\definecolor{darkblue}{rgb}{0,0,0.4} 
\tikzstyle{crossing}=[circle,fill=white,minimum height=6pt,inner sep=0pt, outer sep=0pt, style={transform shape=false}]
\newcommand{\inv}{^{-1}} 
\newcommand{\map}[1]{\xrightarrow{#1}}
\newcommand{\resp}{resp.\ }
\newcommand{\tate}{\mathrm{Tate}} 
\providecommand\@dotsep{5}
\def\listtodoname{List of Todos}
\def\listoftodos{\@starttoc{tdo}\listtodoname}
\tikzstyle{crossing}=[circle,fill=white,minimum height=6pt,inner sep=0pt, outer sep=0pt, style={transform shape=false}]
\numberwithin{equation}{section}
\numberwithin{figure}{section}
\theoremstyle{plain}
\newtheorem{thm}[equation]{Theorem}
\newtheorem{construction}[equation]{Construction}
\newtheorem*{thm*}{Theorem}
\newtheorem{prop}[equation]{Proposition}
\newtheorem{cor}[equation]{Corollary}
\newtheorem{lem}[equation]{Lemma}
\theoremstyle{definition}
\newtheorem{rmk}[equation]{Remark}
\newtheorem{examp}[equation]{Example}
\newtheorem{defn}[equation]{Definition}
\newtheorem{notation}[equation]{Notation}
\newcommand{\f}{\mathbb{F}}
\newcommand{\forgot}{\mathcal{F}}
\newcommand{\hoco}{\mathrm{hocolim}} 
\newcommand{\X}{\mathcal{X}}
\newcommand{\col}{\colon}
\newcommand{\Id}{\mathrm{Id}}
\newcommand{\topp}{\mathrm{Top}_*}
\newcommand{\Hom}{\mathrm{Hom}} 
\newcommand{\from}{\colon}
\newcommand{\into}{\hookrightarrow}
\newcommand{\Kh}{\mathit{Kh}} 
\newcommand{\kho}{\Kh_o} 
\newcommand{\KhGen}{\mathit{Kg}}
\newcommand{\KhCx}{\mathit{Kc}}
\newcommand{\oddKhCx}{\KhCx_o}
\newcommand{\khoh}{\X_e} 
\newcommand{\burn}{\mathscr{B}} 
\newcommand{\oddb}{\burn_{K}} 
\newcommand{\two}{\underline{2}} 
\newcommand{\Tot}{\mathrm{Tot}} 
\newcommand{\refl}{\mathfrak{r}}
\newcommand{\dig}{\mathfrak{d}} 
\newcommand{\ZZ}{\mathbb{Z}}
\newcommand{\Ob}{\mathrm{Ob}}
\renewcommand{\th}{^{\text{th}}}
\newcommand{\Map}{\mathrm{Map}}
\newcommand{\Cat}{\mathscr{C}}
\newcommand{\Dat}{\mathscr{D}}
\newcommand{\cellC}{C_{\mathrm{cell}}}
\newcommand{\eqcellC}{C_{\mathrm{cell}}^{\ZZ_p,*}} 
\newcommand{\redcellC}{\widetilde{C}_{\mathrm{cell}}}
\newcommand{\AbFunc}{\mathfrak{F}}
\newcommand{\Abelianize}{\mathcal{A}}
\newcommand{\RR}{\mathbb{R}}
\newcommand{\cell}{\mathcal{C}} 
\newcommand{\CRealize}[1]{\|#1\|}
\newcommand{\Realize}[1]{|#1|}
\newcommand{\op}{\mathrm{op}}
\newcommand{\Mod}{\text{-Mod}}
\renewcommand{\emptyset}{\varnothing}
\newcommand{\basis}[1]{\langle #1\rangle}
\newcommand{\ff}{F} 
\newcommand{\hodit}{\tilde{F}^+} 
\newcommand{\Akh}{AKh} 
\newcommand{\Akhspace}{\mathcal{AKH}} 
\newcommand{\Akc}{\mathit{AKc}} 
\newcommand{\oddAkc}{\mathit{AKc}_o} 
\newcommand{\oddAkh}{AKh_o} 
\newcommand{\Arr}{\mathrm{Hom}} 
\newcommand{\degh}{\mathfrak{d}} 
\newcommand{\khfunc}{\AbFunc_e} 
\newcommand{\khofunc}{\AbFunc_o} 
\newcommand{\khburn}{\mathcal{KH}} 
\newcommand{\khoburn}{\mathcal{KHO}} 
\newcommand{\akhoburn}{\mathcal{AKHO}} 
\newcommand{\akhburn}{\mathcal{AKH}} 
\newcommand{\detg}{\mathrm{Det}}
\newcommand{\pt}{\mathrm{pt}}
\newcommand{\khon}{\X_n} 
\newcommand{\gr}{\mathrm{gr}} 
\newcommand{\gradedmod}{\ZZ\text{-gMod}} 
\newcommand{\Sym}{\mathrm{Sym}} 
\newcommand{\afunc}{\mathfrak{F}_{\mathrm{Ann}}} 
\newcommand{\oddafunc}{\mathfrak{F}_{\mathrm{Ann}_o}} 
\newcommand{\indset}{\mathbb{I}} 
\newcommand{\verti}{\mathrm{Vert}} 
\title{Localization in Khovanov homology}
\author{Matthew Stoffregen}
\address {Department of Mathematics, Michigan State University, East Lansing, MI 48824}
\email{stoffre1@msu.edu}
\author[M. Zhang]{Melissa Zhang}
\address{Department of Mathematics, UC Davis, One Shields Ave., Davis, CA 95616-8633, U.S.A.}
\email{mlzhang@ucdavis.edu}
\begin{document}

\begin{abstract}
We construct equivariant Khovanov spectra for periodic links using the Burnside functor construction introduced by Lawson, Lipshitz, and Sarkar. By identifying the fixed-point sets, we obtain rank inequalities for odd and even Khovanov homologies, and their annular filtrations, for prime-periodic links in $S^3$.  
\end{abstract} 
\maketitle

\tableofcontents

\section{Introduction}

\subsection{Motivation}

In \cite{kho1} Khovanov categorified the Jones polynomial: to a link
diagram $L$, he associated a bigraded chain complex, whose graded
Euler characteristic is (a certain normalization of) the Jones
polynomial of $L$, and whose (graded) chain homotopy type is an
invariant of the underlying link.  Several generalizations were soon
constructed; for example, \cite{khovtanglefunctor}, \cite{natantangle} developed theories for tangles.  Ozsv{\'a}th-Rasmussen-Szab{\'o} \cite{ors} constructed a version, \emph{odd Khovanov homology}, also categorifying the Jones polynomial, and agreeing with Khovanov homology over the field of two elements.  A further generalization, \emph{annular Khovanov homology}, an invariant of links in the thickened annulus, was introduced by Asaeda-Przytycki-Sikora \cite{aps}; this was further generalized to \emph{odd annular Khovanov homology} by Grigsby-Wehrli in \cite{grigsby-wehrli-gl11}.  Other generalizations, for other polynomials, were given by \cite{kr1,kr2} and others, and have since been extensively developed.

 The purpose of the present paper is to investigate the structure of Khovanov homology in the presence of symmetry; that is, we study the Khovanov homology of \emph{periodic links}.  We say that a link $\tilde{L}\subset S^3$ is $p$-\emph{periodic} if there is a $\ZZ_p=\ZZ/p\ZZ$-action on $(S^3,\tilde{L})$ which preserves $\tilde{L}$ and whose fixed-point set is an unknot $\tilde{U}$ disjoint from $\tilde{L}$.  A particular application of the techniques of this paper is the following:

\begin{thm}\label{thm:main}
Let $\tilde{L}$ be a $p^n$-periodic link, for a prime $p$, with quotient link $L$.  Let $\Kh(\tilde{L};\mathbb{F}_p)$ (\resp  $\kho(\tilde{L};\mathbb{F}_p)$) denote the Khovanov homology (\resp  odd Khovanov homology) of $\tilde{L}$, with coefficients in $\mathbb{F}_p$, the field of $p$ elements.  Let $\Akh(L;\mathbb{F}_p)$ (\resp  $\oddAkh(L;\f_p)$) denote the (\resp  odd) annular Khovanov homology of $L$, viewed in the complement of $U=\tilde{U}/\ZZ_p$.  Then, 
\[
\dim \Kh(\tilde{L}; \f_p) \geq \dim \Akh(L;\f_p) 
 \quad \mbox{  and } \quad
 \dim \kho(\tilde{L}; \f_p) \geq \dim \oddAkh(L;\f_p).
\]
\end{thm}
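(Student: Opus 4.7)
The plan is to lift the Lawson--Lipshitz--Sarkar Burnside-functor construction of the Khovanov stable homotopy type to a $\ZZ_{p^n}$-equivariant setting, identify its fixed-point spectrum with the annular Khovanov spectrum of the quotient, and then apply Smith localization.

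First, I would choose a $\ZZ_{p^n}$-equivariant link diagram $\tilde{D}$ for $\tilde{L}$, for instance by projecting along the symmetry axis $\tilde{U}$. The cube of resolutions $\two^N$ then carries a $\ZZ_{p^n}$-action permuting crossings, and the Khovanov Burnside functor (and its odd analog) is canonically equivariant. After promoting the target Burnside category $\burn$ to a suitable equivariant Burnside category---essentially, one records $\ZZ_{p^n}$-actions on the finite sets appearing in the Burnside correspondences---an equivariant analog of the LLS realization should produce a genuine $\ZZ_{p^n}$-CW spectrum $\khoh(\tilde{L})$ (resp.\ $\khoo(\tilde{L})$) whose underlying cohomology is $\Kh(\tilde{L};\f_p)$ (resp.\ $\kho(\tilde{L};\f_p)$).

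Next, I would analyze the fixed-point spectrum. A vertex $v\in\two^N$ is $\ZZ_{p^n}$-fixed iff the resolution $\tilde{D}_v$ is invariant, in which case it descends to a resolution $D_{\bar v}$ of the quotient diagram $D=\tilde{D}/\ZZ_{p^n}$. The circles of such a fixed $\tilde{D}_v$ split into axis-linking circles (fixed setwise) and free orbits of size $p^n$; on the quotient, the former become trivial (contractible) circles in the annulus $S^3\setminus U$, while the latter become essential (non-contractible) annular circles. A generator of the Khovanov Frobenius algebra is $\ZZ_{p^n}$-fixed iff its labels are constant on orbits, which matches the standard generating set for $\Akh(L;\f_p)$. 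Verifying that the equivariant Burnside 2-morphisms between fixed objects recover the annular Khovanov differential (with correct signs in the odd case) would then produce stable equivalences $\khoh(\tilde{L})^{\ZZ_{p^n}}\simeq \Akhspace(L)$ and similarly for the odd theory.

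Finally, I would apply the Smith--Borel inequality: for any finite $p$-group $G$ acting on a finite CW spectrum $X$,
\[
\dim_{\f_p} H^*(X;\f_p)\ \ge\ \dim_{\f_p} H^*(X^G;\f_p).
\]
Taking $G=\ZZ_{p^n}$ and $X=\khoh(\tilde{L})$ (resp.\ $\khoo(\tilde{L})$) yields the two stated inequalities. I expect the main obstacle to be the second step: constructing an equivariant Burnside category whose realization genuinely records the $\ZZ_{p^n}$-action (rather than a homotopy-coherent action up to indexing ambiguity) requires rigidifying several homotopies in the LLS framework, and the identification of fixed points is further complicated in the odd Khovanov case, where the sign conventions are tied to orderings of crossings and circles that interact nontrivially with the cyclic action and must be carefully tracked through the quotient.
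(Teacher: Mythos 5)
Your high-level strategy — build a $\ZZ_{p^n}$-equivariant Khovanov spectrum, identify its fixed points with the annular Khovanov spectrum of the quotient, and then apply Smith theory — is exactly the paper's strategy, and the final step is literally the same one the paper uses (pass to the underlying finite CW complex and apply the classical Smith inequality~\eqref{eq:1}). The gap is in your second step. Promoting the target to an ``equivariant Burnside category'' of finite $\ZZ_{p^n}$-sets does not work as stated, because $\ZZ_{p^n}$ acts nontrivially on the \emph{index} category $\two^N$: a vertex $v$ and its translate $gv$ are different objects, so the individual sets $F(v)$ carry no intrinsic $G$-action unless $v$ is fixed, and there is simply no functor $\two^N\to\burn_{\ZZ_{p^n}}$ to write down. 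The paper's resolution is the notion of an \emph{external action} on a Burnside functor (Definition~\ref{def:external-action}), which packages $1$-isomorphisms $\psi_{g,v}\colon F(v)\to F(gv)$ together with $2$-morphisms expressing compatibility with the cube structure maps; equivalently, a functor out of a \emph{twisted source} category $\tilde{\Cat}$ (Remark~\ref{rmk:group-category}) rather than a functor into a modified target. Realizing this as an honest $\ZZ_{p^n}$-action on a CW spectrum then requires a parallel theory of external actions on homotopy coherent diagrams and $G$-coherent spatial refinements (Section~\ref{sec:external}). You correctly sense an obstacle (``indexing ambiguity,'' ``rigidifying homotopies''), but the fix you gesture toward — modifying the target Burnside category — is not the one that resolves it.

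Two smaller remarks. First, your geometric identification of circle types is reversed: axis-linking circles upstairs descend to \emph{essential} (non-contractible) circles in the quotient annulus, and free $\ZZ_{p^n}$-orbits of non-axis-linking circles descend to \emph{trivial} (contractible) circles; compare Proposition~\ref{prop:identify-equiv-gens}. This slip does not affect the ungraded dimension count in Theorem~\ref{thm:main}, but it would scramble the $(j,k)$-bigraded refinement of the fixed-point identification. Second, in the odd case it is not enough to ``carefully track'' sign conventions: one must show that a $\ZZ_{p^n}$-invariant edge assignment actually exists (Lemma~\ref{lem:eqvar-cube-cat}, which genuinely requires $p$ odd) and that the resulting fixed-point Burnside functor recovers the annular odd Khovanov functor, including compatibility with the ladybug matching (Lemmas~\ref{lem:fake-functor}--\ref{lem:ladybug-compatible}). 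For $p=2$ the paper does not attempt an equivariant odd spectrum at all and instead deduces the odd inequality from the even one using $\kho(\cdot;\f_2)=\Kh(\cdot;\f_2)$.
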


The motivation for this study comes from the both the application of classical Smith theory to Floer theories, and the general perspective of studying Floer and Khovanov invariants via the (often only conjectural) spectra underlying these theories. 

Let $G$ be a group of order $p^n$ with $p$ prime, acting on a finite-dimensional topological space $M$, with fixed-point set $M^G$. A version of the classical Smith inequality states \cite{smith-classical, bredon}: 
\begin{equation}\label{eq:1}
\dim  H^*(M;\f_p) \geq \dim H^*(M^G;\f_p).
\end{equation}
In low-dimensional topology and symplectic geometry, many results have been developed in analogy with the Smith inequality, relating the Floer homology of some object with symmetries with the Floer homology of its `quotient,' when the latter notion makes sense.  In particular, Seidel-Smith \cite{seidel-smith-loc} proved an analogue of the Smith inequality for $p=2$ in Lagrangian Floer theory. In fact, one of the motivations for \cite{seidel-smith-loc} was its application to symplectic Khovanov homology: Seidel-Smith prove a localization result for the symplectic Khovanov homology \cite{seidel-smith-symp} of $2$-periodic links.  Seidel-Smith further remark in \cite{seidel-smith-loc} that the combinatorial analogue to their symplectic Khovanov rank inequality was not known to hold at the time;  Corollary \ref{cor:cascade} (a consequence of Theorem \ref{thm:main}) of the present paper asserts that this analogue does indeed hold.  (Note that Khovanov homology and symplectic Khovanov homology are known to agree in characteristic 0 by work of Abouzaid-Smith \cite{abouzaid-smith}, but Smith-type inequalities from $\ZZ_p$-localization only hold in finite characteristic.) 

The Seidel-Smith inequality led to many further developments in low-dimensional topology. For instance, Hendricks \cite{hendricks-bdc} showed that the knot Floer homology of a knot $K\subset S^3$ has rank at most as large as that of the knot Floer homology of the preimage $\hat{K}$ in the branched double cover $\Sigma(K)$, and also obtained relationships between knot Floer homology of $2$-periodic knots and that of their quotients \cite{hendricks-periodic} (see also \cite{hls}, \cite{keegan-hfk}, and \cite{large}).

From the perspective of the present paper, the Seidel-Smith inequality reflects the extent to which Floer theories contain more information than just the resulting chain complex (indeed, the Smith inequality is a fact about \emph{spaces}, not about chain complexes).  A particularly striking formulation of this principle is found in Lidman-Manolescu \cite{Lidman-Manolescu-covering}, where they showed that, roughly, for a $p^n$-sheeted regular cover $\pi \from \tilde{Y}\to Y$ there is an action of a group $G$ of order $p^n$ on the Seiberg-Witten Floer space $\mathit{SWF}(\tilde{Y};\pi^*\mathfrak{s})$, so that the fixed-point set is $\mathit{SWF}(Y,\mathfrak{s})$, the Seiberg-Witten Floer space of the quotient.  They thus obtain a rank inequality, by applying the classical Smith inequality: 
\[
\sum_i \dim \tilde{H}_i(\mathit{SWF}(\tilde{Y},\pi^*\mathfrak{s});\f_p)\geq \sum_i \dim \tilde{H}_i(\mathit{SWF}(Y,\mathfrak{s});\f_p).
\]
Recall that Lidman-Manolescu \cite{Lidman-Manolescu-ident} identified the reduced homology of $\mathit{SWF}(Y,\mathfrak{s})$ with the tilde flavor of monopole Floer homology $\widetilde{HM}_*(Y,\mathfrak{s})$.  Further, Colin-Ghiggini-Honda and Kutluhan-Lee-Taubes \cite{cgh1},\cite{klt1} proved $\widetilde{HM}_*(Y,\mathfrak{s})=\widehat{HF}_*(Y,\mathfrak{s})$.  
Then the result of \cite{Lidman-Manolescu-covering} gives an inequality of ranks of Heegaard Floer homology, and in particular, strong constraints on $L$-spaces arising as regular covers.  

\subsection{Results}

In the present paper, we relate Khovanov space-level invariants of a periodic link $\tilde{L}$ with those of the quotient link $L$. This space-level relationship leads to a relationship on the level of homology that does not seem to follow in a simple way from the chain complex description of Khovanov homology.  A priori, it is difficult to relate any given Khovanov chain complex of a periodic link with any given Khovanov chain complex of the quotient, since without further information these are just chain complexes without further structure. 
However, the second author showed in \cite{z-annular-rank}, without using space-level invariants, that there is a spectral sequence relating the annular Khovanov homology of a $2$-periodic link with that of its quotient. 
This took advantage of a bonus grading in annular Khovanov homology, which is a richer invariant than Khovanov homology itself \cite{GLW-schur-weyl}; the extra structure was essential to that result.  

To set up notation, recall that for a link $L\subset S^3$, Lipshitz-Sarkar \cite{lshomotopytype} constructed a CW spectrum $\khoh(L)$ whose stable homotopy type is an invariant of the underlying link $L$, and whose reduced cellular chain complex is precisely the Khovanov chain complex $\KhCx(L)$. 
Their construction readily generalizes to produce an annular Khovanov spectrum of a link $L$ in the thickened annulus.
Further, in \cite{oddkh}, a family $\khon(L)$ of CW spectra was constructed for $n\in\ZZ_{\geq 0}$, so that $\X_0(L)=\khoh(L)$ and so that the reduced cellular chain complex $\tilde{C}_{\mathrm{cell}}(\khon(L))$ is the even Khovanov chain complex $\KhCx(L)$ for $n$ even, and is the odd Khovanov chain complex $\oddKhCx(L)$ for $n$ odd.  It is again straightforward to construct an annular Khovanov spectrum $\Akhspace_n(L)$ for any $n\in \ZZ_{\geq 0}$, whose reduced cellular chain complex $\tilde{C}_{\mathrm{cell}}(\Akhspace_n(L))$ is the even annular Khovanov chain complex $\Akc(L)$ if $n$ is even, and the odd annular Khovanov chain complex $\oddAkc(L)$ if $n$ is odd.  The Khovanov spaces and spectra split as a wedge sum according to quantum gradings, and in the case of annular, $(k)$-gradings as well, as $\khon(L)=\vee_j \khon^j(L)$ and $\Akhspace_n(L)=\vee_{j,k}\Akhspace_n^{j,k}(L)$, respectively. 
Furthermore, for $n\geq 1$,  $\khon(L)$ (\resp $\Akhspace_n(L)$) is a $\ZZ_2$-equivariant spectrum with geometric fixed points $\Sigma^{-1}\X_{n-1}(L)$ (\resp $\Sigma^{-1}\Akhspace_{n-1}(L)$).
See also \cite{akw-quantum}, where Akhmechet-Krushkal-Willis construct a stable homotopy refinement of Beliakova-Putyra-Wehrli's quantum annular Khovanov homology \cite{bpw-16}, and \cite{akw-towards}.

The main result of the present paper is the following:
\begin{thm}\label{thm:submain}
Fix $p>1$ and let $\tilde{L}$ be a $p$-periodic link with quotient link $L$.  For each quantum grading $j$, there is a well-defined structure of a $\ZZ_p$-equivariant spectrum on $\X_0^j(\tilde{L})$ ($\Akhspace^{j,k}_0(\tilde{L})$), whose $\ZZ_p$-equivariant stable homotopy type is an invariant of the $p$-periodic link $\tilde{L}$ (that is, the equivariant stable homotopy type is preserved by equivariant isotopies and equivariant Reidemeister moves of a diagram $\tilde{D}$ of $\tilde{L}$).  Further, the geometric fixed points are given by:
\begin{align}\label{eq:fixed-point-main}
\X^j_0(\tilde{L})^{\ZZ_p}&=\bigvee_{\{a,b\mid pa-(p-1)b=j\}}\Akhspace^{a,b}_0(L),\\
\Akhspace^{pj-(p-1)k,k}_0(\tilde{L})^{\ZZ_p}&=\Akhspace^{j,k}_0(L). \nonumber
\end{align}
  Moreover, if $n\geq 1$ and $p$ is odd, $\khon^j(\tilde{L})$ ($\Akhspace^j_n(\tilde{L})$) is naturally a $\ZZ_2\times \ZZ_p$-equivariant spectrum, whose $\ZZ_2\times \ZZ_p$-equivariant stable homotopy type is an invariant of the $p$-periodic link $\tilde{L}$.  Then, as $\ZZ_2$-equivariant spectra,
\begin{align}\label{eq:fixed-point-main-2}
\khon^j(\tilde{L})^{\ZZ_p}&=\bigvee_{\{a,b\mid pa-(p-1)b=j\}} \Akhspace^{a,b}_n(L),\\
\Akhspace^{pj-(p-1)k,k}_n(\tilde{L})^{\ZZ_p}&=\Akhspace^{j,k}_n(L). \nonumber
\end{align}
In fact, if $p$ is odd and $V$ is any finite-dimensional orthogonal $\ZZ_2\times \ZZ_p$-representation, there is a $\ZZ_2\times \ZZ_p$-equivariant spectrum $\mathcal{X}^j_{V}(\widetilde{L})$ ($\mathcal{AKH}_V^j(\widetilde{L})$), whose $\ZZ_2\times \ZZ_p$-equivariant stable homotopy type is an invariant of the $p$-periodic link $\widetilde{L}$.  Let $V^{\ZZ_p}$ be the $\ZZ_p$-fixed subspace of $V$.  Moreover, $\mathcal{X}^j_{V}(\widetilde{L})$ ($\mathcal{AKH}_V^j(\widetilde{L})$) is $\ZZ_2$-stable homotopy equivalent to $\Sigma^{V-V^{\ZZ_p}}\mathcal{X}^j_{\dim V^{\ZZ_p}}(\widetilde{L})$  ($\Sigma^{V-V^{\ZZ_p}}\mathcal{AKH}_{\dim V^{\ZZ_p}}^j(\widetilde{L})$).   Then, as $\ZZ_2$-equivariant spectra,
\begin{align}\label{eq:fixed-point-main-3}
\mathcal{X}_V^j(\tilde{L})^{\ZZ_p}&=\bigvee_{\{a,b\mid pa-(p-1)b=j\}} \mathcal{AKH}^{a,b}_{\dim V^{\ZZ_p}}(L),\\
\Akhspace^{pj-(p-1)k,k}_V(\tilde{L})^{\ZZ_p}&=\Akhspace^{j,k}_{\dim V^{\ZZ_p}}(L). \nonumber
\end{align}
\end{thm}

\emph{Proof of Theorem \ref{thm:main}:}  We begin by noting that, in Theorem \ref{thm:submain}, all the involved objects are suspension spectra of compact spaces, and the statements in Theorem \ref{thm:submain} continue to hold at the level of the underlying topological spaces
(see Lemma \ref{lem:fix-point-coincidence}  and Theorem \ref{thm:burnside-fixed-pts}).  
Then, $\khoh(\tilde{L})$ (here, a compact topological space) admits a $\ZZ_{p^n}$-action 
with fixed-point set $\Akhspace_0(L)$.  The homology satisfies $\tilde{H}(\khoh(\tilde{L}))=\Kh(\tilde{L})$, while $\tilde{H}(\Akhspace_0(L))=\Akh(L)$.  Applying (\ref{eq:1}) to $M=\khoh(\tilde{L})$, Theorem \ref{thm:main} follows for the even case.  The odd case is similar. \qed 

Further, we expect that the Tate spectral sequence arising from the proof of Theorem \ref{thm:submain} should be compatible with spectral sequences from Khovanov to Floer theories, perhaps being related to Hendricks's \cite{hendricks-periodic}, Roberts's \cite{roberts-dbc}, or Xie's \cite{xie-ss} spectral sequences.

We mention a few further possible connections of Theorem \ref{thm:submain} to other work.  First, recall from \cite{bpw-16} that annular Khovanov homology of a link $L$ can be realized as the Hochschild homology of an appropriate bimodule over the platform algebra (see \cite{chenkhovanov},\cite{stroppel-5}).  Recall moreover that Lawson-Lipshitz-Sarkar \cite{lls-tangle} have given a spectrum-level version of Khovanov's invariant for tangles \cite{khovtanglefunctor}.  From these developments, it seems natural to conjecture that the annular Khovanov spectrum of a link is realized as the topological Hochschild homology of an appropriate spectral bimodule; this conjecture has been proved in \cite{LLS-annular} after the appearance of the present paper. Given the result of \cite{LLS-annular}, it is interesting to ask whether and how the actions constructed in this paper pass over to give actions on the topological Hochschild homology.  See also \cite{lipshitz-treumann}.

Independently,
Borodzik-Politarczyk-Silvero \cite{bps-2018} use equivariant flow categories to also show that $\X_0(\tilde{L})=\X_e(\tilde L)$ admits a $\ZZ_p$-action; the main Theorem 1.2 of \cite{bps-2018} is the first sentence of Theorem \ref{thm:submain} in the present paper, although it is not clear that the action constructed in \cite{bps-2018} and that constructed here (in the case $n=0$) agree.  In \cite{bps-2018}, they further relate the Borel equivariant cohomology of $\X_e(\tilde{L})$ to Politarczyk's equivariant Khovanov homology \cite{politarczyk-kh}.
Jeff Musyt has also constructed a $\ZZ_p$-equivariant Khovanov stable homotopy type using methods similar to ours \cite{musyt-thesis}.

One final potentially surprising point is that, in the odd case, there are several $\ZZ_2\times \ZZ_p$-equivariant Khovanov spectra underlying any of the $\ZZ_2$-equivariant spectra $\mathcal{X}^j_{n}(\widetilde{L})$, with potentially different $\ZZ_2\times \ZZ_p$-equivariant stable homotopy types. Indeed, as in Theorem \ref{thm:submain}, any of the $\mathcal{X}^j_V(\widetilde{L})$, for a $\ZZ_2\times \ZZ_p$-representation $V$ with $\dim V^{\ZZ_p}=n$, is $\ZZ_2$-equivariantly stable homotopy equivalent to $\Sigma^{V-V^{\ZZ_p}}\mathcal{X}^j_n(\widetilde{L})$.  It is not known to us if the $\ZZ_2\times \ZZ_p$-equivariant stable homotopy type of $\mathcal{X}_V^j(\tilde{L})$ is independent of $V$.

\subsection{Techniques}

We summarize the machinery and organization of this paper.  This paper uses the machinery of Burnside functors (roughly speaking, these are functors to the Burnside category, defined below),  introduced in \cite{hkk} and \cite{lls1}, to study the Khovanov spectrum.  The machinery of Burnside functors first appeared in \cite{lls1} to handle the product formula for Khovanov spectra, by giving a construction of the Khovanov spectrum as a certain homotopy colimit, which is more convenient for many applications.  We will use a slight generalization of Burnside functors from \cite{oddkh}, `decorated' Burnside functors, introduced to generalize the construction of \cite{lls1} to produce an odd Khovanov space.  We first review the construction of \cite{lls1}, in order to explain what is done in the present paper.

In \cite{lls1}, the dual of the Khovanov chain complex of a link diagram with $n$ ordered crossings is viewed as a diagram of abelian groups:
\[
\AbFunc_e\from (\two^n)^\op \to \ZZ\Mod,
\]  
and similarly in \cite{oddkh}, the odd Khovanov chain complex is viewed as a diagram:
\[
\AbFunc_o\from (\two^n)^\op \to \ZZ\Mod .
\]

Let us recall, for $K$ a finite group, the $K$-decorated \emph{Burnside category} $\burn_K$ (written $\burn$, if $K=\{1\}$), whose objects are finite sets, whose $1$-morphisms are finite correspondences decorated by elements of $K$, and whose $2$-morphisms are bijections respecting decorations.  The $2$-category $\burn$ naturally comes with a forgetful functor to abelian groups $\burn \to \ZZ\Mod$ by sending a set $S$ to the free abelian group $\ZZ\basis{S}$ generated by $S$.
The Khovanov stable homotopy type arises from a lift, according to \cite{lls1}:
\[
\begin{tikzpicture}[baseline={([yshift=-.8ex]current  bounding  box.center)},xscale=2.5,yscale=1.5]
\node (a0) at (0,0) {$\two^n$};
\node (a1) at (1,0) {$\ZZ\Mod$};
\node (b1) at (1,1) {$\burn$};

\draw[->] (a0) -- (a1) node[pos=0.5,anchor=north] {\scriptsize
  $\AbFunc^\op_e$}; \draw[->] (b1) -- (a1) node[pos=0.2,anchor=east] {};
\draw[->,dashed] (a0) -- (b1) node[pos=0.5,anchor=south east]{\scriptsize $\khburn$};

\end{tikzpicture}
\]

On the other hand, given a homomorphism $\epsilon \from K \to \ZZ_2$, there is a forgetful functor $\burn_K \to \ZZ\Mod$, again by sending a set $S$ to the free abelian group $\ZZ\basis{S}$ generated by $S$, and with $\ZZ\Mod$-morphisms twisted by $\epsilon$.  The odd Khovanov stable homotopy type arises from a lift:

\[
\begin{tikzpicture}[baseline={([yshift=-.8ex]current  bounding  box.center)},xscale=2.5,yscale=1.5]
\node (a0) at (0,0.5) {$\two^n$};
\node (a1) at (1,0) {$\ZZ\Mod$};
\node (b1) at (1,1) {$\burn_{\ZZ_2}$};
\node (a2) at (2,0) {$\ZZ\Mod$};

\draw[->] (a0) -- (a1) node[pos=0.5,anchor=north] {\scriptsize
  $\AbFunc^\op_e$}; 
\draw[->] (b1) -- (a1) node[pos=0.2,anchor=east] {\scriptsize$\epsilon=0$};
\draw[->] (b1) -- (a2) node[pos=0.2,anchor=west] {\scriptsize$\epsilon=\Id$};
\draw[->,dashed] (a0) -- (b1) node[pos=0.5,anchor=south east]{\scriptsize $\khoburn$};

\draw[->] (a0) -- (a2) node[pos=0.7,anchor=south west] {\scriptsize
  $\AbFunc^\op_o$}; 
\end{tikzpicture}
\]
The even Burnside functor $\khburn$ is obtained by forgetting the $\ZZ_2$-decorations on $\khoburn$.  

Given a Burnside functor $F$, \cite{lls1} gives a recipe, called \emph{realization} (see Section \ref{sec:disk}), for how to construct a  space, $||F_e(L)||$, as a homotopy colimit of a certain homotopy coherent diagram constructed from $F$.  This is generalized in \cite{oddkh}, for the case of nontrivial $K$, and allows for the case the construction of an odd Khovanov space $||F_o(L)||$ in a similar way.

The goal of the present paper is to investigate extra structure on the realizations $||F||$, for $F=\khburn(\tilde{L})$ or $\khoburn(\tilde{L})$ for $\tilde{L}$ $p$-periodic.  A natural expectation is that $||F||$ should admit a $\ZZ_p$-action.  The first technical work of the present paper consists of developing the correct notion of `actions' on Burnside functors $F \from \Cat\to \burn_K$, for $\Cat$ a small category, and on homotopy coherent diagrams $\Cat \to \topp$, where $\topp$ is the category of pointed topological spaces.  

First, we briefly explain the notion of `action' on Burnside functors.  A first guess is that a Burnside functor $F$ with action should be a diagram $BG \times \Cat \to \burn_K$, where $BG$ is the category with one object, and morphisms $G$; in analogy with viewing a pointed $G$-space as a diagram $BG \to \topp$.  The main technical difficulty is that, for the Khovanov-Burnside functor, $G=\ZZ_p$ acts on the category $\Cat$ itself.  In Section \ref{sec:burn}, we define a notion of \emph{external action} of a group $G$ on a Burnside functor $F$ as a kind of twist of the above definition. 

We must next see how the realization process of \cite{lls1} behaves on a Burnside functor $F$ with action.  As before, the problem is that we obtain a homotopy coherent diagram where the index category itself admits a $G$-action (we call such a diagram a \emph{diagram with external action} by $G$).  Note that a homotopy coherent diagram with a $G$-action (so that $G$ acts trivially on the index category) is simply a homotopy coherent diagram in the category of $G$-spaces, which would be readily handled along the lines of \cite{oddkh}.  

In Section \ref{sec:external}, we develop some machinery for homotopy colimits for homotopy coherent diagrams with an external action.   We do not pursue the greatest level of generality here; indeed, a more satisfactory treatment would be to essentially generalize the bulk of \cite{Vogt} to this situation; see also work of Dotto-Moi \cite{Dotto-Moi}.   The main results are Proposition \ref{prop:preconditions} and Lemma 5.6, while the main application to realizations of Burnside functors is Proposition \ref{prop:stable-equivalences-realization}.  In fact, including Proposition \ref{prop:stable-equivalences-realization} increases substantially the preliminaries we need, but is not needed in order to show that the Khovanov spaces of $p$-periodic links admit a $\ZZ_p$-action.  Instead, Proposition \ref{prop:stable-equivalences-realization} is only needed to show that the resulting $\ZZ_p$-action is well-defined.  In Section \ref{sec:app}, we show that $\khburn$ and $\khoburn$ have external actions under suitable circumstances, and find the fixed-point functors.  This involves a reasonably detailed study of the relationship of resolution configurations in a periodic link with those in its quotient.  It is somewhat interesting that the case of odd Khovanov homology here is substantially more involved than the even case.

We conclude the introduction with a few remarks.  First, in sections dealing with homotopy coherent diagrams, we work with diagrams in $K$-spaces for a group $K$, although for all of our applications $K$ will always be $\ZZ_2$ or trivial.  We include the more general case because it is no more complicated, and also on account of a conjecture of \cite{oddkh}.  

To explain this conjecture, recall that there are an infinite family of Khovanov spaces $\X_n(L)$ of a link $L$ for $n\in \ZZ_{\geq 0}$, where the $n$-th space has cellular chain complex equal to the even (\resp odd) Khovanov chain complex if $n$ is even (\resp odd).  The conjecture of \cite{oddkh} is that there should be stable homotopy equivalences 
\begin{equation}\label{eq:conjecture}
\X_n(L) \simeq \X_{n+2}(L).
\end{equation}  An attractive method of proving this conjecture would be the construction of a further Burnside functor $\khburn_\ZZ\from (\two^n)^\op\to \burn_\ZZ$ recovering $\khoburn(L)$ by taking $\ZZ \to \ZZ_2$.  If such a functor could be constructed, the techniques of the present paper would apply immediately to its realizations.  Note that even if (\ref{eq:conjecture}) holds, Theorem \ref{thm:submain} is not entirely boring for $n\geq 2$.  Indeed, the statement (\ref{eq:conjecture}) requires a choice of homotopy equivalence, and we expect that the natural family of homotopies realizing this equivalence (constructed from the putative $\khburn_\ZZ$) is not contractible.  That is, there may be no preferred homotopy equivalence $\X_n\to \X_{n+2}$.
 
\subsection*{Acknowledgements} We are grateful to John Baldwin, who suggested the problem to the second author, Eli Grigsby, Robert Lipshitz, Sucharit Sarkar, and David Treumann for much helpful input. The second author would like to thank Patrick Orson for organizing a learning seminar at Boston College which got her interested in space-level tools.  Part of Section \ref{sec:burn} was developed in the course of work on \cite{oddkh} in the hope of proving (\ref{eq:conjecture}); we thank Sucharit Sarkar and Chris Scaduto for allowing us to include it here.
We also thank the referee for many helpful insights and suggestions for making this paper more readable.

MS was supported by NSF Grant DMS-1702532.
MZ was partially supported by NSF Grant DMS-1510444.

\section{Khovanov homologies and periodic links}\label{sec:top-prelims}

In this section, we briefly review the definition and basic properties of several Khovanov homology theories.  For an oriented link $L\subset S^3$, we review the \emph{even} Khovanov homology $\Kh(L)=\Kh_e(L)$, defined by Khovanov \cite{kho1}, and the {\emph{odd}} Khovanov homology $\kho(L)$ defined by Ozsv\'{a}th-Rasmussen-Szab\'{o} \cite{ors}.  For an oriented link $L$ in the thickened annulus $(S^1\times [0,1])\times [0,1]$, we review the annular Khovanov homology $\Akh(L)$ defined by Asaeda-Przytycki-Sikora in \cite{aps}, as well as the odd annular Khovanov homology $\oddAkh(L)$, defined in \cite{grigsby-wehrli-gl11} by Grigsby-Wehrli.  For a more detailed introduction to Khovanov homology, see \cite{kho1}.  Our exposition follows \cite{lls1} closely.

\subsection{The cube category}\label{subsec:cube-cat}
We first recall the cube category.  Call $\two=\{0,1\}$ the
one-dimensional cube, viewed as a partially ordered set by setting
$1>0$, or as a category with a single non-identity morphism from $1$
to $0$.

Call $\two^n=\{0,1\}^n$ the $n$-dimensional cube, with the partial
order given by
\[ u=(u_1,\dots,u_n) \geq v=(v_1,\dots,v_n) \text{ if and only if }
\forall \; i \; (u_i \geq v_i).
\] 
It has the categorical structure induced by the partial order, where
$\Hom_{\two^n}(u,v)$ has a single element if $u \geq v$ and is empty
otherwise.  Write $\phi_{u,v}$ for the unique morphism $u \to v$ if it
exists.  The cube carries a grading given by $|v|=\sum_i v_i$.  Write
$u\geqslant_k v$ if $u\geq v$ and $|u|-|v|=k$. When $u\geqslant_1 v$,
we call the corresponding morphism $\phi_{u,v}$ an {\emph{edge}}, and call $v$ an \emph{immediate successor} of $u$. 

We will study chain complexes refining the cube category whose homological gradings correspond to the gradings of the vertices. When we work with homotopy colimits, it is most useful for us to work with commutative cubes, i.e.\ cubes where the 2-dimensional faces commute. However, in order for $\partial^2=0$ to hold in the chain complex, we must assign signs to the edges of the cube to force each face to instead anticommute, leading to the following definition.

\begin{defn}\label{def:signassign} The {\emph{standard sign
      assignment}} $s$ is the following function from edges of $\two^n$
  to $\f_2$. For $u\geqslant_1 v$, let $k$ be the unique element in
  $\{1,\dots,n\}$ with $u_k > v_k$. Then
\[ 
	s_{u,v} \; := \; \sum^{k-1}_{i=1} u_i \bmod{2}.
\]
\end{defn}
Note that $s$ may be viewed as a 1-cochain in
$\cellC^*([0,1]^n;\f_2)$. In general, $s+c$ is called a
\emph{sign assignment} for any $1$-cocycle $c$ in
$\cellC^*([0,1]^n;\f_2)$.

\subsection{Even Khovanov homology $\Kh$}
\label{subsec:kh-cpx}

Khovanov homology, introduced in \cite{kho1}, is a combinatorial link invariant computed from a planar diagram of an oriented link by considering the \emph{cube of resolutions}.  The result is a bigraded homology theory associated to an oriented link. We sometimes refer to this theory as \emph{even} Khovanov homology to distinguish it from odd Khovanov homology.  For a more complete introduction to this theory, see \cite{natantangle}.

Let $D$ be a link diagram with $n$ ordered crossings.  Each crossing 
$\begin{tikzpicture}[scale=0.04,baseline={([yshift=-.8ex]current
    bounding box.center)}] \draw (0,10) -- (10,0); \node[crossing] at
  (5,5) {}; \draw (0,0) -- (10,10);
\end{tikzpicture}$
can be \emph{resolved} as the \emph{$0$-resolution}
$\begin{tikzpicture}[scale=0.04,baseline={([yshift=-.8ex]current
    bounding box.center)}]
\draw (0,0) .. controls (4,4) and (4,6) .. (0,10);
\draw (10,0) .. controls (6,4) and (6,6) .. (10,10);
\end{tikzpicture}$ or the \emph{$1$-resolution}
$\begin{tikzpicture}[scale=0.04,baseline={([yshift=-.8ex]current
    bounding box.center)}] \draw (0,0)
      .. controls (4,4) and (6,4) .. (10,0); \draw (0,10) .. controls
      (4,6) and (6,6) .. (10,10);
    \end{tikzpicture}$.  

We will view Khovanov homology as coming from a functor
\[
\AbFunc_e : (\two^n)^\op
    \longrightarrow \ZZ\Mod
\]
which we define below. 
The theory is also defined similarly over more general rings. In the context of Smith inequalities (Subsection \ref{subsec:smith}), we will use field coefficients. 

\subsubsection*{Generators}
For each $v\in \two^n$, let $D_v$ be the complete resolution of $D$ formed by taking the $0$-resolution at the $i\th$ crossing if $v_i=0$, or the $1$-resolution otherwise.  The diagram $D_v$ is a planar diagram of embedded circles.  We write $Z(D_v)$ for the set of embedded circles (which we just call circles) in $D_v$. A \emph{Kauffman state} at $v$ will be an element of the powerset of $Z(D_v)$.  Let $\AbFunc_e(v)$ be the free $\ZZ$-module generated by Kauffman states at $v$.  We can think of Kauffman states as the monomials in the symmetric algebra generated by the circles $Z(D_v)$, modulo $x_i^2=0$ for each circle $x_i\in Z(D_v)$, that is, as an element of $\Sym(Z(D_v))/(x^2)_{x\in Z(D_v)}$.

\subsubsection*{Arrows}
Let $v,u \in \Ob(\two^n)$ where $u \geqslant_1 v$.
Since $D_u$ and $D_v$ differ only at the resolution of one crossing, either two circles in $D_v$ \emph{merge} to become one circle in $D_u$, or, dually, one circle in $D_v$ \emph{splits} to become two circles in $D_u$.  Let $\phi_{v,u}^\op\from v \to u$ be the arrow opposite $\phi_{u,v}$.  

First, say that two circles $a_1,a_2\in Z(D_v)$ merge to a circle $a\in Z(D_u)$.  Note that the complements $Z(D_v)\backslash\{a_1,a_2\}$ and $Z(D_u)\backslash\{a\}$ are naturally identified.  Define $\AbFunc_e(\phi^\op_{v,u})$ as the $\ZZ$-algebra map 
\[
\Sym(Z(D_v))/(x^2)_{x\in Z(D_v)}\to \Sym(Z(D_u))/(x^2)_{x\in Z(D_u)}
\]
determined by sending $a_1,a_2$ to $a$, and sending other circles by the identity.

Next, say that one circle $a\in Z(D_v)$ splits to circles $a_1,a_2\in Z(D_u)$.  Define 
\[
\AbFunc_e(\phi^\op_{v,u})(x)=(a_1+a_2)x
\]
where we have used the natural identification of $Z(D_v)\backslash\{a\}$ with $Z(D_u)\backslash\{a_1,a_2\}$.  One readily checks that, with these definitions, $\AbFunc_e$ defines a functor $(\two^n)^\op\to \ZZ\Mod$.  We call $\AbFunc_e$ the \emph{Khovanov functor} of the diagram $D$.

\subsubsection*{Gradings}

There are two gradings associated to the Khovanov complex: first, there is the \emph{homological grading} (or `$h$-grading') $\gr_h$, and an additional \emph{quantum grading} (or `$q$-grading') $\gr_q$ that allows for decategorification to the Jones polynomial.

Let $D $ be a diagram for an oriented link $L$, $n$ the number of crossings in $D$, and $n_+$ and $n_-$ the number of positive and negative crossings (where a negative crossing is locally $\begin{tikzpicture}[scale=0.04,baseline={([yshift=-.8ex]current
    bounding box.center)}] \draw[->] (0,10) -- (10,0); \node[crossing] at
  (5,5) {}; \draw[->] (0,0) -- (10,10);
\end{tikzpicture}$) in $D$, respectively.  
Let $x=a_1\dots a_\ell\in \AbFunc_e(D_u)$ (where $a_i\in Z(D_u)$); then the gradings of $x$ are given by 
\[
	\gr_h(x) \; = \; |v|-n_-,\qquad \gr_q(x) \; = \; |Z(D_v)|-2\ell+|v|+n_+-2n_-.
\]

Note that the morphisms $\AbFunc_e(\phi_{v,u}^\op)$ increase homological grading by $1$ and preserve quantum grading.  In particular, we can regard 
\[
\AbFunc_e\colon (\two^n)^{\mathrm{op}} \to \gradedmod
\]
where $\gradedmod$ is the category of graded $\ZZ$-modules.  We write $\AbFunc^j_e$ for the functor taking $(\two^n)^\op$ to the $j$-graded component of $\AbFunc_e$.

\subsection{Homology from functors}

Khovanov homology is defined from $\AbFunc_e$ as follows.  Let 
\[
\KhCx(L) \; = \; \bigoplus_{v\in\two^n}
\AbFunc_e(v) , \qquad\;\; \partial_{Kh} \; = \;
\sum_{v\geqslant_1 w}
(-1)^{s_{v,w}}\,\AbFunc_e(\phi_{w,v}^\op).
\]

Here $s$ is the standard sign assignment from Definition \ref{def:signassign}.  The chain homotopy type of the resulting complex is an invariant of the oriented link $L$, \cite[Theorem 1]{kho1}.  Note that $\KhCx(L)$ decomposes, over quantum grading, as a chain complex $\KhCx(L)=\KhCx^j(L)$.  The resulting homology $\Kh^{i,j}(L)=H^i(\KhCx^j(L))$ is the \emph{Khovanov homology} of $L$.

\subsection{Odd Khovanov homology $\kho$}
\label{subsec:okh-cpx}

Odd Khovanov homology, introduced in \cite{ors}, is structurally very similar to even Khovanov homology, but instead uses exterior algebra operations to define the differential, introducing signs to the differential within edges. We will view odd Khovanov homology as coming from a functor
\[
\AbFunc_o : (\two^n)^\op
    \longrightarrow \ZZ\Mod
\]

In order to define odd Khovanov homology from a link diagram $D$ with $n$ ordered crossings, we further equip $D$ with an \emph{orientation of crossings}, which is a choice of an arrow at each crossing.  Note that an orientation of the link can be used to acquire an orientation of crossings.  The resolution of a diagram $D$ with an orientation of crossings assigns to $v\in \two^n$ a collections of embedded circles, along with embedded oriented arcs joining the circles.  That is, locally the $0$-resolution of $\begin{tikzpicture}[scale=0.04,baseline={([yshift=-.8ex]current
    bounding box.center)}] \draw (0,10) -- (10,0); \node[crossing] at
  (5,5) {}; \draw (0,0) -- (10,10);
  \draw[thick,->,red] (0,5) --(10,5);
\end{tikzpicture}$ (\resp $\begin{tikzpicture}[scale=0.04,baseline={([yshift=-.8ex]current
    bounding box.center)}] \draw (0,10) -- (10,0); \node[crossing] at
  (5,5) {}; \draw (0,0) -- (10,10);
  \draw[thick,<-,red] (0,5) --(10,5);
\end{tikzpicture}$) is $\begin{tikzpicture}[scale=0.04,baseline={([yshift=-.8ex]current
    bounding box.center)}]
\draw (0,0) .. controls (4,4) and (4,6) .. (0,10);
\draw (10,0) .. controls (6,4) and (6,6) .. (10,10);
  \draw[thick,->,red] (3,5) --(7,5);
\end{tikzpicture}$  (\resp $\begin{tikzpicture}[scale=0.04,baseline={([yshift=-.8ex]current
    bounding box.center)}]
\draw (0,0) .. controls (4,4) and (4,6) .. (0,10);
\draw (10,0) .. controls (6,4) and (6,6) .. (10,10);
  \draw[thick,<-,red] (3,5) --(7,5);
\end{tikzpicture}$) and the $1$-resolution is $\begin{tikzpicture}[scale=0.04,baseline={([yshift=-.8ex]current
    bounding box.center)}] \draw (0,0)
      .. controls (4,4) and (6,4) .. (10,0); \draw (0,10) .. controls
      (4,6) and (6,6) .. (10,10);
      \draw[thick,red,->] (5,7)--(5,3);
    \end{tikzpicture}$ (\resp $\begin{tikzpicture}[scale=0.04,baseline={([yshift=-.8ex]current
    bounding box.center)}] \draw (0,0)
      .. controls (4,4) and (6,4) .. (10,0); \draw (0,10) .. controls
      (4,6) and (6,6) .. (10,10);
      \draw[thick,red,<-] (5,7)--(5,3);
    \end{tikzpicture}$).

For objects $v\in \two^n$, set $\AbFunc_o(v)=\Lambda(Z(D_v))$, the exterior algebra, over $\ZZ$, on the set of circles $Z(D_v)$.  This can be identified with $\AbFunc_e(v)$, but the identification is not canonical.  To define $\AbFunc_o$ on morphisms, we start with an auxiliary assignment $\AbFunc_o'$ (with the same objects) defined on edges $u\geqslant_1 v$; the functor $\AbFunc_o$ is obtained by changing suitable signs of $\AbFunc_o'$.  We will call $\AbFunc_o'$ the \emph{projective odd Khovanov functor}.

For $u\geqslant_1 v$ such that circles $a_1,a_2 \in Z(D_v)$ merge to a circle $a\in Z(D_u)$, set $\AbFunc_o'(\phi^\op_{v,u})$ to be the $\ZZ$-algebra map $\Lambda(Z(D_v))\to \Lambda(Z(D_u))$ determined by sending $a_1,a_2 \mapsto a$ and by identifying the other generators.

For $u\geqslant_1 v$ such that a circle $a\in Z(D_v)$ splits into circles $a_1,a_2\in Z(D_u)$, and such that the arc in $D_u$ points from $a_1$ to $a_2$, set
\[
\AbFunc_o'(\phi_{v,u}^\op)(x)=(a_1-a_2)x
\]
where we view $\Lambda(Z(D_v))$ as a subalgebra of $\Lambda(Z(D_u))$ by sending $a$ to either $a_1$ or $a_2$ and identifying the other generators; one can quickly check that $\AbFunc_o'$ does not depend on whether $a$ is sent to $a_1$ or $a_2$.  It will be convenient later to have the following terminology from \cite{lshomotopytype}:

\begin{defn}[Definition 2.1 \cite{lshomotopytype}]\label{def:reso-config}
A \emph{resolution configuration} $C$ is a pair $(Z(C),A(C))$ where $Z(C)$ is a collection of pairwise-disjoint embedded circles in $S^2$, and $A(C)$ is a totally ordered collection of arcs embedded in $S^2$ with $A(C) \cap Z(C)=\partial A(C)$.  The number of arcs will be called the \emph{index} of a resolution configuration.

An \emph{odd} resolution configuration will be a resolution configuration as above, but where the arcs are oriented.
\end{defn}

For a link diagram $D$ and $u\geq_i w\in \two^n$, we write $D_{u,w}$ for the resolution configuration obtained by performing the $w$-resolution, and then drawing the $i$ arcs corresponding to the difference between $u$ and $w$. 

The assignment $\AbFunc_o'$ on the edges of $(\two^n)^\op$ commutes \emph{up to a sign} along $2$-dimensional faces. We can adjust $\AbFunc_o'$ to give a genuine functor from the cube category, as follows.  
    The two-dimensional odd resolution configurations
    can be divided into four categories as follows (with unoriented
    arcs being orientable in either direction).
   \begin{equation}\label{diagram:type}
     \begin{split}
       \mathrm{A}:\quad&
       \begin{tikzpicture}[scale=0.06,baseline={([yshift=-.8ex]current  bounding  box.center)}]
         \draw (0,0) circle (5cm);
         \draw (11,0) circle (5cm);
         \draw[thick,red] (-5,0) -- (5,0);
         \draw[thick,red] (11-5,0) -- (11+5,0);
       \end{tikzpicture},
       \begin{tikzpicture}[scale=0.06,baseline={([yshift=-.8ex]current  bounding  box.center)}]
         \draw (0,0) circle (5cm);
         \clip (0,0) circle (5cm);
         \draw[thick,red] (-5,0) circle (4cm);
         \draw[thick,red] (5,0) circle (4cm);
       \end{tikzpicture},
       \begin{tikzpicture}[scale=0.06,baseline={([yshift=-.8ex]current  bounding  box.center)}]
         \node[inner sep=0pt, outer sep=0pt,draw,shape=circle,minimum width=0.6cm,style={transform shape=False}] (a) at (0,0) {};
         \node[inner sep=0pt, outer sep=0pt,draw,shape=circle,minimum width=0.6cm,style={transform shape=False}] (b) at (15,0) {};
         \draw[thick,red,->] (a) to[out=30,in=150] (b);
         \draw[thick,red,->] (a) to[out=-30,in=-150] (b);
       \end{tikzpicture},
       \begin{tikzpicture}[scale=0.06,baseline={([yshift=-.8ex]current  bounding  box.center)}]
         \draw[thick,red] (-5,0) circle (4cm);
         \draw[fill=white] (0,0) circle (5cm);
         \draw[thick,red] (0,5) -- (0,-5);
       \end{tikzpicture}.
       \\
       \mathrm{C}:\quad&
       \begin{tikzpicture}[scale=0.06,baseline={([yshift=-.8ex]current  bounding  box.center)}]
         \draw (0,0) circle (5cm);
         \draw (15,0) circle (5cm);
         \draw (30,0) circle (5cm);
         \draw[thick,red] (5,0) -- (10,0);
         \draw[thick,red] (20,0) -- (25,0);
       \end{tikzpicture},
       \begin{tikzpicture}[scale=0.06,baseline={([yshift=-.8ex]current  bounding  box.center)}]
         \draw (0,0) circle (5cm);
         \draw (15,0) circle (5cm);
         \draw[thick,red] (5,0) -- (10,0);
         \draw[thick,red] (15,5) -- (15,-5);
       \end{tikzpicture},
       \begin{tikzpicture}[scale=0.06,baseline={([yshift=-.8ex]current  bounding  box.center)}]
         \node[inner sep=0pt, outer sep=0pt,draw,shape=circle,minimum width=0.6cm,style={transform shape=False}] (a) at (0,0) {};
         \node[inner sep=0pt, outer sep=0pt,draw,shape=circle,minimum width=0.6cm,style={transform shape=False}] (b) at (15,0) {};
         \draw[thick,red,<-] (a) to[out=30,in=150] (b);
         \draw[thick,red,->] (a) to[out=-30,in=-150] (b);
       \end{tikzpicture},
       \begin{tikzpicture}[scale=0.06,baseline={([yshift=-.8ex]current  bounding  box.center)}]
         \draw (0,0) circle (5cm);
         \draw (15,0) circle (5cm);
         \draw (26,0) circle (5cm);
         \draw (41,0) circle (5cm);
         \draw[thick,red] (5,0) -- (10,0);
         \draw[thick,red] (31,0) -- (36,0);
       \end{tikzpicture},
       \begin{tikzpicture}[scale=0.06,baseline={([yshift=-.8ex]current  bounding  box.center)}]
         \draw (0,0) circle (5cm);
         \draw (11,0) circle (5cm);
         \draw (26,0) circle (5cm);
         \draw[thick,red] (16,0) -- (21,0);
         \draw[thick,red] (-5,0) -- (5,0);
       \end{tikzpicture}.
       \\
       \mathrm{X}:\quad&
       \begin{tikzpicture}[scale=0.06,baseline={([yshift=-.8ex]current  bounding  box.center)}]
         \node[inner sep=0pt, outer sep=0pt,draw,shape=circle,minimum width=0.6cm,style={transform shape=False}] (a) at (0,0) {};
         \draw[thick,red,<-] (-5,0) --(5,0);
         \draw[thick,red,->] (a) to[out=150,in=90] (-10,0) to[out=-90,in=210] (a);
       \end{tikzpicture}.
       \\
       \mathrm{Y}:\quad&
       \begin{tikzpicture}[scale=0.06,baseline={([yshift=-.8ex]current  bounding  box.center)}]
         \node[inner sep=0pt, outer sep=0pt,draw,shape=circle,minimum width=0.6cm,style={transform shape=False}] (a) at (0,0) {};
         \draw[thick,red,<-] (-5,0) --(5,0);
         \draw[thick,red,<-] (a) to[out=150,in=90] (-10,0) to[out=-90,in=210] (a);
       \end{tikzpicture}.
     \end{split}
   \end{equation}
Note that $\AbFunc_o'$ commutes on faces of type C, and anticommutes on faces of type A.  Meanwhile, $\AbFunc_o'$ both commutes and anticommutes on faces of type X and type Y (that is, $\AbFunc_o'(\phi_{v,u}^\op)\AbFunc_o'(\phi^\op_{w,v})=0$ on faces of type X and type Y).  For later reference, we call type X and type Y odd resolution configurations (as well as their underlying resolution configurations) \emph{ladybug configurations}. 

We can define \emph{obstruction cocycles} $\Omega(D)\in \cellC^2([0,1]^n;\ZZ_2)$ as follows ($\ZZ_2=\{1,-1\}$ will be written multiplicatively).  Define the type X (\resp type Y) obstruction cocycle $\Omega(D)^X\in \cellC^2([0,1]^n;\ZZ_2)$ (\resp $\Omega(D)^Y$) by setting $\Omega(D)^X_{u,w}=-1$ on faces of type A and type X (\resp type A and type Y), and $\Omega(D)^X_{u,w}=1$ on faces of type C and type Y (\resp type C and type X).  In the sequel we will usually omit the superscript from $\Omega(D)^X$, and we will choose to work with the type X obstruction cocycle. 

Note that the obstruction cocycle cannot \textit{a priori} be determined from the projective functor $\AbFunc_o'\from (\two^n)^\op\to \ZZ\Mod$ itself; the value $\Omega(D)_{u,w}$ on faces $u\geq_2 w\in \two^n$ so that $\AbFunc_o'(\phi_{v,u}^\op)\AbFunc_o'(\phi^\op_{w,v})\neq 0$ is determined by $\AbFunc_o'$, but for faces with $\AbFunc_o'(\phi_{v,u}^\op)\AbFunc_o'(\phi^\op_{w,v})=0$, we need the type of $D_{u,w}$ to specify $\Omega(D)_{u,w}$.

It is shown in \cite{ors} that $\Omega(D)$ (for either type X or Y) is a cocycle, and thus also a coboundary, since $H^2(\cellC([0,1]^{n};\ZZ_2))=0$.  That is, there exists some element $\epsilon \in\cellC^1([0,1]^n;\ZZ_2)$ such that $\delta \epsilon = \Omega(D)$, where $\delta$ denotes the coboundary of $\cellC([0,1]^n;\ZZ_2)$.  An element $\epsilon\in \cellC^1([0,1]^n;\ZZ_2)$ satisfying $\delta \epsilon=\Omega(D)$ will be called an \emph{edge assignment}.  Moreover, for edge assignments $\epsilon_1$ and $\epsilon_2$, the product $\epsilon_1\epsilon_2$ is a cocycle in $\cellC^1([0,1]^n;\ZZ_2)$.  Since $H^1(\cellC([0,1]^{n};\ZZ_2))=0$, any two edge assignments differ by multiplication by a coboundary in $\cellC^1([0,1]^n;\ZZ_2)$.

We define 
\[
\AbFunc_o(\phi^\op_{v,u})=\epsilon_{u,v}\AbFunc_o'(\phi^\op_{v,u}),
\]
which gives a functor $\AbFunc_o$  from the opposite cube category $(\two^n)^\op\to \gradedmod$. Although the identification of $\AbFunc_o(D_u)$ and $\AbFunc_e(D_u)$ is noncanonical, all choices result in the same grading on $\AbFunc_o(D_u)$.  Moreover, it is clear that the arrows $\AbFunc_o(\phi)$ preserve $q$-grading and increase $h$-grading by $1$.  

Odd Khovanov homology is constructed from this functor via

 \[
\oddKhCx(L) \; = \; \bigoplus_{v\in\two^n}
\AbFunc_o(v) , \qquad\;\; \partial_{Kh_o} \; = \;
\sum_{v\geqslant_1 w}
(-1)^{s_{v,w}}\,\AbFunc_o(\phi_{w,v}^\op).
\]
 The homology $H^i(\oddKhCx^j,\partial_{Kh_o})=\kho^{i,j}(L)$ is called the \emph{odd Khovanov homology} of $L$, and its isomorphism class is an invariant of the isotopy class of the oriented link $L$ \cite{ors}.  We will write $\kho^j(L)$ for the sum $\oplus_i \kho^{i,j}(L)$, and similarly write $\Kh^j(L)$ for the sum $\oplus_i \Kh^{i,j}(L)$ for even Khovanov homology.

We will also need to fix bases for the various $\ZZ$-modules considered above.  For the even case, a natural set of generators is given by elements $a_1\otimes \dots\otimes a_k\in \Sym(Z(D_v))/(x^2)_{x\in Z(D_v)}$ where the $a_i\in Z(D_v)$ are distinct.  We refer to the elements $a_1\otimes \dots \otimes a_k$ as \emph{even Khovanov generators}.  For the odd case, in order to choose a basis, we fix at every vertex $v\in \two^n$ a total ordering $>$ on the set $Z(D_v)$.  The set
\[
\KhGen(v) \; =  \; \{ a_{1}\otimes \cdots
\otimes a_{k} : \; a_i \in Z(D_v), \; a_1 < \cdots < a_k \}
\]
is called the set of \emph{odd Khovanov generators at} $v$.  We will usually suppress `even' and `odd' from the notation for Khovanov generators when the appropriate adjective is clear from context.

\begin{rmk} \label{rmk:summary-conventions}
We summarize our conventions with the following minimal example. 
Consider a knot diagram $D$ with one crossing. 
The cohomological functors $\Kh, \Kh_o: \text{link diagrams} \to \ZZ\Mod$ arise from functors $\AbFunc_e, \AbFunc_o$ whose source category is $(\two)^\op$, which is $0 \map{\phi_{0,1}^\op} 1$, where $1 \geqslant_1 0$. 

We have chosen these conventions to match existing literature on our most pertinent tools. `Khovanov homology' was defined with the differentials increasing homological grading \cite{kho1}. Lipshitz-Sarkar constructed their stable homotopy type $\mathcal{X}$ using framed flow categories \cite{lshomotopytype}; in this context, the category $\two = 1 \to 0$ is more natural, for the same reason Morse homology is defined homologically. The (singular) cohomology of $\mathcal{X}$ is then `Khovanov homology.' 
\end{rmk}

\subsection{Annular filtrations}
\label{subsec:ann-filt}

We call a link $L \subset (\mathbb{R}^2 -\{0\}) \times [0,1]$ an \emph{annular} link; in this section we recall the definition of the annular and odd annular Khovanov homologies of annular links.  The former is first defined by \cite{aps}, and the latter is a generalization of their construction, first appearing in \cite{grigsby-wehrli-gl11}.  
  
It is convenient to think of annular links as drawn on $S^2 = \mathbb{R}^2 \cup \{\infty\}$ with two basepoints, with $\mathbb{X}$ at the origin and $\mathbb{O}$ at $\infty$. 
The presence of these basepoints filters both the even and odd Khovanov complexes by a filtration grading $\gr_k$ discussed below, and the associated graded objects are the \emph{annular Khovanov} and the \emph{odd annular Khovanov} complexes. We will denote their homologies by $\Akh$ and $\oddAkh$, respectively.

Fix an annular link diagram $D$.  To obtain the \textit{annular grading} (also called the `($k$)-grading') $\gr_k$, we choose an oriented arc $\gamma$ from $\mathbb{X}$ to $\mathbb{O}$ that misses all crossings of $D$; the resulting grading will be independent of the choice of $\gamma$.  For each Kauffman state of a resolution $D_u$, viewed as a monomial $x_{a_1}\dots x_{a_t}$ in the circles $Z(D_u)$, we obtain an orientation of the circles $Z(D_u)$, where the circles $x_{a_i}$ for $i=1,\dots,t$ are oriented clockwise and the other circles are oriented counterclockwise.  View the collection of oriented circles (associated to a Kauffman state) $Z(D_u)$ as an embedded $1$-manifold $z$.  The ($k$)-grading of $x=x_{a_1}\dots x_{a_t}$, written $\gr_k(x)$, is defined by $\gr_k(x)=I(\gamma,z)$, the algebraic intersection number of $\gamma$ and $z$.

One can check that the maps $\AbFunc_e(\phi^\op_{v,u})$ and $\AbFunc_o(\phi^\op_{v,u})$ (and thus also the differentials $\partial_{\Kh}$ and $\partial_{\kho}$) can only preserve or decrease the ($k$)-grading.   We set $\afunc^{j,k}(v)$ to be the summand of $\AbFunc^j_e(v)$ concentrated in annular grading $k$; equivalently, this is the span of generators of $\AbFunc^j_e(v)$ with annular grading equal to $k$. 
Let $\iota_k\from \afunc^{j,k}(v)\to \AbFunc^j_e(v)$ be the inclusion, and let $\pi_k \from \AbFunc^j_e(v)\to \afunc^{j,k}(v)$ be the projection.  
We define the morphisms $\afunc^{j,k}(\phi_{v,u}^\op)$ to be the ($k$)-grading preserving part of $\AbFunc^j_e(\phi_{v,u}^\op)$; that is, $\afunc^{j,k}(\phi_{v,u}^\op)=\pi_k\AbFunc^j_e(\phi_{v,u}^\op)\iota_k$.  
 Let  $\afunc=\oplus_{j,k} \afunc^{j,k}$ (\resp $\oddafunc$). Then $\afunc$ is a functor
\[
\afunc : (\two^n)^\op
    \longrightarrow \ZZ\Mod,
\] 
which we call the \emph{even annular Khovanov functor}. 

The definitions for $\oddafunc^{j,k}(v)$ and the \emph{odd annular Khovanov functor} $\oddafunc$ are entirely analogous.
It will also be convenient to define $\oddafunc'$, the \emph{(odd) annular Khovanov projective functor}, as the associated graded object of $\AbFunc_o'$ (with respect to the $(k)$-grading).
 
The even annular Khovanov homology of $L$ at $(\gr_q, \gr_k)$-bigrading $(j,k)$, denoted $\Akh^{i,j,k}(L)=H^i(\Akc^{j,k}(L))$, is defined as the homology of the complex
 \[
\Akc^{j,k}(L) \; = \; \bigoplus_{v\in\two^n}
\afunc^{j,k}(v) , \qquad\;\; \partial \; = \;
\sum_{v\geqslant_1 w}
(-1)^{s_{v,w}}\,\afunc(\phi_{w,v}^\op).
\]
The even annular Khovanov homology $\Akh(L)$ of $L$ is the homology of $\Akc(L)$, the direct sum of the above complexes over all bigradings $(j,k)$.
Similarly, the odd annular Khovanov homology $\oddAkh(L)$ is the homology of the analogous complex $\oddAkc(L)$, where $\oddafunc(\phi_{w,v}^\op)$ replaces $\afunc(\phi_{w,v}^\op)$ in the differential $\partial$.  The isomorphism classes of $\Akh(L)$ and $\oddAkh(L)$ are invariants of the annular isotopy class of $L$.

We can also describe the maps $\afunc(\phi^\op_{v,u})$ in local pictures.  It will be useful later to define \emph{annular resolution configurations} (\resp \emph{odd annular resolution configurations}) as in the definition of resolution configurations, except that we require that the embedded circles lie in $S^2-\{\mathbb{X},\mathbb{O}\}$ rather than $S^2$.
Note that an (odd) annular resolution configuration has a well-defined underlying (odd) resolution configuration.  We sometimes abuse notation and refer to any of these types of resolution configurations as  \emph{configurations}.

There are two types of circles in an annular resolution: we call a circle \emph{nontrivial} if it separates $\mathbb{O}$ and $\mathbb{X}$; otherwise, the circle is \emph{trivial}. 
When the annular grading is relevant, we associate nontrivial and trivial circles with the labels $\mathbb{V}$ and $\mathbb{W}$, respectively. 
(Similar to the notation in \cite{GLW-schur-weyl, AGW-skh-hh}, $\mathbb{V}$ 
(\resp $\mathbb{W}$) represents a 2-dimensional vector space with generators in $(\gr_q, \gr_k)$ bigradings $(1, 1)$ and $(-1,-1)$ (\resp $(0,0)$ for both).) 
A saddle (merge or split) cobordism in the annulus corresponds to one of six situations, which are captured by the isotopy classes of index-$1$ annular resolution configurations; see Figure \ref{fig:ann-cobords} for explicit descriptions of the corresponding differentials.
 For an elementary cobordism $S\from D_v \to D_u$, we call a circle $x$ in $Z(D_v)$ or $Z(D_u)$ \textit{active} if the component of $S$ containing $x$ is not homeomorphic to a cylinder; otherwise, we call $x$ a \textit{passive} circle.  The maps $\afunc(\phi^\op_{v,u})$ (and $\oddafunc(\phi^\op_{v,u})$) are obtained from the maps in Figure \ref{fig:ann-cobords} by tensoring with the identity map on generators corresponding to passive circles.
 
\begin{figure} 
\centering 
\renewcommand{\arraystretch}{1.5}
\begin{tabular}{|c | c| c | }
	\hline
	merge map & annular interaction & split map \\
	\hline
	$\begin{array}{c}
	(1 \mapsto 1) \\
	v_1, v_2 \mapsto w
	\end{array}$
	&
	\begin{tikzpicture}[scale=1,baseline={(current bounding box.center)}]
	\node (ghost) at (0,1.5) {}; 
	\node (x) at (0,0) {$\mathbb{X}$};
	\draw circle  (1cm)
	circle (.5cm);
	\draw[->, densely dotted] (0,0) -- node[auto]{$\gamma$} (0,1.5);
	\node (vv) at (0,-1.5) {$\mathbb{V} \otimes \mathbb{V}$}; 
	\end{tikzpicture}
	\begin{tikzpicture}[scale=1,baseline={(current bounding box.center)}]
	\draw[<->] (-.4,0) -- (.4,0);
	\end{tikzpicture}
	\begin{tikzpicture}[scale=1,baseline={(current bounding box.center)}]
	\node (ghost) at (0,1.5) {}; 
	\node (x) at (0,0) {$\mathbb{X}$};
	\draw[->, densely dotted] (0,0) -- node[auto]{$\gamma$} (0,1.5);
	\draw(-75:0.5cm) -- (-75:1cm)
	arc (-75:255:1cm) -- (255:0.5cm) arc (255:-75:0.5cm) ;
	\node (w) at (0,-1.5) {$\mathbb{W}$}; 
	\end{tikzpicture}
	&
	$\begin{array}{c}
	1 \mapsto v_1 + v_2 \\
	(w \mapsto v_1v_2)
	\end{array}$\\
	\hline
	$\begin{array}{c}
	1 \mapsto 1 \\
	v_1 \mapsto v \\
	(w_1 \mapsto v)
	\end{array}$
	&
	\begin{tikzpicture}[scale=1,baseline={(current bounding box.center)}]
	\node (ghost) at (0,1.5) {}; 
	\node (x) at (0,0) {$\mathbb{X}$};
	\node (x) at (0,0) {$\mathbb{X}$};
	\draw[->, densely dotted] (0,0) -- node[auto]{$\gamma$} (0,1.5);
	\draw(-75:0.75cm) -- (-75:1cm)
	arc (-75:255:1cm) -- (255:0.75cm) arc (255:-75:0.75cm) ;
	\draw circle (.5cm);
	\node (vw) at (0,-1.5) {$\mathbb{V}\otimes\mathbb{W}$}; 
	\end{tikzpicture}
	\begin{tikzpicture}[scale=1,baseline={(current bounding box.center)}]
	\draw[<->] (-.4,0) -- (.4,0);
	\end{tikzpicture}
	\begin{tikzpicture}[scale=1,baseline={(current bounding box.center)}]
	\node (ghost) at (0,1.5) {}; 
	\node (x) at (0,0) {$\mathbb{X}$};
	\draw[->, densely dotted] (0,0) -- node[auto]{$\gamma$} (0,1.5);
	\draw
	(255:1cm) 
	-- (255:.75cm) 
	arc (255:15:.75cm)
	-- (15:.5cm)
	arc (15:345:.5cm)
	-- (-15:.75cm) 
	arc (-15:-75:.75cm)
	-- (-75:1cm)
	arc (-75:255:1cm);
	\node (v) at (0,-1.5) {$\mathbb{V}$}; 
	\end{tikzpicture}
	&
	$\begin{array}{c}
	1 \mapsto w_1 (+v_1)\\
	v \mapsto v_1w_1
	\end{array}$\\
	\hline
	%
	%
	$\begin{array}{c}
	1 \mapsto 1 \\ w_1, w_2 \mapsto w
	\end{array}$
	&
	\begin{tikzpicture}[scale=1,baseline={(current bounding box.center)}]
	\node (ghost) at (0,1.5) {}; 
	\node (x) at (0,0) {$\mathbb{X}$};

	\draw(-25:0.5cm) -- (-25:1cm)
	arc (-25:205:1cm) -- (205:0.5cm) arc (205:-25:0.5cm) ;
	\draw(235:0.5cm) -- (235:1cm)
	arc (235:305:1cm) -- (305:0.5cm) arc (305:235:0.5cm) ;
	\draw[->, densely dotted] (0,0) -- node[auto]{$\gamma$} (0,1.5);
	\node (ww) at (0,-1.5) {$\mathbb{W}\otimes \mathbb{W}$}; 
	\end{tikzpicture}
	\begin{tikzpicture}[scale=1,baseline={(current bounding box.center)}]
	\draw[<->] (-.4,0) -- (.4,0);
	\end{tikzpicture}
	\begin{tikzpicture}[scale=1,baseline={(current bounding box.center)}]
	\node (ghost) at (0,1.5) {}; 
	\node (x) at (0,0) {$\mathbb{X}$};
	\draw[->, densely dotted] (0,0) -- node[auto]{$\gamma$} (0,1.5);
	\draw
	(-125:.5cm)
	-- (-125:1cm)
	arc (-125:205:1cm)
	-- (205:.5cm)
	arc (205:-125:.5cm);
	\node (w) at (0,-1.5) {$\mathbb{W}$}; 
	\end{tikzpicture}
	&
	$\begin{array}{c}
	1 \mapsto w_1 + w_2 \\
	w \mapsto w_1w_2
	\end{array}$\\
	\hline
\end{tabular}
\caption{The six types of saddle interactions between circles in the annulus. Components of the (even) Khovanov differential are listed in the side columns, with components that fail to preserve the annular $(k)$-grading in parentheses; these decrease $\gr_k$ by exactly $-2$. For the odd case, the signs may differ, depending on context. }
\label{fig:ann-cobords} 
\end{figure}

There is another grading $\gr_{j_1}$ special to the annular case that we are tempted to call the \emph{annular quantum grading}, as it appears to be more relevant in annular Khovanov homology than the quantum grading; it was first introduced in \cite{GLW-schur-weyl} as the `filtration-adjusted quantum grading' and is defined by $\gr_{j_1} = \gr_q - \gr_k$. This grading will play an important role when we study the Khovanov complexes for periodic links.

Given an annular link diagram, the Khovanov generators $\KhGen(v)$ inherit a well-defined ($k$)-grading, and we write $\KhGen^{j,k}(v)$ for the Khovanov generators at $v\in \two^n$ with $\gr_q=j$ and $\gr_k=k$.

\subsection{Periodic links}\label{subsec:periodic-links}

Let $p$ be an integer greater than $1$.  A $p$-periodic link $(\tilde{L},\psi)$ is a link $\tilde L \subset S^3$  together with an orientation-preserving $\ZZ_p$-action $\psi$ on the pair $(S^3, \tilde L)$ such that the fixed-point set of $\psi$ on $S^3$ is an unknot $\tilde{U}$ disjoint from $\tilde L$. (Often, we will confound notation, and write $\psi$ for a generator of this action.)  We will often write $\tilde{L}$ for such a periodic link, with the action $\psi$ suppressed from the notation.

For a $p$-periodic link $\tilde{L}$, the image of $\tilde L$ under the quotient map $S^3 \to S^3 / \psi$ is called the \emph{quotient link}, and is denoted $L$. 
Observe that if we remove the fixed-point set, an equivariant isotopy from $p$-periodic link $(\tilde{L}_0,\psi_0)$ to another $p$-periodic link $(\tilde{L}_1,\psi_1)$ can be viewed as an equivariant ambient isotopy in the solid torus. Quotienting by the action of $\psi$, we see that an equivariant isotopy between $\tilde{L}_0$ and $\tilde{L}_1$ is a lift of an annular isotopy from $L_0$ to $L_1$. 

We will need a particularly convenient form of link diagrams for periodic links.  A $p$-\emph{periodic link diagram} will be an annular link diagram $\tilde{D}$ in $\mathbb{R}^2$ such that the action of $\ZZ_p$ by counterclockwise rotation on $\mathbb{R}^2$ preserves $\tilde{D}$ (setwise).  Such a diagram describes a $p$-periodic link $\tilde{L}$ in $S^3$, and every $p$-periodic link admits such a diagram. 
Then $D = \tilde D/ \psi$ is a diagram for the quotient link $L$. 
We will assume that all of our diagrams for $p$-periodic links are $p$-periodic diagrams.
 
Note also that given an annular diagram $D$, we can form a $p$-periodic link diagram $\tilde{D}$, called the $p$-\emph{cover} of $D$, by taking $p$ copies $\{D_i\}_{i=1,\dots,p}$ of $D$ cut along an arc $\gamma$ as in Figure \ref{fig:ann-cobords}, and gluing (reversing orientation on the boundary) $D_i$ to $D_{i+1}$ along one boundary component of the cut diagram (with subscripts interpreted cyclically).

Two $p$-periodic diagrams $\tilde{D}_1$ and $\tilde{D}_2$ represent the same periodic link if and only if they are related by equivariant isotopies and \emph{equivariant Reidemeister moves}, which are the lift of Reidemeister moves on the quotient diagrams $D_1,D_2$ (see \cite{politarczyk-kh}).  See Figure \ref{fig:reid-1} for an example. In particular, equivariant Reidemeister moves do not interact with the basepoint $\mathbb{X}$ in the diagram.

\begin{notation} \label{notation:lift-of}
For bookkeeping purposes, we introduce the notation that $\tilde \cdot$ generally means `lift of,' as well as the following rules.
Given an ordering of crossings of a diagram $D$, we obtain an ordering of crossings on $\tilde{D}$ as follows.  Recall that in the definition of annular Khovanov homology we used an arc $\gamma$.  As the quotient of a periodic diagram $\tilde{D}$, the diagram $D$ is naturally an annular diagram, and we fix some arc $\gamma$ connecting from $\mathbb{X}$ to $\mathbb{O}$, as in the definition of annular Khovanov homology in the previous section.  Let $\tilde{\gamma}$ be a lift of $\gamma$ to $\tilde{D}$.  We divide the plane containing $\tilde{D}$ into \emph{sectors}, that is, the connected components of $\mathbb{R}^2-\ZZ_p\tilde{\gamma}$, where $\ZZ_p\tilde{\gamma}$ denotes the orbit of $\tilde{\gamma}$ under the rotation action of $\ZZ_p$.  The sectors are labeled $S_1,\dots,S_p$, where $S_i$ is the sector between $\psi^{i-1}\tilde{\gamma}$ and $\psi^i\tilde{\gamma}$.  The crossings of $\tilde{D}$ are ordered by requiring that the first $n$ crossings are those contained in $S_1$, ordered according to their ordering in the quotient, the next $n$ are the crossings of $S_2$, and so on.  From now on, unless otherwise stated, given an annular diagram $D$ with ordered crossings, we will assume its $p$-cover $\tilde{D}$ has this ordering of crossings.
\end{notation}

\subsection{Periodic links and Khovanov homologies}\label{subsec:periodic-links-kh}
Fix an integer $p>1$ and a  $p$-periodic diagram $\tilde{D}$.  The rotation action on resolution diagrams induces an action $\psi$ on the Khovanov generators, which we describe below. We first observe that for $g\in \ZZ_p$, and a circle $x$ of a resolution $v$, there is a circle $gx$ in the resolution $gv$, obtained by rotating $x$ through $(g/p)2\pi$ (counterclockwise).  The group $\ZZ_p$ acts on $\oplus_v \AbFunc_e(v)$ by sending a Kauffman state $x_1\dots x_t$ to $y_1\dots y_t$, where $y_i=gx_i$.  For the above ordering of the crossings of $\tilde{D}$ and $D$, this action lies over the action of $\ZZ_p$ on $(\two^{n})^p$ by cyclic permutation.  To be specific, the action of $\ZZ_p$ on $(\two^{n})^p$ is defined by the property that the generator $1\in \ZZ_p$ sends $(x_1,x_2,\ldots,x_{p-1},x_p)\in (\two^n)^{p}$ to $(x_p,x_1\ldots,x_{p-2},x_{p-1})$. We call a Khovanov generator an \emph{invariant generator} if it is invariant under the action of $\ZZ_p$. Meanwhile, $\ZZ_p$ acts by bijections on the set $\KhGen(\tilde{D})$, but one can say somewhat more.  That is, $\ZZ_p$ may send odd Khovanov generators to $\pm$-multiples of odd Khovanov generators.  Let a \emph{signed bijection} $X\from S_1 \to S_2$ between two finite sets $S_1,S_2$ be a bijection along with a `sign' map $\sigma \from S_1 \to \ZZ_2$.  (Really, we should view $X$ as a correspondence between $S_1$ and $S_2$ along with a `sign' map $\sigma \from X \to \ZZ_2$; see Section \ref{sec:burn} for more details.)  Then the generator $\psi$ of $\ZZ_p$ acts by signed bijections, $\KhGen(u)\to \KhGen(\psi u)$, where the sign of $x\in \KhGen(u)$ records the sign of the generator $\psi(x)$ as a Khovanov generator of $\AbFunc_o(\psi u)$.  We write $\KhGen(\tilde{D})^{\ZZ_p}$ for the set of invariant Khovanov generators (where invariant just means invariant under the $\ZZ_p$-action, and does not involve the sign map of the $\ZZ_p$-action).

We conclude this section by discussing the relationship between generators in $\KhCx(D)$ and their lifts in $\KhCx(\tilde{D})$. In particular, the relationship between gradings of generators in $\KhCx(D)$ and $\KhCx(\tilde{D})$ explains the role annular filtrations play in the present localization of Khovanov homology.

\begin{prop}[{cf. \cite[Proposition 29]{z-annular-rank}}]
\label{prop:identify-equiv-gens}
There is a bijection between the (even) generators of $\KhCx(D)$ and the (even) invariant generators of $\KhCx(\tilde{D})$, given by $x \mapsto \tilde x$, such that
\[\gr_k(\tilde x) = \gr_k(x), \qquad \gr_h(\tilde x) = p \gr_h(x), \quad \mbox{  and } \quad \gr_q(\tilde x) = p \gr_q(x) - (p-1) \gr_k(x).
\]
In particular, this implies $\gr_{j_1}(\tilde x) = p \gr_{j_1}(x)$.  
\end{prop}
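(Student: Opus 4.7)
The plan is to construct the bijection $x \mapsto \tilde x$ geometrically from the lifting behavior of circles, then verify the grading formulas by direct substitution.

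With the ordering of crossings of $\tilde{D}$ by sectors described in Section \ref{subsec:periodic-links}, the $\ZZ_p$-action on $\two^{pn}$ cyclically permutes the $p$ blocks of $n$ coordinates, so its fixed set is exactly the diagonal $\{(v,\ldots,v) \mid v \in \two^n\}$. This identifies invariant vertices of the cube of $\tilde{D}$ with vertices $v \in \two^n$. For any such invariant $\tilde v$, the quotient map $\tilde D_{\tilde v} \to D_v$ is an unramified $p$-fold cover of planar diagrams in the annulus $S^2 \setminus \{\mathbb{X}, \mathbb{O}\}$. By the standard lifting criterion for covers of the annulus, a trivial circle of $D_v$ (winding number $0$ about $\mathbb{X}$) has preimage in $\tilde D_{\tilde v}$ a disjoint union of $p$ trivial circles freely permuted by $\ZZ_p$, while a nontrivial circle (winding number $\pm 1$) has connected preimage given by a single $\ZZ_p$-invariant embedded nontrivial circle mapping to it with degree $p$. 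A $\ZZ_p$-invariant Kauffman state at $\tilde v$ is thus exactly a labeling $Z(\tilde D_{\tilde v}) \to \{x_+, x_-\}$ constant on the $\ZZ_p$-orbits of circles, which is equivalent to the data of a Kauffman state $x$ at $v$; this gives the bijection $x \mapsto \tilde x$.

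For the gradings, write $N$ and $T$ for the numbers of nontrivial and trivial circles in $D_v$, and $N_-, T_-$ for the numbers of those labeled $x_-$ in $x$. The lifting analysis yields $|Z(\tilde D_{\tilde v})| = N + pT$, $\tilde\ell = N_- + pT_-$, $|\tilde v| = p|v|$, and $\tilde n_\pm = p n_\pm$. The identity $\gr_h(\tilde x) = p\gr_h(x)$ is then immediate. For $\gr_k$: only oriented nontrivial circles intersect an arc $\gamma$ from $\mathbb{X}$ to $\mathbb{O}$ with nonzero algebraic count, and since $\tilde{\gamma}$ is a single sheet of $\pi^{-1}(\gamma)$, its intersections with an invariant lifted nontrivial circle biject with matching signs onto those of $\gamma$ with the downstairs circle; hence $\gr_k(\tilde x) = \gr_k(x) = N - 2N_-$ (with a suitable sign convention). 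A short computation then gives
\begin{equation*}
|Z(\tilde D_{\tilde v})| - 2\tilde\ell \;=\; p\bigl(|Z(D_v)| - 2\ell\bigr) - (p-1)\gr_k(x),
\end{equation*}
and substituting into the defining formula for $\gr_q$ yields $\gr_q(\tilde x) = p\gr_q(x) - (p-1)\gr_k(x)$. The final identity follows from $\gr_{j_1} = \gr_q - \gr_k$.

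The main subtlety is the asymmetric lifting behavior: trivial circles multiply by $p$ under lifting while nontrivial ones do not. Because $\gr_k$ detects exactly the nontrivial circles, this asymmetry produces the correction $-(p-1)\gr_k(x)$ in the quantum grading, which is ultimately responsible for the prominent role of the annular filtration in the Smith-style inequality of Theorem \ref{thm:main}.
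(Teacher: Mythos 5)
Your proof is correct and follows essentially the same route as the paper's: identify the invariant vertices as the diagonal, observe that trivial circles lift to $p$ disjoint copies while nontrivial circles lift to a single invariant circle, and verify the three grading identities by direct substitution. The only stylistic difference is that you invoke the lifting criterion for covering spaces to justify the circle-lifting dichotomy (the paper simply asserts it) and use the bookkeeping $(N,T,N_-,T_-)$ in place of the paper's $(\alpha,\beta,\gamma,\delta)$; the computations are equivalent.
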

\begin{proof}
(See Notation \ref{notation:lift-of}.)
Note that $\tilde n_+ = p n_+$,  $\tilde n_- = p n_-$, and $|\tilde u| = p |u|$.
Let $x \in \KhCx(D)$ be a generator lying at vertex $u \in \two^n$. 
Suppose $x$ has 
\begin{itemize}
	\item $\alpha$ nontrivial counterclockwise circles,
	\item $\beta$ nontrivial clockwise circles,
	\item $\gamma$ trivial counterclockwise circles, and
	\item $\delta$ trivial clockwise circles.
\end{itemize}

Let $S$ be a circle in $D_u$. If $S$ is nontrivial, then its lift in $D_{\tilde u}$ consists of a single equivariant nontrivial circle. On the other hand, if $S$ is trivial, then its lift consists of $p$ copies of a nontrivial circle.  
We may then compute the gradings for $\tilde x$:
\begin{align*}
\gr_k(\tilde x) &= \alpha - \beta = \gr_k(x) \\
\gr_h(\tilde x) &= |\tilde u| - \tilde n_- = p|u| - pn_- = p \gr_h (x) \\
\gr_q(\tilde x) &= |\tilde u| + \alpha - \beta + p \gamma - p \delta + \tilde n_- - 2 \tilde n_+ \\
       &= p |u| + p(\alpha - \beta + \gamma - \delta) - (p-1)(\alpha - \beta) + p(n_- - 2 n_+) \\
       &= p \gr_q(x) - (p-1)\gr_k(x).
\end{align*}
The $\gr_{j_1}$ relationship follows directly.
\end{proof}

Moreover, Proposition \ref{prop:identify-equiv-gens} extends to a  bijection $\KhGen(D)\to \KhGen(\tilde{D})^{\ZZ_p}$, when the order of circles upstairs is chosen to satisfy the following.  First, if circles $a_1, a_2\in Z(D_u)$ satisfy $a_1<a_2$, then any circles over them, say $\tilde{a}_1$ and $\tilde{a}_2$, satisfy $\tilde{a}_1<\tilde{a}_2$.  For $a \in Z(D_u)$, let $\tilde{a}_1$ be the circle upstairs that is closest to $\tilde{\gamma}$, proceeding counterclockwise from $\tilde{\gamma}$, for those $a$ which do not intersect $\gamma$. (For nontrivial circles, which necessarily intersect $\tilde{\gamma}$, there is no ambiguity.)  For trivial $a$ that intersects $\gamma$, let $\tilde{a}$ denote the circle above $a$ that intersects $\tilde{\gamma}$ furthest from $\mathbb{X}$.   
Define $\tilde{a}_i=\psi^{i-1} \tilde{a}$ for $1\leq i\leq p$.  We require $\tilde{a}_1<\dots<\tilde{a}_{p}$.  The bijection $\KhGen(D) \to \KhGen(\tilde{D})^{\ZZ_p}$ is determined by taking nontrivial circles to nontrivial circles, and takes a trivial circle $a$ to $\tilde{a}_1\dots \tilde{a}_{p}$. 

If $p$ is odd, the bijection $\KhGen(D)\to \KhGen(\tilde{D})^{\ZZ_p}$ can be described more simply.  
Each invariant generator in $\KhGen(\tilde{D}_u)$ is a product of terms coming from nontrivial circles in $\tilde{D}_u$ and products $x_{i_1}\ldots x_{i_{p}}$ of trivial circles related by rotation.  Say $y_1\ldots y_k$ is an element of $\KhGen(D)$, with $y_1<\ldots< y_k$.  If $y_i$ is a nontrivial circle, let $\tilde{y}_i$ be the unique circle over $y_i$, otherwise let $\tilde{y}_i$ be the product $\tilde{y}_{i,1}\ldots \tilde{y}_{i,p}$ of trivial circles over $y_i$, where $\tilde{y}_{i,1}$ is any trivial circle over $y_i$, and $\tilde{y}_{i,j}=\psi^{j-1}\tilde{y}_{i,1}$.  Because $p$ is odd, the product $\tilde{y}_{i,1}\ldots \tilde{y}_{i,p}$ is independent of the choice of $\tilde{y}_{i,1}$.  Then the bijection $\KhGen(D)\to\KhGen(\tilde{D})^{\ZZ_p}$ is given by sending
\[
y_1\ldots y_k \mapsto \tilde{y}_1\ldots \tilde{y}_k.
\]


\section{Burnside categories and functors}\label{sec:burn}
In this section we recall the machinery of Burnside functors from \cite{lls1},\cite{lls2}.  We will also record a slight generalization of the signed Burnside functors of \cite{oddkh}.  Sections \ref{subsec:burnside}--\ref{subsec:func-to} are essentially a review of material from \cite{lls1}-\cite{oddkh}.  In Section \ref{subsec:external-3}, we introduce external actions on Burnside functors and prove basic properties.  The rest of the section consists of generalizing notions of \cite{lls1} to Burnside functors with external action.

\subsection{The Burnside category}\label{subsec:burnside}
Given finite sets $X$ and $Y$, a correspondence from $X$ to $Y$ is a
triple $(A,s,t)$ consisting of a finite set $A$ and set maps $s
\from A\to X$ and $t \from A \to Y$.  The maps $s$ and $t$ are called the
\emph{source} and \emph{target} maps, respectively. The correspondence
$(A,s,t)$ is depicted:
\[
\begin{tikzpicture}[scale=1]
\node (X) at (-2,0) {$X$};
\node (Y) at (2,0) {$Y$};
\node (A) at (0,1) {$A$};
\draw[->] (A) -- (X) node[midway, above] {$s\;\;\;$};
\draw[->] (A) -- (Y) node[midway,above] {$\;\;t$};
\end{tikzpicture}
\]

For correspondences $(A,s_A,t_A)$ from $X$ to $Y$ and $(B,s_B,t_B)$ from $Y$ to $Z$, define the composition $(B,s_B,t_B)\circ
(A,s_A,t_A)$ to be the correspondence $(C,s,t)$ from $X$ to $Z$ given
by the fiber product $C=B \times_Y A =\{ (b,a) \in B \times A \mid
t(a) = s(b)\}$ with source and target maps $s(b,a)=s_A(a)$ and
$t(b,a)=t_B(b)$. There is also the identity correspondence from a set
$X$ to itself, $(X,\Id_X,\Id_X)$ from $X$ to $X$, where $\Id_X\colon X \to X$ is the identity map.  Given
correspondences $(A,s_A,t_A)$, $(B,s_B,t_B)$ from $X$ to $Y$, a
morphism of correspondences from $(A,s_A,t_A)$ to $(B,s_B,t_B)$ is a
bijection $f \from A \to B$ commuting with the source and target maps.

Composition (of set maps) gives the set of correspondences from $X$ to
$Y$ the structure of a category.  Informally, the \emph{Burnside category}
$\burn$ is the weak $2$-category whose objects are finite sets,
morphisms are finite correspondences, and $2$-morphisms are maps of
correspondences.

Recall that in a weak $2$-category, composition need only be
associative up to an equivalence, and similarly the identity axiom need
only hold after composing with a $2$-morphism.  To be explicit, for
finite sets $X,Y$ and $(A,s,t)$ a correspondence from $X$ to $Y$,
neither $(Y,\Id_Y,\Id_Y) \circ (A,s,t)$ nor $(A,s,t)\circ
(X,\Id_X,\Id_X)$ needs to equal $(A,s,t)$. Rather, there are natural
$2$-morphisms called left and right unitors,
\[
\lambda\from Y \times_Y A \to A \qquad \text{and} \qquad \rho\from A \times_X X \to A,
\]
such that $\lambda(y,a)=a$ and $\rho(a,x)=a$. Further, the composition
$C\circ (B\circ A)$, for $A$ from $W$ to $X$, $B$ from $X$ to $Y$, and
$C$ from $Y$ to $Z$, is not necessarily identical to $(C \circ B) \circ A$. Rather,
there is an associator
\[
\alpha \from (C \times_Y B)\times_X A \to C \times_Y (B \times_X A)
\]
given by $\alpha((c,b),a)=(c,(b,a))$.

We will work with a variant, as in \cite[Section 2.11]{lls-tangle}, in which composition is strictly associative.  Here, the objects of $\burn$ are finite sets, and for objects $X,Y$, the morphism set $\Hom(X,Y)$ is the set of pairs consisting of an integer $n$ and a $(Y\times X)$-matrix $(A_{y,x})_{x\in X,y\in Y}$ of finite subsets $A_{y,x}$ of $\mathbb{R}^n$ satisfying
\[
A_{y,x}\cap A_{y',x}=\emptyset  \mbox{ if }  y\neq y' 
\qquad 
\mbox{ and } 
\qquad 
A_{y,x}\cap A_{y,x'}=\emptyset  \mbox{ if }  x\neq x'.
\]
Note that for $A\subset \mathbb{R}^n$ and $B\subset \mathbb{R}^m$, $A\times B$ is a subset of $\mathbb{R}^{n+m}$.  Composition is then given by
\[
(A_{z,y})_{y\in Y,z\in Z} \circ(A_{y,x})_{x\in X,y\in Y}=\big( \bigcup_{y\in Y}A_{z,y}\times A_{y,x}\big)_{x\in X,z\in Z}.
\]
Meanwhile, $2$-morphisms are bijections of correspondences (for which the embedding information is not needed).  For more details, see \cite[Section 2.11]{lls-tangle}.

Throughout, when we refer to the `Burnside category,' we will mean this \emph{strict} version of the category.  However, for everything that appears in this paper, the embedding data can be chosen arbitrarily, and so we will not specify it.

In a 2-category, there are two kinds of composition for 2-morphisms. For objects $x,y,z$, and 1-morphisms $f,g\from x \to y$ and $f',g'\from y \to z$, as well as 2-morphisms $\beta\from f\to g$ and $\gamma \from f'\to g'$, there is the \emph{horizontal composite} $\gamma\circ_1\beta\from f'\circ f\to g'\circ g$.  Meanwhile, for objects $x,y$, the set of morphisms $\Hom(x,y)$ is a category, in which we have composition.  That is, for fixed morphisms $f,g,h\from x\to y$, if $\beta\from f\to g$ and $\gamma \from g\to h$ are 2-morphisms, then there is a well-defined \emph{vertical composite} $\gamma\circ_2\beta$.  When it is clear which of the two compositions ($\circ_1$ or $\circ_2$) we are referring to, we will omit the subscript.  For more details on 2-categories, we refer the reader to \cite{Benabou-other-bicategories}.

\subsection{Decorated Burnside categories} \label{subsec:oddburn} 
Fix a group $K$; for our purposes, $K$ is usually the cyclic group $\ZZ_2=\{1,-1\}$, written multiplicatively.  Given finite sets $X$ and $Y$, a \emph{decorated correspondence} is a
correspondence $(A,s_A,t_A)$ equipped with a map
$\sigma_A \from A \to K,$ regarded as a tuple
$(A,s_A,t_A,\sigma_A)$; we call $\sigma_A$ the `decoration'
of the correspondence (or the `sign' if $K=\ZZ_2$):
\[
\begin{tikzpicture}[scale=1]
\node (X) at (-2,0) {$X$};
\node (Y) at (2,0) {$Y$};
\node (A) at (0,1) {$A$};
\node (S) at (0,2.5) {$K$};
\draw[->] (A) -- (X) node[midway, above] {$s_A\;\;\;$};
\draw[->] (A) -- (Y) node[midway,above] {$\;\;t_A$};
\draw[->] (A) -- (S) node[midway,left] {$\sigma_A$};
\end{tikzpicture}
\]
In the sequel we often write `correspondence' for `decorated
correspondence,' where it will not cause any confusion.  

Let $(A,s_A,t_A,\sigma_A)$ be a correspondence from $X$ to $Y$ and
$(B,s_B,t_B,\sigma_B)$ a correspondence from $Y$ to $Z$;  
we define a
composition $(B,s_B,t_B,\sigma_B)\circ (A,s_A,t_A,\sigma_A)$ of decorated
correspondences by $(C,s,t,\sigma)$, where
$(C,s,t)$ is the composition $(B,s_B,t_B) \circ (A,s_A,t_A)$ and
$\sigma(b,a)=\sigma_B(b)\sigma_A(a)$.  Also, we define the identity correspondence by $(X, \Id_X, \Id_X, 1)$ (i.e., the identity
correspondence takes the decoration $1$ on all elements of $X$).

We define maps of decorated correspondences $f \from (A,s_A,t_A,\sigma_A)
\to (B,s_B,t_B,\sigma_B)$ to be morphisms of correspondences $f \from
(A,s_A,t_A) \to (B,s_B,t_B)$ such that $\sigma_B\circ f=\sigma_A$.  We
may then define the $K$-\emph{Burnside category} $\burn_K$ to be the
 $2$-category with objects finite sets, morphisms given by decorated
correspondences along with an embedding as in the definition of the ordinary Burnside category from Section \ref{subsec:burnside}, and $2$-morphisms given by maps of decorated
correspondences.  Note that the structure maps $\lambda, \rho,\alpha$ of
Subsection \ref{subsec:burnside} are easily seen to respect the decoration, 
confirming that $\oddb$, with the embedding information forgotten, is indeed a weak $2$-category. There is a
forgetful 2-functor $\forgot\from \oddb \to \burn$ which forgets
decorations.  As with the ordinary Burnside category, we will work with the \emph{strict} version of the $K$-Burnside category, but we will not specify the embedding data when it may be chosen arbitrarily.  

For a homomorphism $\degh\from K \to \ZZ_2$, we define a functor
$\Abelianize_\degh\from \burn_K \to\ZZ\Mod$ by sending an
object $X$ of $\burn_K$ to the free abelian group
generated by $X$, denoted $\Abelianize_\degh(X)$.  For a decorated
correspondence $\phi = (A,s,t,\sigma)$ from $X$ to $Y$, we define
$\Abelianize_\degh(\phi)\from\Abelianize_\degh(X)\to \Abelianize_\degh(Y)$ by
\begin{equation}
  \Abelianize_\degh(\phi)(x)=\sum_{a\in A\,\mid\,s(a)=x}\degh(\sigma(a))t(a)\label{eq:amorph}
\end{equation}
for elements $x \in X$, extended linearly over $\ZZ$.  When $\degh$ is the trivial morphism, we write $\Abelianize$ for $\Abelianize_\degh$.

We also define a functor $\Abelianize_K\from \burn_K\to \ZZ[K]\Mod$ by sending an object $X$ of $\burn_K$ to the free $\ZZ[K]$-module generated by $X$, denoted $\Abelianize_K(X)$.  For a decorated correspondence $\phi=(A,s,t,\sigma)$ from $X$ to $Y$, we define $\Abelianize_\degh(\phi)\from\Abelianize_\degh(X)\to \Abelianize_K(Y)$ by
\begin{equation}\label{eq:kmorph}
\Abelianize_K(\phi)(x)=\sum_{\{a\in A\,\mid\,s(a)=x\}}\sigma(a)t(a)
\end{equation}
for elements $x \in X$, extended linearly over $\ZZ[K]$.  Note that a homomorphism $\degh\colon K \to \ZZ_2$ defines a homomorphism of group rings $\ZZ[K]\to \ZZ$ by sending $k\in K$ to $\degh(k)\in \{\pm 1\}$ and extending linearly.  The functor $\Abelianize_\degh$ is obtained by applying extension of scalars, along $\ZZ[K]\to \ZZ$, to $\Abelianize_K$.

\subsection{Functors to Burnside categories}\label{subsec:func-to} 

We now consider functors from the
cube category $\two^n$ to the Burnside categories introduced thus far. The functors
$F\from\two^n\to \burn_K$ we consider will be strictly unitary
lax 2-functors, defined below.

\begin{defn}\label{def:lax}
	Let $\Cat$ be a 1-category and $\Dat$ a weak 2-category.  A \emph{strictly unitary lax 2-functor} $F\from \Cat\to \Dat$ consists of the following data:
\begin{enumerate}[leftmargin=*]
\item\label{itm:32-1} For each object $x$ of $\Cat$, an object
  $F(x)$ of $\Dat$.
\item\label{itm:32-2} For any morphism $\phi\from x\to y$ in $\Cat$, a 1-morphism $F(\phi)$ in
  $\Dat$ from $F(x)$ to $F(y)$.  For $x$ an object of a 1-category or 2-category, let $\Id_x$ denote the identity morphism at $x$.  We require that, for all objects $x$ of $\Cat$, $F(\Id_{x})$
  is the identity morphism $\Id_{F(x)}$.
\item\label{itm:32-3} Finally, for any objects $x,y,z$ of $\Cat$ and morphisms $\beta\from x \to y$ and $\gamma \from y \to z$, a 2-morphism $F_{\beta,\gamma}$ in
  $\burn_K$ from $F(\gamma)\circ F(\beta)$ to
  $F(\gamma\circ \beta)$ that agrees with $\lambda$ (\resp $\rho$)
  when $\gamma=\Id_y$ (\resp $\beta=\Id_x$), such the diagram below commutes. Here, $w,x,y,z$ are objects of $\Cat$ with morphisms $\beta\from w\to x$, $\gamma\from x\to y$, and $\delta\from y\to z$.

\[
\begin{tikzpicture}[scale=1,baseline={(current  bounding  box.center)}]
\node (h0) at (0,0)   {\scriptsize $(F(\delta)\circ F(\gamma))\circ F(\beta)$};
\node (h1) at (2*2.5,0)    {\scriptsize $F(\delta)\circ(F(\gamma)\circ F(\beta))$};
\node (g1) at (0,-2.5)  {\scriptsize $F(\delta\circ\gamma)\circ F(\beta)$};
\node (g2) at (2*2.5,-2.5) {\scriptsize $F(\delta)\circ F(\gamma\circ\beta)$};
\node (c1) at (2.5,-5)  {\scriptsize $F(\delta\circ\gamma\circ\beta)$};

\draw[->]
(h0) edge node[auto] {\scriptsize $\alpha$} (h1);

\draw[->](h1) edge node[auto] {\scriptsize $\Id \circ_1 F_{\beta,\gamma}$} (g2)
(g2) edge node[auto] {\scriptsize $F_{\gamma\circ\beta,\delta}$} (c1);
\draw[->]
(h0) edge node[left] {\scriptsize$F_{\gamma,\delta}\circ_1 \Id$} (g1)
(g1) edge node[auto] {\scriptsize $F_{\beta,\delta\circ\gamma}$} (c1);
\end{tikzpicture}
\]
\end{enumerate}
\end{defn}

The more general definition of \emph{strictly unitary lax 2-functors} between weak 2-categories $\Cat$ and $\Dat$ can be found in Definitions 4.2 and 4.3 of \cite{lls1}. 
We call strictly unitary lax 2-functors simply `2-functors' or `functors' when it will not cause confusion.  If the target of such a functor is the Burnside category or a variant thereof, we may also refer to such functors as `Burnside functors.'

When $\Cat=\two^n$, and $u\geq v\geq w$ are objects of $\Cat$, we write $F_{u,v,w}$ for the 2-morphism $F(\phi_{v,w})\circ F(\phi_{u,v})\to F(\phi_{u,w})$.

\begin{lem}[{\cite[Lemma 3.2]{oddkh}}] \label{lem:212}
  Consider the data consisting of objects $F(v)$ for $v \in \two^n$,
  a collection of 1-morphisms $F(\phi_{v,w})$ in $\burn_K$ for edges $v\geqslant_1 w$, and 2-morphisms
  $F_{u,v,v',w} \from F(\phi_{v,w}) \circ F(\phi_{u,v}) \to
  F(\phi_{v',w}) \circ F (\phi_{u,v'})$ for each 2-dimensional face described by
  $u\geqslant_1 v,v'\geqslant_1 w$, such that the following
  compatibility conditions are satisfied:
  \begin{enumerate}[leftmargin=*]
  \item For any 2-dimensional face $u,v,v',w$ as above, $F_{u,v,v',w}=F^{-1}_{u,v',v,w}$;
  \item For any 3-dimensional face in $\two^n$ on the left, the hexagon on
    the right commutes: \label{cond:c2}
  \end{enumerate}
\[
\begin{tikzpicture}[node distance=2.5cm,
  back line/.style={densely dotted},
  cross line/.style={preaction={draw=white, -, line width=6pt}},baseline={(current  bounding  box.center)}]
  \node (u) {$u$};
  \node (v') [right of=u] {$v'$};
  \node [below of=u] (v'') {$v''$};
  \node [right of=v''] (w) {$w$};
  \node (v) [right of=u, above of=u, node distance=1cm] {$v$};
  \node (w'') [right of=v] {$w''$};
  \node (w') [below of=v] {$w'$};
  \node (z) [right of=w'] {$z$};
  \draw[back line, ->] (v) to (w');
  \draw[back line, ->] (v'') to (w');
  \draw[back line, ->] (w') to (z);
  \draw[->] (w'') to (z);
  \draw[cross line, ->] (u) to (v);
  \draw[cross line, ->] (u) to (v');
  \draw[cross line, ->] (u) to (v'');
  \draw[cross line, ->] (v') to (w'');
  \draw[cross line, ->] (v'') to (w);
  \draw[cross line, ->] (v) to (w'');
  \draw[cross line, ->] (w) to (z);
  \draw[cross line, ->] (v') to (w);
\end{tikzpicture}\qquad\qquad
\begin{tikzpicture}[scale=0.6,baseline={(current  bounding  box.center)}]
  \def\radius{3.7cm} 
  \node (h0A) at (60:\radius)   {$\circ$};
  \node (h0C) at (0:\radius)    {$\circ$};
  \node (h1B) at (-60:\radius)  {$\circ$};
  \node (h1A) at (-120:\radius) {$\circ$};
  \node (h1C) at (180:\radius)  {$\circ$};
  \node (h0B) at (120:\radius)  {$\circ$};
  
  \draw[->]
  (h0A) edge node[auto] {$\Id\times F_{u,v,v'',w'}$} (h0C)
  (h1B) edge node[right] {$F_{v'',w',w,z}\times\Id$} (h0C)
  (h1A) edge node[auto] {$\Id\times F_{u,v'',v',w}$} (h1B)
  (h1C) edge node[left] {$F_{v',w,w'',z}\times\Id$} (h1A)
  (h1C) edge node[auto] {$\Id\times F_{u,v',v,w''}$} (h0B)
  (h0B) edge node[auto] {$F_{v,w'',w',z}\times\Id$} (h0A);
\end{tikzpicture}
\]
This collection of data can be extended to a strictly unitary functor
$F\from\two^n\to\burn_K$, uniquely up to natural
isomorphism, so that $F_{u,v,v',w}=F_{u,v',w}^{-1}\circ_2 F_{u,v,w}$.
\end{lem}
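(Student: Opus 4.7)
\emph{Proof plan.} The strategy is to extend the partially specified data inductively on $|u|-|v|$, while tracking the necessary coherence. For $|u|-|v|\leq 1$, the data $F(v)$ and $F(\phi_{u,v})$ (with $F(\phi_{u,u})=\Id_{F(u)}$) is already specified. For $u\geqslant_k v$ with $k\geq 2$, fix once and for all a canonical maximal chain $\chi_{u,v}\colon u=v_0>_1 v_1>_1\cdots>_1 v_k=v$ (for definiteness, always decrement the smallest differing coordinate), and set
\[
F(\phi_{u,v}) := F(\phi_{v_{k-1},v_k})\circ\cdots\circ F(\phi_{v_0,v_1}),
\]
suppressing the associator $\alpha$. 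Any other maximal chain $\chi$ from $u$ to $v$ differs from $\chi_{u,v}$ by a sequence of \emph{adjacent swaps}, each of which takes place inside a 2-dimensional subface $u'\geqslant_1 v',v''\geqslant_1 w'$ and replaces a local two-step segment by the opposite one; the given data $F_{u',v',v'',w'}$ produces a 2-morphism between the two compositions. Composing these face-swap 2-morphisms along a path from $\chi$ to $\chi_{u,v}$ gives a candidate 2-morphism $\Phi(\chi,\chi_{u,v})$ from the composition along $\chi$ to $F(\phi_{u,v})$.

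Next, define $F_{u,v,w}$ as follows. Given $u\geq v\geq w$, the concatenation $\chi_{u,v}*\chi_{v,w}$ is a maximal chain from $u$ to $w$; by construction, $F(\phi_{v,w})\circ F(\phi_{u,v})$ is literally the composition along this chain, and we let $F_{u,v,w}:=\Phi(\chi_{u,v}*\chi_{v,w},\chi_{u,w})$. Strict unitarity ($F_{u,u,w}=\lambda$, $F_{u,w,w}=\rho$) is immediate, and the associativity hexagon for $u\geq v\geq w\geq z$ follows by evaluating both sides as $\Phi$ applied to the chain $\chi_{u,v}*\chi_{v,w}*\chi_{w,z}$ compared to $\chi_{u,z}$, once well-definedness of $\Phi$ is established.

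The main obstacle, and the heart of the argument, is showing that $\Phi(\chi,\chi_{u,v})$ is independent of the chosen sequence of adjacent swaps. Maximal chains in $[u,v]$ are in bijection with linear orderings of the $k$ coordinates in which $u$ and $v$ differ, and adjacent swaps correspond to adjacent transpositions in $S_k$. Thus the space of sequences of swaps relating two chains is generated by the Coxeter-type relations $s_i^2=1$, $s_is_j=s_js_i$ for $|i-j|\geq 2$, and $s_is_{i+1}s_i=s_{i+1}s_is_{i+1}$. The first relation is exactly hypothesis (1) (involutivity of $F_{u',v',v'',w'}$). The third relation, applied to a 3-dimensional subface $[u',z']$ of $\two^n$ whose varying coordinates are the three involved in the braid move, is precisely the hexagon (2). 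The second relation is automatic: non-adjacent swaps take place in disjoint subfaces, so the corresponding 2-morphisms act on disjoint tensor factors of the composed correspondence and commute strictly. This shows that $\Phi$, hence $F$, is well defined.

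Uniqueness up to natural isomorphism proceeds similarly: given another such extension $F'$ agreeing with $F$ on the prescribed data, one constructs a natural isomorphism $\eta\colon F\Rightarrow F'$ by defining $\eta_{u,v}$ on each $F(\phi_{u,v})$ as the composition (along $\chi_{u,v}$) of the identity 2-morphisms on edges, extended using the chain comparison; the same Coxeter coherence shows that $\eta$ is independent of the chain refinement and compatible with the $F_{u,v,w}$'s. The coherence argument of the previous paragraph is the only nontrivial point; everything else is a routine bookkeeping of decorations, which pass through all the constructions since the face 2-morphisms in $\burn_K$ respect decorations by definition.
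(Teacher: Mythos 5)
Your proof is correct and takes essentially the same approach as the one the paper invokes (Lemma~3.2 of \cite{oddkh}, which in turn follows Proposition~4.3 of \cite{lls2}): choose a maximal chain to define each $F(\phi_{u,v})$, build $F_{u,v,w}$ from composites of $\Id\circ F_{x,y,y',z}\circ \Id$ along adjacent-swap paths, and use the hexagon to check independence of choices. You make the coherence step explicit by identifying maximal chains with $S_k$ and matching the squares of the cube with the Coxeter presentation---condition~(1) giving $s_i^2=1$, the hexagon giving the braid relation, and the middle-four interchange law handling commuting swaps---which is a sharper formulation of the ``independent of the choices'' claim that the paper only sketches.
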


\begin{defn}\label{def:total}
 Given a functor
$F\from\two^n \to \oddb$ and $\degh \from K \to \ZZ_2$, we construct a chain complex
denoted $\Tot_\degh(F)$, called the {\emph{totalization}} of the
functor $F$.  We usually suppress $\degh$ from notation when it is clear.  The underlying chain group of $\Tot_\degh(F)$ is
\[
\Tot_\degh(F) \;= \; \bigoplus_{v \in \two^n} \Abelianize_\degh(F(v)).
\]
We set the homological grading of the summand $\Abelianize_\degh(F(v))$ to
be $|v|$. The differential is given by defining the components
$\partial_{u,v}$ from $\Abelianize_\degh(F(u))$ to $\Abelianize_\degh(F(v))$ by
\[
\partial_{u,v}=\begin{cases}
(-1)^{s_{u,v}}\Abelianize_\degh(F(\phi_{u,v})) & \mbox{if } u\geqslant_1 v \\
0 &\mbox{otherwise.}
\end{cases}
\] 

We just write $\Tot$ for $\Tot_\degh$ when $\degh$ is the map that sends all elements of $K$ to the identity of $\ZZ_2$.
Similarly, we construct a chain complex $\Tot_K(F)$, called the $K$-totalization of $F$.  The underlying chain group of $\Tot_K(F)$ is
\[
\Tot_K(F) \;= \; \bigoplus_{v \in \two^n} \Abelianize_K(F(v)),
\] 
and the homological grading of the summand $\Abelianize_K(F(v))$ is $|v|$. The differential is given by defining the components
$\partial_{u,v}$ from $\Abelianize_K(F(u))$ to $\Abelianize_K(F(v))$ by
\[
\partial_{u,v}=\begin{cases}
(-1)^{s_{u,v}}\Abelianize_K(F(\phi_{u,v})) & \mbox{if } u\geqslant_1 v \\
0 &\mbox{otherwise.}
\end{cases}
\] 
\end{defn}

\subsection{External actions on Burnside functors}\label{subsec:external-3}
We will be especially interested in functors to the Burnside category that admit `extra symmetries,' as follows.  For a group $G$, let $BG$ denote the category with one object, and morphism set $G$.   The composition in $BG$ is given by $g\circ h= hg$.  Let $\mathrm{Cat}$ denote the category of small categories.

\begin{defn}\label{def:group-acts-on-category}
Let $G$ be a group and $\Cat$ a small category.  A group action of $G$ on $\Cat$ is a functor $\psi \from BG\to \mathrm{Cat}$, so that the object of $BG$ is sent to $\Cat$.  Alternatively, a group action of $G$ on $\Cat$ consists of a group action $\psi$ of $G$ on $\Ob(\Cat)$, along with an isomorphism of sets $\psi_g \col \Arr(x,y) \to \Arr(\psi_g x,\psi_g y)$ for each $g\in G$, compatible with composition of morphisms in $\Cat$ and so that $\psi_h\psi_g=\psi_{hg}$.  We further require that the group action preserves identity morphisms.  
\end{defn}

\begin{defn}\label{def:external-action}
Fix a Burnside functor $F \from \Cat \to \burn_K$, for $\Cat$ a small category.  Say there exists an action of $G$ by $\psi$ on $\Cat$.  An \emph{external action} on $F$ \emph{compatible with} $\psi$ consists of the following data.  In the following, to ease the notation, for an object $v$ of $\Cat$, and an element $g\in G$, we will write $gv$ for the object of $\Cat$ obtained by acting by $g$ on $v$, and similarly for morphisms of $\Cat$.
\begin{enumerate}[leftmargin=*]
\item\label{itm:34-1} A collection of $1$-isomorphisms
\[
\psi_{g,v} \from F(v) \to F(gv),
\]
in $\burn_K$, for all $g\in G$ and $v$ objects of $\Cat$.  We also require, for each $g,h\in G$ and $v$ an object of $\Cat$, that there exists a 2-morphism $\psi_{g,h,v}\from \psi_{gh,v}\to\psi_{g,hv}\circ \psi_{h,v}$ (note that if such a $2$-morphism exists, it is unique).

\item\label{itm:34-2} For each morphism $A\from x\to y$ in $\Cat$ and each $g\in G$, there is a 2-morphism, which is part of the data of an external action,
\[
\psi_{g,A} \from \psi_{g,y} \circ F(A) \to F(gA)\circ \psi_{g,x}.
\]
\end{enumerate}
The data are subject to the following conditions:
\begin{enumerate}[leftmargin=*,label=(E-\arabic*)]
\item \label{itm:e1} Let $A\from u\to v$ be a 1-morphism in $\Cat$, for objects $u,v$ of $\Cat$.  The $2$-morphism $\psi_{gh,A}$ is given by the composite:
\begin{align*}
\psi_{gh,v} \circ F(A) &\stackrel{\psi_{g,h,v}\circ_1\Id}{\longrightarrow} \psi_{g,hv}\psi_{h,v} \circ F(A) \stackrel{\Id\circ_1\psi_{h,A}}{\longrightarrow} \psi_{g,hv} \circ F(hA)\circ \psi_{h,u}\\
&\stackrel{\psi_{g,hA}\circ_1\Id}{\longrightarrow} F(ghA) \circ \psi_{g,hu}\psi_{h,u} \stackrel{\Id\circ_1\psi_{g,h,u}}{\longrightarrow} F(ghA)\circ \psi_{gh,u}.
\end{align*}
It is convenient to record relations such as this using certain schematics.  We represent 2-morphisms by double arrows and 1-morphisms by single arrows.  Then the above equation can be represented schematically by:
\[
\begin{tikzpicture}[scale=1.4,baseline={(current bounding box.center)}]
\node (a1) at (-.3,0) {$F(u)$};
\node (a2) at (1,0) {$F(hu)$};
\node (a3) at (2.3,0) {$F(ghu)$};
\node (b1) at (-.3,-1) {$F(v)$};
\node (b2) at (1,-1) {$F(hv)$};
\node (b3) at (2.3,-1) {$F(ghv)$};
\node (x1) at (1,1) {};
\node (y1) at (1,-2) {};
\draw[->] (a1) -- (a2);
\draw[->] (a2) -- (a3);
\draw[->] (b1) -- (b2);
\draw[->] (b2) -- (b3);
\draw[->] (a1) -- (b1);
\draw[->] (a2) -- (b2);
\draw[->] (a3) -- (b3);
\draw[double,->] (b1) -- (a2);
\draw[double,->] (b2) -- (a3);
\path[->] 

(a1) edge [bend left=80] node [left] {} (a3) ;

\path[->] 

(b1) edge [bend right=80] node [left] {} (b3) ;

\draw[double,->] (a2) -- (x1);

\draw[double,->] (y1) -- (b2);

\end{tikzpicture} 
\qquad \qquad = \qquad \qquad 
\begin{tikzpicture}[scale=1.4,baseline={(current bounding box.center)}]
\node (a1) at (0,0) {$F(u)$};
\node (a2) at (1.3,0) {$F(ghu)$};
\node (b1) at (0,-1) {$F(v)$};
\node (b2) at (1.3,-1) {$F(ghv)$};
\draw[->] (a1) -- (a2);
\draw[->] (b1) -- (b2);
\draw[->] (a1) -- (b1);
\draw[->] (a2) -- (b2);
\draw[double,->] (b1) -- (a2);

\end{tikzpicture} 
\]

The figure on the left-hand side represents a 2-morphism from $\psi_{gh,v}\circ F(A)$ to $F(ghA)\circ\psi_{gh,u}$, as follows.  The horizontal and curved 1-morphisms are of the form $\psi_{k,x}$ for $k\in G$ and $x$ an object of $\Cat$, while the vertical 1-morphisms are $A\from u \to v$, $hA\from hu\to hv$ and $ghA\from ghu\to ghv$.  Associated to each path, by traversing single arrows from $F(u)$ to $F(ghv)$, there is a 1-morphism $F(u)\to F(ghv)$.  For instance, $\psi_{gh,v}\circ F(A)$ is obtained from the arrow $F(u)\to F(v)$, composed with the curved arrow $v\to ghv$.  Each square in the rectangle records a 2-morphism as in item (\ref{itm:34-2}).  The semicircular regions have 2-morphisms as in item (\ref{itm:34-1}) above.  The 2-morphism represented by the figure is the composite of the $2$-morphisms in the squares and top and bottom regions.  The square on the other side of the schematic equality represents the 2-morphism $\psi_{gh,A}$.  For more on this notation, we refer to \cite[Section 2]{lauda-introduction}.
 
\item \label{itm:e2} Let $u,v,w$ be objects of $\Cat$ and let $A\from u\to v$ and $B\from v\to w$ be 1-morphisms in $\Cat$. We require that the following pentagon commutes (where three additional associators have been suppressed):

\[
\begin{tikzpicture}[scale=0.8,baseline={(current  bounding  box.center)}]
  \node (h0) at (0,0)   {\scriptsize $\psi_{g,w}\circ F(B)\circ F(A)$};
  \node (h1) at (2.5,2.5)    {\scriptsize $F(gB) \circ \psi_{g,v} \circ F(A)$};
  \node (h2) at (2*2.5,0)  {\scriptsize $F(gB)\circ F(gA) \circ \psi_{g,u}$};
  \node (g2) at (2*2.5,-2.5) {\scriptsize $F(gB\circ gA) \circ \psi_{g,u}$};
  \node (g1) at (0,-2.5)  {\scriptsize $\psi_{g,w} \circ F(B\circ A)$};
  
  \draw[->]
  (h0) edge node[auto] {\scriptsize $\psi_{g,B}\circ_1 \Id$} (h1)
  (h1) edge node[right] {\scriptsize $\Id \circ_1 \psi_{g,A}$} (h2)
  (h2) edge node[auto] {\scriptsize $F_{gA,gB} \circ_1 \Id$} (g2)
  (h0) edge node[left] {\scriptsize$\Id \circ_1 F_{A,B}$} (g1)
  (g1) edge node[auto] {\scriptsize $\psi_{g,B\circ A}$} (g2);
\end{tikzpicture}
\]

Schematically,
\[
\begin{tikzpicture}[scale=1.4,baseline={(current bounding box.center)}]
\node (a1) at (-0.3,0) {$F(u)$};
\node (a2) at (1,0) {$F(v)$};
\node (a3) at (2.3,0) {$F(w)$};
\node (b1) at (-0.3,-1) {$F(gu)$};
\node (b2) at (1,-1) {$F(gv)$};
\node (b3) at (2.3,-1) {$F(gw)$};
\draw[->] (a1) -- (a2);
\draw[->] (a2) -- (a3);
\draw[->] (b1) -- (b2);
\draw[->] (b2) -- (b3);
\draw[->] (a1) -- (b1);
\draw[->] (a2) -- (b2);
\draw[->] (a3) -- (b3);
\draw[double,->] (a2) -- (b1);
\draw[double,->] (a3) -- (b2);

\node (x1) at (1,1) {};
\node (y1) at (1,-2) {};
\path[->] 

(a1) edge [bend left=80] node [left] {} (a3) ;

\path[->] 

(b1) edge [bend right=80] node [left] {} (b3) ;

\draw[double,<-] (a2) -- (x1);

\draw[double,<-] (y1) -- (b2);
\end{tikzpicture}\qquad \qquad = \qquad \qquad
\begin{tikzpicture}[scale=1.4,baseline={(current bounding box.center)}]

\node (a1) at (0,0) {$F(u)$};
\node (a2) at (1.3,0) {$F(w)$};
\node (b1) at (0,-1) {$F(gu)$};
\node (b2) at (1.3,-1) {$F(gw)$};
\draw[->] (a1) -- (a2);
\draw[->] (b1) -- (b2);
\draw[->] (a1) -- (b1);
\draw[->] (a2) -- (b2);
\draw[double,->] (a2) -- (b1);
\end{tikzpicture}
\]
 
\end{enumerate}

\end{defn}

\begin{rmk}
One can view the condition \ref{itm:e1} of \ref{def:external-action} as stating that the `action' by $G$ on $F$ is compatible with multiplication in $G$, while \ref{itm:e2} says that the `action' of $G$ on $F$ is compatible with composition in the category $\Cat$.  
\end{rmk}

Note that $\two^{np}=(\two^n)^p$, and let $\ZZ_p$ act by cyclic permutation of the factors $\two^n$ of $\two^{np}$, as in Section \ref{subsec:periodic-links-kh}.  Let $F\from \two^{np}\to \burn_K$ be a Burnside functor with compatible external action.  The complex $\Tot_K(F)$, for $F\from \two^{np}\to \burn_K$ admitting an external $\ZZ_p$-action compatible with permutation of the coordinates, admits its own $\ZZ_p$-action as follows.  For each $u\in \Ob(\two^{np})$, let $\tau(u)=(-1)^{(\# \{i\leq n(p-1) \mid u_i=1\})(\# \{i >n(p-1)\mid u_i=1\})}\in \{\pm 1\}$.  Define, for $g\in \ZZ_p$, $g_*\from \Tot_K(F)\to \Tot_K(F)$ as follows.  For $g=1_p$ the generator of $\ZZ_p$ (written additively), and  $x\in F(v)$, set $g_*(x)=\tau(v)\sigma(g,x)g(x)$ and extend linearly, where $g(x)$ and $\sigma(g,x)$ are defined as follows.  Let $\psi_{g,v}$ be the correspondence from $F(v)$ to $F(gv)$ as in Definition \ref{def:external-action} with source map $s$, target map $t$, and decoration $\sigma\from \psi_{g,v}\to K$, and set $\sigma(g,x)=\sigma(s^{-1}(x))$, and $g(x)=t(s^{-1}(x))$.  The action of general $g=\ell(1_p)\in \ZZ_p$ is defined by $(1_p)_*^\ell$.

It is a direct but tedious check to see that $g_*$ is a chain map for all $g\in \ZZ_p$, and a similar check gives $(1_p)_*^p=\Id$, so we obtain: 

\begin{lem}\label{lem:total-equiv}
Let $\ZZ_p$ act on $\two^{np}$ as above.  For $F\from \two^{np}\to \burn_K$ a functor with compatible external action, the complex $\Tot_{\ZZ_p}(F)$ is naturally a chain complex over $\ZZ[\ZZ_p\times K]$.  It follows that, for each homomorphism $\degh\from K \to \ZZ_2$, the complex $\Tot_\degh(F)$ is a chain complex over $\ZZ[\ZZ_p]$.  
	\end{lem}

For a small category $\Cat$ with an action by a group $G$, along with functors $F_1,F_2\from \Cat \to \burn_K$ with compatible external actions $\psi_1,\psi_2$ respectively, we say that $F_1$ and $F_2$ are \emph{$G$-equivariantly naturally isomorphic} if there is a functor $J\from \Cat\times \two \to \burn_K$ with external action (where the action on $\two$ is trivial) so that $J|_{\Cat\times 0}$ is $F_1$ and $J|_{\Cat\times 1}$ is $F_2$, and so that $J(\Id_v \times \phi_{1,0})$ is an isomorphism for all objects $v$ of $\Cat$.  
\begin{lem}\label{lem:212-equivariant}
Let $\ZZ_p$ act on $(\two^n)^p$ by cyclic permutation of the factors of $\two^n$.  Consider the data $F$ as in Lemma \ref{lem:212} along with the following data:
\begin{enumerate}
\item\label{itm:hyp1} For each object $v$ of $(\two^n)^p$ and element $g$ of $\ZZ_p$, a $1$-isomorphism (in $\burn_K$) $\psi_{g,v} \from F(v) \to F(gv)$.  We also require, for all $g,h\in G$ and $v$ an object of $(\two^n)^p$, a 2-morphism $\alpha_{g,h,v}\from \psi_{gh,v}\to\psi_{g,hv}\psi_{h,v}$ (note that if such a 2-morphism exists, it is unique).  
\item\label{itm:hyp2} For each $g\in \ZZ_p$ and pair $u\geqslant_1v \in \two^{np}$, a $2$-morphism $\psi_{g,u,v} \from \psi_{g,v} \circ F(\phi_{u,v}) \to F(\phi_{gu,gv})\circ \psi_{g,u}$.
\end{enumerate} 
Assume that the data satisfies the following conditions:
\begin{enumerate}[leftmargin=*,label=(E-\arabic*$'$)]
\item \label{itm:e1'} For any $u\geqslant_1v\in \two^{np}$ and for all $g,h\in G$, we have
\[
\psi_{gh,u,v}=\alpha_{g,h,u}^{-1}\circ_2(\psi_{g,hu,hv}\circ \Id) \circ_2 (\Id \circ \psi_{h,u,v})\circ_2 \alpha_{g,h,v}.
\]
  That is, the data $(\psi_{g,v},\psi_{g,u,v},\alpha_{g,h,v})$ satisfy \ref{itm:e1} for length-$1$ morphisms.
\item \label{itm:e2'} For any objects $u,v$ of $(\two^n)^p$, write $F(\phi_{u,v})=A_{u,v}$ to ease the notation.  For objects $u\geqslant_1 v,v'\geqslant_1 w$ of $(\two^n)^p$ and $g\in G$, the following hexagon commutes:
\[
\begin{tikzpicture}[scale=1.1,baseline={(current  bounding  box.center)}]
  \def\radius{3.7cm} 
  \node (a3) at (60:\radius)   {\scriptsize $A_{gv,gw}\circ A_{gu,gv}\circ \psi_{g,u}$};
  \node (a4) at (0:\radius)    {\scriptsize $A_{gv',gw}\circ A_{gu,gv'}\circ \psi_{g,u}$};
  \node (b2) at (-60:\radius)  {\scriptsize $A_{gv',gw}\circ \psi_{g,v'}\circ A_{u,v'}$};
  \node (b1) at (-120:\radius) {\scriptsize $\psi_{g,w} \circ A_{v',w} \circ A_{u,v'} $};
  \node (a1) at (180:\radius)  {\scriptsize $\psi_{g,w}\circ A_{v,w}\circ A_{u,v}$};
  \node (a2) at (120:\radius)  {\scriptsize $A_{gv,gw} \circ \psi_{g,v}\circ A_{u,v}$};
  
  \draw[->]
  (a1) edge node[auto] {\scriptsize $\psi_{g,v,w}\circ\Id $} (a2)
  (a2) edge node[auto] {\scriptsize $\Id \circ \psi_{g,u,v}$} (a3)
  (a3) edge node[auto] {\scriptsize $F_{gu,gv,gv',gw}\circ_2 \Id$} (a4)
  (a1) edge node[auto] {\scriptsize  $\Id \circ F_{u,v,v',w}$} (b1)
  (b1) edge node[auto] {\scriptsize $\psi_{g,v',w}\circ\Id$} (b2)
  (b2) edge node[auto] {\scriptsize $\Id\circ \psi_{g,u,v'}$ } (a4);
\end{tikzpicture}
\]
\end{enumerate}

Then this collection of data extends to a strictly unitary functor $F\from (\two^{n})^p\to \burn_K$ admitting an external $\ZZ_p$-action, which is unique up to $\ZZ_p$-equivariant natural isomorphism.
\end{lem}
\begin{proof}
We will need to briefly describe the argument for Lemma \ref{lem:212}, which is identical to that of Proposition 4.3 \cite{lls2}.  The functor $F$ constructed in Lemma \ref{lem:212} is defined by, for each $\phi_{u,v}$, choosing a sequence $u\geqslant_1 u_1\dots\geqslant_1 u_{i-1}\geqslant_1 u_i=v$ and then setting $F(\phi_{u,v})=F(\phi_{u_{i-1},v})\circ \cdots \circ F(\phi_{u,u_1})$.  For each $u\geq_i v\geq_j w$, we need a $2$-morphism $F_{u,v,w}\from F(\phi_{v,w})\circ F(\phi_{u,v}) \to F(\phi_{u,w})$.  Suppose that the sequence defining $F(\phi_{u,v})$ is $u\geqslant_1 u_1\cdots \geqslant_1 u_i=v$, that defining $F(\phi_{v,w})$ is $v\geqslant_1 v_1,\cdots \geqslant_1 v_j=w$ and that defining $F(\phi_{u,w})$ is $u\geqslant_1 u_1'\cdots \geqslant_1 u_{i+j}'=w$.  We then need a bijection of decorated sets
\[
(F(\phi_{v_{j-1},w})\circ \cdots \circ F(\phi_{v,v_1})) \circ (F(\phi_{u_{i-1},v})\circ \cdots \circ F(\phi_{u,u_1}))\to F(\phi_{u_{i+j-1}',w})\cdots \circ F(\phi_{u,u_1'})
\]
Such a bijection is obtained by taking a composition of bijections of the form $\Id \circ F_{x,y,y',z} \circ \Id$ as in the statement of Lemma \ref{lem:212}.  Condition (\ref{cond:c2}) of Lemma \ref{lem:212} guarantees that the bijection of decorated sets thus constructed is independent of the choices of the $F_{x,y,y',z}$.

To simplify the notation for the proof, for any objects $u\geq v$ of $(\two^n)^p$, write $A_{u,v}=F(\phi_{u,v})$.  We need to define $2$-isomorphisms in $\burn_K$, $\psi_{g,u,v} \from \psi_{g,v}\circ A_{u,v} \to A_{gu,gv} \circ \psi_{g,u}$ for all $u\geq v$ so that \ref{itm:e1} and \ref{itm:e2} hold.  Recall that in the construction of $F$, for each $u\geq_i v$, we selected a sequence $u\geqslant_1 u_1\cdots \geqslant_1 u_i=v$, and set $A_{u,v}=A_{u_{i-1},v}\circ \cdots \circ A_{u,u_1}$.  We have a diagram, where the solid arrows represent 2-morphisms, and the dashed arrow has not yet been defined:

\[
\begin{tikzpicture}[baseline={(current  bounding  box.center)},xscale=7,yscale=1] 
\node (a0) at (0,1) {\small $\psi_{g,v}\circ A_{u_{i-1},v}\circ \cdots \circ A_{u,u_1}$};
\node (a1) at (0.75,1) {\small  $A_{gu_{i-1},gv}\circ \psi_{g,u_{i-1}}\circ \cdots \circ A_{u,u_1}$};
\node (a2) at (1.25,1) {\small $ \dots $};
\node (a3) at (1.725,1) {\small $ A_{gu_{i-1},gv}\circ \cdots \circ A_{gu,gu_1}\circ \psi_{g,u}$};
\node (b0) at (0,0) {\small $\psi_{g,v} \circ A_{u,v}$};
\node (b1) at (1.725,0) {\small $A_{gu,gv}\circ \psi_{g,u}$};

\draw[->] (a0) -- (a1) node[pos=0.5,anchor=north] {};
\draw[->] (a1) -- (a2) node[pos=0.5,anchor=north] {};

\draw[->] (a2) -- (a3) node[pos=0.5,anchor=north] {};

 \draw[->] (a0) -- (b0) node[pos=0.2,anchor=east] {};
\draw[->] (a3) -- (b1) node[pos=0.5,anchor=south east]{};
\draw[->,dashed] (b0) -- (b1) {};

\end{tikzpicture}
\]
\noindent The vertical $2$-morphisms are given by the construction of $F$: the left one is part of the definition, and the right one arises from a sequence of bijections of the form $\Id \circ F_{x,y,y',z} \circ \Id$, as in the proof of Lemma \ref{lem:212}.  Although the decomposition of the right vertical 2-morphism into the $F_{x,y,y',z}$ is not well-defined, the resulting composite is well-defined.  The horizonal 2-morphisms in the top row are instances of the maps $\psi_{g,x,y}$ as in (\ref{itm:hyp2}) of the lemma, for objects $x\geqslant_1 y$ of $(\two^n)^p$.
We define the 2-morphism $\psi_{g,u,v}$ (taking the place of the dashed arrow) to make the diagram commutative.  

For checking that \ref{itm:e1} holds, we draw the following schematic figures, interpreted as in Definition \ref{def:external-action}, which the determined reader can translate into equations.  Let us set up some notation.  Fix $g,h\in \ZZ_p$.  Say that in the definition of $F$, we have selected the sequences $u\geqslant_1 u_1\geqslant_1\dots \geqslant_1 u_i = v$ and $hu\geqslant_1 u_1'\geqslant_1\dots \geqslant_1 u_i'=hv$ and $ghu\geqslant_1 u_1''\geqslant_1\dots \geqslant_1 u_i''=ghv$ to define $A_{u,v},A_{hu,hv},A_{ghu,ghv}$, respectively.  Consider the 2-morphism 
\[\mathbb{E}_1'\from \psi_{g,hv}\circ \psi_{h,v}\circ A_{u_{i-1},v}\circ \dots \circ A_{u,u_1}\to 
A_{u_{i-1}'',ghv}\circ\dots \circ A_{u,u_{1}}\circ \psi_{g,hu}\circ\psi_{h,u}
\]
defined as the composite:
\begin{equation}\label{eq:composite-1}
\mathbb{E}_1'=\begin{tikzpicture}[baseline={(current  bounding  box.center)},xscale=3.4,yscale=1.4]
\node (u0) at (0,0) {$ F(u)$};
\node (u1) at (0,-1) {$ F(u_1)$};
\node (u2) at (0,-1.7) {$\vdots$};
\node (u3) at (0,-2.4) {$F(u_{i-1})$};
\node (uf) at (0,-3.4) {$F(u_i)=F(v)$};

\node (b0) at (1,0) {$ F(hu)$};
\node (b1) at (1,-1) {$ F(hu_1)$};
\node (b2) at (1,-1.7) {$\vdots$};
\node (b3) at (1,-2.4) {$F(hu_{i-1})$};
\node (bf) at (1,-3.4) {$F(hu_i)=F(hv)$};

\node (c0) at (2,0) {$ F(hu)$};
\node (c1) at (2,-1) {$ F(u_1')$};
\node (c2) at (2,-1.7) {$\vdots$};
\node (c3) at (2,-2.4) {$F(u_{i-1}')$};
\node (cf) at (2,-3.4) {$F(u_i')=F(hv)$};

\node (d0) at (3,0) {$ F(ghu)$};
\node (d1) at (3,-1) {$ F(gu_1')$};
\node (d2) at (3,-1.7) {$\vdots$};
\node (d3) at (3,-2.4) {$F(gu_{i-1}')$};
\node (df) at (3,-3.4) {$F(ghu_i)=F(ghv)$};

\node (f0) at (4,0) {$ F(ghu)$};
\node (f1) at (4,-1) {$ F(u_1'')$};
\node (f2) at (4,-1.7) {$\vdots$};
\node (f3) at (4,-2.4) {$F(u_{i-1}'')$};
\node (ff) at (4,-3.4) {$F(u_i'')=F(ghv)$};

\draw[->] (u0) -- (u1) node[pos=0.5,anchor=north] {};
\draw[->] (u1) -- (u2) node[pos=0.5,anchor=north] {};
\draw[->] (u2) -- (u3) node[pos=0.5,anchor=north] {};
\draw[->] (u3) -- (uf) node[pos=0.5,anchor=north] {};

\draw[->] (u0) -- (b0) node[pos=0.5,anchor=north] {};
\draw[->] (u1) -- (b1) node[pos=0.5,anchor=north] {};
\draw[->] (u3) -- (b3) node[pos=0.5,anchor=north] {};
\draw[->] (uf) -- (bf) node[pos=0.5,anchor=north] {};

\draw[->] (c0) -- (d0) node[pos=0.5,anchor=north] {};
\draw[->] (c1) -- (d1) node[pos=0.5,anchor=north] {};
\draw[->] (c3) -- (d3) node[pos=0.5,anchor=north] {};
\draw[->] (cf) -- (df) node[pos=0.5,anchor=north] {};

\draw[double,->] (uf) -- (b3) node[pos=0.5,anchor=north] {};
\draw[double,->] (u1) -- (b0) node[pos=0.5,anchor=north] {};

\draw[->] (b0) -- (b1) node[pos=0.5,anchor=north] {};
\draw[->] (b1) -- (b2) node[pos=0.5,anchor=north]{};
\draw[->] (b2) -- (b3) node[pos=0.5,anchor=north] {};
\draw[->] (b3) -- (bf) node[pos=0.5,anchor=north] {};

\draw[double,->] (cf) -- (d3) node[pos=0.5,anchor=north] {};
\draw[double,->] (c1) -- (d0) node[pos=0.5,anchor=north] {};

\draw[->] (c0) -- (c1) node[pos=0.5,anchor=north] {};
\draw[->] (c1) -- (c2) node[pos=0.5,anchor=north]  {};
\draw[->] (c2) -- (c3) node[pos=0.5,anchor=north] {};
\draw[->] (c3) -- (cf) node[pos=0.5,anchor=north] {};

\draw[->] (d0) -- (d1) node[pos=0.5,anchor=north] {};
\draw[->] (d1) -- (d2) node[pos=0.5,anchor=north]  {};
\draw[->] (d2) -- (d3) node[pos=0.5,anchor=north] {};
\draw[->] (d3) -- (df) node[pos=0.5,anchor=north] {};

\draw[->] (f0) -- (f1) node[pos=0.5,anchor=north] {};
\draw[->] (f1) -- (f2) node[pos=0.5,anchor=north]  {};
\draw[->] (f2) -- (f3) node[pos=0.5,anchor=north] {};
\draw[->] (f3) -- (ff) node[pos=0.5,anchor=north] {};

\draw[double,->] (b2) -- (c2) node[pos=.5,anchor=south] {};
\draw[double,->] (d2) -- (f2);

\end{tikzpicture}\end{equation}

Each of the diagonal 2-morphisms is of the form $\psi_{k,x,y}$ for some $k\in G$ and $x\geqslant_1 y$ objects of $(\two^n)^p$.  The horizontal 2-morphisms are composites of 2-morphisms of the form $\Id \circ_1 F_{x,y,y',z}\circ_1 \Id$ for $x\geqslant_1 y,y'\geqslant_1 z$.  

Define $\mathbb{E}_1=\alpha_{g,h,u}^{-1}\circ_1\mathbb{E}_1'\circ_1 \alpha_{g,h,v}$.  To verify \ref{itm:e1}, we need to show that $\mathbb{E}_1$ is the same as the $2$-morphism $\mathbb{E}_2$ defined by:
\begin{equation}\label{eq:composite-2}
\mathbb{E}_2=\begin{tikzpicture}[baseline={(current  bounding  box.center)},xscale=3.5,yscale=1.4]

\node (c0) at (2,0) {$ F(u)$};
\node (c1) at (2,-1) {$ F(u_1)$};
\node (c2) at (2,-1.7) {$\vdots$};
\node (c3) at (2,-2.4) {$F(u_{i-1})$};
\node (cf) at (2,-3.4) {$F(u_i)=F(v)$};

\node (d0) at (3,0) {$ F(ghu)$};
\node (d1) at (3,-1) {$ F(ghu_1)$};
\node (d2) at (3,-1.7) {$\vdots$};
\node (d3) at (3,-2.4) {$F(ghu_{i-1})$};
\node (df) at (3,-3.4) {$F(ghu_i)=F(ghv)$};

\node (f0) at (4,0) {$ F(ghu)$};
\node (f1) at (4,-1) {$ F(u_1'')$};
\node (f2) at (4,-1.7) {$\vdots$};
\node (f3) at (4,-2.4) {$F(u_{i-1}'')$};
\node (ff) at (4,-3.4) {$F(u_i'')=F(ghv)$};

\draw[double,->] (cf) -- (d3) node[pos=0.5,anchor=north] {};
\draw[double,->] (c1) -- (d0) node[pos=0.5,anchor=north] {};

\draw[->] (c0) -- (c1) node[pos=0.5,anchor=north] {};
\draw[->] (c1) -- (c2) node[pos=0.5,anchor=north]  {};
\draw[->] (c2) -- (c3) node[pos=0.5,anchor=north] {};
\draw[->] (c3) -- (cf) node[pos=0.5,anchor=north] {};

\draw[->] (d0) -- (d1) node[pos=0.5,anchor=north] {};
\draw[->] (d1) -- (d2) node[pos=0.5,anchor=north]  {};
\draw[->] (d2) -- (d3) node[pos=0.5,anchor=north] {};
\draw[->] (d3) -- (df) node[pos=0.5,anchor=north] {};

\draw[->] (f0) -- (f1) node[pos=0.5,anchor=north] {};
\draw[->] (f1) -- (f2) node[pos=0.5,anchor=north]  {};
\draw[->] (f2) -- (f3) node[pos=0.5,anchor=north] {};
\draw[->] (f3) -- (ff) node[pos=0.5,anchor=north] {};

\draw[->] (c0) -- (d0) node[pos=0.5,anchor=north] {};
\draw[->] (c1) -- (d1) node[pos=0.5,anchor=north] {};
\draw[->] (c3) -- (d3) node[pos=0.5,anchor=north] {};
\draw[->] (cf) -- (df) node[pos=0.5,anchor=north] {};

\draw[double,->] (d2) -- (f2);

\end{tikzpicture}
\end{equation}

Again, the diagonal 2-morphisms are of the form $\psi_{k,x,y}$ for $k\in G$ and $x\geqslant_1 y$ objects of $(\two^n)^p$.  The horizontal $2$-morphism comes from composing several of the $F_{x,y,y',z}$ maps.

  We first apply the hypothesis \ref{itm:e1'} to express $\mathbb{E}_2$ as: 
\begin{equation}\label{eq:composite-3}
\begin{tikzpicture}[baseline={(current  bounding  box.center)},xscale=3.5,yscale=1.5]
\node (c0) at (2,0) {$ F(u)$};
\node (c1) at (2,-1) {$ F(u_1)$};
\node (c2) at (2,-1.7) {$\vdots$};
\node (c3) at (2,-2.4) {$F(u_{i-1})$};
\node (cf) at (2,-3.4) {$F(u_i)=F(v)$};

\node (d0) at (3,0) {$ F(hu)$};
\node (d1) at (3,-1) {$ F(hu_1)$};
\node (d2) at (3,-1.7) {$\vdots$};
\node (d3) at (3,-2.4) {$F(hu_{i-1})$};
\node (df) at (3,-3.4) {$F(hu_i)=F(hv)$};

\node (dt) at (3,.8) {};
\node (db) at (3,-4.2) {};

\node (e0) at (4,0) {$ F(ghu)$};
\node (e1) at (4,-1) {$ F(ghu_1)$};
\node (e2) at (4,-1.7) {$\vdots$};
\node (e3) at (4,-2.4) {$F(ghu_{i-1})$};
\node (ef) at (4,-3.4) {$F(ghu_i)=F(ghv)$};

\node (g0) at (5,0) {$ F(ghu)$};
\node (g1) at (5,-1) {$ F(u_1'')$};
\node (g2) at (5,-1.7) {$\vdots$};
\node (g3) at (5,-2.4) {$F(u_{i-1}'')$};
\node (gf) at (5,-3.4) {$F(ghv)$};

\draw[->] (c0) -- (c1) node[pos=0.5,anchor=north] {};
\draw[->] (c1) -- (c2) node[pos=0.5,anchor=north]  {};
\draw[->] (c2) -- (c3) node[pos=0.5,anchor=north] {};
\draw[->] (c3) -- (cf) node[pos=0.5,anchor=north] {};

\draw[->] (d0) -- (d1) node[pos=0.5,anchor=north] {};
\draw[->] (d1) -- (d2) node[pos=0.5,anchor=north]  {};
\draw[->] (d2) -- (d3) node[pos=0.5,anchor=north] {};
\draw[->] (d3) -- (df) node[pos=0.5,anchor=north] {};

\draw[->] (e0) -- (e1) node[pos=0.5,anchor=north] {};
\draw[->] (e1) -- (e2) node[pos=0.5,anchor=north]  {};
\draw[->] (e2) -- (e3) node[pos=0.5,anchor=north] {};
\draw[->] (e3) -- (ef) node[pos=0.5,anchor=north] {};

\draw[->] (g0) -- (g1) node[pos=0.5,anchor=north] {};
\draw[->] (g1) -- (g2) node[pos=0.5,anchor=north]  {};
\draw[->] (g2) -- (g3) node[pos=0.5,anchor=north] {};
\draw[->] (g3) -- (gf) node[pos=0.5,anchor=north] {};

\draw[->] (c0) -- (d0) node[pos=0.5,anchor=north] {};
\draw[->] (c1) -- (d1) node[pos=0.5,anchor=north] {};
\draw[->] (c3) -- (d3) node[pos=0.5,anchor=north] {};
\draw[->] (cf) -- (df) node[pos=0.5,anchor=north] {};

\draw[->] (d0) -- (e0) node[pos=0.5,anchor=north] {};
\draw[->] (d1) -- (e1) node[pos=0.5,anchor=north] {};
\draw[->] (d3) -- (e3) node[pos=0.5,anchor=north] {};
\draw[->] (df) -- (ef) node[pos=0.5,anchor=north] {};

\draw[double,->] (e2) -- (g2) node[pos=0.5,anchor=north] {};

\draw[double,->] (cf) -- (d3) node[pos=0.5,anchor=north] {};
\draw[double,->] (c1) -- (d0) node[pos=0.5,anchor=north] {};

\draw[double,->] (df) -- (e3) node[pos=0.5,anchor=north] {};
\draw[double,->] (d1) -- (e0) node[pos=0.5,anchor=north] {};

\path[->] (c0) edge [bend left=80] node [left] {} (e0) ;

\path[->] 

(cf) edge [bend right=80] node [left] {} (ef) ;

\draw[double,->] (d0) -- (dt);

\draw[double,->] (db) -- (df);

\end{tikzpicture}
\end{equation}

Let $\mathbb{E}_2'=\alpha_{g,h,u}\circ_1\mathbb{E}_2\circ_1 \alpha_{g,h,v}^{-1}$.  To show $\mathbb{E}_1=\mathbb{E}_2$ it then suffices to check $\mathbb{E}_1'=\mathbb{E}_2'$.  Observe that the first two columns (of objects) along with the 1-morphisms among these objects, and the two morphisms between these columns are the same in (\ref{eq:composite-1}) and (\ref{eq:composite-3}).   Thus, it suffices to check that the 2-morphisms $\mathbb{E}_1''$ and $\mathbb{E}_2''$ depicted below agree.  Set

\begin{equation}\label{eq:composite-4}
\mathbb{E}_1''=\begin{tikzpicture}[baseline={(current  bounding  box.center)},xscale=3.5,yscale=1.4]

\node (b0) at (1,0) {$ F(hu)$};
\node (b1) at (1,-1) {$ F(hu_1)$};
\node (b2) at (1,-1.7) {$\vdots$};
\node (b3) at (1,-2.4) {$F(hu_{i-1})$};
\node (bf) at (1,-3.4) {$F(hu_i)=F(hv)$};

\node (c0) at (2,0) {$ F(hu)$};
\node (c1) at (2,-1) {$ F(u_1')$};
\node (c2) at (2,-1.7) {$\vdots$};
\node (c3) at (2,-2.4) {$F(u_{i-1}')$};
\node (cf) at (2,-3.4) {$F(u_i')=F(hv)$};

\node (d0) at (3,0) {$ F(ghu)$};
\node (d1) at (3,-1) {$ F(gu_1')$};
\node (d2) at (3,-1.7) {$\vdots$};
\node (d3) at (3,-2.4) {$F(gu_{i-1}')$};
\node (df) at (3,-3.4) {$F(ghu_i)=F(ghv)$};

\node (f0) at (4,0) {$ F(ghu)$};
\node (f1) at (4,-1) {$ F(u_1'')$};
\node (f2) at (4,-1.7) {$\vdots$};
\node (f3) at (4,-2.4) {$F(u_{i-1}'')$};
\node (ff) at (4,-3.4) {$F(u_i'')=F(ghv)$};

\draw[->] (c0) -- (d0) node[pos=0.5,anchor=north] {};
\draw[->] (c1) -- (d1) node[pos=0.5,anchor=north] {};
\draw[->] (c3) -- (d3) node[pos=0.5,anchor=north] {};
\draw[->] (cf) -- (df) node[pos=0.5,anchor=north] {};

\draw[->] (c0) -- (d0) node[pos=0.5,anchor=north] {};
\draw[->] (c1) -- (d1) node[pos=0.5,anchor=north] {};
\draw[->] (c3) -- (d3) node[pos=0.5,anchor=north] {};
\draw[->] (cf) -- (df) node[pos=0.5,anchor=north] {};

\draw[->] (b0) -- (b1) node[pos=0.5,anchor=north] {};
\draw[->] (b1) -- (b2) node[pos=0.5,anchor=north]{};
\draw[->] (b2) -- (b3) node[pos=0.5,anchor=north] {};
\draw[->] (b3) -- (bf) node[pos=0.5,anchor=north] {};

\draw[double,->] (cf) -- (d3) node[pos=0.5,anchor=north] {};
\draw[double,->] (c1) -- (d0) node[pos=0.5,anchor=north] {};

\draw[->] (c0) -- (c1) node[pos=0.5,anchor=north] {};
\draw[->] (c1) -- (c2) node[pos=0.5,anchor=north]  {};
\draw[->] (c2) -- (c3) node[pos=0.5,anchor=north] {};
\draw[->] (c3) -- (cf) node[pos=0.5,anchor=north] {};

\draw[->] (d0) -- (d1) node[pos=0.5,anchor=north] {};
\draw[->] (d1) -- (d2) node[pos=0.5,anchor=north]  {};
\draw[->] (d2) -- (d3) node[pos=0.5,anchor=north] {};
\draw[->] (d3) -- (df) node[pos=0.5,anchor=north] {};

\draw[->] (f0) -- (f1) node[pos=0.5,anchor=north] {};
\draw[->] (f1) -- (f2) node[pos=0.5,anchor=north]  {};
\draw[->] (f2) -- (f3) node[pos=0.5,anchor=north] {};
\draw[->] (f3) -- (ff) node[pos=0.5,anchor=north] {};

\draw[double,->] (b2) -- (c2) node[pos=.5,anchor=south] {};
\draw[double,->] (d2) -- (f2);

\end{tikzpicture}\end{equation}

and set
\begin{equation}\label{eq:composite-5}
\mathbb{E}_2''=\begin{tikzpicture}[baseline={(current  bounding  box.center)},xscale=3.5,yscale=1.4]

\node (d0) at (3,0) {$ F(hu)$};
\node (d1) at (3,-1) {$ F(hu_1)$};
\node (d2) at (3,-1.7) {$\vdots$};
\node (d3) at (3,-2.4) {$F(hu_{i-1})$};
\node (df) at (3,-3.4) {$F(hu_i)=F(hv)$};

\node (dt) at (3,.8) {};
\node (db) at (3,-4.2) {};

\node (e0) at (4,0) {$ F(ghu)$};
\node (e1) at (4,-1) {$ F(ghu_1)$};
\node (e2) at (4,-1.7) {$\vdots$};
\node (e3) at (4,-2.4) {$F(ghu_{i-1})$};
\node (ef) at (4,-3.4) {$F(ghu_i)=F(ghv)$};

\node (g0) at (5,0) {$ F(ghu)$};
\node (g1) at (5,-1) {$ F(u_1'')$};
\node (g2) at (5,-1.7) {$\vdots$};
\node (g3) at (5,-2.4) {$F(u_{i-1}'')$};
\node (gf) at (5,-3.4) {$F(ghv)$};

\draw[->] (d0) -- (d1) node[pos=0.5,anchor=north] {};
\draw[->] (d1) -- (d2) node[pos=0.5,anchor=north]  {};
\draw[->] (d2) -- (d3) node[pos=0.5,anchor=north] {};
\draw[->] (d3) -- (df) node[pos=0.5,anchor=north] {};

\draw[->] (e0) -- (e1) node[pos=0.5,anchor=north] {};
\draw[->] (e1) -- (e2) node[pos=0.5,anchor=north]  {};
\draw[->] (e2) -- (e3) node[pos=0.5,anchor=north] {};
\draw[->] (e3) -- (ef) node[pos=0.5,anchor=north] {};

\draw[->] (g0) -- (g1) node[pos=0.5,anchor=north] {};
\draw[->] (g1) -- (g2) node[pos=0.5,anchor=north]  {};
\draw[->] (g2) -- (g3) node[pos=0.5,anchor=north] {};
\draw[->] (g3) -- (gf) node[pos=0.5,anchor=north] {};

\draw[->] (d0) -- (e0) node[pos=0.5,anchor=north] {};
\draw[->] (d1) -- (e1) node[pos=0.5,anchor=north] {};
\draw[->] (d3) -- (e3) node[pos=0.5,anchor=north] {};
\draw[->] (df) -- (ef) node[pos=0.5,anchor=north] {};

\draw[double,->] (e2) -- (g2) node[pos=0.5,anchor=north] {};

\draw[double,->] (df) -- (e3) node[pos=0.5,anchor=north] {};
\draw[double,->] (d1) -- (e0) node[pos=0.5,anchor=north] {};

\end{tikzpicture}
\end{equation}

In fact, we may assume without loss of generality that $(u_j)_{j=1,\ldots,i}=(u''_j)_{j=1,\ldots,i}$, and we do so for the rest of the proof of \ref{itm:e1}.  Write $\mathbb{E}_2''((u_j))$ and $\mathbb{E}_1''((u_j),(u_j'))$ to illustrate the dependence on the sequences $(u_j)$ and $(u_j')$.  Note that
\begin{equation}\label{eq:d2}
\mathbb{E}_1''((u_j),(u_j'))=\Phi_{(gu_j'),(ghu_j)}\circ_2\mathbb{E}_2''((h^{-1}u_j'))\circ_2\Phi_{(hu_j),(u_j')},
\end{equation}
where, for sequences $(v_j)_{j=0,\ldots,i}$ and $(v_j')_{j=0,\ldots i}$ with $v_j\geqslant_1 v_{j+1}$ and $v_j'\geqslant_1 v_{j+1}'$ for all $j$ and so that $v_0=v'_0$ and $v_i=v'_i$, the term $\Phi_{(v_j),(v_j')}$ denotes the $2$-morphism 
\[
A_{v_{i-1},v_i}\circ A_{v_{i-2},v_{i-1}}\circ \dots \circ A_{v_0,v_1}\to A_{v_{i-1}',v_i'}\circ A_{v_{i-2}',v_{i-1}'}\circ \dots \circ A_{v_0',v_1'}
\]
obtained as a composite of the maps $F_{x,y,y',z}$ for $x\geqslant_1 y,y'\geqslant_1 z$.  

  We claim that if $(u_j)$ and $(h^{-1}u_j')$ differ at a single entry, then $\mathbb{E}_1''((u_j),(u_j'))$ and $\mathbb{E}_2''((u_j))$ are the same.  Assuming this, let us check that $\mathbb{E}_1''((u_j),(u_j'))$ is independent of $(u_j')$.    
We show that if $(u_j')$ and $(v_j')$ differ in only one entry, then $\mathbb{E}_1''((u_j),(v_j'))=\mathbb{E}_1''((u_j),(u_j'))$.  It then follows by induction that $\mathbb{E}_1''((u_j),(u_j'))$ is independent of $u_j'$, as needed.

To see that $\mathbb{E}_1''((u_j),(v_j'))=\mathbb{E}_1''((u_j),(u_j'))$ for $u_j,u_j'$ and $v_j'$ as above, we use (\ref{eq:d2}), from which we need
\[
\Phi_{(gu_j'),(ghu_j)}\circ_2\mathbb{E}_2''((h^{-1}u_j'))\circ_2\Phi_{(hu_j),(u_j')}=\Phi_{(gv_j'),(ghu_j)}\circ_2\mathbb{E}_2''((h^{-1}v_j'))\circ_2\Phi_{(hu_j),(v_j')}.
\]  
Rearranging, that is equivalent to
\[
\Phi_{(gu_j'),(gv_j')}\circ_2\mathbb{E}_2''((h^{-1}u_j'))\circ_2 \Phi_{(v_j'),(u_j')}=\mathbb{E}_2''((h^{-1}v_j')),
\]
but that is just the claim $\mathbb{E}_1''((h^{-1}v_j'),(u_j'))=\mathbb{E}_2''((h^{-1}v_j'))$, which is a case of our assumption, since $(v_j')$ and $(u_j')$ differ in only a single entry.
That is, it suffices to prove that if $(u_j)$ and $(h^{-1}u_j')$ differ at a single entry then $\mathbb{E}_2''((u_j))$ and $\mathbb{E}_1''((u_j),(u_j'))$ are the same.

It is enough to consider the case $i=2$.  In this case, we must check that the 2-morphisms represented by the diagrams below agree.

\[
\begin{tikzpicture}[baseline={(current bounding box.center)},scale=1.8]=
\node (x0) at (0,0) {$F(hu)$};
\node (x1) at (2,0) {$F(ghu)$};
\node (y0) at (0,-1) {$F(hu_1)$};
\node (z0) at (0,-2) {$F(hv)$};
\node (y1) at (1,-1) {$F(u_1')$};
\node (y2) at (3,-1) {$F(gu_1')$};
\node (y3) at (5,-1) {$F(ghu_1)$};
\node (z1) at (2,-2) {$F(ghv)$};
\draw[->] (x0) to (x1);
\draw[->] (x1) to (y3);
\draw[->] (x0) to (y0);

\draw[->] (y1) to (y2);
\draw[->] (x0) to (y1);
\draw[->] (x1) to (y2);
\draw[->] (y0) to (z0);
\draw[->] (y1) to (z0);
\draw[->] (z0) to (z1);
\draw[->] (y2) to (z1);
\draw[->] (z1) to (y3);
\draw[double,->] (y0) to (y1);
\draw[double,<-] (y2) to (z0);
\draw[double,->] (y1) to (x1);
\draw[double,->] (y2) to (y3);
\end{tikzpicture} \qquad = \qquad \begin{tikzpicture}[baseline={(current bounding box.center)}, scale =1.8]
\node (a1) at (0,0) {$F(ghv)$};
\node (a2) at (0,1) {$F(ghu_1)$};
\node (a3) at (0,2) {$F(ghu)$};
\node (b1) at (-1,0) {$F(hv)$};
\node (b2) at (-1,1) {$F(hu_1)$};
\node (b3) at (-1,2) {$F(hu)$};

\draw[->] (a2) -- (a1);
\draw[->] (a3) -- (a2);
\draw[->] (b2) -- (b1);
\draw[->] (b3) -- (b2);
\draw[->] (b1) -- (a1);
\draw[->] (b2) -- (a2);
\draw[->] (b3) -- (a3);
\draw[double,->] (b1) -- (a2);
\draw[double,->] (b2) -- (a3);

\end{tikzpicture} 
\]

In formulas, that is: 
\begin{align*}
(F_{ghu,gu_1',ghu_1,ghv}\circ_1 \Id)\circ_2(\Id \circ_1 \psi_{g,hu,u_1'})\circ_2 & (\Id\circ_1\psi_{g,u_1',hv}\circ_1\Id)\circ_2 (\Id \circ_1 F_{hu,hu_1,u_1',hv})\\&=(\Id\circ_1 \psi_{g,hu,hu_1})\circ_2 (\psi_{g,hu_1,hv}\circ_1\Id).
\end{align*}
This is exactly an instance of \ref{itm:e2'}.  By the above argument, we have verified \ref{itm:e1}.

The claim \ref{itm:e2} is established by substantially similar techniques (but does not require \ref{itm:e1'}), and is omitted.
The proof of uniqueness up to natural isomorphism is analogous to the proof that $F$ itself is (nonequivariantly) well-defined up to natural isomorphism.
\end{proof}

Let $H$ be a subgroup of $G$.  For a small category $\Cat$ with a $G$-action, let $\Cat^H$, called the $H$-\emph{fixed-point category}, be the subcategory of $\Cat$ whose objects are the objects of $\Cat$ invariant under $H$, and whose arrows are those of $\Cat$ that are invariant under $H$.

\begin{lem}\label{lem:fixed-functor}
Let $\Cat$ be a small category with an action by a finite group $G$, and fix a functor $\ff\col \Cat\to \burn$ with an external action by $G$.  Let $H$ be a subgroup of $G$.  Then there is a well-defined $H$-\emph{fixed-point functor} of $\ff$, written $\ff^H\col \Cat^H\to \burn$, given as follows.  On objects $v$ of $\Cat^H$, $\ff^H$ is defined by $\ff^H(v)=\ff(v)^H$, where $h\in H$ acts on $\ff(v)$ by $\psi_{h,v}$ as in (\ref{itm:34-1}) of Definition \ref{def:external-action}. On morphisms $\phi\from x \to y$ of $\Cat^H$, we define $F^{\mathrm{inv}}(\phi)=s^{-1}(F(x)^H)\cap t^{-1}(F(y)^H)$, viewed as a correspondence $F^H(x)\to F^H(y)$.  Using (\ref{itm:34-2}) of Definition \ref{def:external-action}, $h\in H$ acts on $F^{\mathrm{inv}}(\phi)$ by $\psi_{h,\phi}$.  For morphisms $\phi$ in $\Cat^H$, define $F^H(\phi)$ by $F^H(\phi)=(F^{\mathrm{inv}}(\phi))^H$.  Then, given objects $x,y$ and $z$ of $\Cat^H$, along with morphisms $\beta\from x \to y$ and $\gamma \from y \to z$ in $\Cat^H$, the associators $F_{\beta,\gamma}\from F(\gamma)\circ F(\beta)\to F(\gamma\circ\beta)$ for $\beta\from x \to y$ and $\gamma \from y\to z$ of $\ff^H$ restrict to give a bijection 
\begin{equation}\label{eq:good-restriction}
F^H_{\beta,\gamma}\from F^H(\gamma)\circ F^H(\beta)\to F^H(\gamma\circ \beta),
\end{equation}
which we use as the associators of $\ff^H$ in (\ref{itm:32-3}) of Definition \ref{def:lax}.  This data defines a functor $F\from \Cat^H\to \burn$ in the sense of Definition \ref{def:lax}.
	\end{lem}
\begin{proof}
The hypotheses of the lemma give us the data of (\ref{itm:32-1}) and (\ref{itm:32-2}) of Definition \ref{def:lax}.  We must check that $F_{\beta,\gamma}$ does indeed restrict as in (\ref{eq:good-restriction}) to verify that $F^H_{\beta,\gamma}$ is data as in (\ref{itm:32-3}), and moreover check that the pentagon as in Definition \ref{def:lax} commutes.   

Before starting, we note that, for $x,y$ objects of $\Cat^H$ and $\phi\from x \to y$ a morphism in $\Cat^H$, the action on $F^{\mathrm{inv}}(\phi)$ by $\psi_{h,\phi}$ is specified by using that $\psi_{h,y}\circ_1 F^{\mathrm{inv}}(\phi)$ and $F^{\mathrm{inv}}(\phi)\circ_1\psi_{h,x}$ are canonically identified with $F^{\mathrm{inv}}(\phi)$, for all $h\in H$, so that the 2-morphism $\psi_{h,\phi}$ in Definition \ref{def:external-action}(\ref{itm:34-2}) determines a bijection $F^{\mathrm{inv}}(\phi)\to F^{\mathrm{inv}}(\phi)$.  This bijection, written $\psi^{\mathrm{inv}}_{h,\phi}$, is the action of $h\in H$ on $F^\mathrm{inv}(\phi)$.  

Fix objects $x,y,z$ of $\Cat^H$ and morphisms $\beta\from x\to y$, $\gamma \from y \to z$ of $\Cat^H$.  Let us first verify that \[F_{\beta,\gamma}|_{F^H(\gamma)\circ F^H(\beta)}\from F^H(\gamma)\circ F^H(\beta)\to F(\gamma\circ \beta)\]
has image in $F^H(\gamma\circ\beta)$.  
Consider the commuting pentagon from \ref{itm:e2} for $\ff$, for a fixed $h\in H$:
\[
\begin{tikzpicture}[scale=0.8,baseline={(current  bounding  box.center)}]
\node (h0) at (0,0)   {\scriptsize $\psi_{h,z}\circ F(\gamma)\circ F(\beta)$};
\node (h1) at (2.5,2.5)    {\scriptsize $F(\gamma) \circ \psi_{h,y} \circ F(\beta)$};
\node (h2) at (2*2.5,0)  {\scriptsize $F(\gamma)\circ F(\beta) \circ \psi_{h,x}$};
\node (g2) at (2*2.5,-2.5) {\scriptsize $F(\gamma\circ \beta) \circ \psi_{h,x}$};
\node (g1) at (0,-2.5)  {\scriptsize $\psi_{h,z} \circ F(\gamma\circ \beta)$};

\draw[->]
(h0) edge node[auto] {\scriptsize $\psi_{h,\gamma}\circ_1 \Id$} (h1)
(h1) edge node[right] {\scriptsize $\Id \circ_1 \psi_{h,\beta}$} (h2)
(h2) edge node[auto] {\scriptsize $F_{\beta,\gamma} \circ_1 \Id$} (g2)
(h0) edge node[left] {\scriptsize$\Id \circ_1 F_{\beta,\gamma}$} (g1)
(g1) edge node[auto] {\scriptsize $\psi_{h,\gamma\circ \beta}$} (g2);
\end{tikzpicture}
\]

Recall that the objects of this diagram are correspondences from $F(x)$ to $F(z)$, and the arrows of this diagram are bijections of correspondences.  We consider the diagram formed by considering only the subsets of the above correspondences that have source in $F^H(x)$ and target in $F^H(z)$, as follows.  To obtain the following diagram, we have used that the 2-morphisms in \ref{itm:e2} respect source and target maps.  The arrows in the resulting diagram are again bijections.
\[
\begin{tikzpicture}[scale=1.2,baseline={(current  bounding  box.center)}]
\node (h0) at (-2,0)   {\scriptsize $(\psi_{h,z}\circ F(\gamma)\circ F(\beta))\cap s^{-1}(F^H(x))\cap t^{-1}(F^H(z))$};
\node (h1) at (2.5,2.5)    {\scriptsize $(F(\gamma) \circ \psi_{h,y} \circ F(\beta))\cap s^{-1}(F^H(x))\cap t^{-1}(F^H(z))$};
\node (h2) at (7,0)  {\scriptsize $(F(\gamma)\circ F(\beta) \circ \psi_{h,x})\cap s^{-1}(F^H(x))\cap t^{-1}(F^H(z))$};
\node (g2) at (2*3,-2.5) {\scriptsize $(F(\gamma\circ \beta) \circ \psi_{h,x})\cap s^{-1}(F^H(x))\cap t^{-1}(F^H(z))$};
\node (g1) at (-1,-2.5)  {\scriptsize $(\psi_{h,z} \circ F(\gamma\circ \beta))\cap s^{-1}(F^H(x))\cap t^{-1}(F^H(z))$};

\draw[->]
(h0) edge node[auto] {\scriptsize $\psi_{h,\gamma}\circ_1 \Id$} (h1)
(h1) edge node[right] {\scriptsize $\Id \circ_1 \psi_{h,\beta}$} (h2)
(h2) edge node[auto] {\scriptsize $F_{\beta,\gamma} \circ_1 \Id$} (g2)
(h0) edge node[left] {\scriptsize$\Id \circ_1 F_{\beta,\gamma}$} (g1)
(g1) edge node[auto] {\scriptsize $\psi_{h,\gamma\circ \beta}$} (g2);
\end{tikzpicture}
\]
However, for any object $w$ of $\Cat^H$ and $h_1\in H$, the restriction of $\psi_{h_1,w}$ to $F^H(w)$ is canonically identified with the identity 1-morphism $F^H(w)\to F^H(w)$.  In particular, the above diagram is canonically identified with 
\begin{equation}\label{eq:pre-very-restricted}
\begin{tikzpicture}[scale=1.2,baseline={(current  bounding  box.center)}]
\node (h0) at (-2,0)   {\scriptsize $(F(\gamma)\circ F(\beta))\cap s^{-1}(F^H(x))\cap t^{-1}(F^H(z))$};
\node (h1) at (2.5,2.5)    {\scriptsize $(F(\gamma) \circ \psi_{h,y} \circ F(\beta))\cap s^{-1}(F^H(x))\cap t^{-1}(F^H(z))$};
\node (h2) at (7,0)  {\scriptsize $(F(\gamma)\circ F(\beta) )\cap s^{-1}(F^H(x))\cap t^{-1}(F^H(z))$};
\node (g2) at (2*3,-2.5) {\scriptsize $(F(\gamma\circ \beta) )\cap s^{-1}(F^H(x))\cap t^{-1}(F^H(z))$};
\node (g1) at (-1,-2.5)  {\scriptsize $( F(\gamma\circ \beta))\cap s^{-1}(F^H(x))\cap t^{-1}(F^H(z))$};

\draw[->]
(h0) edge node[auto] {\scriptsize $\psi_{h,\gamma}\circ_1 \Id$} (h1)
(h1) edge node[right] {\scriptsize $\Id \circ_1 \psi_{h,\beta}$} (h2)
(h2) edge node[auto] {\scriptsize $F_{\beta,\gamma} $} (g2)
(h0) edge node[left] {\scriptsize$ F_{\beta,\gamma}$} (g1)
(g1) edge node[auto] {\scriptsize $\psi_{h,\gamma\circ \beta}$} (g2);
\end{tikzpicture}
\end{equation}
Note that the bottom row is naturally identified with the bijection 
\[\psi_{h,\gamma\circ\beta}^{\mathrm{inv}}\from F^{\mathrm{inv}}(\gamma\circ\beta)\to F^{\mathrm{inv}}(\gamma\circ\beta).\]

We have a further commutative diagram by restricting the maps $\psi_{h,\gamma}\circ_1\Id$ and $\Id\circ_1 F_{\beta,\gamma}$ from the previous diagram.  Note that the arrows are no longer necessarily bijections - they need only be maps of sets:

\begin{equation}\label{eq:very-restricted}
\begin{tikzpicture}[scale=0.8,baseline={(current  bounding  box.center)}]
\node (h0) at (0,0)   {\scriptsize $ F^{\mathrm{inv}}(\gamma)\circ F^{\mathrm{inv}}(\beta)$};
\node (h1) at (2.5,2.5)    {\scriptsize $F^{\mathrm{inv}}(\gamma) \circ F^{\mathrm{inv}}(\beta)$};
\node (h2) at (2*2.5,0)  {\scriptsize $F^{\mathrm{inv}}(\gamma)\circ F^{\mathrm{inv}}(\beta) $};
\node (g2) at (2*2.5,-2.5) {\scriptsize $F^{\mathrm{inv}}(\gamma\circ \beta)$};
\node (g1) at (0,-2.5)  {\scriptsize $ F^{\mathrm{inv}}(\gamma\circ \beta)$};

\draw[->]
(h0) edge node[auto] {\scriptsize $\psi^{\mathrm{inv}}_{h,\gamma}\circ_1 \Id$} (h1)
(h1) edge node[right] {\scriptsize $\Id \circ_1 \psi^{\mathrm{inv}}_{h,\beta}$} (h2)
(h2) edge node[auto] {\scriptsize $F_{\beta,\gamma}$} (g2)
(h0) edge node[left] {\scriptsize$F_{\beta,\gamma}$} (g1)
(g1) edge node[auto] {\scriptsize $\psi^{\mathrm{inv}}_{h,\gamma\circ \beta}$} (g2);
\end{tikzpicture}
\end{equation}
Because (\ref{eq:very-restricted}) commutes, we have equality of the following composites:
\begin{equation}\label{eq:very-restricted2}
F_{\beta,\gamma}\circ_2(\Id\circ_1\psi^{\mathrm{inv}}_{h,\beta})\circ_2(\psi_{h,\gamma}^{\mathrm{inv}}\circ_1\Id)= \psi^{\mathrm{inv}}_{h,\gamma\circ\beta}\circ_2 F_{\beta,\gamma}.
\end{equation}
On the subset $F^H(\gamma)\circ F^H(\beta)\subset F^{\mathrm{inv}}(\gamma)\circ F^{\mathrm{inv}}(\beta)$, the 2-morphism $ (\Id\circ_1\psi^{\mathrm{inv}}_{h,\beta})\circ_2(\psi^{\mathrm{inv}}_{h,\gamma}\circ_1\Id)$ is the identity.  Thus, we have:
\[
F_{\beta,\gamma}|_{F^H(\gamma)\circ F^H(\beta)}=(\psi_{h,\gamma\circ\beta}^{\mathrm{inv}}\circ_2 F_{\beta,\gamma})|_{F^H(\gamma)\circ F^H(\beta)}.
\]
Thus, the image of $F_{\beta,\gamma}|_{F^H(\gamma)\circ F^H(\beta)}$ in $F^\mathrm{inv}(\gamma\circ\beta)$ is preserved by $\psi_{h,\gamma\circ\beta}$.  That is, the image of $F_{\beta,\gamma}|_{F^H(\gamma)\circ F^H(\beta)}$ is in $F^H(\gamma\circ\beta)$.  

The map of sets $F_{\beta,\gamma}|_{F^H(\gamma)\circ F^H(\beta)}$ is injective by construction.  We must check surjectivity.  Suppose $\nu\in F^H(\gamma\circ\beta)$.  First, we show that the element $F_{\beta,\gamma}^{-1}(\nu)\in F(\gamma)\circ F(\beta)$ is in $F^\mathrm{inv}(\gamma)\circ F^\mathrm{inv}(\beta)$.

If $F^{-1}_{\beta,\gamma}(\nu)\not\in F^\mathrm{inv}(\gamma)\circ F^\mathrm{inv}(\beta)$, then 
\begin{equation}\label{eq:proof-of-surjectivity-of-fixed}
(\Id\circ_1 \psi_{h,\beta})\circ_2 (\psi_{h,\gamma}\circ_1\Id)F^{-1}_{\beta,\gamma}(\nu)\not=F^{-1}_{\beta,\gamma}(\nu),
\end{equation}
for some $h\in H$, by the following observations.  Let $F^{-1}_{\beta,\gamma}(\nu)=(\nu_2,\nu_1)$, with $\nu_2\in F(\gamma)$ and $\nu_1\in F(\beta)$ where $s(\nu_2)=t(\nu_1)$.  Then $(\Id\circ_1 \psi_{h,\beta})\circ_2 (\psi_{h,\gamma}\circ_1\Id) (\nu_2,\nu_1)$ can be written $(\nu_2',\nu_1')$ for some $\nu_2'\in F(\gamma)$ and $\nu_1'\in F(\beta)$.  By $(\nu_2,\nu_1)\not\in F^\mathrm{inv}(\gamma)\circ F^\mathrm{inv}(\beta)$, we have $t(\nu_1)\neq t(\nu_1')$ for an appropriate choice of $h$.  Thus $(\nu_2,\nu_1)$ cannot be $(\nu_2',\nu_1')$, giving (\ref{eq:proof-of-surjectivity-of-fixed}).  This contradicts commutativity of (\ref{eq:pre-very-restricted}), so that $F^{-1}_{\beta,\gamma}(\nu)\in F^{\mathrm{inv}}(\gamma)\circ F^{\mathrm{inv}}(\beta)$.  

To see that $F^{-1}_{\beta,\gamma}(\nu)\in F^H(\gamma)\circ F^H(\beta)$, observe from (\ref{eq:very-restricted2}) that \[F_{\beta,\gamma}((\psi^{\mathrm{inv}}_{h,\gamma}\nu_2,\psi^{\mathrm{inv}}_{h,\beta}\nu_1))=F_{\beta,\gamma}(\nu_2,\nu_1),\]
so that $\nu_2$ and $\nu_1$ are both $H$-fixed, by injectivity of $F_{\beta,\gamma}$.  

We have then established that, for all objects $x,y,z$ of $\Cat^H$ and morphisms $\beta\from x\to y$ and $\gamma \from y \to z$ in $\Cat^H$, that $F^H_{\beta,\gamma}$ defines a bijection $F^H(\gamma)\circ F^H(\beta)\to F^H(\gamma\circ \beta)$.  It remains to check that the pentagon of Definition \ref{def:lax} commutes.  This follows immediately from the fact that $F^H_{\beta,\gamma}$ is a restriction of $F_{\beta,\gamma}$.  

\end{proof}
\begin{defn}\label{def:singularity}
Fix a group $K$.  Let $\Cat$ be a small category with an action by a finite group $G$, and fix a functor $\ff\col \Cat\to \burn_K$ with an external action by $G$.  Let $H$ be a subgroup of $G$.  Call $F$ a $H$-\emph{singular} functor if there exists $u$ an object of $\Cat^H$ and $x\in F(u)$ so that the decorated bijection $\psi_h$ between $F(u)$ and $F(u)$ has $s^{-1}(x)=t^{-1}(x)$, but $\sigma(s^{-1}(x))\neq 1\in K$.  Otherwise, say $F$ is $H$-\emph{nonsingular}.  If $F$ is nonsingular for all subgroups $H\subset G$, we just say that $F$ is \emph{nonsingular}.  
\end{defn}

Let $\Cat$ be a small category with an action by a finite group $G$: as a matter of convention, we regard any functor $\ff\col \Cat\to \burn$ with an external action by $G$ as nonsingular. 

\begin{lem}\label{lem:fixed-point-singularity}
	Fix a group $K$.  Let $\Cat$ be a small category with an action by a finite group $G$, and fix a functor $\ff\col \Cat\to \burn_K$ with an external action by $G$.  Let $H$ be a subgroup of $G$, and say $\ff$ is $H$-nonsingular. Then there is a well-defined $H$-\emph{fixed-point functor} of $\ff$, written $\ff^H\col \Cat^H\to \burn_K$, given as follows.   On objects $v$ of $\Cat$, $\ff^H$ is defined by $\ff^H(v)=(\forgot\ff(v))^H$, where $h\in H$ acts on $\forgot\ff(v)$ as in Lemma \ref{lem:fixed-functor}.  On morphisms $\phi\from x \to y$ of $\Cat^H$, we define $F^{\mathrm{inv}}(\phi)=s^{-1}(F^H(x))\cap t^{-1}(F^H(y))$, viewed as a decorated correspondence $F^H(x)\to F^H(y)$.  Using (\ref{itm:34-2}) of Definition \ref{def:external-action}, $h\in H$ acts on $F^{\mathrm{inv}}(\phi)$ by $\psi_{h,\phi}$.  For morphisms $\phi$ in $\Cat^H$, define $F^H(\phi)$ by $F^H(\phi)=(F^{\mathrm{inv}}(\phi))^H$.  Then, given objects $x,y$ and $z$ of $\Cat^H$, along with morphisms $\beta\from x \to y$ and $\gamma \from y \to z$ in $\Cat^H$, the associators $F_{\beta,\gamma}\from F(\gamma)\circ F(\beta)\to F(\gamma\circ\beta)$ for $\beta\from x \to y$ and $\gamma \from y\to z$ of $\ff^H$ restrict to give a bijection 
	\begin{equation}\label{eq:good-restriction2}
	F^H_{\beta,\gamma}\from F^H(\gamma)\circ F^H(\beta)\to F^H(\gamma\circ \beta),
	\end{equation}
	which we use as the associators of $\ff^H$ in (\ref{itm:32-3}) of Definition \ref{def:lax}.  This data defines a functor $F\from \Cat^H\to \burn_K$ in the sense of Definition \ref{def:lax}.  
\end{lem}
\begin{proof}
	The proof of this lemma is completely analogous to that of Lemma \ref{lem:fixed-functor}, and amounts to checking that the constructions there are compatible with decorations, for a $H$-nonsingular functor; the details are omitted. 
\end{proof}

Finally, a similar argument shows:

\begin{lem}\label{lem:action-on-fix-functors}
	Fix notation as in Lemma \ref{lem:fixed-point-singularity}.  The $H$-fixed-point functor $F^H$ admits a $N(H)/H$-external action, where $N(H)$ is the normalizer of $H$ in $G$, by restriction of the external action on $F$, compatible with the $N(H)/H$-action on $\Cat^H$.  
\end{lem}

\subsection{Natural transformations}\label{sec:nat-transform-burn}
To relate different functors to the Burnside category, we will need the following notion:

\begin{defn}\label{def:nattrans}
  Let $\Cat$ be a small category.  A \emph{natural transformation} $\eta \from F_1 \to F_0$ between
  $2$-functors $F_1,F_0\from \Cat\to \burn_K$ is a
  strictly unitary $2$-functor
  $\eta\from \two\times \Cat \to \burn_K$ so that
  $\eta|_{\{1\}\times \Cat}=F_1$ and
  $\eta|_{\{0\}\times \Cat}=F_0$.  A natural transformation of functors $F_1,F_0\from \two^{np}\to \burn_K$ with external action by $\ZZ_p$, where $\ZZ_p$ acts on $\two^{np}$ by permuting the coordinates, is such an $\eta$, itself admitting an external action (where $\two\times \two^{np}$ has the product $\ZZ_p$-action, where $\ZZ_p$ acts trivially on $\two$.).
\end{defn}

We usually refer to `natural transformations with external action' as `natural transformations' where it will not cause confusion.

For $\Cat=\two^n$ or $\two^{np}$, a natural transformation (functorially) induces a chain map between
the totalizations of Burnside functors, which we write as
$\Tot_\degh(\eta)\from\Tot_\degh(F_1)\to \Tot_\degh(F_0)$, for any homomorphism $\degh\from K \to \ZZ_2$.  (In fact, for a natural transformation with external action by $\ZZ_p$, $\Tot_\degh(\eta)$ is $\ZZ[\ZZ_p]$-equivariant; cf.\ Lemma \ref{lem:total-equiv}.)

Many of the natural transformations we will encounter will be
sub-functor inclusions or quotient functor surjections. Given a
functor $F\from\two^{np}\to\burn_K$ with external action, a \emph{sub-functor with external action}
(\resp \emph{quotient functor}) $H\from\two^{np}\to\burn_K$
is a functor that satisfies:
\begin{enumerate}[leftmargin=*]
\item $H(v)\subset F(v)$ for all $v\in\two^{np}$, and so that the external action of $\ZZ_p$ restricts to an action on the set $H(v)$.
\item $H(\phi_{u,v})\subset F(\phi_{u,v})$ for all $u\geq v$, with the
  source and target maps being restrictions of the
  corresponding ones on $F(\phi_{u,v})$, and so that the action of $\ZZ_p$ preserves $H$ (in the natural sense).
\item $s^{-1}(x)\subset H(\phi_{u,v})$ (\resp
  $t^{-1}(y)\subset H(\phi_{u,v})$) for all $u\geq v$ and for all $x\in
  H(u)$ (\resp $y\in H(v)$).
  Equivalently, $H(\phi_{u,v}) = \bigcup_{x\in H(u)} s\inv(x)$ (\resp $\bigcup_{y\in H(v)} t\inv(y)$).
\end{enumerate}
If $H$ is a sub- (\resp quotient) functor of $F$, then there is
a natural transformation $H\to F$ (\resp $F\to H$), and the
induced chain map $\Tot(H)\to \Tot(F)$ (\resp
$\Tot(F)\to\Tot(H)$) is an inclusion (\resp a quotient
map) of chain complexes (in fact, a $\ZZ_p$-equivariant map of chain complexes).  See also \cite[Section 3.7]{oddkh}.

\begin{defn}\label{def:burn-cofib-sequence}
  If $J$ is a sub-functor with external action of $F\from\two^{np}\to\burn_K$, then the
  functor $L$ defined as $L(v)=F(v)\setminus J(v)$ and
  $L(\phi_{u,v})=\cup_{y\in L(v)}t^{-1}(y)\subset F(\phi_{u,v})\setminus J(\phi_{u,v})$ is a quotient
  functor of $F$ (and vice versa). Such a sequence
  \[
  J\to F\to L
  \]
  is called a \emph{cofibration sequence} of Burnside functors; it
  induces the short exact sequence
  \[
  0\to\Tot(J)\to\Tot(F)\to\Tot(L)\to0
  \]
  of chain complexes.
\end{defn}

\subsection{Stable equivalence of functors}

 In the sequel, we will be
interested not just in functors
$F\from \two^n\to \burn_K$, but in \emph{stable} functors,
which are pairs $(F,R)$, for $F$ a functor $F\from \two^n\to \burn_K$ with external action by $G$, and for $R$ an element of the real representation ring of $G$, with $R$ a linear combination of copies of the trivial representation and the regular representation.  In case $G=\{1\}$, we view stable functors as pairs $(F,r)$ for $r$ an integer, referring to $r$ copies of the trivial representation, and $F$ a functor $\two^n\to\burn_K$.  Note that a functor $F\from \two^N \to \burn_K$ with external action by $G=\{1\}$ is the same information as a functor $F\from \two^N\to \burn_K$ without external action. 
We denote the regular representation of $G$ by $\mathbb{R}(G)$.  For an orthogonal $G$-representation $V$, write $V^+$ for its one-point compactification, considered as a pointed space by taking the point at infinity as the basepoint.  We will also write $\Sigma^RF$ for $(F,R)$.

Let $\detg_G=\tilde{H}^*(\mathbb{R}(G)^+)$ as a graded $\ZZ[G]$-module.  We define the totalization of the stable functor $(F,r\mathbb{R}+s\mathbb{R}(G))$, by $\Tot((F,r\mathbb{R}+s\mathbb{R}(G))=\Tot(F)[r]\otimes_\ZZ \detg_G^{\otimes s}$, where $\Tot(F)[r]$ denotes the (ordinary) totalization shifted up by $r$.  If $s<0$, we make sense of the above formula using the (graded) dual of $\detg_G$.  In this section we will describe when two such stable
functors are equivalent, following Definition 3.6 of \cite{oddkh}. 

A \emph{face inclusion} $\iota$ is a functor $\two^n \to \two^N$ that
is injective on objects and preserves the relative gradings.  Note
that self-equivalences $\iota\from \two^n \to \two^n$ are face
inclusions.  Consider a face inclusion $\iota\from \two^n \to
\two^N$ and a functor $F \from \two^n \to \burn_K$. We define an
induced functor $F_\iota\from \two^N \to \burn_K$, which is
uniquely determined by requiring that $F=F_\iota\circ \iota$, and such
that for $v\in \Ob(\two^N)\setminus \Ob(\iota(\two^n))$, we have
$F_\iota(v)=\varnothing$. For a face inclusion $\iota$, we define
$|\iota|=|\iota(v)|-|v|$ for any $v\in\two^n$, which is
independent of $v$ since $\iota$ is assumed to preserve relative
gradings.  For any functor $F$, and face inclusion
$\iota$ as above, 
\begin{equation}\label{eq:totalization-iso}
	\Tot_K(F_\iota) \; \cong \; \Sigma^{|\iota|}\Tot_K(F)
\end{equation}
To construct such an isomorphism let $\mathfrak{c}_{F,v}$ denote the canonical isomorphism of $\ZZ[K]$-modules $\Abelianize_K(F_\iota(\iota(v)))\to \Abelianize_K(F(v))$, for $v$ an object of $\two^n$.  For a function $\sigma\from \Ob(\two^n)\to \{+1,-1\}=\ZZ_2$, whose value on $v\in\Ob(\two^n)$ will be denoted $\sigma_v$, we define an isomorphism of graded $\ZZ[K]$-modules $\rho_F\from \Tot_K(F_{\iota})\to \Sigma^{|\iota|}\Tot_K(F)$ given by sending the summand $\Abelianize_K(F_\iota(\iota(v)))$ to $\Abelianize_K(F(v))$ by $\rho_{F,\iota,v}=\sigma_v\mathfrak{c}_{F,v}$.  

We next determine under what conditions on $\{\sigma_v\}_{v\in\Ob(\two^n)}$ the map $\rho_F$ is an isomorphism of chain complexes.  For any $v\geqslant_1 w$ objects of $\two^n$, we need that 
\begin{equation}\label{eq:definition-of-a-cochain}
(-1)^{s_{\iota(v),\iota(w)}}\sigma_w=\sigma_v(-1)^{s_{v,w}+|\iota|}.
\end{equation}
To see this, consider the cellular cochain complex of the $n$-dimensional cube, with $\ZZ_2$-coefficients: $\cellC^*([0,1]^{n};\ZZ_2)$.  The assignment $(-1)^{s_{u,v}}$ defines a cochain in $\cellC^1([0,1]^n;\ZZ_2)$ whose coboundary is the constant $2$-cochain that evaluates to $-1$ on all $2$-dimensional faces of $[0,1]^n$.  Similarly, the assignment sending a pair of objects $u\geqslant_1 v$ of $\two^n$ to $(-1)^{|\iota|+s_{\iota(u),\iota(v)}}$ is a 1-cochain, again with coboundary the constant cochain evaluating to $-1$. 
Since $H^1([0,1]^n;\ZZ_2)=0$, $(-1)^{|\iota|+s_{\iota(u),\iota(v)}}$ and $(-1)^{s_{u,v}}$ are cohomologous.  
The condition that $\sigma\in \cellC^{0}([0,1]^{n};\ZZ_2)$ has $(\delta \sigma)(u,v)=(-1)^{|\iota|+s_{\iota(u),\iota(v)}+s_{u,v}}$ is precisely (\ref{eq:definition-of-a-cochain}).  Thus, $\sigma$ satisfying (\ref{eq:definition-of-a-cochain}) exist and so (\ref{eq:totalization-iso}) holds. Moreover, any two cochains $\sigma$ satisfying (\ref{eq:definition-of-a-cochain}) differ by a cocycle in $\cellC^0([0,1]^n;\ZZ_2)$.  Since $H^0([0,1]^n;\ZZ_2)=\ZZ_2$, any functions $\sigma\from \Ob(\two^n)\to\ZZ_2$ satisfying (\ref{eq:definition-of-a-cochain}) are $(\pm 1)$-multiples of each other.  Thus the isomorphism in (\ref{eq:totalization-iso}) is canonical up to sign.

As a consequence of (\ref{eq:totalization-iso}), the following holds for any homomorphism $\degh\from K\to\ZZ_2$, for $F,\iota$ as above:
\[
\Tot_\degh(F_\iota) \; \cong \; \Sigma^{|\iota|}\Tot_\degh(F).
\]

For $\ZZ_p$ acting on $\two^{np}=(\two^n)^p$ and $\two^{Np}=(\two^N)^p$ by cyclic permutation, an \emph{equivariant face inclusion} $\iota \from \two^{np}\to \two^{Np}$ will be a face inclusion so that $g\iota=\iota g$ for all $g\in \ZZ_p$.  

Let $\iota$ be an equivariant face inclusion, and $F\from \two^{np}\to \burn_K$ a functor with external action by $G$.  Then $F_\iota$, defined as above, admits a $\ZZ_p$-external action using the data of (\ref{itm:34-1}) and (\ref{itm:34-2}) of Definition \ref{def:external-action} from the functor $F$ with $\ZZ_p$-external action.

\begin{lem}\label{lem:det-naturality} Fix $F\from \two^{np}\to \burn_K$ and an equivariant face inclusion $\iota\from \two^{np}\to \two^{Np}$.   Say that $\ZZ_p$ acts on $\two^{np}$ and $\two^{Np}$ by cyclic permutation, and $F$ admits a compatible $\ZZ_p$-external action.  Fix a homomorphism $\degh\from K \to \ZZ_2$.  The pair $(F,\iota)$ induces a $\ZZ_p$-equivariant isomorphism between $\Tot_\degh(F_\iota)$ and $\detg_{\ZZ_p}^{|\iota|/p}\otimes\Tot_\degh(F)$, natural up to sign.
\end{lem}
\begin{proof}

	We show that 
	\[
	\Tot_K(F_\iota)\cong \detg_{\ZZ_p}^{|\iota|/p}\otimes\Tot_K(F),
	\]
	which implies the claim of the lemma.
	
	Note that as $\ZZ$-complexes, $\detg_{\ZZ_p}^{|\iota|/p}\otimes\Tot_K(F)$ is naturally identified with $\Sigma^{|\iota|}\Tot_K(F)$.  From the discussion preceding the lemma, we have an isomorphism (well-defined up to sign) of $\ZZ$-complexes
	\[
	\Tot_K(F_\iota)\cong \Sigma^{|\iota|}\Tot_K(F),
	\]
	so it remains to determine the $\ZZ_p$-action on the right-hand side that is compatible with the $\ZZ_p$-action on $\Tot_K(F_\iota)$.  
	
	Let $\ZZ_p$ act on $\cellC^0([0,1]^n;\ZZ_2)$ as follows.  It is enough to define the action on a generator of $\ZZ_p$.  Let $g=1_p\in \ZZ_p$ and $\sigma \in \cellC^0([0,1]^n;\ZZ_2)$, and define
	\[
	(g\sigma)(v)=\sigma(gv)\tau(v)\tau(\iota v),
	\]
	where $\tau$ is as in the discussion preceding Lemma \ref{lem:total-equiv}.

	If $\rho\from \Tot_K(F_\iota) \to \Sigma^{|\iota|}\Tot_K(F)$ is one of the two isomorphisms (well-defined up to sign) as in (\ref{eq:totalization-iso}), associated to a choice of function $\sigma \from \Ob(\two^n)\to \ZZ_2$, then we have:
	\begin{align*}
	\delta(g\sigma)(v,w)&=(-1)^{s_{gv,gw}+g_{\iota gv,\iota gw}+s_{v,w}+s_{gv,gw}+s_{\iota v,\iota w}+s_{\iota gv,\iota gw}+|\iota|}\\&=(-1)^{s_{v,w}+s_{\iota v,\iota w}+|\iota|}=\delta(\sigma)(v,w).
	\end{align*}
	That is, $\sigma$ and $g\sigma$ agree up to an overall sign, since $\delta(g\sigma)=\delta(\sigma)$. In particular, for fixed $g$, the term $(\sigma(v))\cdot((g\sigma)(v))$ is constant in $v\in \Ob(\two^{np})$.  
	
	Assume now that $g=1_p$ is the standard generator of $\ZZ_p$, as an additive group.  We determine $(\sigma(v))\cdot ((g\sigma)(v))$ by considering its evaluation at $v=0\in \Ob(\two^{np})$.  Then
	\[
	(\sigma(v))\cdot ((g\sigma)(v))=\tau(0)\tau(\iota(0)).
	\]
	By definition, $\tau(0)=1$, while it is readily checked that $\tau(\iota(0))=-1$ if and only if $p$ is even and $|\iota|/p$ is odd.
	
	Thus, the isomorphism (\ref{eq:totalization-iso}) is $\ZZ_p$-equivariant if $\ZZ_p$ acts on $\Tot_K(F_{\iota})$ as in Lemma \ref{lem:total-equiv}, and $K$ acts on $\Sigma^{|\iota|}\Tot_K(F)\cong\ZZ[|\iota|]\otimes_\ZZ \Tot_K(F)$ by the sign representation on $\ZZ[|\iota|]$ if $p$ is even, and if $p$ is odd, then $\ZZ_p$ acts on $\ZZ[|\iota|]$ by the trivial representation.  It is direct to check that $\ZZ[|\iota|]$ with this $\ZZ_p$-action is $\detg_{\ZZ_p}$, as needed.
\end{proof}

With this
background, we state the relevant notion of equivalence for stable
functors.

\begin{defn}\label{def:stableq}
  Two stable functors
  $(E_1 \from \two^{m_1} \to \burn_K, q_1)$ and
  $(E_2\from \two^{m_2} \to \burn_K, q_2)$ are \emph{stably
    equivalent} for $\degh\from K \to \ZZ_2$ if there is a sequence of stable functors
  $\{(F_i \from \two^{n_i} \to \burn_K, r_i)\}$
  ($0\leq i \leq \ell$) with $\Sigma^{q_1}E_1=\Sigma^{r_0}F_0$ and
  $\Sigma^{q_2}E_2=\Sigma^{r_\ell}F_\ell$ such that for each pair
  $\{ \Sigma^{r_i}F_i, \Sigma^{r_{i+1}}F_{i+1}\}$, one of the
  following holds:
\begin{enumerate}[leftmargin=*]
\item\label{itm:stable-1} $(n_i,r_i)=(n_{i+1},r_{i+1})$ and there is a natural
  transformation $\eta\from F_i\to F_{i+1}$ or
  $\eta\from F_{i+1} \to F_i$ such that the induced map $\Tot_\degh(\eta)$
  is a chain homotopy equivalence.
\item\label{itm:stable-2} There is a face inclusion
  $\iota\from \two^{n_i} \hookrightarrow \two^{n_{i+1}}$ such that
  $r_{i+1}=r_i-|\iota|$ and $F_{i+1}=(F_i)_\iota$; or a face inclusion
  $\iota\from \two^{n_{i+1}} \hookrightarrow \two^{n_{i}}$ such that
  $r_{i}=r_{i+1}-|\iota|$ and $F_{i}=(F_{i+1})_\iota$.
  
\end{enumerate}

Two nonsingular stable functors $(E_1,Q_1),(E_2,Q_2)$ with $E_i\from \two^{n_ip}\to \burn_K$ with external action by $\ZZ_p$ (compatible with the action on $\two^{n_ip}$ by cyclic permutation) are \emph{externally stably equivalent} for a given homomorphism $\degh\from K \to \ZZ_2$ if there is a sequence of nonsingular stable functors $\{(F_i \from \two^{n_i} \to \burn_K, R_i)\}$
($0\leq i \leq \ell$) with $(E_1,Q_1)=(F_0,R_0)$ and
$(E_2,Q_2)=(F_\ell,R_\ell)$ such that for each pair
$\{ (F_i,R_i), (F_{i+1},R_{i+1})\}$, one of the
following holds: 
\begin{enumerate}[leftmargin=*]
	\item\label{itm:stable-1-eq} $(n_i,R_i)=(n_{i+1},R_{i+1})$ and there is a natural
	transformation of functors with external actions $\eta\from F_i\to F_{i+1}$ or
	$\eta\from F_{i+1} \to F_i$, such that the induced map, for each subgroup $H\subset G$, $\Tot_\degh(\eta^H)$
	is a chain homotopy equivalence, where $\eta^H$ is the fixed-point functor.
	\item\label{itm:stable-2-eq} There is an equivariant face inclusion
	$\iota\from \two^{n_ip} \hookrightarrow \two^{n_{i+1}p}$ such that
	$R_{i+1}=R_i-(|\iota|/p) \mathbb{R}(G)$ and $F_{i+1}=(F_i)_\iota$; or a face inclusion
	$\iota\from \two^{n_{i+1}p} \hookrightarrow \two^{n_{i}p}$ such that
	$R_{i}=R_{i+1}-(|\iota|/p) \mathbb{R}(G)$ and $F_{i}=(F_{i+1})_\iota$.
	
\end{enumerate}  

We call such a sequence, along with the arrows $\eta,\iota$ between
the $(F_i,R_i)$, a $\degh$-{\emph{external stable equivalence}} between the stable
functors $(E_1,Q_1)$ and $(E_2,Q_2)$.  If the sequence is such that the maps $\eta$
satisfy $\Tot_\degh(\eta^H)$ are chain homotopy equivalences for all $\degh\from K \to \ZZ_2$, we call it a $K$-{\emph{equivariant
    (stable) equivalence}}, and say that $(E_i,Q_i)$ are
$K$-\emph{equivariantly equivalent}.  All external stable equivalences that appear in this paper will be $K$-equivariant.
\end{defn}

An external stable equivalence from $(E_1,Q_1)$ to
$(E_2,Q_2)$ induces a $\ZZ_p$-equivariant chain homotopy equivalence
$\Tot((E_1,Q_1))\to \Tot((E_2,Q_2))$, well-defined up to
choices of inverses of the chain homotopy equivalences involved in its
construction, and an overall sign (since $\sigma$ is well-defined up to an overall sign).

We will also need the notion of a \emph{product} of Burnside functors.  
\begin{defn}\label{def:product-functor}
Given functors $F\from \two^{mp}\to \burn_K$ and $J \from \two^{np} \to \burn$, both with external action by $\ZZ_p$ compatible with the permutation action on $(\two^n)^p$, we define the product $F\times J \from \two^{(m+n)p}\to \burn_K$ as follows
\begin{enumerate}
\item For $(v_1,v_2)\in \two^{mp} \times \two^{np}$, $(F\times J)((v_1,v_2))=F(v_1)\times J(v_2)$.
\item For all $(u_1,u_2)>(v_1,v_2)$, $(F\times J)(\phi_{(u_1,u_2),(v_1,v_2)})=F(\phi_{u_1,v_1})\times J(\phi_{u_2,v_2})$.  The decoration on each element of the correspondence is the decoration of $F(\phi_{u_1,v_1})$.
\item For all $(u_1,u_2)>(v_2,v_2)>(w_1,w_2)$, the map $(F \times J)_{(u_1,u_2),(v_1,v_2),(w_1,w_2)}$ is defined by
\[
(F \times J)_{(u_1,u_2),(v_1,v_2),(w_1,w_2)}(x_1,x_2)=((F)_{u_1,v_1,w_1}(x_1),(J)_{u_2,v_2,w_2}(x_2)),
\]
where, if $u_i=v_i$ or $v_i=w_i$, we set $(F)_{u_i,v_i,w_i}=\Id$ or $(J)_{u_i,v_i,w_i}=\Id$, respectively.  
\end{enumerate}
It is direct to check that this defines a strictly unitary lax $2$-functor $\two^{(n+m)p} \to \oddb$.  A computation verifies that $\Tot_K(F \times J)=\Tot_K(F) \otimes \Tot_K(J)$.  

The $\ZZ_p$-external action on $F\times J$ is given as follows.  On objects, $\psi_{g,(v,w)}$ is given by the product action $\psi_{g,(v,w)} \from F(v)\times J(w) \to F(gv)\times J(gw)$, and similarly for the action on correspondences.   It is direct to confirm that these data satisfy conditions \ref{itm:e1} and \ref{itm:e2} of Definition \ref{def:external-action}.
\end{defn}
\section{Realizations of Burnside functors}\label{sec:disk}

In this section, given a functor $F\from \two^n\to\burn_K$, along with some other choices, we construct an
essentially well-defined spectrum $\Realize{F}$, which is an equivariant spectrum if $K\neq\{1\}$, in a sense that will be made precise in the course of Section \ref{subsec:eqvar}. As a first step, we construct finite CW complexes
$\CRealize{F}_{V}$ for sufficiently large representations $V$, so that increasing the
parameter $V$ corresponds to suspending the CW complex
$\CRealize{F}_{V}$. The finite CW spectrum $\Realize{F}$ is then
defined from this sequence of spaces. The construction of
$\CRealize{F}_{V}$ depends on some auxiliary choices, but its stable
homotopy type does not. Moreover, the spectra constructed from two
stably equivalent Burnside functors will be homotopy equivalent.  Much of this section is either a generalization of, or contained in, \cite[Sect.\ 4]{oddkh}, which itself is mostly a collection of results from \cite{lls1} along with some background on equivariant topology.  The only essentially new material in the present section is Lemma \ref{lem:connctd}.

\subsection{Maps from correspondences}\label{sec:realization-sub-main}
 We start with the construction of
(ordinary) disk maps, following \cite[\S2.10]{lls1}\footnote{In previous papers, starting with \cite{lls1}, but continuing in \cite{lls2},\cite{oddkh}, one works with `box maps.'  The previous papers could have been executed in very close analogy using disk maps as formulated here, obtaining homotopy-equivalent objects; we prefer disk maps in the present paper as they are more suitable for visualizing the group action.}, which the reader may consult for more details.  Let $B^\ell=\{ x\in \RR^\ell\mid ||x||\leq 1\}$, and fix an
identification $S^\ell=B^\ell/\partial$, with $\partial:=\partial B^\ell$, and view $S^\ell$ as a pointed space, with base point the image of $\partial$.  For any subset $B\subset B^\ell$ of the form $B=\{y\in B^\ell\mid ||y-y_0||\leq c, \}$, for some $y_0\in B^\ell$ and $c\in \mathbb{R}_{>0}$ so that $||y_0||+c< 1$, we note that there is a standard identification of $B$ with a copy of $B^\ell$ by the map $\phi$ defined by $x\to (x-y_0)/c$ and so we have a standard identification $S^\ell=B/\partial B$.  In the sequel, by a subdisk $B\subset B^\ell$ we will mean a subset $B$ as above, which we will often identify with $B^\ell$ itself, using  $\phi$.

Given a collection (indexed by $\{1,\dots,t\}$) of sub-disks $B_1,\dots, B_t$ of some disk $B$, so that the $\{B_i\}_{i=1,\ldots, t}$ have
disjoint interiors, there is an induced map
\begin{equation}\label{eq:signedphi}
  S^\ell = B  /\partial   B \to B   /(B \backslash (\mathring{B_1} \cup \dots \cup \mathring{B_t}))= \bigvee^t_{a=1} B_a /\partial B_a = \bigvee^t_{a=1}S^\ell \to S^\ell.
\end{equation}
The last map is the identity on each summand, so that the composition (\ref{eq:signedphi})
has degree $t$.  As observed in \cite{lls1}, this construction is
continuous in the position of the sub-disks.  We let $E(B,t)$
denote the space of (indexed) subdisks with disjoint interiors in $B$, and have a
continuous map $E(B,t) \to \Map(S^\ell,S^\ell)$.

We can generalize the above procedure to associate a map of spheres to
a map of finite sets $A \to Y$, as follows.  Say we have chosen a collection of subdisks $\{B_a\}_{a\in A}$ where the subdisks
$B_a \subset B$, for some fixed disk $B$, have disjoint interiors.  Then we have a
map:
\begin{equation}\label{eq:mapfromset}
  S^\ell = B /\partial B \to B /(B\backslash (\bigcup_{a\in A}\mathring{B_a} )) =\bigvee_{a\in A} B_a /\partial B_a = \bigvee_{a\in A}S^\ell \to \bigvee_{y\in Y} S^\ell
\end{equation}
where the last map is built using the map of sets $A \to Y$.

More generally, we can also create maps from a correspondence of
sets, as follows.  

\begin{construction}\label{const:disk}  Fix finite sets $X,Y$ and fix a finite correspondence $A$ from $X$ to $Y$ with
source map $s$ and target map $t$.  Say that we also have a collection
of disks $B_x$ for $x\in X$.  Finally, take as input also a collection of
sub-disks $B_a \subset  B_{s(a)}$ with disjoint interiors, for $a\in
A$.  We then have an induced map
\begin{equation}\label{eq:mapfromset2}
\bigvee_{x\in X} S^\ell \to \bigvee_{y\in Y} S^\ell,
\end{equation}
by applying, on $B_x$, the map associated to the set map $s^{-1}(x)
\to Y$.  A map constructed this way is said to \emph{refine}
the correspondence $A$.  

\end{construction}
For a pair of finite sets $A,X$, along with a map of sets $s\from A \to X$, and a collection of disks $\{B_x\}_{x\in X}$, let $E(\{B_x\},s,A,X)$ be the space of
collections of labeled sub-disks $\{B_a \subset B_{s(a)} \mid a \in
A\}$ with disjoint interiors.  Then, choosing a correspondence
$\mathbb{A}=(A,s,t)$, and a collection of disks $\{B_x\}_{x\in X}$,
Construction \ref{const:disk} gives a map
$E(\{B_x\},s,A,X) \to \Map(\vee_{x\in X} S^\ell ,\vee_{y\in Y} S^\ell)$.  We
write 
\begin{equation}
\Phi(e,(A,s,t)) \in\Map(\bigvee_{x\in X} S^\ell , \bigvee_{y\in Y}S^\ell)\label{eq:phi}
\end{equation}
for the map associated to $e\in E(\{B_x\},s,A,X)$ and the
correspondence $(A,s,t)$. One of the main points is that, for any disk map
$\Phi(e,(A,s,t))$ refining $(A,s,t)$, the induced map on the $\ell\th$ homology
agrees with the abelianization map 
\[
\Abelianize(A)\from\Abelianize(X)=\widetilde{H}_\ell(\vee_{x\in
  X}S^\ell)\to\Abelianize(Y)=\widetilde{H}_\ell(\vee_{y\in Y}S^\ell).
\]

We now indicate a further generalization of disk maps to cover decorated
correspondences.
\begin{construction}\label{const:k-disk}
Fix a finite group $K$ and finite sets $X$ and $Y$, and a decorated correspondence $(A,s,t,\sigma)$ from
$X$ to $Y$, with $A$ finite and $\sigma\from A \to K$.  Fix also some collection of disks $\{B_x\}_{x \in X}$.  Fix a homomorphism $\refl\col K \to \mathrm{Homeo}(B^\ell)$.  Fix
a collection of subdisks 
$B_a\subset B_{s(a)}$ for $a\in A$.
There is an induced map just as in Construction \ref{const:disk},
but whose construction depends on the decoration $\sigma$, as
follows. For $x\in X$, we have a set map $s^{-1}(x) \to Y$, along with
decorations for each element of $s^{-1}(x)$.  We modify the map refining
$s^{-1}(x) \to Y$ (without decoration) by precomposing with $\refl(\sigma(a))$:
\[
S^\ell = B/\partial B \to B /(B\backslash (\bigcup_{a\in A}\mathring{B_a} )) = \bigvee_{a\in A} B_a /\partial   B_a\xrightarrow{\bigvee  \refl(\sigma(a))} \bigvee_{a\in A} B_a  / \partial B_a = \bigvee_{a\in A}S^\ell  \to \bigvee_{y\in Y}S^\ell.
\]
We say that a map constructed this way $\refl$-\emph{refines} (or, when $\refl$ is clear from context, simply \emph{refines}) the decorated
correspondence $\mathbb{A}=(A,s,t,\sigma)$.  

\end{construction}
As before, we can regard Construction \ref{const:k-disk} as a map
\[
\Phi(e,\mathbb{A})\in \Map( \bigvee_{x\in X} S^\ell ,\bigvee_{y\in Y}S^\ell),
\]
where $e \in E(\{B_x\},s,A,X)$, and $\mathbb{A}=(A,s,t,\sigma)$ is a
decorated correspondence.  Once again, the induced map on the $\ell\th$
homology agrees with the $\degh$-abelianization map, where the homomorphism $\degh$ is defined by setting, for $k\in K$,  $\degh(k)$ to be the topological degree of $\refl(k)$.

For $V$ an orthogonal representation of a finite group $K$, write $B(V)$ for the unit ball of $V$.  

Let $E_{K,V}(\{B_x\},s,A,X)$ denote the set of elements in $E(\{B_x\},s,A,X)$ whose centers lie in $B(V)^K$.  In other words, each element $e$ of $E(\{B_x\},s,A,X)$ is a collection of disks; the element $e$ will be in $E_{K,V}(\{B_x\},s,A,X)$ if and only if the center of each disk in $e$ lies in $B(V)^K$.  

\begin{lem}[{cf. \cite[Lemma 4.5]{oddkh}}]
\label{lem:connctd-pre}
  Let $A$ and $X$ be finite sets, and let $s\from A\to X$ be a map of sets. If $\dim(V^K)\geq k$ then
  $E_{K,V}(\{ B_x\},s,A,X)$ is $(k-2)$-connected.
\end{lem}

\begin{proof}
  The proof is analogous to \cite[Lemma 2.29]{lls1} or \cite[Lemma 4.5]{oddkh}.
\end{proof}

For finite sets $X,Y$ and a finite correspondence $\mathbb{A}=(A,s,t)$ from $X$ to $Y$, it is convenient to abuse notation somewhat and write $E(\{B_x\},s,\mathbb{A},X)$ for $E(\{B_x\},s,A,X)$, as we will do in the following lemma.  
\begin{lem}[{cf. \cite[Lemma 4.6]{oddkh}}]\label{lem:box-composition}
  Fix an $\mathbb{R}$-vector space $V$ and an orthogonal $K$-representation $\refl\from K \to O(V)$.  Let $A$, $B$, $X$, $Y$, and $Z$ be finite sets, and let $s_A\from A\to X$ and $s_B\from B\to Y$ be maps of sets.  Let $\mathbb{A}=(A,s_A,t_A,\sigma_A)$ and $\mathbb{B}=(B,s_B,t_B,\sigma_B)$ be decorated correspondences, from $X$ to $Y$ and from $Y$ to $Z$, respectively. If $e\in E(\{B_x\},s_A,A,X)$ 
   and $f\in E(\{B_y\},s_B,B,Y)$, then there is a unique
  $f\circ_\refl e\in E(\{B_x\},s_{B\circ A},\mathbb{B}\circ \mathbb{A},X)$, 
   so
  that $\Phi(f\circ_\refl e,\mathbb{B}\circ \mathbb{A})=\Phi(f,\mathbb{B})\circ\Phi(e,\mathbb{A})$. Moreover,
  the assignment $E(\{B_y\},s_B,B,Y)\times E(\{B_x\},s_A,A,X)\to
  E(\{B_x\},s_{B\circ A},\mathbb{B}\circ \mathbb{A},X)$, by sending a pair $(f,e)$ to $f\circ_{\refl} e$, is continuous and sends
  $E_{K,V}(\{B_y\},s_B,B,Y)\times E_{K,V}(\{B_x\},s_A,A,X)$ to
  $E_{K,V}(\{B_x\},s_{\mathbb{B}\circ \mathbb{A}},\mathbb{B}\circ\mathbb{A},X)$. 
\end{lem}

\begin{proof}

For $(b,a) \in B \times_Y A$ where $b$ has decoration $g$ and $a$ has decoration $h$, let $e_b\colon B_b\to B_{s_B(b)}$ and $e_a\colon B_a\to B_{s_A(a)}$ denote the corresponding disks in $E(\{B_x\},s_B,B,Y)$ and $E(\{B_x\},s_A,A,X)$, respectively.  Define $B_{(b,a)}\subset B(V)$ to be the subdisk given by the image of
\[
B_b 
\stackrel{e_b}{\longrightarrow} B_{s_B(b)= t_A(a)}\xrightarrow{\refl(h^{-1})} B_{s_B(b)} 
= B_a
\stackrel{e_a}{\longrightarrow}B_{s_A(a)}.
\]
 
This defines $f \circ_\refl e$ as the image of $(f,e)$ under the assignment 
\[
E(\{B_y\},s_B,B,Y)\times E(\{B_x\},s_A,A,X)\to
E(\{B_x\},s_{\mathbb{B}\circ \mathbb{A}},\mathbb{B}\circ\mathbb{A},X)
.\] 
It follows from the definitions that $\Phi(f\circ_\refl e,\mathbb{B}\circ\mathbb{A})=\Phi(f,\mathbb{B})\circ\Phi(e,\mathbb{A})$.

Finally, consider the restriction of the assignment to $E_{K,V}(\{B_y\}, s_B,B,Y) \times E_{K,V}(\{B_x\}, s_A,A,X)$.  It is clear that the above construction takes disks centered on $V^K$ to disks centered on $V^K$, completing the proof.
\end{proof}

If $K$ is abelian and $\mathbb{A}=(A,s,t,\sigma)$ is a decorated finite correspondence between finite sets $X$ and $Y$, then for $e\in E_{K,V}(\{B_x\},s,A,X)$, the induced map $\Phi(e,\mathbb{A})$ is $K$-equivariant.
Note that the condition that $K$ is abelian is necessary, as $\Phi(e,\mathbb{A})$ may be a collapse to a slightly smaller disk (mod boundary), along with multiplication by $g\in K$; in order for this to be $K$-equivariant, we would need $kg=gk$ for all $k \in K$.

\begin{construction}\label{const:g-box-action}
	Fix a finite group $G$, an abelian group $K$, and a finite-dimensional orthogonal $K\times G$-representation $\mathfrak{s}\from K\times G\to O(V)$.  Let $\underline{A}\from \two\to \burn_K$ be a functor with external action $\psi$ by $G$, where $G$ acts on $\two$ trivially.  Write $X=\underline{A}(1)$ and $Y=\underline{A}(0)$, with $\mathbb{A}:=\underline{A}(\phi_{1,0})=(A,s,t,\sigma)$, where as usual we write $s,t,\sigma$ for the source, target, and sign maps of $\mathbb{A}$, respectively.   For each $x\in X$, define the disk $B_x(V)$ to be a copy of $B(V)$; more precisely one may define $B_x(V)=\{x\}\times B(V)$.  We usually write $B_x$ for $B_x(V)$, when the representation $V$ is clear from context.  Write elements of the set $B(X,V)=\amalg_{x\in X} B_x(V)$ as pairs $(x,v)$, where $v\in B(V)$ and $x\in X$.  Then define an action of $K\times G$ on $B(X,V)$ by setting, for $k\in K$ and $g\in G$, 
	\[
	(k\times g)\cdot (x,v)=(g\cdot x,\mathfrak{s}(k\sigma(\psi_{g,v},x)\times g)\cdot v).
	\]
	Here $\sigma(\psi_{g,v},x)$ denotes the decoration $\sigma(s^{-1}(x))$ for the decorated correspondence $\psi_{g,v}\from X\to X$.  This assignment gives a continuous homomorphism $\rho\from K\times G \to \mathrm{Homeo}(B(X,V))$.
	
	Moreover, $G$ acts on $E(\{B_x\},s,A,X)$ as follows.  View each element $e\in E(\{B_x\},s,A,X)$ as a function assigning a disk $e(a)=B_a\subset B_{s(a)}\cong B(V)$ (note that the isomorphism is fixed - each ball $B_a$ is a copy of $B(V)$) to each element $a\in A$.  Then, for $e\in E(\{B_x\},s,A,X)$, define $(g\cdot e)(a)$ to be the image of $\rho(1\times g)(e(g^{-1}a))$ in $B_{s(a)}$.  
\end{construction}

\begin{examp}
	We consider an example of Construction \ref{const:g-box-action} to see how it looks in practice.  In the notation of Construction \ref{const:g-box-action}, let $K=G=\ZZ_2=\{\pm 1\}$, and let $X$ be the $2$-element set $\{x,y\}$.  Let $V=\mathbb{R}_K\oplus \mathbb{R}_G$ where $K\times G$ preserves the decomposition, and the nonidentity element of $K$ acts by $-1$ on $\mathbb{R}_K$ and $1$ on $\mathbb{R}_G$.  The nonidentity element of $G$ acts by $1$ on $\mathbb{R}_K$ and $-1$ on $\mathbb{R}_G$.  Say that the nontrivial element of $G$ acts (by decorated bijection) on $X$ by\[
	x\overset{-1}{\to} y\qquad \mbox{ and } \qquad y\overset{-1}{\to} x.
	\]
	Here the label over the arrows refers to the value, in $K$, of the decoration of the bijection.  The set $B(X,V)$ is then two copies, $B_x(V)$ and $B_y(V)$, of $B(V)$.  The action of $K\subset K\times G$ on $B(X,V)$ is given by, for $(x,v)\in B_x(V)$,
	\[ 
	\rho(k\times 1)(x,v)=(x,kv),
	\]
	and similarly in the $B_y(V)$ factor.  The action of the nontrivial element $g\in G$ on $B_x(V)$ is given by
	\[
	\rho(1\times g)(x,v)=(y,\left(\begin{array}{cc} -1 & 0 \\ 0 & -1 
	\end{array}\right)v),
	\]
	and there is a similar formula for the $B_y(V)$ factor.
\end{examp}

\begin{lem}\label{lem:connctd}
	Use the notation from Construction \ref{const:g-box-action} and let $H$ be a subgroup of $G$.  
 
For any $N>0$, there exists a fixed finite-dimensional representation $V_N$ so that the following holds.  For all finite-dimensional representations $V$ as in Construction \ref{const:g-box-action} for which there is an embedding of $V_N$ in $V$, the fixed-point set of $E_{K,V}(\{B_x(V)\},s,A,X)$ under the action of $H$, denoted $E_{K,V}(\{B_x(V)\},s,A,X)^H$, is $N$-connected, and nonempty.
\end{lem}
\begin{proof}

Fix some $(K\times G)$-representation $\mathfrak{s}$ on $V$, and assume that $\dim V^K\geq 1$.  

Let $Z_{K,V}(\{B_x\}_{x\in X},s,A,X)$ denote the space of injective maps (of sets) $\zeta \from A \to B(X,V^K)$ that lift the map of sets $z\from A \to X=\pi_0(B(X,V^K))$.  The group $G$ acts on $Z_{K,V}(\{B_x\}_{x\in X},s,A,X)$ by $(g\cdot \zeta)(a)=(\mathfrak{s}(g))(\zeta(g^{-1}a))$.  There is a continuous map $\pi\from E_{K,V}(\{B_x\}_{x\in X},s,A,X)\to Z_{K,V}(\{B_x\}_{x\in X},s,A,X)$ by sending balls to their centers.  This map is $K\times G$-equivariant and a homotopy equivalence; here we have used that the centers of disks in $E_{K,V}(\{B_x\}_{x\in X},s,A,X)$ lie in $V^K$.  Moreover, the fixed-point set $E_{K,V}(\{B_x\}_{x\in X},s,A,X)^H$ is sent by $\pi$ to $Z_{K,V}(\{B_x\}_{x\in X},s,A,X)^H$.  Let $\pi^H$ denote the restriction of $\pi$ to $E_{K,V}(\{B_x\}_{x\in X},s,A,X)^H$.  It is straightforward to check that $\pi^H$ is also a homotopy equivalence.  

Thus, it suffices to find conditions under which $Z_{K,V}(\{B_x\}_{x\in X},s,A,X)^H$ is $N$-connected.  We describe the set $Z_{K,V}(\{B_x\}_{x\in X},s,A,X)^H$.  Note that $G$ acts on the set $A$ itself, as follows.  We have, by the definition of external actions, a 2-isomorphism:
\[
\psi_{g,A} \from \psi_{g,Y}\circ A\to A\circ \psi_{g,X}.
\]
However,  as a set, $A$ may be canonically identified with $\psi_{g,Y}\circ A$ by sending an element $a$ of $A$ to the unique pair $(\nu,a)$ of $\psi_{g,Y}\circ A=\psi_{g,Y}\times_{Y} A$, and similarly for identifying $A$ with $A\circ \psi_{g,X}$.  Thus $\psi_{g,A}$ defines a bijection $A\to A$, and the collection of these as $g\in G$ varies defines an action of $G$ on $A$.

Choose an element $a_i\in A$, for $i=1,\ldots,n$, in each of the orbits $H\backslash A$.  By the definition of $Z_{K,V}(\{B_x\}_{x\in X},s,A,X)$ and the action, an element $\zeta$ of $Z_{K,V}(\{B_x\}_{x\in X},s,A,X)^H$ is determined by the restriction $\zeta|_{\{a_i\}_{i=1,\ldots,n}}$, since, for any $a\in A$, with $a=ha_{i_1}$ for some $i_1\in\{1,\ldots,n\}$ and $h\in H$, the assumption that $\zeta\in Z_{K,V}(\{B_x\}_{x\in X},s,A,X)^H$ ensures that $\rho(1\times h)(\zeta(a_{i_1}))=\zeta(a)$.

For each $a_i$, let $S_i\subset H$ be the stabilizer.  For $g\in G$, define $\langle g \rangle \subset G$ to be the subgroup of $G$ generated by $g$.  Let $B=B(V)$ for some $V$ sufficiently large.

Let $Z'_{K,V}(\{B_x\}_{x\in X},s,A,X)$ denote the space of maps of sets $\zeta \from A'=\{a_i\}_{i=1,\ldots,n} \to B(X,V^K)$, that lift the map of sets $z\from A' \to X$, and so that:

\begin{enumerate}[leftmargin=*,label=(D-\arabic*)]
	\item\label{itm:fixedness} The element $\zeta(a_i)$ lies in $V^{K\times S_i}$.
	\item\label{itm:disjointness}  The elements $\{\rho(1\times h)\zeta(a_i)\}_{i=1,\ldots,n;h\in H/S_i}$,  are disjoint in $B(X,V^K)$.  
\end{enumerate}

Alternatively, \ref{itm:disjointness} is equivalent to 
\begin{enumerate}[leftmargin=*,label=(D-\arabic*$'$)]\addtocounter{enumi}{1}
	\item\label{itm:disjointness-2}  For each $x\in X$, the elements $\{\rho(1\times h)\zeta(a_i)\}_{i=1,\ldots,n;h\in H/S_i}$ so that $s(ha_i)=x$ are disjoint in $B(\{x\},V^K)$.  
\end{enumerate}

Moreover, if \ref{itm:disjointness-2} is true for a single $x\in X$, then it is also true for $hx$ for any $h\in H$.  Thus, we need only check \ref{itm:disjointness-2} under the assumption that $X$ is a one-element set $\{x\}$; we will assume $X=\{x\}$ henceforth.  In particular, we have reduced to the case that $H$ acts trivially on $X=\{x\}$, by replacing $H$ by the stabilizer of $x$ in $H$.  

From the above discussion, we have that 
\[
Z'_{K,V}(\{B_x\},s,A,X)=Z_{K,V}(\{B_x\},s,A,X)^H,
\]
so we need only show that $Z'_{K,V}(\{B_x\},s,A,X)$ is highly connected.  

  Note that $Z'_{K,V}(\{B_x\},s,A,X)$ is exactly the set of tuples $(x_1,\dots,x_n)$ where $x_i\in B(V)$ so that 
  \begin{enumerate} \item $x_i\in V^{S_i}$.
  	\item For each $i\in \{1,\ldots,n\}$, the orbit of $x_i$ is isomorphic, as an $H$-set, to $H/S_i$.  Equivalently, for all $g\in H$ which are not in $S_i$, $x_i\not \in V^{\langle g \rangle}$.
  	\item $(x_1,\dots,x_n)\not\in\Delta$ where $\Delta$ is the set of tuples $(x_1,\ldots,x_n)$ for which there is some pair $i\neq j$ with $g_1x_i=g_2x_j$ for some $g_1,g_2\in H$.
  	\end{enumerate}  

That is, $Z'_{K,V}(\{B_x\}_{x\in X},s,A,X)=(\prod_{i=1}^nV^{S_i})-D$ where 
\[
D=\Delta \cup \bigcup_{i=1}^n 
\left (
 (V^{S_1} \times V^{S_2} \times \ldots \times V^{S_{i-1}})
\times 
\bigcup_{g\not \in S_i} V^{\langle g\rangle}
\times 
(V^{S_{i+1}} \times \ldots \times V^{S_n})
\right ).
\]

For given $N>0$, to show that $Z'_{K,V}(\{B_x\}_{x\in X},s,A,X)$ (and therefore also $Z_{K,V}(\{B_x\}_{x\in X},s,A,X)^H$) is $N$-connected, it suffices to show that $D$ has arbitrarily high codimension in $\prod_{i=1}^nV^{S_i}$. We will show next that this can be achieved by constructing a suitably large $V$.  

Let $\RR[G]$ denote the regular representation of $G$.  Recall that $\RR[G]$ satisfies the following two properties:
\begin{enumerate}
\item It contains a copy of the trivial representation; every $g\in G$ acts trivially on this 1-dimensional summand. 
\item For any $1 \neq g \in G$, $g$ acts nontrivially on some irreducible summand of $\RR[G]$. 
\end{enumerate}
Given $1 \neq g \in G$, these two facts show that both the dimension and the codimension of $\RR[G]^{\langle g \rangle}$ are at least 1.

As $D$ is the union of finitely many pieces, it suffices to show that each piece has high codimension, say at least $N+2$.  Choose $V \cong\RR[G]^{\oplus N+2}$, so that $V^{\langle g \rangle}$ has dimension and codimension at least $N+2$.  It is now clear that the non-$\Delta$ pieces of $D$ have codimension at least $N+2$. 
To see that $\Delta$ also has high codimension, observe that $\Delta$ is the (finite) union of subsets homoeomorphic to diagonals $\Delta_{i,j}=\{(x,x)\in V^{S_i}\times V^{S_j}\}$ for $i\neq j$, thickened by the remaining components $\prod_{k \neq i, j} V^{S_k}$. 
The dimension of $\Delta_{i,j}$ is at most $\min (\dim V^{S_i}, \dim V^{S_j})$, so it codimension is at least $\max (\dim V^{S_i}, \dim V^{S_j})$, which is at least $N+2$. 
The codimension of $\Delta_{i,j} \subset V^{S_i} \times V^{S_j}$ is the same as the codimension of $\Delta_{i,j} \times (\prod_{k \neq i, j} V^{S_k}) \subset \prod_{k=1}^n V^{S_k}$. 

The same argument applies to any representation containing $V$, completing the proof of the lemma. 
\end{proof}

\begin{lem}\label{lem:connctd-2}
Maintain the notation from Lemma \ref{lem:connctd}.  For $e\in E_{K,V}(\{B_x(V)\},s,A,X)^H$, the induced map $\Phi(e,\mathbb{A})$ is $K\times H$-equivariant.
\end{lem}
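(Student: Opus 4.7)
The plan is to verify the two equivariances separately. The $K$-equivariance follows along the lines already observed just before Lemma \ref{lem:connctd-pre}: since $e \in E_{K,V}(\{B_x(V)\},s)$ has each subdisk centered in $V^K$, the $K$-action permutes each subdisk $B_a$ to itself, and the refinement maps are defined as composites with the action of $\refl$, which makes the resulting wedge map $K$-equivariant on the nose. For $H$-equivariance we must unpack the construction of $\Phi(e,A)$ piece-by-piece and check compatibility with the simultaneous $H$-action on $X$, $Y$, $A$ and $V$, which is where $H$-fixedness of $e$ is used.

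The first step is to reduce to checking equivariance on each wedge summand. Write $\Phi(e,A)|_{B_x}$ as the composite of (a) the Pontryagin--Thom collapse $B_x \to B_x/(B_x \setminus \bigcup_{s(a)=x}\mathring{B_a}) = \bigvee_{s(a)=x} B_a/\partial B_a$, (b) the wedge of twists $\bigvee \refl(\sigma(a))$, and (c) the identifications $B_a/\partial B_a = S^\ell = B_{t(a)}/\partial B_{t(a)}$ sending the $a$-summand into the $t(a)$-summand of $\bigvee_{y\in Y} S^\ell$. I would then show, for each $h \in H$, that each of (a), (b), (c) intertwines the $h$-action on source and target. Step (a) uses precisely the hypothesis that $e$ is $H$-fixed: the equation $\phi_{ha} = h\phi_a h^{-1}$ (equivalently $h\cdot \phi_a(B_a) = \phi_{ha}(B_{ha})$ as subsets, with standard framings) means that the collection $\{B_a : s(a)=x\}$ is carried by $h$ to $\{B_{a'} : s(a')=hx\}$, so the collapse at $hx$ is obtained from the collapse at $x$ by conjugation with $h$. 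Step (c) uses that $s(ha)=hs(a)$ and $t(ha)=ht(a)$ together with the fact that the standard identification of $B_a$ with $B(V)$ is compatible with the $G$-action on $B(V)$.

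Step (b) is the one that requires genuine bookkeeping, since the decorations $\sigma_A(a)$ need not be literally invariant under $H$; rather, the $2$-morphism $\psi_{h,A}\from \psi_h\circ A \to A\circ \psi_h$ records the identity $\tau_h(t(a))\,\sigma_A(a) = \sigma_A(ha)\,\tau_h(s(a))$ in $K$, where $\tau_h$ is the decoration of the $1$-morphism $\psi_h$. Since the $H$-action on $\bigvee_{x\in X} S^\ell$ is itself twisted by $\refl(\tau_h(x))$ (this being the $K\times H$-action induced by a $K$-decorated $H$-permutation), combining this factor with the pre- and post-composed twists $\refl(\sigma_A(a))$ and $\refl(\sigma_A(ha))$ and using that $\refl$ is a group homomorphism and that $K$ is abelian, the decoration identity above exactly absorbs the difference. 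I expect this reconciliation of decorations to be the only delicate point; once it is done, the three steps paste together summand-by-summand to give $h\circ \Phi(e,A) = \Phi(e,A)\circ h$ on each $S^\ell$ indexed by $x\in X$.

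Finally, the $K$- and $H$-actions commute on $V$ (since $V$ is a $K\times G$-representation) and on the indexing sets (since the $H$-bijections are $K$-decorated rather than carrying an internal $K$-action), so the two equivariances combine to the claimed $K\times H$-equivariance of $\Phi(e,A)$.
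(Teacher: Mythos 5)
Your proof is correct and takes essentially the same approach as the paper's, whose "proof" is a one-line assertion that the claim follows from the definition of disk maps and of the $H$-action on $E(\{B_x(V)\},s)$; you have simply made explicit the verification the authors treated as routine. In particular, the decoration identity $\tau_h(t(a))\,\sigma_A(a) = \sigma_A(ha)\,\tau_h(s(a))$ that you extract from the $2$-morphism $\psi_{h,A}$ is exactly the compatibility that makes the twists in step (b) cancel, and your use of the $K\times G$-structure on $V$ to commute $\refl(k)$ past $h$ is the right way to close the argument.
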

\begin{proof}
This follows from the definition of disk maps, as well as the definition of the $H$-action on $E(\{B_x(V)\},s,A,X)$ in Lemma \ref{lem:connctd}.
\end{proof}

\subsection{Equivariant topology}\label{subsec:eqvar}
Let $\topp$ be the category of well-based topological spaces. A \emph{weak equivalence} $X\to Y$ is a map that
induces isomorphisms on all homotopy groups; typically the spaces we consider are simply connected, in which case the definition reduces to being
isomorphisms on all homology groups.  Homotopy equivalence is a special case of weak equivalence, and for CW complexes the two notions are equivalent.

Let
$G\text{-}\topp$ be the category of well-based topological spaces with a continuous action by a finite group $G$. We also require that the inclusions of fixed points
$X^{H}\to X^{H'}$, for all subgroups $H'<H$ of $G$, are cofibrations.  For pointed $G$-spaces $X$ and $Y$, a map $X\to Y$ is called a
\emph{weak equivalence} if the induced map $X^H\to Y^H$ is a weak
equivalence for all subgroups $H$ of $G$. (We will also sometimes call such a map a $G$-\emph{weak equivalence}, to distinguish it from a map in $G\text{-}\topp$ that is a weak equivalence in the nonequivariant sense.)  A homotopy between $G$-maps $f\from X \to Y$ and $h\from X \to Y$ is an extension to a $G$-equivariant map $k\from X \times I \to Y$, where $X \times I$ is given a $G$-structure by $g(x,i)=(gx,i)$, so that $k|_{X\times 0}=f$ and $k|_{X\times 1} =h$.   A homotopy equivalence in
$G\text{-}\topp$ induces a weak equivalence. For $G$-CW complexes, the two notions are equivalent by the $G$-Whitehead
theorem, see \cite[Theorem 2.4]{Greenlees-May}. For $G$-CW complexes,
a weak equivalence $X\to Y$ induces a weak equivalence between
quotients of fixed points, $X^{H'}/X^H\to Y^{H'}/Y^H$, for all
subgroups $H'<H$ of $G$, and between orbit spaces, $X/H\to Y/H$, for
all subgroups $H$ of $G$. 

\subsection{Homotopy coherence}\label{subsec:homtpy}
In this section, we briefly review homotopy colimits and homotopy
coherent diagrams, following \cite[\S2.9]{lls1}. 

We recall the notion of a homotopy coherent diagram, which is the
data from which a homotopy colimit is constructed.  A homotopy
coherent diagram is intuitively a diagram $F\from \Cat\to
K\text{-}\topp$ which is not commutative, but commutative up to
homotopy, and the homotopies themselves commute up to higher homotopy,
and so on, and for which all the homotopies and higher homotopies are
viewed as part of the data of the diagram. More precisely, we have the
following definition:

\begin{defn}[{\cite[Definition 2.3]{Vogt}}]\label{def:homco}
  A homotopy coherent diagram $F\from\Cat\to K\text{-}\topp$ is an assignment, 
  to each $x \in \Cat$, of a space $F(x) \in K\text{-}\topp$, and for each $n \geq
  1$ and each sequence
    \[
    x_0 \xrightarrow{f_1} x_1 \xrightarrow{f_2} \cdots \xrightarrow{f_n} x_n
    \]
    of composable morphisms in $\Cat$, a continuous map
    \[
    F(f_n,\dots,f_1)\from [0,1]^{n-1} \times F(x_0) \to F(x_n)\phantom{\xrightarrow{f}}
    \]
    with $F(f_n,\dots,f_1)([0,1]^{n-1}\times \{ *\})=*$.  These maps
    are required to satisfy the following compatibility conditions:
\begin{align}
\nonumber F(f_n,\dots,f_1)(t_1,& \dots,t_{n-1})=  \\ 
& \begin{cases}
F(f_n,\dots,f_2)(t_2,\dots,t_{n-1}), &f_1 = \Id\\
F(f_n ,\dots,\hat{f}_i,\dots,f_1)(t_1,\dots,t_{i-1}\cdot t_i,\dots,t_{n-1}),&f_i=\Id, 1 < i < n\\
F(f_{n-1},\dots,f_1)(t_1,\dots,t_{n-2}),&f_n = \Id\\
F(f_n,\dots,f_{i+1})(t_{i+1},\dots, t_{n-1}) \circ F(f_i,\dots,f_1)(t_1,\dots,t_{i-1}), & t_i=0 \\
F(f_n,\dots,f_{i+1}\circ f_i,\dots,f_1)(t_1,\dots,\hat{t}_{i},\dots,t_{n-1}), & t_i=1.
\end{cases}\label{eq:compat}
\end{align}
When $\Cat$ does not contain any non-identity isomorphisms, homotopy
coherent diagrams may be defined only in terms of non-identity
morphisms and the last two compatibility conditions.
\end{defn}

Given a homotopy coherent diagram, we can define its \emph{homotopy
  colimit} in $K\text{-}\topp$, quite concretely, as follows:
\begin{defn}[{\cite[\S5.10]{Vogt}}]\label{def:hoco}
  Given a homotopy coherent diagram $F\from \Cat \to K\text{-}\topp$ the
  \emph{homotopy colimit} of $F$ is defined by
\begin{equation}\label{eq:hoco}
\hoco \; F = \{ * \} \amalg \coprod_{n\geq 0} \coprod_{x_0\xrightarrow{f_1} \cdots\xrightarrow{f_n}x_n} [0,1]^n \times F(x_0) /\sim,
\end{equation}
 where the equivalence relation $\sim$ is given as follows:
\[
(f_n,\dots,f_1;t_1,\dots,t_n;p)\sim\begin{cases}
(f_n,\dots,f_2 ; t_2,\dots,t_n;p),&f_1=\Id\\
(f_n,\dots,\hat{f}_i,\dots,f_1;t_1,\dots,t_{i-1}\cdot t_i,\dots,t_n;p),&f_i=\Id,i>1\\
(f_{n},\dots,f_{i+1};t_{i+1},\dots,t_n;F(f_i,\dots,f_1)(t_1,\dots,t_{i-1},p)), &t_i=0\\
(f_n,\dots,f_{i+1}\circ f_i,\dots,f_1;t_1,\dots,\hat{t}_{i},\dots,t_n;p), &t_i=1,i<n\\
(f_{n-1},\dots,f_1; t_1,\dots,t_{n-1};p), & t_n=1\\
*, & p=*.
\end{cases}
\]
When $\Cat$ does not contain any non-identity isomorphisms, homotopy
colimits may be defined only in terms of non-identity morphisms and the last four equivalence relations.  That is, 
\begin{equation*}\label{eq:hoco-red}
\hoco \; F = \{ * \} \amalg \coprod_{n\geq 0} \coprod_{\substack{x_0\xrightarrow{f_1} \cdots\xrightarrow{f_n}x_n\\\forall i\in \{1,\dots,n\}, f_i\neq \Id}} [0,1]^n \times F(x_0) /\sim',
\end{equation*}
where $\sim'$, in the case $\Cat$ has no non-identity isomorphisms, is the last four cases of the definition of $\sim$.  

In this paper, the categories $\Cat$ will have no non-identity isomorphisms, so we will work with the latter formulation.   
\end{defn}

We will occasionally need the following:

\begin{defn}\label{def:homomorph}[Definition 2.6, \cite{Vogt}]
A \emph{homomorphism} of homotopy coherent diagrams $F_1,F_0\from \Cat \to K\text{-}\topp$ is a collection of maps $\phi_x\from F_1(x) \to F_0(x)$ for each $x\in \Ob(\Cat)$, so that
\[
F_0(f_n,\dots,f_1)(t_1,\dots,t_{n-1})\circ \phi_x= \phi_y\circ F_1(f_n,\dots,f_1)(t_1,\dots,t_{n-1}),
\]
where $f_n\circ\dots\circ f_1\from x\to y\in \Cat$, for all $t_i$.  
\end{defn}

A homotopy-coherent diagram may itself be viewed as a commutative diagram from an auxiliary category as in \cite[Definition 2.3]{Vogt}, and a homomorphism of homotopy coherent diagrams is a homomorphism (of diagrams, in the usual sense) of the associated commutative diagrams from the auxiliary category.

We will need the following properties:
\begin{enumerate}[leftmargin=*,label=(ho-\arabic*)]
\item \label{itm:ho1} Suppose that $F_0,F_1\from \Cat \to
  K\text{-}\topp$ are homotopy coherent diagrams and $\eta\from F_1
  \to F_0$ is a \emph{natural transformation}, defined as a homotopy coherent
  diagram
  \[\eta\from\two\times\Cat\to K\text{-}\topp
  \]
  with $\eta|_{\{i\}\times\Cat}=F_i$, $i=0,1$.  Then $\eta$ induces a
  map $\hoco\,\eta \from \hoco\,F_1 \to \hoco\,F_0$, well-defined up to homotopy, according to \cite[Theorem 5.12]{Vogt}.  If $\eta(x)$
  is a $K$-weak equivalence for each $x\in \Cat$---we will call such an
  $\eta$ a $K$-weak equivalence from $F_1$ to $F_0$---then $\hoco\,\eta$ is a 
 $K$-weak equivalence as well.

  When the spaces involved are $K$-CW complexes, a
  weak equivalence $\eta\from F_1\to F_0$ is also a \emph{homotopy
  equivalence} \cite[Proposition 4.6]{Vogt}, that is, there exist
  $\zeta,\zeta'\from F_0\to F_1$ and
  \[
  \mathfrak{h},\mathfrak{h}'\from\{2\to1\to0\}\times\Cat\to
  K\text{-}\topp,
  \]
  with $\mathfrak{h}|_{\{2\to1\}\times\Cat}=\eta$,
  $\mathfrak{h}|_{\{1\to0\}\times\Cat}=\zeta$,
  $\mathfrak{h}|_{\{2\to0\}\times\Cat}=\Id_{F_0}$, and
  $\mathfrak{h}'|_{\{2\to1\}\times\Cat}=\zeta'$,
  $\mathfrak{h}'|_{\{1\to0\}\times\Cat}=\eta$,
  $\mathfrak{h}'|_{\{2\to0\}\times\Cat}=\Id_{F_1}$.  Here we write $\Id_{F_0}$ (similarly $\Id_{F_1}$) for the `identity' natural transformation $\two \times \Cat \to K\text{-}\topp$, which restricts to $F_0$ on $\{0\}\times \Cat$ and $\{1\}\times \Cat$, and where the morphism $\Id_{F_0}(\phi_{1,0}\times c)$, for $c$ an object of $\Cat$, is the identity $F_0(c)\to F_0(c)$; there are also well-defined higher homotopies. 

\item\label{itm:ho-new}
 A homomorphism $F_1 \to F_0\from \Cat \to K\text{-}\topp $ of homotopy coherent diagrams induces a $K$-equivariant map $\hoco\, F_1 \to \hoco \, F_0$.  This map on homotopy colimits satisfies a certain compatibility with \ref{itm:ho1}, as in \cite[Proposition 7.1]{Vogt}.
\item\label{itm:ho3}
For any subgroup $H$ of $K$, define the fixed-point diagram $F^H \from \Cat\to \topp$ by setting $F^H(x)$ to be the fixed points $F(x)^H$.  Then there is a natural homeomorphism
\[
(\hoco\, F)^H\simeq \hoco (F^H),
\]
see \cite[(ho-3)]{oddkh}
\item \label{itm:ho2} Suppose that $F\from \Cat \to \topp$ and $G\from \Dat \to \topp$.  Then there is an induced functor $F \wedge G \from \Cat\times\Dat \to \topp$ with $(F\wedge G)(v\times w)=F(v)\wedge G(w)$.  Then there is a natural (in homomorphisms of homotopy coherent diagrams) weak equivalence $(\hoco\, F)\wedge(\hoco \, G)\to \hoco (F\wedge G)$.
\item \label{itm:ho4} Let $L \from \Cat \to \Dat$ be a functor between small categories.  Given $d\in \Ob(\Dat)$, the \emph{undercategory} of $d$ is defined as follows.  It has objects $\{(c,f)\mid c\in \Cat, f\from d\to L(c)\}$, and arrows $\Arr((c,f),(c',f'))=\{ g \from c\to c'\mid f'=L(g)\circ f\}$.  We write $d \downarrow L$ for the undercategory of $d$.  The functor $L$ is called \emph{homotopy cofinal} if for each $d\in \Ob(\Dat)$, the undercategory $d\downarrow L$ has contractible nerve. 

For a homotopy coherent diagram $F\from \Dat \to K\text{-}\topp$, there is an induced homotopy coherent diagram $F\circ L \from \Cat \to K\text{-}\topp$.  If $F(j)$ is cofibrant for all $j\in \Ob(\Dat)$ and $L$ is homotopy cofinal, then the natural map
\[
\hoco\, F\circ L \to \hoco\, F
\]
is a homotopy equivalence. This follows from the version for homotopy limits in \cite{bousfield-kan}; cf.\ \cite[Sect.\ 2.9, (ho-4)]{lls1}.
\end{enumerate}

\subsection{Little disks refinement}
\label{sec:little-boxes}
With this background, we are ready to review the little box
realization construction of \cite[\S5]{lls1} and generalize to functors to $\burn_K$.  Assume from now on that $K$ is abelian.
\begin{defn}\label{def:spacref}
  Fix a small category $\Cat$ and a strictly unitary $2$-functor $F
  \from \Cat \to \burn_K$, as well as a finite abelian group $K$.  Fix a pair of finite-dimensional orthogonal $K$-representations $V_1$ and $V_2$, where the action of $K$ on $V_1$ is denoted $\refl$.  A \emph{spatial
    refinement} of $F$ modeled on $(V_1,V_2)$ is a homotopy coherent diagram $\widetilde{F}
  \from \Cat \to K\text{-}\topp$ such that
\begin{enumerate}[leftmargin=*]
\item For any $u\in \Cat$, $\widetilde{F}(u)=\vee_{x\in F(u)} B(V)/\partial B(V)$, where $V:=V_1\oplus V_2$.    
\item For any sequence of morphisms $u_0 \xrightarrow{f_1} \cdots
  \xrightarrow{f_n} u_n$ in $\Cat$ and any $(t_1,\dots,t_{n-1})\in
  [0,1]^{n-1}$ the map
  \[
  \widetilde{F}_k(f_n,\dots,f_1) (t_1,\dots,t_{n-1})\from \bigvee_{x\in F(u_0)} B(V)/\partial B(V) \to \bigvee_{x\in F(u_n)}B(V)/\partial B(V)  \]
  is a $K$-equivariant disk map $\refl\oplus\Id_{V_2}$-refining the correspondence $F(f_n
  \circ \dots \circ f_1)$ (note that $F(f_n
  \circ \dots \circ f_1)$ is naturally isomorphic to $F(f_n)
  \times_{F(u_{n-1})} \dots \times_{F(u_1)} F(f_1)$).
\end{enumerate}
\end{defn}
This definition extends \cite[Definition 5.1]{lls1} and \cite[Definition 4.11]{oddkh}.

The main technical result that makes it possible to construct spatial refinements from Burnside functors is as follows.

\begin{prop}[{cf. \cite[Proposition 4.12]{oddkh},  \cite[Proposition 5.22]{lls1}}]
\label{prop:cube-non-special}
  Let $\Cat$ be a small category in which every sequence of composable
  non-identity morphisms has length at most $n$, and let $F\from
  \Cat\to\burn_K$ be a strictly unitary 2-functor, with $K$ a finite abelian group.
\begin{enumerate}[leftmargin=*]
\item Fix a finite-dimensional orthogonal $K$-representation $\refl\from K\to O(V_1)$.  For $V_2$ a sufficiently large $K$-representation, there is a spatial refinement of
  $F$ modeled on $(V_1,V_2)$.
   \label{itm:real1-pre}
\item Fix a finite-dimensional orthogonal $K$-representation $\refl\from K\to O(V_1)$.  For $V_2$ a sufficiently large $K$-representation, any two spatial refinements of $F$ modeled on $(V_1,V_2)$ are weakly equivalent.  \label{itm:real2-pre}
\item\label{itm:cube-non-special-3} Fix a finite-dimensional orthogonal $K$-representations $\refl\from K\to O(V_1)$, and $V_2,W$.  If $\widetilde{F}$ is a spatial refinement of $F$ modeled on $(V_1,V_2)$ then the result of suspending each $\widetilde{F}(u)$ and
  $\widetilde{F}(f_n, \dots, f_1)$ by $W$ gives a spatial refinement of $F$ modeled on $(V_1,V_2\oplus W)$.  \label{itm:real3-pre}
\end{enumerate}
\end{prop}

\begin{proof}
  This is entirely analogous to the proof of Proposition 4.12 of \cite{oddkh}; see Proposition 4.20 of \cite{oddkh} for equivariant aspects. 
\end{proof}

\subsection{Realization of cube-shaped diagrams}
Finally, in this section we will discuss how to construct a CW complex
$\CRealize{F}$, and then a spectrum
$\Realize{F}$, from a diagram $F\from
\two^n\to\burn_K$.  We assume in this section that $K$ is abelian.  Let $\two_+$ be the category with objects
$\{0,1,*\}$ and unique non-identity morphisms $1\to0$ and $1\to *$, and
let $\two^n_+=(\two^n)\amalg \ast$ where, for $v\in\two^n-\{0^n\}$, there is a unique arrow $v\to \ast$, and $\Arr(0^n,\ast)=\emptyset$.

Let $\widetilde{F}\from \two^n\to K\text{-}\topp$ be a spatial refinement of $F$ modeled on $(V_1,V_2)$ for some finite-dimensional orthogonal $K$-representations $V_1$ and $V_2$, and let $\widetilde{F}^+\from \two^n_+ \to
K\text{-}\topp$ be the diagram obtained from $\widetilde{F}$ by setting
$\widetilde{F}^+(\ast)=\pt$; we will sometimes abuse notation by also calling $\widetilde{F}^+$ a spatial refinement of $F$.  Let $\CRealize{F}_{(V_1,V_2)}$ be the
homotopy colimit of $\widetilde{F}^+$ (we will sometimes suppress $(V_1,V_2)$ from the notation).  Sometimes we write $||\widetilde{F}^+||$ to indicate dependence on the choice of spatial refinement.  We call
$\CRealize{F}_{(V_1,V_2)}$ a \emph{(spatial) realization} of $F\from \two^n
\to\burn_K$ for the pair $(V_1,V_2)$.

\begin{cor}[{cf. \cite[Corollary 5.6]{lls1}, \cite[Corollary 4.14]{oddkh}}]
\label{cor:burnlimwelldef}
  Fix a finite-dimensional orthogonal $K$-representation $V_1$.  For $V_2$ a sufficiently large finite-dimensional orthogonal $K$-representation, the realization $\CRealize{F}_{(V_1,V_2)}$ is well-defined up
  to weak equivalence in $K\text{-}\topp$.
\end{cor}
\begin{proof}
  This follows from Proposition
  \ref{prop:cube-non-special} and properties of homotopy colimits
  \ref{itm:ho1}.
\end{proof}

The homotopy colimit $\CRealize{F}$ may be given various CW
structures;  First, from Definition \ref{def:hoco}, there is the
\emph{standard} CW structure, with cells $[0,1]^m \times B_x$,
parameterized by tuples $(f_m,\dots,f_1)$ subject to some relations.  Usually, this will not be a $K$-CW decomposition (as some cells may be, for example, fixed by the action of $K$, but not fixed pointwise, as in the definition of a $K$-CW structure).

We have a second cell structure on $\CRealize{F}$, the \emph{fine} structure, which is obtained from the standard structure by subdividing each cell $[0,1]^m\times B_x$ into $K$-cells (to see that this is possible, see for example \cite{adams} or \cite{illman}).  This is an equivariant cell structure, and so in particular $\CRealize{F}$ has the homotopy type of a $K$-CW complex, although otherwise we will not use this structure.

Further, $\CRealize{F}$ has the \emph{coarse} cell structure of \cite[Section
6]{lls1}.  There they construct a CW structure on $\CRealize{F}$
for $F$ a (not $K$-decorated) Burnside functor, with cells formed by taking unions of the balls $[0,1]^m\times B_x(V)$, so that there is exactly one (non-basepoint) cell
$\cell(x)$ for each $x\in\amalg_u F(u)$.  
The coarse cell structure generalizes in a straightforward way to $K$-equivariant realizations to give a CW complex structure on $||F||$ so that the action of $K$ permutes the cells, but it is not a $K$-CW-structure.  In the sequel, we will treat $||F||$ as a CW complex with the coarse cell structure.

\begin{prop}\label{prop:totalization}
  Fix a finite abelian group $K$, and a finite-dimensional orthogonal $K$-representation $\refl\from K \to O(V_1)$.  If $F\from\two^n\to\burn_K$ is a functor, then the shifted reduced (coarse) cellular complex
  $\redcellC(\CRealize{F}_{(V_1,V_2)})[-\dim V_1-\dim V_2]$ is isomorphic to the totalization
  $\Tot_\degh(F)$ with the cells mapping to the corresponding generators, where $\degh\from K\to \ZZ_2$ is the topological degree of $\refl$. If
  $\eta\from F_1\to F_0$ is a natural transformation of Burnside
  functors, then there is an induced $K$-equivariant cellular map $\eta_*\from \CRealize{F_1}\to\CRealize{F_0}$, and the induced cellular chain map agrees with
  $\Tot_\degh(\eta)$.
\end{prop}

\begin{proof}
  This follows from the proof of Proposition 4.16 of \cite{oddkh}.
\end{proof}

We will use the notion of an equivariant spectrum as in \cite[Chapter XII]{alaska}, which we recall here.  Fix a finite group $G$ and a complete $G$-universe $\mathcal{U}$, that is, an infinite-dimensional real inner product space equipped with an orthogonal $G$-action, so that, for each finite-dimensional orthogonal $G$-representation $V$, $\mathcal{U}$ contains the direct sum of countably many copies of $V$.  We refer to finite-dimensional $G$-subspaces of $\mathcal{U}$ as \emph{indexing spaces}.  A $G$-\emph{prespectrum} $E$ indexed on $\mathcal{U}$ is a family of based $G$-spaces $E(V)$, running over all indexing spaces $V$, together with maps:
\[
\sigma_{V,W}\from \Sigma^{W-V}E(V)\to E(W),
\] 
whenever $V\subset W$, where $\Sigma^{W-V}$ denotes suspension by the orthogonal complement of $V$ in $W$, and where it is required that $\sigma_{V,V}=\Id$.  The following diagram is also required to commute:

\[
\begin{tikzpicture}[scale=1.4,baseline={(current bounding box.center)}]

\node (a1) at (0,0) {$\Sigma^{Z-W}\Sigma^{W-V}E(V)$};
\node (a2) at (2.5,0) {$\Sigma^{Z-W}E(W)$};
\node (b1) at (0,-1.5) {$\Sigma^{Z-V}E(V)$};
\node (b2) at (2.5,-1.5) {$E(Z)$};
\draw[->] (a1) -- (a2) ;
\draw[->] (b1) -- (b2);
\draw[->] (a1) -- (b1) node[pos=.5,anchor=east] {\scriptsize $\cong$};
\draw[->] (a2) -- (b2);

\end{tikzpicture}
\]
A \emph{spectrum} $E$ is a prespectrum so that the adjoints of the maps $\sigma_{V,W}$, given by
\[
\tilde{\sigma}_{V,W}\from E(V)\to \Omega^{W-V}E(W),
\]
are homeomorphisms.

The forgetful functor $\ell$ from spectra to prespectra has a left adjoint $L$, called \emph{spectrification}, so that for $E$ a spectrum, $L\ell E=E$. 

A map of prespectra (or spectra) $f\from E_1\to E_2$ consists of a $G$-equivariant map $E_1(V)\to E_2(V)$ for all indexing spaces $V$, respecting the structure maps.

A \emph{homotopy} of maps of spectra $f_1,f_2\from E_1\to E_2$ is a $G$-equivariant map of prespectra $h\from E_1\wedge I_+\to E_2$, where $I_+$ is the unit interval with a disjoint basepoint added, so that $h|_{E_1\wedge i}=f_i$ for $i=1,2$.  
 
 In order to construct a spectrum from a Burnside functor, we will also consider \emph{virtual representations}, as considered in \cite{costenoble-waner} or \cite{costenoble-may-waner}.
 \begin{defn}\label{def:virtual-rep}
 	For a finite group $K$ (with complete universe $\mathcal{U}$ fixed), the category of \emph{virtual representations} of $K$ has objects the pairs $(V,W)$ for indexing spaces $V,W$.  The pair $(V,W)$ we will usually write as $V\ominus W$.  A \emph{(virtual) map }  $V_1\ominus W_1\to V_2\ominus W_2$ is the equivalence class of a pair of $K$-equivariant isometries:
 	\begin{align*}
 	f\from V_1\oplus Z_1 &\to V_2\oplus Z_2,\\
 	g\from W_1\oplus Z_1 &\to W_2\oplus Z_2,
 	\end{align*}
 	where $Z_1$ and $Z_2$ are indexing spaces.  The equivalence relation between pairs $(f,g)$ as above is generated by setting $(f,g)\sim (f\oplus k,g\oplus k)$ where $k$ is any $K$-isometry $T_1\to T_2$ of indexing spaces, and where $f\oplus k $ and $g\oplus k$ are defined by:
 	\begin{align*}
 	f\oplus k\from V_1\oplus (Z_1+T_1) &\to V_2\oplus (Z_2+T_2),\\
 	g\oplus k\from W_1\oplus (Z_1+T_1) &\to W_2\oplus (Z_2+T_2),
 	\end{align*}
 \end{defn}
 We will write $(V_1\ominus W_1)+ (V_2\ominus W_2)$ for $(V_1\oplus V_2)\ominus (W_1\oplus W_2)$.  We call virtual representations that are of the form $V\ominus 0$ \emph{ordinary representations}.  Associated to any virtual representation there is a natural element of the representation ring of $K$.

\begin{construction}\label{const:output-spectra}
	Fix a finite abelian group $K$, and a finite-dimensional orthogonal $K$-representation $\refl\from K \to O(V_1)$.  Let $\Cat$ be a small category in which every sequence of composable non-identity morphisms has length at most $n$, for some $n$, and let $(F\from \Cat\to \burn_K,W)$ be a stable functor, where we take a virtual representation representative of $W$ as $W=W_1\ominus W_2$.  We define a spectrum $|(F,W)|_{V_1}$ as follows.  We start with the definition of a prespectrum $|(F,W)|_{\mathcal{P},V_1}$.  First, fix $V_2$ a sufficiently large indexing space, and define $|(F,W)|_{\mathcal{P},V_1}(V_2)$ as follows.  We require that the virtual representation $W\oplus V_2$ is isomorphic to a (ordinary) representation, and set $|(F,W)|_{\mathcal{P},V_1}(V_2)=||F||_{(V_1,V_2+W)}$.  The resulting space $|(F,W)|_{\mathcal{P},V_1}(V_2)$ depends on some choices, but its weak-equivalence class is well-defined by Corollary \ref{cor:burnlimwelldef}.  For indexing spaces $V_3$ containing $V_2$, set 
	\[
	|(F,W)|_{\mathcal{P},V_1}(V_3)=\Sigma^{V_3-V_2}||F||_{(V_1,V_3+W)},
	\]
	with the structure maps acting by the identity in the suspension factor.  For indexing spaces $V_3$ so that $V_2\not\subset V_3$, we set $|(F,W)|_{\mathcal{P},V_1}(V_3)=\ast$, a single basepoint, with trivial structure maps.
	
	It is readily confirmed that $|(F,W)|_{\mathcal{P},V_1}$ is a prespectrum, depending on the choices of a virtual representation $W_1\ominus W_2$ underyling $W$, an indexing space $V_2$, as well as a spatial realization of $(V_1,V_2\oplus W)$.  We set $|(F,W)|_{V_1}=L|(F,W)|_{\mathcal{P},V_1}$.  Said differently, $|(F,W)|_{V_1}$ is homotopy equivalent to the suspension spectrum $\Sigma^{W-V_2}\Sigma^{\infty}(||(F,W)||_{(V_1,V_2)})$.
	
	Proposition \ref{prop:spectrum-realization-well-def} confirms that this spectrum is well-defined up to equivariant homotopy. 
\end{construction}

We record a result of \cite{oddkh} (there it is proved for $K=\ZZ_2$; the more general proof is no different):
\begin{prop}[{\cite[Lemma 4.17 ]{oddkh}}]\label{prop:spectrum-realization-well-def}
Let $(F\from \two^n \to \burn_K,W)$ be a stable Burnside functor and $\refl\from K \to O(V_1)$ a finite-dimensional orthogonal representation of $K$.  The spectrum realization $\Realize{\Sigma^W F}_{V_1}$ is well-defined up to $K$-equivariant stable homotopy equivalence.  For stable Burnside functors $(F_i,W_i)$ for $i=1,2$, a $K$-equivariant stable equivalence $\eta\from \Sigma^{W_1}F_1\to \Sigma^{W_2}F_2$ induces a $K$-equivariant stable homotopy equivalence 
\[
|\eta|\from \Realize{\Sigma^{W_1}F_1}_{V_1}\to \Realize{\Sigma^{W_2}F_2}_{V_1}.
\]
\end{prop}
\begin{proof}
	This follows using the fact that $||(F,W)||_{(V_1,V_2)}$ is well-defined up to weak equivalence for $V_2$ sufficiently large, as well as Proposition \ref{prop:cube-non-special}(\ref{itm:cube-non-special-3}).  Here we use that $|\Sigma^W F|_{V_1}$ is the suspension spectrum $\Sigma^{W-V_2}\Sigma^{\infty}(||(F,W)||_{(V_1,V_2)})$.  The construction of $|\eta|$ is as in the proof of Lemma 4.17 of \cite{oddkh}. 
\end{proof}

\section{External actions and realization}\label{sec:external}

Our goal in this section will be to show that, for a Burnside functor $\ff$ with an external action $\psi$, a suitable realization of $\ff$ admits a $G$-action, and the fixed-point set can be explicitly described as a realization of yet another Burnside functor.  In Section \ref{sec:ho-co-external} we deal with some generalities on homotopy coherent diagrams, then specialize to homotopy coherent diagrams from Burnside functors in Section \ref{sec:realizing-external}.  Throughout this section we assume that $K$ is a finite abelian group.  

\subsection{External actions on homotopy coherent diagrams}\label{sec:ho-co-external}

\begin{defn}\label{def:space-ext-action}
Let $F\from \Cat \to \topp$ a homotopy coherent diagram, where $\Cat$ is a small category so that there is some $n$ for which each sequence of composable non-identity morphisms has length at most $n$.  Say that a finite group $G$ acts on $\Cat$ by $\psi$.  An \emph{external action} $\bar{\psi}$ of $G$ on $F$, compatible with $\psi$, is defined as follows.  An external action consists of a homomorphism $\bar{\psi} \from G \to \mathrm{Homeo} (\bigvee_{c\in \Ob(\Cat)} F(c))$ lifting the group action $\psi$ of $G$ on $\Ob(\Cat)$ (and preserving the basepoint).  The action $\bar{\psi}$ is required to `commute with composition' in the following sense:
\begin{equation}\label{eq:pre-equivariance}
\bar{\psi}_g(F(f_i,\dots,f_1)(t_1,\dots, t_{i-1})(y))=F(\psi_g(f_i),\dots,\psi_g(f_1))(t_1,\dots,t_{i-1})(\bar{\psi}_g y),
\end{equation}
for all $g\in G$ and $y\in F(c)$.
For a functor $F \from \Cat \to K\text{-}\topp$, an external action on $F$ is as above but further requiring that the $K$ and $G$ actions commute.  
\end{defn}

\begin{rmk}\label{rmk:ho-action}
A homotopy coherent diagram with external action by $G$ may be thought of as an analogue of a $G$-space in the category of homotopy coherent diagrams.  First, note that a pointed $G$-space $X$ may be viewed as a functor $\underline{X}\from BG \to \topp$, where $BG$ is the category with one object, and morphisms $G$.  A more flexible notion (though equivalent for many purposes, see \cite{dwyer-kan-smith},\cite{cooke}) is a homotopy coherent diagram $\underline{X} \from BG \to \topp$.  

Consider the case of a small category $\Cat$ \emph{without} a $G$-action.  Then one might define a ``$G$-equivariant'' diagram as a homotopy-coherent diagram $BG\times \Cat \to \topp$.  
For the case of present interest, that is, for a small category $\Cat$ with $G$-action, we need a `twisted' version of the above definition, as in Definition \ref{def:space-ext-action}.  
\end{rmk}

\begin{prop}\label{prop:preconditions}
Fix $F \from \Cat \to \topp$, where $\Cat$ has an action $\psi$ and $F$ admits an external action, as in Definition \ref{def:space-ext-action}.  Then the homotopy colimit $\hoco\, F$ admits a $G$-action by 
\[
g(f_m,\dots, f_1;t_1,\dots,t_m;y) = (\psi_g f_m,\dots, \psi_g f_1;t_1,\dots, t_m ; \bar{\psi}_gy).
\]
Similarly, if $F\from \Cat \to K\text{-}\topp$ admits an external action by $G$, the homotopy colimit in $K\text{-}\topp$ inherits a $K\times G$-action by the same formula.
\end{prop}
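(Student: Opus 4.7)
The plan is to verify the four things needed for the formula to define a $G$-action on $\hoco F$: (i) the map on the disjoint union is well-defined as a self-map of the quotient (i.e., it respects $\sim'$ from Definition \ref{def:hoco}), (ii) it satisfies the group axioms, (iii) it is continuous, and (iv) in the $K$-equivariant case it commutes with the $K$-action. The main content is (i); the rest will be essentially formal.

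For (i), I would go through the five clauses of $\sim'$ one by one. First, recall that since $G$ acts on $\Cat$ by functors, $\psi_g$ preserves identity morphisms and is compatible with composition: $\psi_g(f_{i+1}\circ f_i)=\psi_g(f_{i+1})\circ \psi_g(f_i)$. This immediately handles the clauses $f_i=\mathrm{Id}$ (for $i>1$) and $t_i=1,\ i<n$. The clauses $t_n=1$ and $p=*$ are trivial (the latter uses that $\bar\psi_g$ is required to preserve the basepoint). The only non-formal clause is $t_i=0$, where one must check
\[
\bar\psi_g\bigl(F(f_i,\ldots,f_1)(t_1,\ldots,t_{i-1},p)\bigr)=F(\psi_g f_i,\ldots,\psi_g f_1)(t_1,\ldots,t_{i-1},\bar\psi_g p),
\]
which is exactly the compatibility equation \eqref{eq:pre-equivariance} imposed in Definition \ref{def:space-ext-action}. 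Thus the formula descends to a well-defined map $g\cdot(-)\from \hoco F\to \hoco F$.

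For (ii), the identity element of $G$ maps to the identity functor on $\Cat$ (since group actions on categories send the unit to the identity functor) and $\bar\psi_e=\mathrm{Id}$ (since $\bar\psi$ is a group homomorphism to $\mathrm{Homeo}$), so $e$ acts as the identity on $\hoco F$. The multiplicativity $g\cdot(h\cdot x)=(gh)\cdot x$ follows separately on the $\Cat$-coordinates (using $\psi_{gh}=\psi_g\psi_h$) and on the space coordinate (using $\bar\psi_{gh}=\bar\psi_g\bar\psi_h$), and the $t_i$'s are fixed. Continuity (iii) is immediate from the explicit formula: on each summand of the disjoint union \eqref{eq:hoco} the map is the product of permutation of summands (determined by $\psi_g$) with the identity on $[0,1]^m$ and with the restriction of the continuous map $\bar\psi_g$ to $F(c)$.

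For the $K$-equivariant case (iv), the hypothesis that the $K$- and $G$-actions on $\bigvee_c F(c)$ commute means $\bar\psi_g\circ k=k\circ\bar\psi_g$ for all $k\in K$, and the $K$-action on $\hoco F$ (inherited pointwise on the last coordinate of each cell) therefore commutes with the $G$-action defined above. Hence $\hoco F$ inherits a $K\times G$-action by the same formula. The only potential obstacle here is the well-definedness check on $\sim'$, which is handled precisely by the compatibility \eqref{eq:pre-equivariance}; since that was built into the definition of external action, there is no real obstruction.
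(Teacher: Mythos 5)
Your proposal is correct and takes essentially the same approach as the paper's proof, which simply says to unravel Definition~\ref{def:hoco}, apply the compatibility condition (\ref{eq:pre-equivariance}) for the $t_i=0$ clause, and observe that continuity of $\bar\psi$ yields continuity of the action; you have worked out the same checks in more detail. One small imprecision: since the reduced form of the homotopy colimit with $\sim'$ uses only tuples of \emph{non-identity} morphisms, the relevant observation is not the $f_i=\Id$ clause of $\sim$, but rather that $\psi_g$ preserves identity morphisms and hence preserves the index set of tuples with all $f_i\neq\Id$ (and one also needs $\psi_g(f_{i+1}\circ f_i)=\psi_g(f_{i+1})\circ\psi_g(f_i)$ so that the $t_i=1,\ i<n$ identification is respected, which you do note).
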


\begin{proof}
This consists of unraveling the Definition \ref{def:hoco} of homotopy colimits and applying the condition (\ref{eq:pre-equivariance}).  We work with the version of the homotopy colimit in which no nonidentity isomorphisms appear in the index category (as is possible from our hypotheses on $\Cat$).  One first sees by directly considering Definition \ref{def:hoco} that $G$ acts on the homotopy colimit (as a set), and the continuity of the $G$-action in Definition \ref{def:space-ext-action} implies that the $G$-action on the homotopy colimit is continuous.  The $K$-equivariant version is analogous.  
\end{proof}

\begin{defn}\label{def:external-weak-equivalence}
Let $F_1,F_2 \from \Cat \to K\text{-}\topp$ be homotopy coherent diagrams, where $\Cat$ has an action $\psi$ and $F_1$ and $F_2$ admit external actions, all as in Definition \ref{def:space-ext-action}.  We say that $F_1$ and $F_2$ are \emph{externally weakly equivalent} (usually shortened to \emph{weakly equivalent} if the context is clear) if there is a diagram $F_3 \from \two \times \Cat\to K\text{-}\topp$, where $\two \times \Cat$ is given the product $G$-action with $G$ acting trivially on $\two$, so that $F_3|_{i\times \Cat}=F_{i+1}$ and so that $F_3$ itself has an external action.  Furthermore, we require that the maps $F_1(x)\to F_2(x)$ are weak equivalences of $K$-spaces for each $x\in \Cat$.
\end{defn}

\begin{lem}\label{lem:fixed-points}
Let $H$ be a subgroup of $G$, and let $F \from \Cat \to \burn_K$ be as in Definition \ref{def:space-ext-action}.  The $H$-fixed-point set $(\hoco\, F)^H$ of $\hoco\, F$ is the homotopy colimit of the homotopy coherent diagram $F^H\col \Cat^H \to K\text{-}\topp$ with entries $F^H(u)=F(u)^H$, and with morphisms defined as follows.  For a sequence of composable morphisms $(f_i,\ldots,f_1)$ of $\Cat$, with $u\in \Ob(\Cat)$ the domain of $f_1$, the map $F^H(f_i,\dots,f_1)(t_1,\dots,t_{i-1})$ is given by the restriction of the map $F(f_i,\dots,f_1)(t_1,\dots,t_{i-1})$ to $F(u)^H$.  
\end{lem}
\begin{proof}
We describe the fixed-point set of $\hoco \, F$ explicitly.  First, by the construction of homotopy colimits, by applying the relations iteratively, each non-basepoint point in $\hoco\, F$ may be represented (uniquely) by a tuple $(f_m,\dots, f_1;t_1,\dots,t_m;y)$ for $m\geq 0$, with none of $t_i=0,1$.  Such a point is in the fixed-point set if and only if 
\[
(f_m,\dots,f_1)=(h f_m,\dots,h f_1)
\]
as tuples in the set $\Arr(\Cat)$ of homomorphisms of $\Cat$, and $y=hy$. 
That is, the $f_i$ must come from the $H$-fixed arrows, i.e.\ elements of $\Arr(\Cat^H)$, and the lemma follows.
\end{proof}

\subsection{Realizations}\label{sec:realizing-external}

We start with a minor modification of the Construction \ref{const:g-box-action}:
\begin{construction}\label{const:g-box-action-2}
	Fix a finite group $G$, and a finite abelian group $K$, and a pair of finite-dimensional orthogonal $K\times G$-representations $\mathfrak{s}_i\from K\times G\to O(V_i)$ for $i=1,2$.  Let $\underline{X}\from \{*\}\to \burn_K$ be a functor with external action $\psi$ by $G$, where $G$ acts on the one-point category $\{*\}$ (that is the category with a unique object $*$ and no non-identity morphisms) trivially.  Write $X$ for $\underline{X}(\ast)$.  Let $V=V_1\oplus V_2$, and define $B(X,V)=\amalg_{x\in X} B_x(V)$.  Then define an action of $K\times G$ on $B(X,V)$ by setting, for $k\in K$, $g\in G$ and $(v_1,v_2)\in V_1\oplus V_2=V$,
	\[
	(k\times g)\cdot (x,v_1,v_2)=(g\cdot x,\mathfrak{s}_1(k\sigma(\psi_{g,v},x)\times g)\cdot v_1,\mathfrak{s}_2(k\times g)\cdot v_2).
	\]
	Here $\sigma(\psi_{g,v},x)$ denotes the decoration $\sigma(s^{-1}(x))$ for the decorated correspondence $\psi_{g,v}\from X\to X$.  This assignment gives a continuous homomorphism $\rho\from K\times G \to \mathrm{Homeo}(B(X,V))$.
	 
\end{construction}

\begin{lem}\label{lem:conditions}
Let $K$ be an abelian group. Fix a small category $\Cat$ such that (1) there is some $n$ for which each sequence of composable non-identity morphisms has length at most $n$ and (2) $\Cat$ has an action $\psi$ by a finite group $G$.  Fix a Burnside functor $\ff\col \Cat \to \burn_K$ where $\ff$ admits an external action $\bar{\psi}$ by $G$, compatible with $\psi$.   

Let $\tilde{F}$ be a spatial refinement for $\ff$, modeled on $K$-representations $(V_1,V_2)$.  Suppose that $V_1$ and $V_2$ admit $G$-actions commuting with the actions of $K$.  We define an action of $K\times G$ on $B(\bigcup_{u\in \Ob(\Cat)}F(u) ,V_1\oplus V_2)$, as in Construction \ref{const:g-box-action-2}.  Suppose, for each $g\in G$, $u\in \Ob(\Cat)$, $x\in F(u)$, and $p\in B_x/\partial B_x$, that
\begin{equation}\label{eq:equivariance}
g(\tilde{F}(f_i,\dots,f_1)(t_1,\dots, t_{i-1})(p))=\tilde{F}(g(f_i),\dots,g(f_1))(t_1,\dots,t_{i-1})(g p).
\end{equation}   
In this notation, the term $gp$ does not refer to the action of $g\in G$ applied to $p\in V_1\oplus V_2$.  Rather, we view $p$ as an element of $B(\bigcup_{u\in \Ob(\Cat)} F(u),V_1\oplus V_2)$, and $gp$ is the result of applying the action, as in Construction \ref{const:g-box-action-2}, of $G$ on $B(\bigcup_{u\in \Ob(\Cat)} F(u),V_1\oplus V_2)$ to $p$, and similarly for the left-hand side of (\ref{eq:equivariance}).  

Then $\hoco \,\widetilde{F}$ admits a $G$-action, commuting with its natural $K$-action, given by 
\[
(f_m,\dots, f_1;t_1,\dots,t_{m-1};y) \to (gf_m,\dots, g f_1;t_1,\dots, t_{m-1} ; gy).
\]

\end{lem}

The following special case will be our main use of the Lemma.  If $F\from \two^{np} \to \burn_K$ is a Burnside functor admitting an external action by $\ZZ_p$ compatible with the action of $\ZZ_p$ on $\two^{np}$ by permutation of coordinates, with $\widetilde{F}$ satisfying the conditions of Lemma \ref{lem:conditions} for $\Cat=\two^{np}$, we have that $||\widetilde{F}^+||$ admits a $\ZZ_p$-action, commuting with its natural $K$-action, as above.

\smallskip

\noindent \emph{Proof of Lemma \ref{lem:conditions}.}
This follows directly from Proposition \ref{prop:preconditions}.  \qed 
\smallskip

\begin{defn}\label{def:g-coherence}
We call a spatial refinement $\hodit$ of a Burnside functor $\ff\col \Cat\to \burn_K$ with a $G$-external action satisfying (\ref{eq:equivariance}) a $G$-\emph{coherent refinement} of $\ff$.
\end{defn}
We next try to build a homotopy coherent diagram satisfying the conditions of Lemma \ref{lem:conditions}.  The key is to provide a suitable generalization of Proposition 5.2 of \cite{lls1}.

\begin{prop}[{cf. \cite[Proposition 5.2]{lls1},  \cite[Proposition 4.12]{oddkh}}]
\label{prop:5.2-analog}
Let $\Cat$ be a small category admitting a $G$-action $\psi$, in which every sequence of composable
  non-identity morphisms has length at most $n$, and let $F\from
  \Cat\to\burn_K$ be a strictly unitary 2-functor admitting an external $G$-action.  Fix a finite-dimensional orthogonal $K\times G$-representation $\refl \from K \times G\to O(V_1)$.  
\begin{enumerate}[leftmargin=*]
\item There exists some finite-dimensional $K\times G$-representation $W$ so that the following holds.  For all finite-dimensional representations $V_2$ of $K\times G$ which admit an embedding of $W$, there exists a $G$-coherent refinement of $F$ modeled on $(V_1,V_2)$.  \label{itm:real1}
\item\label{itm:real2} There exists some finite-dimensional $K\times G$-representation $W$ so that the following holds.  For all finite-dimensional representations $V_2$ of $K\times G$ which admit an embedding of  $W$, any two $G$-coherent refinements of $F$ modeled on $(V_1,V_2)$ are $K$-weakly equivalent.  Let $\eta$ denote the functor $\eta\from \two\times \Cat\to \burn_K$ that is two copies of $F$ (on $\{0\}\times \Cat$ and $\{1\}\times \Cat$) along with identity arrows along the $\two$-factor.  Then, if $\widetilde{F}_0$ and $\widetilde{F}_1$ are $G$-coherent refinements of $F$ modeled on $(V_1,V_2)$, there exists a $G$-coherent refinement $\widetilde{\eta}$ of $\eta$, so that $\widetilde{\eta}|_{\{0\}\times\Cat }=\widetilde{F}_0$ and  $\widetilde{\eta}|_{ \{1\}\times\Cat}=\widetilde{F}_1$.
\item If $\widetilde{F}_{(V_1,V_2)}$ is a $G$-coherent refinement of $F$ modeled on $(V_1,V_2)$,
  then the result of suspending each $\widetilde{F}_{(V_1,V_2)}(u)$ and
  $\widetilde{F}_{(V_1,V_2)}(f_n, \dots, f_1)$ by a $K\times G$ representation $V'$ gives a $G$-coherent spatial
  refinement of $F$ modeled on $(V_1,V_2\oplus V')$.  \label{itm:real3}
\end{enumerate}
\end{prop}
\begin{proof}

For Item \ref{itm:real1}, we inductively construct a spatial refinement $\widetilde{F}$.  

First, choose representatives $a_\omega$ of the orbits of $\Arr(\Cat)$ under the action of $G$.  For each representative $a_\omega$, let $S_\omega \subset G$ be its stabilizer subgroup.  For each $a_\omega$, choose a $K\times S_\omega$-equivariant disk map refining $F(a_\omega)$; such exist by Lemmas \ref{lem:connctd} and \ref{lem:connctd-2}.  Then, define the maps associated to each $a\in \Arr(\Cat)$ by, if $ga_\omega=a$, setting $g\widetilde{F}(a_\omega)g^{-1}=:\widetilde{F}(ga_\omega)$.  Here, recall that $g\in G$ acts via Construction \ref{const:g-box-action-2}.

It follows from the construction of $\widetilde{F}(a_\omega)$ that $\widetilde{F}(a)$ is independent of the choice of $g$ so that $ga_\omega=a$ holds.  Let us see that the maps constructed thus satisfy Lemma \ref{lem:conditions} for $i=1$.  We need to check that
\[
g\widetilde{F}(f)(p)=\widetilde{F}(gf)(gp)
\] 
for all $g\in G$, arrows $f$ of $\Cat$, and points $p\in B(V)$.  By hypothesis, $f=ha_\omega$ for some $a_\omega$ and $h\in G$.  Then $\widetilde{F}(ha_\omega)$ is defined by $h\widetilde{F}(a_\omega)h^{-1}$, and the $i=1$ case of (\ref{eq:equivariance}) follows readily, using that $s\widetilde{F}(a_\omega)s^{-1}=\widetilde{F}(a_\omega)$ for $s\in S_\omega$.
 
Fix $\ell\geq 1$ and suppose that for any sequence $v_0\to^{f_1}\dots \to^{f_\ell} v_\ell$ of non-identity morphisms we have chosen a map $e_{f_1,\dots,f_\ell}\from [0,1]^{\ell-1}\to E_{K,V}(\{B_x\}_{x \in F(v_0)},s_{F(f_\ell\circ \dots \circ f_1)},F(f_\ell\circ\dots\circ f_1),F(v_0))$, compatible in that:
\begin{align*}
e_{f_\ell,\dots,f_1}(t_1,\dots,t_{i-1},0,t_{i+1},\dots,t_{\ell-1})&=e_{f_\ell,\dots,f_i}(t_{i+1},\dots,t_{\ell-1})\circ e_{f_{i-1},\dots,f_1}(t_1,\dots,t_{i-1})\\
e_{f_\ell,\dots,f_1}(t_1,\dots,t_{i-1},1,t_{i+1},\dots,t_{\ell-1})&=e_{f_\ell,\dots,f_i\circ f_{i-1},\dots,f_1}(t_1,\dots,t_{i-1},t_{i+1},\dots,t_{\ell-1})
\end{align*}
and satisfying the $i=\ell$ condition of Lemma \ref{lem:conditions}.  

Choose representatives $a_\omega$ for the orbits of the $G$-action on the set of all composable tuples $(f_1,\dots,f_\ell)$, where the $f_i$ are morphisms $v_0\to^{f_1}\dots \to^{f_{\ell+1}} v_{\ell+1}$ for $v_i$ objects of $\Cat$, with stabilizers $S_\omega$ as before.  Here, $G$ acts on the set of composable tuples by acting on each of the morphisms in a composable tuple.
Then for the induction step, given $a_\omega=(f_1,\dots,f_{\ell+1})$ where $v_0
  \xrightarrow{f_1} \cdots \xrightarrow{f_{\ell+1}} v_{\ell+1}$ is a composable sequence of arrows, we
  have a continuous map
  \[
  S^{\ell-1}=\partial ([0,1]^\ell) \to E_{K,V}(\{B_x \mid x \in
  F(v_0)\},s_{F(f_{\ell+1}\circ\dots\circ f_1)},F(f_{\ell+1}\circ\dots\circ f_1),F(v_0))
  \]
  defined by
  \begin{equation}\label{eq:homconds3}
    \begin{split}
      (t_1,\dots,t_{i-1},0,t_{i+1},\dots,t_\ell) &\mapsto  e_{f_{\ell+1},\dots,f_{i+1}}(t_{i+1},\dots,t_{\ell})\circ e_{f_i,\dots,f_1}(t_1,\dots,t_{i-1}) \\
      (t_1,\dots,t_{i-1},1,t_{i+1},\dots,t_\ell) &\mapsto
      e_{f_{\ell+1},\dots,f_{i+1}\circ f_i,\dots,
        f_1}(t_1,\dots,t_{i-1},t_{i+1},\dots, t_{\ell}).
    \end{split}
  \end{equation}
  By the argument from Lemma \ref{lem:connctd}, this map extends to a map, call it
  $e_{f_{\ell+1},\dots,f_1}$, from $[0,1]^\ell$, which is $K\times S_\omega$-equivariant.  Define $e_{f_{\ell+1}',\dots,f_1'}$, for $(f_{\ell+1}',\dots,f_1')=ga_\omega$ for some $g\in G$, by $ge_{a_\omega}g^{-1}$.  This is well-defined as in the $i=1$ case (independent of the choice of $g$ for which $(f_{\ell+1}',\dots,f_1')=ga_\omega$) and gives that the collection of $e_{(f_{\ell+1}',\dots,f_1')}$ thus defined satisfy the $i=\ell+1$ case of Lemma \ref{lem:conditions}.  

We have used that external actions respect composition, as in Definition \ref{def:external-action}, in order to see that each $ge_{a_\omega}g^{-1}$ is a family of disk maps refining the composite correspondence $F(f_{\ell+1}'\circ\dots\circ f_1')$.  

By definition, the
  maps
  \[
  \Phi(e_{f_m,\dots,f_1},F(f_m\circ\dots \circ f_1))\from [0,1]^{m-1} \times \bigvee_{x\in F(v_0)} B_x(V)/\partial B_x(V) \to \bigvee_{x\in F(v_{m})}B_x(V) /\partial B_x(V),
  \]
running over the set of all tuples of composable nonidentity arrows $v_0\to^{f_1}\dots \to^{f_m} v_m$, assemble to form a homotopy coherent diagram. 

Next we address point \ref{itm:real2}.  Fix $G$-coherent refinements $\widetilde{F}_i$ of $F$, for $i=0,1$.  It suffices to construct a $G$-coherent refinement $\widetilde{\eta}\from \two\times \Cat \to K\text{-}\topp$ with $\widetilde{\eta}|_{\{i\}\times \Dat}=\widetilde{F_i}$.  Using the construction in the proof of Item \ref{itm:real1} we can construct such $\widetilde{\eta}$, and it follows from the definitions that $\widetilde{\eta}(\phi_{1,0} \times \Id_u)$, for each $u\in \Ob(\Dat)$, will be a homotopy equivalence (where $\phi_{1,0}$ is the unique nonidentity morphism in $\two$).  By \ref{itm:ho1} we have that $\widetilde{F}_0,\widetilde{F}_1$ are then $K$-weakly equivalent.  

Item \ref{itm:real3} is clear. 
\end{proof}

Let us consider the fixed-point sets of the homotopy colimit constructed in Lemma \ref{lem:conditions}.  We state the following result only for Burnside functors from the cube category; the result for general $\Cat$ as in Lemma \ref{lem:conditions} differs only notationally and will not be needed.  Henceforth, we will always view $\two^{np}$ as a category with $\ZZ_p$-action by permuting the coordinates.   The $\ZZ_p$-fixed-point set is readily identified with a copy of $\iota\from \two^n \rightarrow (\two^n)^p$, which we call the \emph{canonical embedding} of cube categories.

\begin{lem}\label{lem:fix-point-coincidence} 
Let $\widetilde{F}$ be a $\ZZ_p$-coherent refinement of $F\from \two^{np}\to\burn_K$, a nonsingular Burnside functor with external action by $\ZZ_p$, compatible with the permutation action on $\two^{np}$.  Say that $\widetilde{F}$ is modeled on $(V_1,V_2)$.  For $H$ a subgroup of $\ZZ_p$, the $H$-fixed-point set, $||\widetilde{F}||^H$, is a $K\times (\ZZ_p/H)$-equivariant realization of the fixed-point Burnside functor $\ff^H$, modeled on $(V_1^H,V_2^H)$.  That is, \[||\widetilde{F}||^H_{(V_1,V_2)}=||\widetilde{F}^H||_{(V_1^H,V_2^H)}.\]  
Additionally, $\widetilde{F}^H$ is a $K$-equivariant refinement of $F^H$.

For a $\ZZ_p$-external natural transformation $\eta\from \two^{np}\times \two \to \burn_K$, where $\ZZ_p$ acts by permutation on $\two^{np}$ and trivially on $\two$, we have similarly $||\tilde{\eta}||^H_{(V_1,V_2)}=||\tilde{\eta}^H||_{(V_1^H,V_2^H)}$.  Finally, $\tilde{\eta}^H$ is a $K$-equivariant refinement of $\eta^H$.  
\end{lem}
\begin{proof}
By Lemma \ref{lem:fixed-points}, $(\hoco\, \widetilde{\ff}^+)^H$ is described explicitly, by restricting to the sub-homotopy-coherent diagram $(\widetilde{F}^H)^+\from (\two^{np})^H_+ \to K\text{-}\topp$.  Recall that $\tilde{F}^H$ has an explicit description as in Lemma \ref{lem:fixed-points}.  We have
\[
(\hoco\, \widetilde{\ff}^+)^H=\hoco_{(\two^{np})^H_+}\, (\widetilde{F}^H)^+.
\]
The homotopy coherent diagram $\widetilde{F}^H$ is a $K$-equivariant refinement of $F^H$, and is in fact $\ZZ_p/H$-coherent (with $\ZZ_p/H$ external action as in Lemma \ref{lem:action-on-fix-functors}), by unwrapping the definitions.  The previous equation then shows that $||\widetilde{F}||^H=\hoco\, (\widetilde{F^H})^+$ for the $\ZZ_p/H$-coherent refinement $\widetilde{F^H}:=\widetilde{F}^H$.  The claim that  $||\widetilde{F}||^H_{(V_1,V_2)}=||\widetilde{F}^H||_{(V_1^H,V_2^H)}$ follows.

The remaining claims in the lemma are proved entirely analogously.  \end{proof}

\begin{lem}\label{lem:sub-burn-func}
Let $\two^n \subset \two^{np}$ be the canonical embedding.  Fix a nonsingular Burnside functor $\ff\col \two^{np} \to \burn_K$ with external action, where $\ff$ admits an external action lifting the $\ZZ_p$-action on $\two^{np}$.  We will denote both actions by $\psi$.  

Let $\ff$ be a $\ZZ_p$-coherent refinement.  Then the fixed-point set $(\hoco\, \ff^+)^{\ZZ_p}$ is $\hoco\, \widetilde{J}^+$, for $\widetilde{J}$ some $K$-equivariant refinement of $\ff^{\ZZ_p}$.
\end{lem}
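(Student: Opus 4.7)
The plan is to derive this lemma as a specialization of the preceding Lemma \ref{lem:fix-point-coincidence} to $H=\ZZ_p$, with the only substantive work being the identification of both the fixed-point category and the fixed-point Burnside functor in concrete terms. First I would apply Lemma \ref{lem:fix-point-coincidence} with $H=\ZZ_p$ to the $\ZZ_p$-coherent refinement $\widetilde{F}$, which directly gives
\[
(\hoco\widetilde{F}^+)^{\ZZ_p} \;=\; \hoco (\widetilde{F}^{\ZZ_p})^+
\]
as the realization of the fixed-point Burnside functor $F^{\ZZ_p}\col ((\two^n)^p)^{\ZZ_p}\to\burn_K$ (Definition \ref{def:burn-fixed-set}), where the refinement on the right is $\widetilde{J} := \widetilde{F}^{\ZZ_p}$, which is $K$-equivariant since the $K$- and $G$-actions commute.

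Second, I would identify the indexing category. Because $\ZZ_p$ acts on $(\two^n)^p$ by cyclic permutation of factors, the fixed objects are precisely the diagonal $\{(v,v,\dots,v)\mid v\in \two^n\}$ and similarly for morphisms, so the canonical embedding $\iota\col \two^n\to (\two^n)^p$ identifies $((\two^n)^p)^{\ZZ_p}$ with $\two^n$. Under this identification, $F^{\ZZ_p}$ becomes a functor $\two^n\to\burn_K$, and I would argue it is precisely the restriction $F|_{\two^n}$ along $\iota$ in the sense of the statement. Concretely, for each $v\in\two^n$, $F^{\ZZ_p}(\iota(v))$ is the $\ZZ_p$-fixed subset of $F(\iota(v))$ \emph{after forgetting the $K$-decorations on the bijections $\psi_{g,\iota(v)}$}. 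The nonsingularity hypothesis is exactly the assertion that these decorations are trivial on $\ZZ_p$-fixed elements; hence the forgetful step introduces no ambiguity, and the fixed-point set obtained is an honest $\ZZ_p$-invariant subset of $F(\iota(v))$. The same observation, combined with condition \ref{itm:e2} guaranteeing that the associators commute with the $G$-action, ensures that fixed-point correspondences compose correctly and that $F^{\ZZ_p}$ is a bona fide strictly unitary $2$-functor; its underlying diagram is (by inspection) identified with $F|_{\two^n}$.

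The main obstacle, such as it is, is purely bookkeeping around the interaction of the $K$-decorations with the external $\ZZ_p$-action: without the nonsingularity assumption, one would be forced to work with a decorated fixed-point functor, and the clean identification of $F^{\ZZ_p}$ with $F|_{\two^n}$ would fail (one would instead get fixed points in some twisted sense). Given nonsingularity, however, the proof reduces to invoking Lemma \ref{lem:fix-point-coincidence} and unpacking Definition \ref{def:burn-fixed-set}, with no further coherence data to construct by hand, since the homotopies of $\widetilde{J}$ are inherited from those of $\widetilde{F}$ via the explicit fixed-point description in Lemma \ref{lem:fixed-points}.
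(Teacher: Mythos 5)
Your proposal is correct and follows the paper's approach exactly: the paper's entire proof consists of the one-line observation that the result is the $H=\ZZ_p$ specialization of Lemma \ref{lem:fix-point-coincidence}. Your additional commentary is accurate and helpful — in particular, you rightly read the statement's ``$F|_{\two^n}$'' as referring to the fixed-point Burnside functor $F^{\ZZ_p}$ (which is what Lemma \ref{lem:fix-point-coincidence} actually produces, and whose values at $\iota(v)$ are the $\ZZ_p$-fixed \emph{subsets} $F(\iota(v))^{\ZZ_p}$ rather than all of $F(\iota(v))$), and you correctly identify nonsingularity as the hypothesis that lets the realization of the fixed-point functor coincide with the genuine $\ZZ_p$-fixed locus of the realization.
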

\begin{proof}
This follows immediately from Lemma \ref{lem:fix-point-coincidence}.
\end{proof}

We can now discuss how the realizations of different Burnside functors are related.

\begin{lem}[{cf. \cite[Lemma 4.15]{oddkh}}]
\label{lem:415}
A cofibration sequence $J \to  F \to H$ of functors with external action $\two^{np}\to \burn_K$ compatible with the permutation action on $\two^{np}$, upon realization, induces a cofibration sequence in $(K\times \ZZ_p)\text{-}\topp$.  In general, any external natural transformation $\eta \from F_1 \to F_0$ of Burnside functors $\two^{np} \to \burn_K$ induces a $(K\times \ZZ_p)$-equivariant map on sufficiently large realizations $||\widetilde{F}_1||\to ||\widetilde{F}_0||$, well-defined up to $K\times\ZZ_p$-equivariant homotopy.
\end{lem}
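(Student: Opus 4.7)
For the cofibration sequence, the strategy is to build $\ZZ_p$-coherent refinements of $J$, $F$, and $H$ simultaneously, so that the sub-functor/quotient-functor relationship persists at the level of homotopy-coherent diagrams. Starting from a $\ZZ_p$-coherent refinement $\widetilde{F}$ of $F$ modeled on a sufficiently large $K\times \ZZ_p$-representation $V$ (which exists by Proposition \ref{prop:5.2-analog}), one sets $\widetilde{J}(u)=\bigvee_{x\in J(u)} B(V)_x/\partial$, and observes that the third axiom in the definition of a sub-functor ($s^{-1}(x)\subset J(\phi_{u,v})$ for $x \in J(u)$, and dually for $H$) guarantees that the collections of sub-disks chosen in building $\widetilde{F}(\phi_{u,v})$ restrict to $\widetilde{J}(\phi_{u,v})$ and pass to the quotient $\widetilde{H}(\phi_{u,v}):=\widetilde{F}(\phi_{u,v})/\widetilde{J}(\phi_{u,v})$. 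Because $J$ is a sub-functor \emph{with external action}, the permutation-with-decoration $\psi_{g,u}$ on $F(u)$ restricts to $J(u)$, so the $\ZZ_p$-action on $\widetilde{F}$ preserves $\widetilde{J}$ and descends to $\widetilde{H}$. Applying Lemma \ref{lem:conditions} to each of the three diagrams and using that homotopy colimits preserve cofibrations yields the desired cofibration sequence in $(K\times \ZZ_p)\text{-}\topp$.

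For the natural transformation, I view $\eta$ as a Burnside functor $\eta\from \two\times\two^{np}\to \burn_K$ with external $\ZZ_p$-action (trivial on the $\two$ factor) and apply Proposition \ref{prop:5.2-analog} to produce a $\ZZ_p$-coherent refinement $\widetilde{\eta}$. The restrictions $\widetilde{\eta}|_{\{i\}\times \two^{np}}$ are $\ZZ_p$-coherent refinements of $F_i$ for $i=0,1$, and by Lemma \ref{lem:conditions} the realization $\|\widetilde{\eta}^+\|$ carries a $(K\times\ZZ_p)$-action. As in the non-equivariant case treated in \cite{lls1,oddkh}, the natural transformation structure gives rise to an induced map $\|F_1^+\|\to \|F_0^+\|$ coming from the inclusion of the $\{1\}$-face into $\|\widetilde{\eta}^+\|$ composed with a (homotopy-cofinal) retraction onto the $\{0\}$-face; all of these structure maps are $(K\times\ZZ_p)$-equivariant by construction, since the external action is trivial on the $\two$-coordinate and $\ZZ_p$-coherent on the $\two^{np}$-coordinates.

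For well-definedness up to $K$-equivariant homotopy, I invoke Proposition \ref{prop:5.2-analog}(\ref{itm:real2}): any two $\ZZ_p$-coherent refinements of $\eta$ are externally weakly equivalent, and the resulting homotopy equivalence of realizations is compatible with the constructions above. The main obstacle is bookkeeping rather than genuine difficulty, namely tracking that the $\ZZ_p$-action really does restrict along the sub-functor inclusion and descend to the quotient; this reduces entirely to the conditions built into Definition \ref{def:external-action} and the definition of sub-functor with external action in Section \ref{sec:nat-transform-burn}. (One should note that we obtain well-definedness only up to $K$-equivariant --- not $(K\times \ZZ_p)$-equivariant --- homotopy, because the homotopy equivalence between refinements produced by Proposition \ref{prop:5.2-analog}(\ref{itm:real2}) is only a $K$-equivariant weak equivalence at each object, though the maps at the realization level are genuinely $(K\times \ZZ_p)$-equivariant.)
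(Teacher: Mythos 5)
The first half of your argument—building compatible $\ZZ_p$-coherent refinements $\widetilde{J}\subset\widetilde{F}\twoheadrightarrow\widetilde{H}$ so that the sub/quotient relationship persists on homotopy colimits—is a reasonable route to the cofibration statement and is consistent with how the paper handles that part (by appeal to \cite[Lemma 4.15]{oddkh}).

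However, your construction of the induced map for a natural transformation $\eta\from F_1\to F_0$ does not work as stated. You propose ``inclusion of the $\{1\}$-face into $\|\widetilde{\eta}^+\|$ composed with a (homotopy-cofinal) retraction onto the $\{0\}$-face.'' Neither ingredient exists. First, there is in general no retraction $\|\widetilde\eta^+\|\to\|\widetilde F_0^+\|$: the realization $\|\widetilde\eta^+\|$ is, up to homotopy, a mapping cone of (a version of) $\eta_*$, and mapping cones do not retract onto their targets. Second, neither face inclusion $\{i\}\times\two^{np}_+\hookrightarrow\two^{np+1}_+$ is homotopy cofinal: for $\iota_0$, the undercategory of a vertex $(1,v)$ with $v\ne 0^{np}$ is a discrete two-object category (hence has disconnected nerve), while for $\iota_1$, the undercategory of $(0,0^{np})$ is empty. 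So \ref{itm:ho4} does not apply.

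The correct construction, which is what the paper does (and what \cite[Lemma 4.15]{oddkh} does non-equivariantly), is the Puppe map. View $\eta$ as a functor $\two^{np+1}\to\burn_K$; then $(F_0)_{\iota_0}$ is a subfunctor of $\eta$ and $(F_1)_{\iota_1}$ the corresponding quotient, and the first part of the lemma produces a $(K\times\ZZ_p)$-equivariant cofibration sequence
\[
\CRealize{F_0}=\CRealize{(F_0)_{\iota_0}}\longrightarrow\CRealize{\eta}\longrightarrow\CRealize{(F_1)_{\iota_1}}=\Sigma\CRealize{F_1}.
\]
The desired map $\CRealize{F_1}_{V\oplus\RR}\to\CRealize{F_0}_{V\oplus\RR}$ is the Puppe connecting map of this sequence, which can be chosen $\ZZ_p$-equivariantly precisely because the cofibration sequence is $\ZZ_p$-equivariant. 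Your argument as written would have to be replaced by this connecting-map construction; it cannot be salvaged by appeal to homotopy cofinality.
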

\begin{proof}
The proof is parallel to that of Lemma 4.15 of \cite{oddkh}, which produces a $K$-equivariant map of realizations as a Puppe map.   We will need some of the details in the proof of Lemma \ref{lem:416}, so we go over the argument.

If $\eta\from\two^{np+1}\to\burn_K$ is the
  natural transformation, then $(F_0)_{\iota_0}$ is a subfunctor and
  $(F_1)_{\iota_1}$ is the corresponding quotient functor, where
  $\iota_i\from\two^{np}\to\two^{np+1}$ is the face inclusion to
  $\{i\}\times\two^{np}$.  For a fixed $K\times \ZZ_p$-representation $V_1$ and for any sufficiently large $K\times \ZZ_p$-representation $V_2$, we have $\ZZ_p$-coherent realizations of $F_0$ and $F_1$, and obtain a cofibration sequence, as in the proof of Lemma 4.15 of \cite{oddkh}:
  \begin{equation}\label{eq:sss-415}
    \CRealize{(F_0)_{\iota_0}}_{(V_1,V_2)}\to\CRealize{\eta}_{(V_1,V_2)}\to\CRealize{(F_1)_{\iota_1}}_{(V_1,V_2)}.
  \end{equation}
  However, $\CRealize{(F_0)_{\iota_0}}_{(V_1,V_2)}=\CRealize{F_0}_{(V_1,V_2)}$, while
  $\CRealize{(F_1)_{\iota_1}}_{(V_1,V_2)}=\Sigma\CRealize{F_1}_{(V_1,V_2)}$ since
  $\CRealize{F_1}_{(V_1,V_2)}$ is constructed as a homotopy colimit over $\two_+^n$, while
  $\CRealize{(F_1)_{\iota_1}}_{(V_1,V_2)}$ is constructed as a homotopy colimit over
  $\two_+^{n+1}$. Therefore, the Puppe map
  \[
    \CRealize{(F_1)_{\iota_1}}_{(V_1,V_2)}=\Sigma\CRealize{F_1}_{(V_1,V_2)}=\CRealize{F_1}_{(V_1,V_2\oplus \mathbb{R})}\to\Sigma\CRealize{(F_0)_{\iota_0}}_{(V_1,V_2)}=\Sigma\CRealize{F_0}_{(V_1,V_2)}=\CRealize{F_0}_{(V_1,V_2\oplus \mathbb{R})}
  \]
  is the required map.
To see that the map is also $\ZZ_p$-equivariant, we use that, under the hypothesis of Lemma \ref{lem:415}, the cofibration sequence itself is $\ZZ_p$-equivariant, from which the Puppe map can be chosen to be $\ZZ_p$-equivariant.  Well-definedness of the map up to $K\times \ZZ_p$-equivariant homotopy follows from the well-definedness of Puppe maps.

\end{proof}

Write $\eta_*$ for the map $||F_1||_{(V_1,V_2)}\to ||F_0||_{(V_1,V_2)}$ as in Lemma \ref{lem:415}.  Recall that neither the coarse nor standard CW structures need be equivariant CW structures.

\begin{lem}[{cf. \cite[Proposition 4.16]{oddkh}}]
\label{lem:416}
Let $F \from \two^{np} \to \burn_K$ be a Burnside functor with $\ZZ_p$-external action compatible with the permutation action, and fix a coherent realization $\CRealize{F}$ modeled on $(V_1,V_2)$ with $V_2$ sufficiently large.  Then:
\begin{enumerate} \item The shifted reduced coarse cellular complex $\redcellC(\CRealize{F})[-\dim V_1-\dim V_2]$ is isomorphic (as a chain complex) to the totalization $\Tot_\degh(F)$, with the cells mapping to the corresponding generators.  Here, $\degh$ is the topological degree of the action of $K$ on $V_1$.  
	\item\label{itm:2-416}  If $\eta \from F_1 \to F_0$ is an external natural transformation, then the map $\eta_*\from ||F_1|| \to ||F_0||$ is cellular with respect to the coarse CW structure, and the induced cellular map on the coarse structure agrees with $\Tot_\degh(\eta)$.  
	\item\label{itm:3-416}  If $F$ is nonsingular, the restriction to fixed points $(\eta_*)^H\from ||F_1||^H \to ||F_0||^H$, for $H$ a subgroup of $\ZZ_p$, is $K\times (\ZZ_p/H)$-equivariantly homotopic to  $(\eta^H)_*\from ||F_1^H||_{(V_1^H,V_2^H)}\to ||F_0^H||_{(V_1^H,V_2^H)}$, the map of realizations induced by the $H$-fixed-point functor $\eta^H\from F^H_1\to F^H_0$.  Here we have used $||F_i||^H_{(V_1,V_2)}=||F_i^H||_{(V_1^H,V_2^H)}$, for suitable realizations, by Lemma \ref{lem:fix-point-coincidence}.  
	\item\label{itm:4-416} Finally, $(\eta_*)^H$ is a cellular map on the coarse CW structures on $||F_i||^H_{(V_1,V_2)}$.  The induced cellular chain map on the $H$-fixed points, in the coarse CW structure of the $H$-fixed-point set, is $\Tot_\degh(\eta^H)$. 
	\end{enumerate}
\end{lem}

\begin{proof}
The first two claims are just Proposition \ref{prop:totalization}, and do not involve the external action.    
 
   Let us fix $\tilde{\eta}$ a $\ZZ_p$-equivariant spatial refinement of $\eta$, with restriction $\tilde{F}_i$ on $\two^{np}\times\{i\}$ for $i=0,1$.  By taking fixed points in the cofibration sequence (\ref{eq:sss-415}), we have a cofibration sequence
   \begin{equation}\label{eq:cofib-2}
   ||\tilde{F}_0||^H\to ||\tilde{\eta}||^H\to ||\tilde{F}_1||^H.
   \end{equation}  The $K\times (\ZZ_p/H)$-equivariant homotopy type of the Puppe map for the sequence (\ref{eq:cofib-2}) is exactly $(\eta_*)^H$, using that $\eta_*$ is defined as a Puppe map.  We have that $\tilde{\eta}^H$ is a refinement of $\eta^H$, by Lemma \ref{lem:fix-point-coincidence}, and so the cofibration sequence above defines the homotopy class of the map $(\eta^H)_*$ in Proposition \ref{prop:totalization}.   
   
   The claim (\ref{itm:4-416}) is a consequence of items (\ref{itm:2-416}) and (\ref{itm:3-416}).
\end{proof}

\begin{lem}\label{lem:414}
Let $F \from \two^{np} \to \burn_K$ be a nonsingular Burnside functor with external action by $\ZZ_p$.  For a fixed finite-dimensional orthogonal $K\times \ZZ_p$-representation $V_1$ and for $V_2$ sufficiently large, $||F||_{(V_1,V_2)}$ is well-defined up to weak equivalence in $(K\times \ZZ_p)\text{-}\topp$.
\end{lem}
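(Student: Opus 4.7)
The plan is to upgrade the external weak equivalence provided by Proposition~\ref{prop:5.2-analog}(\ref{itm:real2}) to a genuine $(K\times\ZZ_p)$-equivariant homotopy equivalence between realizations. First I would fix two $\ZZ_p$-coherent refinements $\widetilde{F}_0,\widetilde{F}_1$ of $F$ modeled on $V$. Following the proof of Proposition~\ref{prop:5.2-analog}(\ref{itm:real2}), they are connected by a $\ZZ_p$-coherent refinement $\widetilde{F}_2\from \two\times\two^{np}\to K\text{-}\topp$ of the Burnside functor $F_2$ whose two slices are $F$ and whose connecting arrows are identities, where $\ZZ_p$ acts trivially on the $\two$-factor. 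By Lemma~\ref{lem:conditions} the realization $\|\widetilde{F}_2^+\|$ inherits a $(K\times \ZZ_p)$-action, and the Puppe construction of Lemma~\ref{lem:415} then produces a $(K\times\ZZ_p)$-equivariant map
\[
\eta_*\from \|\widetilde{F}_1^+\|\to \|\widetilde{F}_0^+\|.
\]

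Next I would verify that $\eta_*$ is a weak equivalence of $(K\times \ZZ_p)$-spaces, i.e.\ that it induces a weak equivalence on $L$-fixed points for every subgroup $L\subset K\times\ZZ_p$. Writing $L=K'\times H$ with $K'\subset K$ and $H\subset\ZZ_p$ and invoking nonsingularity of $F$, Lemma~\ref{lem:fix-point-coincidence} identifies the $H$-fixed realizations with realizations of the $H$-fixed-point Burnside functor $F^H\from (\two^{np})^H\to \burn_K$, and the restrictions of $\widetilde{F}_0,\widetilde{F}_1,\widetilde{F}_2$ to $H$-fixed objects become genuine $K$-spatial refinements of $F^H$ and of $\two\times F^H$. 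The third furnishes a $K$-equivariant weak equivalence of refinements in the sense of Proposition~\ref{prop:cube-non-special}(\ref{itm:real2-pre}), so the induced map on $H$-fixed realizations is a weak equivalence of $K$-spaces; passing to further $K'$-fixed points preserves this via \ref{itm:ho3}, since the vertexwise maps are disk maps between wedges of copies of $S(V)$ refining an identity correspondence and hence $K$-equivariant weak equivalences whose $K'$-fixed restrictions remain weak equivalences.

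Finally, the realizations $\|F\|_V$ carry $(K\times\ZZ_p)$-CW structures built from chosen $(K\times\ZZ_p)$-CW decompositions of $S(V)$ and of each orbit $\ZZ_p/H$, as described following Lemma~\ref{lem:fix-point-coincidence}. The equivariant Whitehead theorem \cite[Theorem~2.4]{Greenlees-May} then promotes the $(K\times\ZZ_p)$-equivariant weak equivalence $\eta_*$ to a $(K\times\ZZ_p)$-equivariant homotopy equivalence, establishing the lemma.

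The main obstacle is the second step: one must ensure that nonsingularity makes all of the fixed-point constructions compatible, so that refinements of $F$ restrict to $K$-spatial refinements of $F^H$, and that the equivalence of refinements furnished by Proposition~\ref{prop:5.2-analog}(\ref{itm:real2}) descends to an equivalence of fixed-point refinements. Nonsingularity is precisely the hypothesis that prevents nontrivial $K$-decorations from appearing in the $\ZZ_p$-action on fixed vertices, which would otherwise obstruct the identification of $\widetilde{F}_i(u)^H$ with a wedge of $K$-spheres on which the higher coherence homotopies restrict correctly.
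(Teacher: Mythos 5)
Your proof is correct and follows the same overall path as the paper's: connect two $\ZZ_p$-coherent refinements $\widetilde{F}_0,\widetilde{F}_1$ by a $\ZZ_p$-coherent refinement $\widetilde{F}_2$ of $\two\times F$ using Proposition~\ref{prop:5.2-analog}(\ref{itm:real2}), obtain an equivariant map between the realizations via Lemma~\ref{lem:415}, verify it is a weak equivalence on all fixed points, and conclude by the equivariant Whitehead theorem.

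The one place where you deviate is in how the fixed-point weak equivalence is verified. You argue directly at the level of spatial refinements: Lemma~\ref{lem:fix-point-coincidence} gives the $H$-fixed-point refinements, the vertexwise maps are $K$-equivariant disk maps refining identity correspondences (hence degree-one equivalences of wedges of $K$-spheres), and so the induced maps of homotopy colimits of fixed-point diagrams are weak equivalences. The paper instead routes this check through Lemma~\ref{lem:416}: the restriction of the Puppe map to $H$-fixed points is homotopic to a map that is cellular for the coarse CW structure and whose induced cellular chain map is $\Tot(\eta^H)=\Id$, so it is a chain homotopy equivalence and hence a homotopy equivalence. Both routes are legitimate. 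Your version is more hands-on and skips the coarse-cellular bookkeeping; the paper's version deliberately exercises Lemma~\ref{lem:416}, which it also needs in the proof of Proposition~\ref{prop:stable-equivalences-realization}, so the detour is not wasted effort. One small thing worth making explicit in your write-up: the identification of $\eta_*|_H$ with the map induced by the $H$-fixed-point refinements is itself a (mild) claim — it follows because the Puppe construction of Lemma~\ref{lem:415} is compatible with passing to $H$-fixed points, which is exactly the content recorded in Lemma~\ref{lem:416}; citing it would tighten your argument.
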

\begin{proof}
Fix coherent refinements $\widetilde{F}_0$ and $\widetilde{F}_1$ modeled on $(V_1,V_2)$ for $V_2$ large.  By Proposition \ref{prop:5.2-analog}(\ref{itm:real2}), there is a homotopy coherent diagram $\widetilde{\eta} \from \two\times \two^{np} \to K\text{-}\topp$ so that $\widetilde{\eta}|_{i\times \two^{np}}=\widetilde{F}_i$.  We need to show that, for any subgroup $H\subset K\times \ZZ_p$, the induced map
\[
(\eta_*)^H\colon ||\tilde{F}_1||^H\to ||\tilde{F}_0||^H
\]
is a (nonequivariant) homotopy equivalence. 

 We treat first the case that $H$ is contained in the $\ZZ_p$ factor.  We observe that for any homomorphism $\degh\from K\to \ZZ_2$, $\Tot_\degh(\eta^H)$ is the identity.  By Lemma \ref{lem:416}(\ref{itm:4-416}), $\eta^H$ induces a map between cellular chain complexes 
\[
\redcellC(||\tilde{F}_1||^H)\to \redcellC(||\tilde{F}_0||^H),
\]
which may be identified with $\Tot_\degh(\eta^H)$ up to a shift.  Then, for each subgroup $H\subset \ZZ_p$, $\eta^H$ is a $K\times (\ZZ_p/H)$-equivariant map which is \emph{a priori} a homotopy equivalence in the nonequivariant sense, since $\Tot_\degh(\eta^H)$ is identified with the identity map.  

For a general subgroup $H\subset K\times \ZZ_p$ (that is, a subgroup which need not be contained in the $\ZZ_p$ factor), let $H'$ denote the image of $H$ in $\ZZ_p$.  It is a consequence of the formula for the action of $K\times G$ on the homotopy colimit in Lemma \ref{lem:conditions} that $||\tilde{F}_i||^{H}$ is a spatial realization of the fixed-point functor $F^{H'}$, modeled on $(V_1^H,V_2^H)$, for $i=1,2$.  By the same argument, $||\tilde{\eta}||^H$ is a spatial realization of the fixed-point functor $\eta^{H'}$, modeled on $(V_1^H,V_2^H)$.  As in the proof of Lemma \ref{lem:415}, the map
\begin{equation}\label{eq:eta-general-h}
(\eta_*)^H\colon ||\tilde{F}_0||^H\to ||\tilde{F}_1||^H
\end{equation}
is then identified up to homotopy with the Puppe map 
\[
(\eta^{H'})_*\colon ||F_0^{H'}||_{(V_1^H,V_2^H)}\to ||F_1^{H'}||_{(V_1^H,V_2^H)}.
\]
We have already established that the Puppe maps $(\eta_*)^{H''}$ are homotopy equivalences when $H''$ is a subgroup of $\ZZ_p\subset K\times \ZZ_p$, so $\eta^{H}$ in (\ref{eq:eta-general-h}) is an equivalence for all $H$.  This establishes that $||F||_{(V_1,V_2)}$ is well-defined up to weak equivalence, as needed.
\end{proof}

In order to describe the relationship between realizations of externally stably equivalent Burnside functors, we need a further object.  Let $J_p\from \two^p \to \burn_K$  be the nonsingular Burnside functor (with external action by $\ZZ_p$) with $J_p(1^p)$ a $1$-element set, and $J_p(v)=\emptyset$ for $v\neq 1^p$.  

\begin{lem}\label{lem:equivariant-spheres}
For any pair of finite-dimensional orthogonal $K\times \ZZ_p$ representations $(V_1,V_2)$, the realization of $J_p$ satisfies $||J_p||_{(V_1,V_2)}=\Sigma^{V_1\oplus V_2} (\mathbb{R}(\ZZ_p))^+$.
\end{lem}
\begin{proof}
By \ref{itm:ho4}, it suffices to prove the lemma in the case that $\widetilde{J}_p(1^p)=S^0$, with the trivial $\ZZ_p$-action.  From \cite[Proposition 6.1]{lls1}, there is a nonequivariant identification of $||\widetilde{J}_p||$ with $M_p\times [0,2]/\partial (M_p\times [0,2])$, where $M_p$ is the permutahedron on $p$ symbols.  

Recall that the permutahedron $M_p$ is the $(p-1)$-dimensional convex hull of the orbit of $(1,2,\ldots, p)\in \mathbb{R}^p$ under the action of the symmetric group $S_p$ on $p$ letters (where $S_p$ acts on $\mathbb{R}^p$ by permuting the coordinates).  

Moreover, $M_p$ has the structure of a cubical complex, so that the $(m-1)$-dimensional cubes are in correspondence with sequences $(u^0>u^1>\dots > u^{m-1})$ of objects of $\two^{p}$ with $u^i\neq 0^p\in \Ob(\two^{p})$ for all $i$.  

The $\ZZ_p$-action on $M_p\times [0,2]/\partial (M_p\times [0,2])$ determined by its isomorphism with $||\widetilde{J}_p||$ is given by permuting the cubes (taking each cube $[0,1]^{m-1}$ labeled by a sequence $(u^0>u^1>\dots> u^{m-1})$ to the cube $[0,1]^{m-1}$ labeled by some other sequence $(v^0>v^1>\dots> v^{m-1})$ by the identity map $[0,1]^{m-1}\to [0,1]^{m-1}$) and acting by the identity on the $[0,2]$ factor, by direct inspection of the proof of \cite[Proposition 6.1]{lls1}.  

The identification between $M_p$, viewed as a cubical complex and viewed as a convex hull in $\mathbb{R}^p$ is such that each cube is mapped linearly to $\mathbb{R}^p$.  In particular, the \emph{vertices} of the polytope $M_p$ are exactly the $(p-1)$-tuples $(0,\dots,0)$ in the cubes associated to maximal sequences $(u^0>u^1>\dots u^{m-1})$ of objects of $\two^{p}$ with $u^i\neq 0^p\in \Ob(\two^{np})$ for all $i$.  These maximal sequences are in bijection with permutations of the set $\{1,\dots,p\}$.   

Finally, the $\ZZ_p$-action on $M_p$ induced by the isomorphism with $||\widetilde{J}_p||$ agrees with the $\ZZ_p$-action on the convex hull by the permutation $(1\dots p-1,p)\to (2\dots p, 1)$.  To see this, by the preceding paragraph it suffices to identify the two actions on the vertices.  The identification on vertices follows from the descriptions of the two actions above.

Then $M_p\times [0,2]$ is identified (not metrically, but topologically) with the unit ball in $\RR(\ZZ_p)$, and the lemma follows.
\end{proof}

We also need a simple fact about indexing categories:

\begin{lem}\label{lem:index-cat}
The natural $\ZZ_p$-equivariant functor $\two^p_+\times \two^{np}_+\to \two^{(n+1)p}_+$ is homotopy cofinal.
\end{lem}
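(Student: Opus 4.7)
Plan: I will invoke Quillen's Theorem A via property (ho-4) of the paper: the functor $L$ is homotopy cofinal if and only if, for every $d \in \two^{(n+1)p}_+$, the nerve $N(d \downarrow L)$ is contractible. Using the interleaving bijection $\two^p\times \two^{np}\cong \two^{(n+1)p}$, the objects of the target are parameterized either by pairs $(v,w)$ or by the basepoint $\ast$, and the proof splits in cases accordingly.

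For $d=\ast$, the undercategory is the sub-poset $L^{-1}(\ast) = (\{\ast\}\times \two^{np}_+)\cup(\two^p_+\times \{\ast\})$ of $\two^p_+\times \two^{np}_+$. Each piece is isomorphic to a pointed cube category, hence contractible because of the initial objects $(\ast, 1^{np})$ and $(1^p,\ast)$ respectively. They meet only in $(\ast,\ast)$, and a direct check shows that no chain of morphisms in the nerve crosses between the two pieces without factoring through $(\ast,\ast)$ (any such chain would require a non-existent morphism $\ast\to v$ or $v\to \ast$ with $v\neq \ast$ in the wrong direction). So the nerve is the union of two contractible simplicial complexes glued at a single vertex, and is contractible. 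For $d = 0^{(n+1)p}$, only the identity emanates from $0$ in $\two^{(n+1)p}_+$, so the undercategory is the singleton $\{(0^p,0^{np})\}$.

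The main case is $d = L(v,w)$ with $(v,w)\neq (0,0)$. Here the undercategory decomposes as $U = U_I \cup U_{II}$, where $U_I = \{(v',w'): v'\leq v,\ w'\leq w\}$ is a sub-cube with initial object $(v,w)$ (hence contractible), and $U_{II} = L^{-1}(\ast)$ (contractible by the basepoint case); moreover, no morphism in $U$ runs from $U_{II}$ back into $U_I$. The plan to establish contractibility of $N(U)$ is, first, to perform an elementary collapse removing the element $(0^p,0^{np})\in U_I$: its simplicial link in $N(U)$ coincides with $N(U_I\setminus \{(0^p,0^{np})\})$, a cube minus its minimum, which is contractible because $(v,w)$ remains its initial element. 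After this collapse, every surviving object of $U_I$ admits a morphism into $U_{II}$, and one constructs a further deformation retraction onto $N(U_{II})$ using the chains of morphisms issuing from elements of $U_I$.

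The main obstacle will be the boundary subcases $v=0$ or $w=0$, in which the initial element $(v,w)$ of $U_I$ reaches only one of the two ``sheets'' $A=\{\ast\}\times \two^{np}_+$ or $B=\two^p_+\times\{\ast\}$ of $U_{II}$, so $(v,w)$ does not dominate the full collapsed category and the direct deformation argument breaks down. To handle these I plan to induct on $|v|+|w|$, observing that the portion of $U_{II}$ unreachable from $U_I$ retracts independently onto $(\ast,\ast)$ by the Case~$d=\ast$ analysis, reducing the remaining computation to a lower-dimensional instance of the same cofinality statement.
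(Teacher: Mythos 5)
The paper's own proof of this lemma is a one-line citation to \cite{lls1}[Lemma 4.18], so there is nothing explicit to compare against; your blind proof is therefore doing real work. Your overall framework is the right one: invoke property (ho-4), split into cases on $d$, and decompose the undercategory $U = d\downarrow L$ as $U_I\cup U_{II}$ with $U_{II}=L^{-1}(\ast)$. Your $d=\ast$ and $d=0$ cases are correct (including the observation that any chain in $L^{-1}(\ast)$ lies entirely in one of the two sheets $A=\{\ast\}\times\two^{np}_+$ or $B=\two^p_+\times\{\ast\}$, so the nerve is a wedge at $(\ast,\ast)$). The collapse of $(0^p,0^{np})$ is also correct: its link in $N(U)$ is indeed $N(U_I\setminus\{(0^p,0^{np})\})$ (no element of $U_{II}$ is comparable to $(0^p,0^{np})$ because $0\to\ast$ does not exist in either factor), and this has initial object $(v,w)$.

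The genuine gap is the clause \emph{``one constructs a further deformation retraction onto $N(U_{II})$ using the chains of morphisms issuing from elements of $U_I$.''} Knowing that each surviving $(v',w')\in U_I$ admits \emph{some} morphism into $U_{II}$ is much weaker than having a monotone retraction, and in fact no monotone idempotent $r:U\setminus\{(0^p,0^{np})\}\to U_{II}$ with $\mathrm{id}\leq r$ exists: the candidate $(v',w')\mapsto(\ast,w')$ breaks when $v''=0<v'$ along an arrow $(v',w')\to(0^p,w'')$, the candidate $(v',w')\mapsto(v',\ast)$ breaks symmetrically, and $(v',w')\mapsto(\ast,\ast)$ is not even defined when one coordinate is $0$. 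What does close the argument is Quillen's Theorem A (the fiber lemma), applied to the inclusion $i\colon U_{II}\hookrightarrow U\setminus\{(0^p,0^{np})\}$: one checks $x\downarrow i$ is contractible for each $x$. For $x\in U_{II}$ this is trivial; for $x=(v',w')\in U_I$, $(v',w')\neq(0,0)$, the poset $x\downarrow i$ is either a singleton (if one of $v',w'$ is $0$) or the union of two cones $\{(\ast,w''):w''\leq w'\}\cup\{(\ast,\ast)\}$ and $\{(v'',\ast):v''\leq v'\}\cup\{(\ast,\ast)\}$ glued at $(\ast,\ast)$, each with initial object $(\ast,w')$ resp.\ $(v',\ast)$, hence contractible.

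Once the argument is phrased this way, your separate treatment of the boundary cases $v=0^p$ or $w=0^{np}$ becomes unnecessary, and the induction you sketch there is not well founded as stated: after deleting the part of $U_{II}$ ``unreachable'' from $U_I$, what remains is not a lower-dimensional instance of the same cofinality statement but simply the poset $\{0^p\}\times\bigl([0,w]\cup\{\ast\}\bigr)$, which is contractible because $(0^p,w)$ is initial. More to the point, once $(0^p,0^{np})$ is removed, the Theorem A verification above already covers these cases (the relevant fibers $x\downarrow i$ are singletons), so no induction on $|v|+|w|$ is needed. Finally, a small terminological point: what you describe as an ``elementary collapse'' is really the deletion of the open star of a vertex with contractible link, which preserves homotopy type but is not an elementary collapse in the PL sense.
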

\begin{proof}
This is straightforward. 
\end{proof}

\begin{prop}\label{prop:stable-equivalences-realization}
An external $K$-equivariant stable equivalence $(E_1,W_1) \to (E_2,W_2)$ of stable nonsingular functors $(E_1 \from \two^{n_1p} \to \burn_K, W_1)$ and $(E_2\from \two^{n_2p} \to \burn_K, W_2)$ induces a $K\times \ZZ_p$-equivariant homotopy equivalence $|\Sigma^{W_1}E_1|_{V_1}\to |\Sigma^{W_2}E_2|_{V_1}$ for any finite-dimensional orthogonal $K\times \ZZ_p$-representation $V_1$.
\end{prop}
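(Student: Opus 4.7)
The plan is to reduce to the two elementary moves from Definition \ref{def:stableq} and handle each kind separately, then compose the resulting equivariant equivalences. So we may assume the chain of moves has length one.

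For a move of type (\ref{itm:stable-1}) given by an external natural transformation $\eta\from F_1\to F_0$, Lemma \ref{lem:415} produces a $K\times\ZZ_p$-equivariant map $\eta_*\from \CRealize{F_1}_V\to \CRealize{F_0}_V$ at a common level of suspension. I will argue this map is a $K\times\ZZ_p$-equivariant weak equivalence and then invoke the equivariant Whitehead theorem from \S\ref{subsec:eqvar}. To check weak equivalence, it suffices to check that $\eta_*|_{H}$ is a weak equivalence for every subgroup $H\subset K\times\ZZ_p$. First, for $H\subset \ZZ_p$, Lemma \ref{lem:fix-point-coincidence} (using nonsingularity) identifies $\CRealize{F_i}^H_V$ with $\CRealize{F_i^H}_{V^H}$, and Lemma \ref{lem:416} says $\eta_*|_H$ is $K$-equivariantly homotopic to a map cellular in the coarse structure whose induced chain map is $\Tot(\eta^H)$; by the $K$-equivariant stable equivalence hypothesis $\Tot_\degh(\eta^H)$ is a chain homotopy equivalence for every $\degh\col K\to\ZZ_2$. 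Applying the ordinary Whitehead theorem to $K/H_1$-fixed points, for each $H_1\subset K$, on these coarse-cellular chain complexes, shows $\eta_*|_H$ is a weak equivalence for every product subgroup $H_1\times H_2\subset K\times \ZZ_p$; for any subgroup $H\subset K\times \ZZ_p$, the fixed points can be analyzed via its intersections with $K\times\{1\}$ and $\{1\}\times \ZZ_p$ and the resulting abelian quotient, reducing us to the product case.

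For a move of type (\ref{itm:stable-2}) given by an equivariant face inclusion $\iota\from\two^{n_1p}\into \two^{n_2p}$ with $F_{i+1}=(F_i)_\iota$ and $r_{i+1}=r_i-(|\iota|/p)\mathbb{R}(\ZZ_p)$, I need to produce a $K\times\ZZ_p$-equivariant equivalence $\CRealize{F_{i+1}}_V\simeq \Sigma^{(n_2-n_1)\mathbb{R}(\ZZ_p)}\CRealize{F_i}_V$. Because $p$ is prime and the cyclic block-permutation action on $\two^{n_2p}$ has no fixed coordinates, the $(n_2-n_1)p$ complementary coordinates of $\iota$ split into $n_2-n_1$ free $\ZZ_p$-orbits, and the values $(0,1)$ assigned by $\iota$ on each orbit are constant on that orbit. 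Up to an equivariant permutation of the coordinates of $\two^{n_2p}$ (which is itself induced by an equivariant identification of cube categories and yields a $K\times\ZZ_p$-equivariant equivalence of realizations via property \ref{itm:ho4} and Lemma \ref{lem:index-cat}), we may assume $\iota(v)=(v,1^{(n_2-n_1)p})$, so that $(F_i)_\iota$ is equivariantly isomorphic to the product Burnside functor $F_i\times J_p^{\times (n_2-n_1)}$ of Definition \ref{def:product-functor}. Using the smash product formula for homotopy colimits (property \ref{itm:ho2}) and Lemma \ref{lem:equivariant-spheres}, one has
\[
\CRealize{F_i\times J_p^{\times (n_2-n_1)}}_V\simeq \CRealize{F_i}_V\smas \CRealize{J_p}_V^{\smas (n_2-n_1)}=\CRealize{F_i}_V\smas (V^+\smas \mathbb{R}(\ZZ_p)^+)^{\smas(n_2-n_1)},
\]
which equals $\Sigma^{(n_2-n_1)\mathbb{R}(\ZZ_p)}\CRealize{F_i}_V$ after absorbing the $V$-suspensions into the spectrum-level shift; the equivalences here are $K\times\ZZ_p$-equivariant.

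The main obstacle is the face-inclusion case: both the identification of $(F_i)_\iota$ with the product $F_i\times J_p^{\times(n_2-n_1)}$ after an equivariant coordinate permutation, and the application of the smash-product formula for homotopy colimits in the equivariant setting, need to be checked carefully to ensure the resulting equivalence is $K\times\ZZ_p$-equivariant rather than merely $K$-equivariant. The natural-transformation case is more routine, modulo the subgroup analysis mentioned above. Composing the equivariant equivalences obtained from all the moves in the chain of stable equivalence gives the desired $K\times\ZZ_p$-equivariant homotopy equivalence $\Realize{\Sigma^{W_1}E_1}\to \Realize{\Sigma^{W_2}E_2}$.
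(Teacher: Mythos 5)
Your proof follows the same structure as the paper's: reduce to the two moves of Definition~\ref{def:stableq}; handle a natural transformation via Lemmas~\ref{lem:415},~\ref{lem:416}, Lemma~\ref{lem:fix-point-coincidence}, and the $G$-Whitehead theorem; and handle a face inclusion by identifying $(F_i)_\iota$ with a product involving $J_p$ and then combining \ref{itm:ho2}, Lemma~\ref{lem:index-cat}, and Lemma~\ref{lem:equivariant-spheres}. The natural-transformation half is sound (and the caveat about non-product subgroups is inessential for the paper's applications, where $K$ is trivial or $\ZZ_2$ and $p$ is odd, so that $K\times\ZZ_p$ is cyclic). The $\ZZ_p$-equivariance of the smash-product comparison map, which you flag and defer, is the step the paper spends the most effort on: it reformulates the external action as homomorphisms of homotopy coherent diagrams via the pullbacks $\mathbf{F}_{g^{-1}}(\widetilde{F}^+)$, and then invokes naturality of \ref{itm:ho2} in such homomorphisms. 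A complete proof would need to include that verification.

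The face-inclusion case contains a genuine error. You assert that, up to an equivariant coordinate permutation, every equivariant face inclusion takes the form $\iota(v)=(v,1^{(n_2-n_1)p})$, and accordingly you aim for the shift $\Sigma^{(n_2-n_1)\mathbb{R}(\ZZ_p)}$. But a coordinate permutation cannot change the constant value $0$ or $1$ that $\iota$ assigns to a complementary coordinate, so the reduction fails whenever some complementary orbits are set to~$0$. Your claimed target equivalence is then inconsistent with the definition's $r_{i+1}=r_i-(|\iota|/p)\mathbb{R}(\ZZ_p)$, since $|\iota|$ counts only the coordinates set to $1$. The correct normal form after equivariant permutation is $\iota(v)=(v,1^{kp},0^{mp})$ with $kp=|\iota|$ and $k+m=n_2-n_1$; the $1^{kp}$ block gives the $J_p^{\smas k}$ smash you describe, while the $0^{mp}$ block must be handled by smashing with the Burnside functor concentrated at $0^p$, whose realization is $S^V$ and contributes no $\mathbb{R}(\ZZ_p)$-suspension, recovering the required total shift $(|\iota|/p)\,\mathbb{R}(\ZZ_p)$. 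This is not a vacuous case: $0$-face inclusions appear in the equivariant Reidemeister arguments, e.g.\ in the R1 proof where the subfunctor $\khoburn(\tilde D)$ lives over the $0$-resolution of the added crossings. (For comparison, the paper's proof also explicitly treats only a single $1$-face inclusion $\iota\from\two^{np}\to\two^{(n+1)p}$ with the claim that ``it will suffice,'' but it does not rest on the incorrect permutation reduction.)
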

\begin{proof}
We need only check that the operations (\ref{itm:stable-1}) and (\ref{itm:stable-2}) of Definition \ref{def:stableq} induce equivariant homotopy equivalences.  

For operation (\ref{itm:stable-1}), say we have a natural transformation $F_1\to F_{2}$ of Burnside functors with external action.  Fix a finite-dimensional orthogonal $K\times \ZZ_p$-representation $V_1$.  Associated to a natural transformation with external action, there is a map $||F_1||_{(V_1,V_2)} \to ||F_2||_{(V_1,V_2)}$ for any realizations, for $V_2$ sufficiently large, by Lemma \ref{lem:415}.  By Lemma \ref{lem:416}, the resulting (equivariant) map is a $K$-homotopy equivalence $||F_1^H||_{(V_1,V_2)} \to ||F_2^H||_{(V_1,V_2)}$ for all $H$ (having applied the Whitehead theorem on each fixed-point set).  By the $G$-Whitehead theorem, we have that $F_1$ and $F_2$ are $(K\times \ZZ_p)$-equivariantly homotopy equivalent.  

We next deal with operation (\ref{itm:stable-2}).  It will suffice to show that for the face inclusion $\iota \from \two^{np}\to \two^{(n+1)p}$ and any Burnside functor $F\from \two^{np}\to \burn_K$, that $\Sigma^{\mathbb{R}(\ZZ_p)}||F||_{(V_1,V_2)}$ is (equivariantly) homotopy equivalent to $||F_\iota||_{(V_1,V_2)}$ for any $V_2$ sufficiently large.  We will check this using the relationship of homotopy colimits to smash products.

First, observe by \ref{itm:ho2} that we have a natural $K$-weak equivalence:
\begin{equation}\label{eq:natural-map}
(\hoco_{\two^p_+} \widetilde{J}^+) \wedge (\hoco_{\two^{np}_+} \widetilde{F}^+) \to \hoco_{\two^p_+\times \two^{np}_+} (\widetilde{J}^+\wedge \widetilde{F}^+).
\end{equation}
We must check that this map is $\ZZ_p$-equivariant.  To do so, we would like to use naturality of the map in \ref{itm:ho2}.  In order to use that naturality, we need to use the external action to generate homomorphisms of homotopy coherent diagrams.

Choose a generator $g\in \ZZ_p$.  Let $\mathbf{F}_{g^{-1}}\from \two^{np}_+\to \two^{np}_+$ be the functor of the action of $g^{-1}$ on $\two^{np}_+$.  We consider the pullback homotopy coherent diagram $\mathbf{F}_{g^{-1}}(\widetilde{F}^+)=\widetilde{F}^+\circ \mathbf{F}_{g^{-1}}$.  The external action of $\ZZ_p$ on $\widetilde{F}^+$ is precisely the data of a homomorphism of homotopy coherent diagrams $\Psi_{g}\from \mathbf{F}_{g^{-1}}(\widetilde{F}^+)\to \widetilde{F}^+$, by Definition \ref{def:space-ext-action}.  We then obtain a well-defined $K$-equivariant map, by \ref{itm:ho-new},
\[
\hoco\, \mathbf{F}_{g^{-1}}(\widetilde{F}^+) \to \hoco\, \widetilde{F}^+.
\]
Note that $\hoco\, \mathbf{F}_{g^{-1}}(\widetilde{F}^+)$ is not identical to $\hoco \, \widetilde{F}^+$.  However, there is a natural (in homomorphisms of homotopy coherent diagrams with external $G$-action) homeomorphism between $\hoco\, \mathbf{F}_{g^{-1}}(\widetilde{F}^+)$ and $\hoco \, \widetilde{F}^+$
defined essentially by relabeling, as follows.

Recall that $\hoco \, \widetilde{F}^+$ is defined as a quotient of the disjoint union of certain spaces $[0,1]^n\times \widetilde{F}(x_0)$, parameterized by tuples of arrows $(f_1,\dots,f_n)$ of $\two^{np}$ with $f_i\from x_{i-1}\to x_i$ for objects $x_i$ of $\two^{np}$.  Now $\hoco\, \mathbf{F}_{g^{-1}}(\widetilde{F}^+)$ is a quotient of a disjoint union of spaces $[0,1]^n\times \mathbf{F}_{g^{-1}}(\widetilde{F}^+)(x_0)$ parameterized by tuples of arrows $(f_1,\dots,f_n)$ of $\two^{np}$ with $f_i\from x_{i-1}\to x_i$ for objects $x_i$ of $\two^{np}$.  We have a homeomorphism 
\begin{equation}\label{eq:relabel-homeo}
\hoco \, \widetilde{F}^+\to \hoco\, \mathbf{F}_{g^{-1}}(\widetilde{F}^+)
\end{equation}
by identifying the space $[0,1]^n \times \widetilde{F}(x_0)$, associated to the tuple $(f_1,\dots,f_n)$ to the space $[0,1]^n\times \mathbf{F}_{g^{-1}}(\widetilde{F}^+)(gx_0)$ associated to the tuple $(gf_1,\dots,gf_n)$.  Here, we have used that $\mathbf{F}_{g^{-1}}(\widetilde{F}^+)(gx_0)$ and $\widetilde{F}(x_0)$ are equal.  We choose the identification between $[0,1]^n\times \tilde{F}(x_0)$ and $[0,1]^n\times \mathbf{F}_{g^{-1}}(\widetilde{F}^+)(gx_0)$ to be the identity on the $[0,1]^n$ factor.  It is direct to check that this identification is compatible with the gluing rules in Definition \ref{def:hoco}.

  All of this discussion applies equally well, replacing $\widetilde{F}^+$ with $\widetilde{J}^+$ or $\widetilde{F}^+\wedge \widetilde{J}^+$.  In fact, we have a commutative diagram, where the vertical arrows are homomorphisms:
 \[
\begin{tikzpicture}[baseline={([yshift=-.8ex]current  bounding  box.center)},xscale=7.5,yscale=1.5]
\node (a0) at (0,1) {$(\hoco_{\two^p_+} (\mathbf{F}_{g^{-1}}\widetilde{J}^+)) \wedge (\hoco_{\two^{np}_+} (\mathbf{F}_{g^{-1}}\widetilde{F}^+))$};
\node (a1) at (1,1) {$\hoco_{\two^p_+\times \two^{np}_+}(\mathbf{F}_{g^{-1}} (\widetilde{J}^+\wedge \widetilde{F}^+))$};
\node (b0) at (0,0) {$(\hoco_{\two^p_+} \widetilde{J}^+) \wedge (\hoco_{\two^{np}_+} \widetilde{F}^+)$};
\node (b1) at (1,0) {$\hoco_{\two^p_+\times \two^{np}_+} (\widetilde{J}^+\wedge \widetilde{F}^+)$};

\draw[->] (a0) -- (a1) node[pos=0.5,anchor=north] {};
 \draw[->] (a0) -- (b0) node[pos=0.5,anchor=east] {};
\draw[->] (a1) -- (b1) node[pos=0.5,anchor=west]{};
\draw[->] (b0) -- (b1) node[pos=0.5,anchor=north] {};

\end{tikzpicture}
\]
Moreover, the $\ZZ_p$-action on $\hoco \,\widetilde{F}^+$ fits into the commutative diagram
 \[
\begin{tikzpicture}[baseline={([yshift=-.8ex]current  bounding  box.center)},xscale=2,yscale=2.5]
\node (a0) at (0,1) {$\hoco_{\two^{np}_+} (\mathbf{F}_{g^{-1}}\widetilde{F}^+)$};
\node (a1) at (1,0.5) {$\hoco_{\two^{np}_+} \widetilde{F}^+$};
\node (b0) at (0,0) {$\hoco_{\two^{np}_+} \widetilde{F}^+$};

 \draw[->] (a0) -- (b0) node[pos=0.5,anchor=east] {$\Psi_g$};
\draw[->] (a1) -- (b0) node[pos=0.3,anchor=north]{$g$};
\draw[->] (a0) -- (a1) {};

\end{tikzpicture}
\]
where the vertical arrow is the map induced by the homomorphism, and the diagonal arrow labeled by $g$ is as in the definition of the $\ZZ_p$-action on $\hoco\, \widetilde{F}^+$.  The remaining diagonal map is the inverse of the identification from (\ref{eq:relabel-homeo}). The analogous diagrams for $(\hoco_{\two^p_+} \widetilde{J}^+) \wedge (\hoco_{\two^{np}_+} \widetilde{F}^+)$ and $\hoco_{\two^p_+\times \two^{np}_+} (\widetilde{J}^+\wedge \widetilde{F}^+)$ also commute.  Using the above commutative square, and the naturality of \ref{itm:ho2} with respect to homomorphisms, we see that (\ref{eq:natural-map}) is $\ZZ_p$-equivariant.  Moreover, we have that the map
\[
(\hoco_{\two^p_+} \widetilde{J}^+) \wedge (\hoco_{\two^{np}_+} \widetilde{F^H}^+) \to \hoco_{\two^p_+\times \two^{np}_+} (\widetilde{J}^+\wedge \widetilde{F^H}^+).
\]
is a $K$-weak equivalence for each subgroup $H$ of $\ZZ_p$, by the same argument.  That is, (\ref{eq:natural-map}) is a $K\times \ZZ_p$-weak equivalence.

We note that $\widetilde{J}^+\wedge \widetilde{F}^+$ is the pullback of some $G$-coherent spatial refinement $\widetilde{J \times F}^+$ under $L\from \two^p_+\times \two^{np}_+\to \two^{(n+1)p}_+$, as a consequence of the definitions. 
Moreover, it is immediate from the definitions that $J\times F=F_\iota$.

Using Lemma \ref{lem:index-cat} and \ref{itm:ho4}, we have a $K$-homotopy-equivalence
\[
\hoco_{\two^p_+\times \two^{np}_+} (\widetilde{J}^+\wedge \widetilde{F}^+)\simeq \hoco_{\two^{(n+1)p}_+} (\widetilde{J\times F}^+).
\]
Moreover, this homotopy-equivalence is once again equivariant with respect to the $\ZZ_p$-action, because the homotopy equivalence is natural in the involved diagrams.  For each subgroup $H\subset \ZZ_p$, we obtain a similar map of $H$-fixed-point sets.  However, the same hypotheses we have used to this point apply to the $H$-fixed-point sets, since they come from refinements of the Burnside functor $F^H$, according to Lemma \ref{lem:fix-point-coincidence}.  That is, the map on $H$-fixed-point sets is also a homotopy equivalence, and by the $G$-Whitehead Theorem, we have obtained:  
\[
(\hoco_{\two^p_+} \widetilde{J}^+) \wedge (\hoco_{\two^{np}_+} \widetilde{F}^+)\simeq \hoco_{\two^{(n+1)p}_+} (\widetilde{J\times F}^+),
\]
now $K\times \ZZ_p$-equivariantly.  Applying Lemma \ref{lem:equivariant-spheres}, the result follows.
\end{proof}

\section{Applications to Khovanov spectra and Khovanov homology}\label{sec:app}

In this section, we recall the definition and main properties of Khovanov spectra from \cite{lls1}, as well as the generalization of the Lawson-Lipshitz-Sarkar construction to the odd Khovanov case \cite{oddkh}.  

Fix a link $L$ with diagram $D$, from which we obtain the Khovanov functor $\khfunc(D) \from \two^n\to \ZZ\Mod$ and the odd Khovanov functor $\khofunc(D) \from \two^n \to \ZZ\Mod$; we will often omit the diagram $D$ from the notation where it is clear from context.  In \cite{lls1}, Lawson-Lipshitz-Sarkar extended $\khfunc \from \two^n\to \ZZ\Mod$ to a stable Burnside functor $\khburn \from \two^n \to \burn$:

\[
\begin{tikzpicture}[baseline={([yshift=-.8ex]current  bounding  box.center)},xscale=2.5,yscale=1.5]
\node (a0) at (0,0) {$\two^n$};
\node (a1) at (1,0) {$\ZZ\Mod$};
\node (b1) at (1,1) {$\burn$};

\draw[->] (a0) -- (a1) node[pos=0.5,anchor=north] {\scriptsize
  $\khfunc$}; \draw[->] (b1) -- (a1) node[pos=0.2,anchor=east] {};
\draw[->,dashed] (a0) -- (b1) node[pos=0.5,anchor=south east]{\scriptsize $\khburn$};

\end{tikzpicture}
\]
In \cite{oddkh}, $\AbFunc_o(L)$ was extended to a stable functor $\khoburn \from \two^n \to \burn_{\ZZ_2}$, so that for $\dig=0 \from \ZZ_2\to \ZZ_2$, $\Tot_{\dig}(\khoburn)=\khfunc$ and for $\dig=\Id$, $\Tot_{\dig}(\khoburn)=\khofunc$.  In \cite{oddkh}, it was shown that the equivariant stable-equivalence class of $\khoburn(D)$ is an invariant of the link $L$.  From $\khoburn$, one can construct an infinite family $\X_n(L)$ of Khovanov spaces (or spectra), well-defined up to stable homotopy, whose definition we will see in Section \ref{sec:kh-burn-func}.

Once we have recalled these definitions, we will see in Section \ref{subsec:63} that the machinery of Sections \ref{sec:burn}--\ref{sec:external} applies to the Khovanov-Burnside functors $\khoburn$ and $\khburn$.  That is, we will show that $\khburn$ and $\khoburn$, as well as their annular analogs, admit external actions in various settings.  This is largely, but not entirely, formal.  In Section \ref{subsec:fixed-point}, we will show that the fixed-point functors of these Khovanov-Burnside functors agree with certain \emph{annular Khovanov-Burnside functors}.  This section is not formal, and relies on understanding the relationship between resolution configurations in the periodic link and the quotient link; this becomes particularly complicated in the odd case.  In Section \ref{subsec:reidemeister}, we show that the external actions are well-defined; this section is largely formal once an understanding of the fixed-point functors is obtained.  One can also obtain the results of this section without knowing the fixed-point functor explicitly, but it is somewhat easier with the results of Section \ref{subsec:fixed-point} in hand.  Here we also wrap up the construction of space-level invariants, proving Theorem \ref{thm:submain} using the tools from Section \ref{sec:external}.  We end with some spectral sequences in Section \ref{subsec:smith}, and some questions in Section \ref{subsec:questions}.

\subsection{The Khovanov-Burnside functor}\label{sec:kh-burn-func}

The purpose of this section is to explicitly describe the various Burnside functors we will use (cf.\ \cite[Sect.\ 6]{lls2}). 

We start by recalling the construction of the functor $\khoburn(D)$, for $D$ a diagram of an oriented link $L$, with $n$ ordered crossings, and a choice of orientation of crossings, as well as a choice of edge assignment as in Section \ref{subsec:okh-cpx}, and finally an ordering of the circles of each resolution.  Following Lemma \ref{lem:212}, it suffices to define $\khoburn(D)$ on objects, edges $\phi_{u,v}$ with $u\geqslant_1 v$, and across two-dimensional faces of the cube
$\two^n$.  For brevity, we write $\khoburn$ for $\khoburn(D)$.  On objects, set
\[
\khoburn(u)=\KhGen(u).
\]
For each edge $u\geqslant_1 v$ in $\two^n$, and each element
$y \in \khoburn(v)$, write
\[
\AbFunc_{o}(\phi^{\op}_{v,u})(y)=\sum_{x\in \khoburn(u) } \epsilon_{x,y} x.
\]
Note each $\epsilon_{x,y}\in \{ -1,0,1\}$.
Define
\[
\khoburn(\phi_{u,v})=\{ (y,x) \in \khoburn(v) \times \khoburn(u) \mid
\epsilon_{x,y}=\pm 1 \},
\]
where the sign on elements of $\khoburn(\phi_{u,v})$ is given by
$\epsilon_{x,y}$ of the pair, and the source and target maps are the
natural ones.

We need only define the $2$-morphisms across $2$-dimensional faces.
In fact, there is a unique
choice of $2$-morphisms compatible with the preceding data.  To be
more specific, for any $2$-dimensional face
$u\geqslant_1v,v'\geqslant_1w$, and any pair
$(x,y) \in \khoburn(u) \times \khoburn(w)$, there is a unique bijection
between
\[
A_{x,y}:=s^{-1}(x)\cap t^{-1}(y)\subset \khoburn(\phi_{v,w})\times_{\khoburn(v)} \khoburn(\phi_{u,v})
\] 
and
\[
A_{x,y}':=s^{-1}(x)\cap t^{-1}(y)\subset \khoburn(\phi_{v',w})\times_{\khoburn(v')} \khoburn(\phi_{u,v'})
\]
that preserves the signs.  (That is, the signed sets
$A_{x,y},A_{x,y}'$ both have at most one element of any given sign.)  Indeed, the only resolution configurations for which $A_{x,y}$ has more than one element are the ladybug configurations.  The unique sign-preserving matching turns out to be the right ladybug matching of \cite{lshomotopytype} for a type X edge assignment, and is the left ladybug matching for a type Y edge assignment.  This completes the description of a strictly unitary lax $2$-functor $\khoburn_0(D)$ associated to the data as above.  We call the identification (for any $x,y$) of sets $A_{x,y}$ and $A_{x,y}'$ above the {ladybug matching}. Recall also that we work with \emph{stable} Burnside functors; that is, pairs of a Burnside functor and an integer.  We define the \emph{(odd) Khovanov-Burnside functor} by $\khoburn=(\khoburn_0,-n_-)$, the Burnside functor shifted down by $n_-$. 
It follows from the construction that we may write $\khoburn$ as a sum over quantum gradings: $\khoburn=\amalg_j \khoburn^j$, where $\khoburn^j$ is the subfunctor from Khovanov generators in quantum grading $j$.

 Recall that the equivariant stable equivalence class of $\khoburn$ is a link invariant, as in the following theorem.  In what follows, let $\tilde{\mathbb{R}}$ be the nontrivial one-dimensional real representation of $\ZZ_2$. 

\begin{thm}[{\cite[Theorem 1.7 ]{oddkh}}]\label{thm:oddkh-main}
The equivariant stable equivalence class of the stable functor $\khoburn$ is independent of the choices in its construction, and is a link invariant. Let $\widetilde{\khoburn}^j_{n}$ be a spatial refinement of $\khoburn^j$ in sufficiently high dimension, modeled on $\tilde{\mathbb{R}}^n$.  Then the $\ZZ_2$-equivariant stable homotopy type of the spatial realization $\X_n^j=||(\widetilde{\khoburn}^j_{n})^+||$ is a link invariant.  Moreover, there is a CW structure on $\X_n^j$ for which the reduced cellular chain complex $\redcellC^*(\X_n^j)=\oddKhCx^j(L;\ZZ)=\Tot_{\Id}(\khoburn^j)^*$ if $n$ odd, or $\KhCx^j(L;\ZZ)=\Tot_{\dig=0}(\khoburn^j)^*$ if $n$ is even.
\end{thm}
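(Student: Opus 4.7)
The plan is to establish the four assertions of the theorem in sequence: (i) independence of $\khoburn(D)$ from the auxiliary choices in the construction, up to equivariant stable equivalence; (ii) invariance of $\khoburn$ under (oriented) Reidemeister moves, up to equivariant stable equivalence; (iii) well-definedness and invariance of the spectrum realization $\X_n^j$; and (iv) the cellular chain complex identification. Parts (iii) and (iv) are formal consequences of the machinery already developed in Section \ref{sec:disk}, so the real content is (i) and (ii).

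For (i), the auxiliary choices in $\khoburn_0(D)$ are a crossing-orientation, an edge assignment $\epsilon\in\cellC^1([0,1]^n;\ZZ_2)$ satisfying $\delta\epsilon=\Omega(D)$, and a total order on the circles at each vertex. Reversing the orientation at one crossing negates $\AbFunc_o'(\phi)$ on the affected edges; this sign change can be absorbed by a compensating coboundary in $\epsilon$ together with a sign change on generators at neighboring vertices, giving an equivariant natural isomorphism of Burnside functors that is the identity on underlying sets. Since $H^1(\cellC^*([0,1]^n;\ZZ_2))=0$, any two valid edge assignments differ by a coboundary, so the same kind of argument realizes their difference as an equivariant natural isomorphism. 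Reordering circles at a vertex changes exterior-algebra generators by signs, producing another such natural isomorphism. In each case the induced map on $\Tot_\dig$ is a chain isomorphism for every $\dig\from \ZZ_2 \to \ZZ_2$, hence an equivariant stable equivalence in the sense of Definition \ref{def:stableq}.

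For (ii), invariance under each Reidemeister move is proved by constructing, for each move, an explicit zig-zag of equivariant stable equivalences between the Burnside functors on the two sides of the move. The template (following \cite{lls1}) is to identify a sub-Burnside-functor $J\subset \khoburn(D')$ or quotient $\khoburn(D')\to L$ with acyclic totalization for every $\dig$, so that the cofibration sequences of Definition \ref{def:burn-cofib-sequence} and face inclusions $\iota$ of the form appearing in Definition \ref{def:stableq}(\ref{itm:stable-2}) produce a stable equivalence $\khoburn(D)\simeq \khoburn(D')$. Because $\khoburn$ carries $\ZZ_2$-decorations, at each step one must verify that the decorations match through the sub/quotient identifications, and that the ladybug matching for $2$-morphisms, introduced after the display \eqref{diagram:type}, is compatible with the decorations in the relevant two-dimensional faces.

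For (iii) and (iv), Proposition \ref{prop:spectrum-realization-well-def} upgrades equivariant stable equivalences to $\ZZ_2$-equivariant stable homotopy equivalences of the realizations $\|\widetilde{\khoburn}_{j,n}^+\|$, so (i) and (ii) imply the stable homotopy type of $\X_n^j$ is a link invariant. The coarse CW structure on $\|\widetilde{\khoburn}^+_{j,n}\|$ (Section \ref{sec:disk}) has one cell per element of $\amalg_v \khoburn^j(v)$, and by Proposition \ref{prop:totalization} its reduced cellular chain complex is $\Tot_\dig(\khoburn^j)$ shifted by $\dim V$; the formula \eqref{eq:amorph} for $\Abelianize_\dig$ with $\dig=\mathrm{Id}$ recovers the odd Khovanov differential of Section \ref{subsec:okh-cpx} (giving $\oddKhCx^j$ when $n$ is odd), while $\dig=0$ trivializes the $\ZZ_2$-decoration and yields $\KhCx^j$ (when $n$ is even).

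The main obstacle is the Reidemeister R3 invariance: the natural transformation relating the two sides necessarily involves the ladybug matching on faces of types X and Y in \eqref{diagram:type}, and one must verify that the signs induced by the edge assignments $\epsilon$ on either side can be consistently matched so that the induced map is a chain homotopy equivalence for \emph{both} $\dig=0$ and $\dig=\mathrm{Id}$. This is the step that distinguishes the decorated case from the treatment in \cite{lls1} and requires the full two-parameter cocycle bookkeeping of \cite{ors}.
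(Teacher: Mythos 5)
This statement is cited verbatim from \cite{oddkh} (Theorem 1.7 there); the present paper does not reprove it, so there is no ``paper's own proof'' to compare against directly. However, the paper partially reveals the proof structure in its proof of Theorem \ref{thm:functor-with-action}, which it explicitly describes as following ``almost verbatim from the start of the proof of Theorem 1.7 of \cite{oddkh}'' with equivariance adjustments layered on top. Against that template, your proposal has the correct overall architecture: (i) independence of auxiliary choices via sign-reassignments and natural isomorphisms, (ii) Reidemeister invariance via acyclic sub/quotient Burnside functors and face inclusions in the sense of Definition \ref{def:stableq}, and (iii)--(iv) formal upgrades via Propositions \ref{prop:spectrum-realization-well-def} and \ref{prop:totalization}.

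Two small inaccuracies worth noting. First, your list of auxiliary choices omits the ``type of edge assignment'' (type X versus type Y, in the terminology of Section \ref{subsec:okh-cpx}), which the paper treats as a fourth independent check alongside orientation of crossings, edge assignment, and circle ordering; this one is handled by \cite[Lemma 2.4]{ors}, showing that a type X edge assignment can be reinterpreted as a type Y edge assignment for a different crossing orientation. Second, your claim that reversing the orientation at one crossing ``negates $\AbFunc_o'(\phi)$ on the affected edges'' is imprecise: merge maps $\AbFunc_o'$ are manifestly independent of the arc orientation, so only the split-type edges incident to that crossing are affected. The compensating mechanism (changing $\epsilon$ and possibly signs of generators) does work, and is packaged as \cite[Lemma 2.3]{ors}, but one should be careful to invoke the correct asymmetry between merges and splits. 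Neither of these affects the validity of the overall argument; the place where the real work lies (and where you correctly point) is the compatibility of the ladybug matching with the $\ZZ_2$-decorations, particularly in the R3 proof.
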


Let $\khburn=\forgot (\khoburn)$ denote the stable Burnside functor obtained by applying the forgetful functor $\burn_{\ZZ_2}\to \burn$; call this the \emph{even Khovanov-Burnside functor}; it agrees with the construction of \cite{lls1}. We illustrate an example Khovanov-Burnside functor in Figure \ref{fig:burnside-dots}.

\begin{figure}

\begin{tikzcd}[column sep={.8cm,between origins}]
	& \underset{1}{\bullet} \arrow{rrr}{a_1} & 
					& &\underset{1}{\bullet} & \\
	\underset{x_1}{\bullet}\arrow[bend left]{rrrr}{a_2} &  & \underset{x_2}{\bullet} \arrow[bend right]{rr}{a_3}& 
					& \underset{y_1}{\bullet} \\
	& \underset{x_1x_2}{\bullet} & & & & 
\end{tikzcd}
\caption{An example Burnside functor $F\from \two^1 \to \burn_{\ZZ_2}$.  We visualize elements of $F(1),F(0)$ as dots, and regard the morphism $F(\phi_{1,0})$ as a collection of arrows.  Here, we let $F(1)=\{1,x_1,x_2,x_1x_2\}$, the set of Khovanov generators associated to a resolution configuration of two circles, and $F(0)=\{1,y_1\}$, the set of Khovanov generators associated to a single circle. Set $F(\phi_{1,0}) = \{a_1, a_2, a_3\}$; $s(a_i)$ is given by the tail of the arrow $a_i$, and $t(a_i)$ is given by the head of the arrow $a_i$.  This is the Khovanov-Burnside functor associated to two circles merging to a single circle. Generators at the same height have the same quantum grading.} 
\label{fig:burnside-dots}
\end{figure}
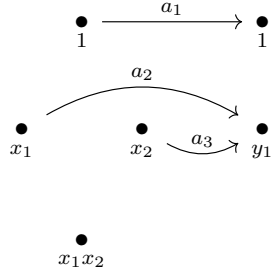

Next, we address the construction of the stable \emph{annular Khovanov-Burnside functor} $\akhoburn(D)=\amalg_{j,k} \akhoburn^{j,k}(D)\from \two^n \to \burn_{\ZZ_2}$ associated to a diagram $D$ of an annular link $L$, along with an ordering of the $n$ crossings, an orientation of the crossings, a choice of edge assignment, and an ordering of circles at each resolution.  We define, for $u\in \two^n$,
\[
\akhoburn^{j,k}(u)=\KhGen^{j,k}(u).
\]
For each edge $u\geqslant_1 v$ in $\two^n$, and each element
$y \in \akhoburn(v)$, write
\[
\oddafunc(\phi^{\op}_{v,u})(y)=\sum_{x\in \akhoburn(u) } \epsilon_{x,y} x.
\]
Define
\[
\akhoburn(\phi_{u,v})=\{ (y,x) \in \akhoburn(v) \times \akhoburn(u) \mid
\epsilon_{x,y}=\pm 1 \},
\]
where the sign on elements of $\akhoburn(\phi_{u,v})$ is given by
$\epsilon_{x,y}$ of the pair, and the source and target maps are the
natural ones.  The matching along $2$-dimensional faces is obtained from that of $\khoburn$, and the formal desuspension of $\akhoburn$ is also inherited from $\khoburn$.  We have the following theorem:

\begin{thm}\label{thm:annular-burn-exists}
The equivariant stable equivalence class of the functor $\akhoburn(D)$ is independent of the choices involved in its construction, and is an invariant of the annular link $L$.  Let $\widetilde{\akhoburn}^{j,k}_{n}$ be a spatial refinement of $\akhoburn^{j,k}$ in sufficiently high dimension, modeled on $\tilde{\mathbb{R}}^n$.  Then the $\ZZ_2$-equivariant stable homotopy type of the spatial realization $\Akhspace^{j,k}_n=||(\widetilde{\akhoburn}^{j,k}_{n})^+||$ is a link invariant. Moreover, there is a CW structure on $||(\widetilde{\akhoburn}^{j,k}_{n})^+||$ for which the reduced cellular chain complex $\redcellC^*(\Akhspace^{j,k}_n)=\oddAkc^j(L;\ZZ)=\Tot_{\Id}(\akhoburn^{j,k})^*$ if $n$ odd, or $\Akc^j(L;\ZZ)=\Tot_{\dig=0}(\akhoburn^{j,k})^*$ if $n$ is even.
\end{thm}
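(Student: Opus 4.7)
The proof strategy closely parallels that of Theorem \ref{thm:oddkh-main} in \cite{oddkh}, with the central new issue being that every construction must respect the $(k)$-grading. My plan has four steps.

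First, I would verify that $\akhoburn(D)$ is indeed a well-defined strictly unitary lax $2$-functor via Lemma \ref{lem:212}. The objects and edge morphisms are defined exactly as for $\khoburn$ but restricted to the $(k)$-graded piece; since the maps $\oddafunc(\phi_{v,u}^{\op})$ are by definition the $(k)$-grading-preserving part of $\AbFunc_o(\phi_{v,u}^{\op})$, the edge correspondences are precisely the subsets of the edge correspondences of $\khoburn$ living in fixed $(j,k)$-bigrading. The $2$-morphisms across $2$-dimensional faces (in particular the ladybug matching) are inherited from $\khoburn$: ladybug configurations are local in a small disk in the plane, and the two matched generator pairs assign the same labels to all circles outside that disk, so they lie in the same $(k)$-grading. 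Thus the matching restricts coordinate-wise to $\akhoburn^{j,k}$.

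Second, I would establish independence of the auxiliary choices (orientation of crossings, edge assignment, ordering of circles at each resolution). For $\khoburn$, \cite{oddkh} constructs explicit natural transformations realizing stable equivalences between the Burnside functors associated to two different choices. Each such natural transformation is, vertex-by-vertex, induced by an identification of Khovanov generators or a reordering of tensor factors, neither of which changes the $(k)$-grading. Consequently, each of these natural transformations restricts to a natural transformation of $\akhoburn^{j,k}$, and the induced maps on totalizations remain chain homotopy equivalences in each $(j,k)$ bigrading.

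Third, I would prove invariance under annular Reidemeister moves. The stable equivalences for the three Reidemeister moves constructed in \cite{oddkh} are assembled from local Burnside functor cobordisms (face inclusions and natural transformations) built from the resolution cube in a neighborhood of the move. Since annular Reidemeister moves occur in a disk disjoint from the basepoints $\mathbb{X}, \mathbb{O}$, every circle in this local neighborhood is trivial, and every circle lying outside retains its type. Thus the generator sets of the relevant local Burnside functors split cleanly as direct sums over $(k)$-gradings, and the natural transformations/face inclusions respect this splitting. Restricting to a fixed $(j,k)$ grading then gives a stable equivalence of annular Burnside functors, establishing invariance of the equivariant stable equivalence class of $\akhoburn^{j,k}$.

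Finally, the spectrum-level conclusions follow from the general machinery: Proposition \ref{prop:spectrum-realization-well-def} upgrades the equivariant stable equivalence class of $\akhoburn^{j,k}$ to a $\ZZ_2$-equivariant stable homotopy type of $\Akhspace_n^{j,k}$, and Proposition \ref{prop:totalization} identifies the reduced cellular chain complex with $\Tot_\degh(\akhoburn^{j,k})$ for $\degh$ determined by the parity of $n$, which by construction equals $\oddAkc^j$ (odd $n$) or $\Akc^j$ (even $n$). The main obstacle is the bookkeeping in step three: one must check carefully, move-by-move, that the specific local Burnside functor cobordisms used in \cite{oddkh}--which were only required to respect the quantum grading--actually split over annular grading as well. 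I expect this to be straightforward but tedious, and reduces to observing that all structure maps appearing in the local models factor through $(k)$-grading-preserving operations because no passive circle is ever created, destroyed, or changes type during an annular Reidemeister move.
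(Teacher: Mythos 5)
Your proof is correct and follows essentially the same approach as the paper, whose proof of this theorem is just the one-line remark that it ``follows from keeping track of the annular gradings in the invariance proof of the equivariant stable equivalence class of $\khoburn$''; you are filling in that bookkeeping in the natural way. One small imprecision in your third step: it is not the case that ``every circle in this local neighborhood is trivial''---a nontrivial circle can certainly pass through the Reidemeister disk---but the relevant fact (which you correctly use) is that every circle \emph{created or destroyed} inside that disk is trivial, so the birth/death maps in the local cobordisms preserve the $(k)$-grading.
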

\begin{proof}
This follows from keeping track of the annular gradings in the invariance proof of the equivariant stable equivalence class of $\khoburn$.
\end{proof}

We write $\akhburn\from \two^n \to \burn$ for the \emph{even annular Khovanov-Burnside functor}, obtained from $\akhoburn$ by applying the functor $\burn_{\ZZ_2}\to \burn$.

\subsection{Equivariant Khovanov-Burnside functors}\label{subsec:63}

In this section, we apply the machinery from Sections \ref{sec:burn}--\ref{sec:external} to construct Burnside functors with external action.  
We first outline the notation used in this section. 
Let $p > 1$ be an integer, and consider a $p$-periodic link $\tilde L$ with (annular) periodic diagram $\tilde D$. Let $\psi$ denote the rotation of the annulus by $2\pi/p$.

We abuse notation by writing $\psi$ also for the actions on $\tilde{L}$ and $\tilde{D}$.   
The quotient link $L = \tilde L / \psi$ has (annular) diagram $D = \tilde D / \psi$, with $n$ crossings.
We refer to information relating to $\tilde L$ as `upstairs' and information relating to the quotient $L$ as `downstairs.'  

\begin{thm}\label{thm:functor-with-action}
Let $\tilde{L}$ be a $p$-periodic link.  Then there is a natural $\ZZ_p$-external action on $\akhburn(\tilde{L})$ and $\khburn(\tilde{L})$, whose external equivariant stable equivalence class is an invariant of the equivariant isotopy type of the link $\tilde{L}$.  If $p$ is odd, then there is a natural $\ZZ_p$-external action on $\akhoburn(\tilde{L})$ and $\khoburn(\tilde{L})$, whose external equivariant stable equivalence class is an invariant of the equivariant isotopy type of the link $\tilde{L}$. 
\end{thm}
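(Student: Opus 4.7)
The plan is to build the external $\ZZ_p$-action from the geometric rotation $\psi$ of the periodic diagram $\tilde{D}$, verify the coherence hypotheses of Lemma \ref{lem:212-equivariant}, and finally lift the proof of Theorem \ref{thm:oddkh-main} equivariantly to obtain invariance under equivariant isotopy.

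First I would order the $np$ crossings of $\tilde{D}$ as in Section \ref{subsec:periodic-links}, so that $\ZZ_p$ acts on $\two^{np}$ by cyclic permutation of $p$ blocks of $n$. Because $\psi$ is a planar rotation, it sends the resolution diagram $\tilde{D}_v$ to $\tilde{D}_{\psi v}$ and thereby induces set bijections $\psi_{g,v}\col F(v)\to F(gv)$ for $F=\khburn(\tilde{D})$ or $\akhburn(\tilde{D})$, together with canonical bijections between the correspondences $F(\phi_{u,v})$ and $F(\phi_{gu,gv})$ (the source/target data on $F(\phi_{u,v})$ is literally rotated). This realizes \ref{itm:hyp1} and \ref{itm:hyp2} of Lemma \ref{lem:212-equivariant}. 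Condition \ref{itm:e1'} is tautological from $\psi_{gh}=\psi_g\psi_h$. Condition \ref{itm:e2'} is the only nontrivial point: the $2$-morphisms $F_{u,v,v',w}$ are forced by source/target data on non-ladybug faces, while on ladybug faces they are given by the right/left ladybug matching, which is preserved because $\psi$ is an orientation-preserving planar rotation. This finishes the even case.

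For $\akhoburn$ and $\khoburn$ we need additionally an equivariant orientation of crossings, an equivariant ordering of circles at each resolution, and an edge assignment. An orientation of $L$ lifts to a $\ZZ_p$-invariant orientation of $\tilde{L}$ and hence of crossings; the conventions of the paragraph following Proposition \ref{prop:identify-equiv-gens} make the circle orderings well-behaved under $\psi$, with the sign by which $\psi$ permutes the Khovanov generators recorded by the $\ZZ_2$-decorations of the bijections $\psi_{g,v}$ in $\burn_{\ZZ_2}$. The remaining obstruction is that the edge assignment $\epsilon$ need not be $\psi$-invariant: the pullback $\psi^\ast\epsilon$ is another edge assignment, so $\psi^\ast\epsilon-\epsilon$ is a $1$-cocycle in $\cellC^1([0,1]^{np};\ZZ_2)$, which is a coboundary $\delta\eta$. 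When $p$ is odd the cohomological obstruction to promoting $\eta$ to a $\ZZ_p$-compatible $0$-cochain lies in $H^1(\ZZ_p;\ZZ_2)=0$, so a standard averaging over the $\ZZ_p$-orbit produces such an $\eta$. We absorb $\eta$ into the decorations of $\psi_{g,v}$; with this choice \ref{itm:e1'} holds by construction, and \ref{itm:e2'} is again reduced to the ladybug hexagon, now with the signed ladybug matching, which holds because $\eta$ was built precisely to cancel the discrepancy between $\psi^\ast\epsilon$ and $\epsilon$ on each $2$-face.

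The main obstacle will be this odd-case sign bookkeeping: producing the $\ZZ_p$-equivariant trivialization of $\psi^\ast\epsilon-\epsilon$, and then checking that the resulting decorations intertwine with the signed ladybug matching on all ladybug $2$-faces. The hypothesis that $p$ is odd enters exactly at the cohomological vanishing statement that allows the trivialization to be made equivariant.

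Finally, for invariance under equivariant isotopy, I would lift the proof of Theorem \ref{thm:oddkh-main} to the equivariant setting. Equivariant isotopies and equivariant Reidemeister moves on $\tilde{D}$ are lifts of ordinary isotopies and Reidemeister moves on $D$, and so occur simultaneously in the $p$ cyclically arranged fundamental domains. The chain-level equivalences used in the proof of Theorem \ref{thm:oddkh-main} are local in the diagram and functorial in the crossing data; performing them simultaneously in all $p$ fundamental domains yields natural transformations and face inclusions that are $\ZZ_p$-equivariant in the sense of Definitions \ref{def:nattrans} and \ref{def:stableq}. Concatenating these gives an external $K$-equivariant stable equivalence between the Burnside functors of any two equivariantly isotopic $p$-periodic diagrams, proving that the external stable equivalence class is an invariant of the equivariant isotopy type of $\tilde{L}$.
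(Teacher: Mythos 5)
Your construction of the external action matches the paper's: the rotation $\psi$ gives bijections on generators and correspondences, conditions \ref{itm:hyp1}, \ref{itm:hyp2}, \ref{itm:e1'} are formal, and \ref{itm:e2'} in the even case reduces to the planar-isotopy invariance of the ladybug matching, exactly as in Proposition \ref{prop:equivariant-constr}. For the odd case, however, your route diverges from the paper's and has a gap. The paper (Proposition \ref{prop:odd-equivariant-constr}) observes that the obstruction cocycle $\Omega(\tilde D)$ is itself $\ZZ_p$-invariant, and invokes Lemma \ref{lem:eqvar-cube-cat} — that $H^1$ and $H^2$ of the $\ZZ_p$-\emph{invariant} subcomplex $\eqcellC([0,1]^{np};\f_2)$ vanish when $p$ is odd — to produce a genuinely $\ZZ_p$-invariant edge assignment $\epsilon$ directly. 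Once $\epsilon$ is invariant, the decorations on $\psi_{g,v}$ are forced by the circle orderings and no twisting is required. You instead fix a non-invariant $\epsilon$, observe $\psi^*\epsilon - \epsilon = \delta\eta$, invoke $H^1(\ZZ_p;\ZZ_2)=0$, and propose to ``absorb $\eta$ into the decorations of $\psi_{g,v}$.'' This is close in spirit (both use the invertibility of $p$ mod $2$ to make the norm element behave), but it is underspecified: the decorations on $\psi_{g,v}$ are not free parameters — they are dictated by the signs with which $\psi$ permutes the ordered circles — and if you modify them by $\eta$ you still have to verify that the modified family satisfies the cocycle constraint in Definition \ref{def:external-action}(1) and that the $2$-morphism $\psi_{g,A}$ (a bijection of \emph{decorated} correspondences) still exists; you assert \ref{itm:e1'} and \ref{itm:e2'} ``hold by construction'' without checking this. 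The paper's formulation avoids these issues entirely.

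You also omit the verification that the resulting external stable equivalence class is independent of the remaining auxiliary choices (equivariant orientation of crossings, type and choice of equivariant edge assignment, circle orderings), which the paper handles via the equivariant version of the sign-reassignment argument, using Lemma \ref{lem:eqvar-cube-cat} a second time to show two invariant edge assignments differ by the coboundary of an invariant $0$-cochain.

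The more serious gap is in the invariance argument. Definition \ref{def:stableq} requires, for an \emph{external} stable equivalence, that the induced maps $\Tot(\eta^H)$ on fixed-point functors be chain homotopy equivalences for \emph{every} subgroup $H\subset\ZZ_p$. Performing Reidemeister moves simultaneously in the $p$ fundamental domains does give a $\ZZ_p$-equivariant natural transformation, but that by itself only shows $\Tot(\eta)$ is a homotopy equivalence; it does not show the fixed-point complexes are also quasi-isomorphic. The paper verifies this by first computing the fixed-point functors (Theorem \ref{thm:burnside-fixed-pts}, with Propositions \ref{prop:long-diffs}, Lemmas \ref{lem:edge-magic}, \ref{lem:ladybug-compatible}), identifying $\khoburn(\tilde D)^{\ZZ_q}$ with $\akhoburn(\tilde D/\ZZ_q)$, and then recognizing the induced map on fixed-point totalizations as the corresponding annular Reidemeister equivalence for the quotient diagram, which is itself a quasi-isomorphism. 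Your argument does not invoke the fixed-point computation and hence does not establish the fixed-point condition; this is a missing ingredient, not a simplification.
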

We will prove this theorem over the course of the next few sections.  We start with the construction.

\begin{prop}\label{prop:equivariant-constr}
Let $\tilde{D}$ be a $p$-periodic link diagram.  In terms of the diagram $\tilde{D}$, there is a well-defined $\ZZ_p$-external action on $\akhburn(\tilde{D})$ and $\khburn(\tilde{D})$.  
\end{prop}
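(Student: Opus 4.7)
The plan is to exhibit the external action data in the sense of Definition \ref{def:external-action} directly from the geometric rotation, then invoke the edge-level reformulation of Lemma \ref{lem:212-equivariant} to confirm that it assembles into a genuine external action. I will write the argument for $\khburn(\tilde D)\from \two^{np}\to\burn$; the case of $\akhburn(\tilde D)$ is obtained by restricting to each annular grading, which is automatically preserved because the $\ZZ_p$-action rotates the annulus about $\mathbb{X}$ and therefore respects intersection numbers with the arc $\tilde\gamma$ (up to the same shift that reorders the circles).

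First I would define the $1$-isomorphisms $\psi_{g,v}\from \khburn(v)\to\khburn(gv)$. Given $v\in\two^{np}$ and $g\in\ZZ_p$, the rotation $\psi$ carries the resolution diagram $\tilde D_v$ homeomorphically onto $\tilde D_{gv}$ (recalling that crossings were ordered so that $\ZZ_p$ acts by cyclic permutation of coordinates, as in \S\ref{subsec:periodic-links}), and in particular induces a bijection $Z(\tilde D_v)\to Z(\tilde D_{gv})$. Taking monomials of circles to monomials of circles produces the desired bijection of Kauffman states, which I declare to be $\psi_{g,v}$, viewed as the graph correspondence (no decoration, since we are in $\burn$). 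The identity $\psi_{gh,v}=\psi_{g,hv}\circ\psi_{h,v}$ is immediate from the fact that we have genuine bijections and that $\psi$ is a group action on circles. This verifies hypothesis \eqref{itm:hyp1} of Lemma \ref{lem:212-equivariant}.

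Next I would supply the $2$-morphisms $\psi_{g,u,v}$ along edges $u\geq_1 v$. The correspondence $\khburn(\phi_{u,v})$ was built from the data of the merge or split map for the two circles differing in $\tilde D_u$ and $\tilde D_v$; the rotation homeomorphism carries this local picture to the corresponding merge or split in $\tilde D_{gu},\tilde D_{gv}$, so the rotation tautologically induces a bijection of signed sets $\khburn(\phi_{u,v})\to\khburn(\phi_{gu,gv})$ commuting with source and target maps up to the $\psi_{g,(-)}$'s. Taking $\psi_{g,u,v}$ to be this identification, hypothesis \eqref{itm:hyp2} is satisfied, and condition \ref{itm:e1'} of Lemma \ref{lem:212-equivariant} follows at once from the compositional property of a group action.

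The main work is condition \ref{itm:e2'}, i.e.\ commutativity of the rotation hexagon on each $2$-dimensional face $u\geq_1 v,v'\geq_1 w$. For non-ladybug faces, both the composite $\khburn(\phi_{v,w})\circ\khburn(\phi_{u,v})$ and its primed counterpart are determined set-theoretically by the merge/split pattern, and the hexagon commutes because the rotation is an honest homeomorphism of $S^2$ that sends the $2$-face configuration for $(u,v,v',w)$ to that for $(gu,gv,gv',gw)$. The substantive case is the ladybug configuration, where the face matching $F_{u,v,v',w}$ is the right ladybug matching of \cite{lshomotopytype}. Here the hexagon reduces to the statement that the right ladybug matching is natural under orientation-preserving homeomorphisms of $S^2$ taking one ladybug configuration to another, which is the content of the proof that the Khovanov flow category is invariant under diagram symmetries (cf.\ \cite{lls1,lls2}); this naturality is exactly what allows the rotation to induce an equivariant matching.

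Having verified \eqref{itm:hyp1}, \eqref{itm:hyp2}, \ref{itm:e1'} and \ref{itm:e2'}, Lemma \ref{lem:212-equivariant} extends the data to a strictly unitary functor $\khburn(\tilde D)\from \two^{np}\to\burn$ equipped with an external $\ZZ_p$-action, unique up to $\ZZ_p$-equivariant natural isomorphism. Restricting to each pair of quantum and annular gradings preserved by the rotation yields the corresponding external action on $\akhburn^{j,k}(\tilde D)$. The hexagon check for the ladybug face is the only nontrivial obstacle; the rest is bookkeeping in the naturality of the cube construction under diagram homeomorphisms.
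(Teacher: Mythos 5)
Your proposal takes essentially the same approach as the paper: define $\psi_{g,v}$ and $\psi_{g,u,v}$ from the rotation on resolution diagrams, observe that condition \ref{itm:e2'} is automatic for non-ladybug faces, reduce the substantive check to the ladybug case, and conclude using the fact that the ladybug matching is preserved by orientation-preserving planar isotopy (the paper cites \cite[Lemma 5.8]{lshomotopytype} for exactly this). Both arguments then invoke Lemma \ref{lem:212-equivariant} to assemble the data into an external action.
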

\begin{proof}
Recall from Section \ref{subsec:periodic-links} that $\ZZ_p$ acts on $\amalg_{u\in \two^{np}} \KhGen^{j,k}(u)$ for any $j,k\in \ZZ$.  We write $\psi u$ for the generator obtained from the Khovanov generator $u$ by rotation by $2\pi/p$.  

For each $v\in \two^{np}$ and $i\in \ZZ_p$, we have a (bijective) correspondence
\[E_{\psi^i,v}\colon \akhburn(v)\to \akhburn(\psi^iv),
\]
by sending the generator $x\in \akhburn(v)=\KhGen(v)$ to $\psi^ix\in \akhburn(\psi^iv)$.  We view $E_{\psi^i,v}$ as a $1$-morphism in $\burn$.

For $u\geqslant_1 v$, it is easy to check that there are natural bijections $\psi\from \akhburn(\phi_{u,v})\to \akhburn(\phi_{\psi u,\psi v})$ and $\psi \from \khburn(\phi_{u,v})\to \khburn(\phi_{\psi u, \psi v})$. (Here we have abused notation: the bijections are induced by the rotation $\psi$, hence the name.)  Moreover, if $a\in \akhburn(\phi_{u,v})$, one can check directly that $s(\psi a)=\psi s(a)$ and $t(\psi a)=\psi t(a)$.  For $u\geq_1 v$, let
\[
E_{\psi^i,\phi_{u,v}}\colon E_{\psi^i,v}\circ \akhburn(\phi_{u,v})\to \akhburn(\phi_{\psi^iu,\psi^iv})\circ E_{\psi^i,u}
\]
be the $2$-morphism in $\burn$ induced by $\psi^i\from \akhburn(\phi_{u,v})\to \akhburn(\phi_{\psi^iu,\psi^iv})$.  Note that $E_{\psi^i,\phi_{u,v}}$ is indeed a $2$-morphism, since $\psi^i\from \akhburn(\phi_{u,v})\to \akhburn(\phi_{\psi^iu,\psi^iv})$ is compatible with source and target maps.

There are similar constructions of $E_{\psi^i,v}$ and $E_{\psi^i,\phi_{u,v}}$ for $\khburn$ in place of $\akhburn$.  

We are almost in the situation of Lemma \ref{lem:212-equivariant}: in order to apply that lemma and obtain an external action $E$ on $\akhburn(\tilde{D})$ and $\khburn(\tilde{D})$, we need to check the conditions \ref{itm:e1'} and \ref{itm:e2'}.  The hypothesis \ref{itm:e1'} holds because, for each pair $u\geq_1 v$, and integers $i,j$, there is at most one $2$-morphism between $E_{\psi^{i+j},v}\circ \akhburn (\phi_{u,v})$ and $\akhburn(E_{\psi^{i+j}u,\psi^{i+j}v})\circ E_{\psi^{i+j},u}$.  For \ref{itm:e2'}, we need only show that the $\ZZ_p$-action respects the ladybug matching.  However, this is also essentially automatic; let us see how formal properties of the ladybug matching guarantee this.  First of all, we need only consider squares $u\geqslant_1 v,v'\geqslant_1w$ in $\two^{np}$ so that $\khburn(\phi_{u,v})$ (or $\akhburn(\phi_{u,v})$) has two elements, otherwise the diagram in the hypotheses \ref{itm:e2'} of Lemma \ref{lem:212-equivariant} is automatically commutative.  That is, we may assume the resolution configuration associated to $u\geq_2 w$ is a ladybug configuration. 

Then the arrows from the action in \ref{itm:e2'} are obtained from the maps
\[
\KhGen(u)\to \KhGen(\psi u),
\]
(similarly for $v,v',w$), obtained by rotating the resolution $D_u$ to $D_{\psi u}$ (using the fact that $\khburn(\phi_{u',u''})$ is a subset of the product $\khburn(u')\times \khburn(u'')$ for any $u', u''$ in $\two^{np}$).  Finally, the ladybug matching is an invariant of planar isotopy \cite[Lemma 5.8]{lshomotopytype},  so the diagram commutes.  Lemma \ref{lem:212-equivariant} then implies that there is a $\ZZ_p$-external action on $\akhburn(\tilde{D})$ and $\khburn(\tilde{D})$, as needed.  
\end{proof}

We next generalize this to the odd case.  We will need an auxiliary lemma.  

\begin{lem}\label{lem:eqvar-cube-cat}
Suppose $p$ is odd.  Let $\eqcellC([0,1]^{np};\f_2)$ be the subcomplex of $\cellC^*([0,1]^{np};\f_2)$ consisting of elements fixed by the $\ZZ_p$-action, with respect to the product cell structure on $[0,1]^{np}$.  Then $H^2(\cellC^{\ZZ_p}([0,1]^{np};\f_2))=H^1(\cellC^{\ZZ_p}([0,1]^{np};\f_2))=0$.
\end{lem}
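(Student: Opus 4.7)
The plan is a short transfer/averaging argument using that $p$ is odd, hence a unit in $\f_2$.

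\smallskip

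First I would observe that the cyclic group $\ZZ_p$ acts on the cochain complex $\cellC^*([0,1]^{np};\f_2)$ by chain maps (since it permutes the product cells), and that $\cellC^{\ZZ_p}$ is by definition the fixed-point subcomplex under this action. The inclusion $i\from \cellC^{\ZZ_p} \hookrightarrow \cellC$ is a chain map. Because $p$ is odd, the element $p\in \f_2$ is a unit, so we may define the averaging map
\[
a\from \cellC \to \cellC^{\ZZ_p},\qquad a(c)=\tfrac{1}{p}\sum_{g\in \ZZ_p} g\cdot c.
\]
This is a chain map (as each $g$ acts by chain maps), lands in $\cellC^{\ZZ_p}$, and satisfies $a\circ i = \Id_{\cellC^{\ZZ_p}}$.

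\smallskip

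Next I would run the standard consequence of this retraction. Since $a\circ i=\Id$, the induced map $i^*\from H^*(\cellC^{\ZZ_p})\to H^*(\cellC)$ is injective, and any $[z]\in H^*(\cellC)^{\ZZ_p}$ is the image under $i^*$ of $[a(z)]\in H^*(\cellC^{\ZZ_p})$ (here one uses that $g[z]=[z]$ in cohomology forces $[a(z)]=[z]$). This identifies
\[
H^*(\cellC^{\ZZ_p})\cong H^*(\cellC)^{\ZZ_p}.
\]

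\smallskip

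Finally, since $[0,1]^{np}$ is contractible, $H^i(\cellC([0,1]^{np};\f_2))=0$ for all $i\geq 1$. Taking $\ZZ_p$-invariants then gives $H^1(\cellC^{\ZZ_p})=H^2(\cellC^{\ZZ_p})=0$, as required. There is no real obstacle here; the only subtle point is that the argument depends essentially on $p$ being odd, which is exactly the hypothesis of the lemma — if $p$ were even, averaging over $\ZZ_p$ in $\f_2$-coefficients would not be defined and one could not expect the conclusion to hold.
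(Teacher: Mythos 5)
Your proof is correct, and it takes a genuinely different and cleaner route than the paper's. You observe that because $p$ is odd, $p=1$ is a unit in $\f_2$, so the norm map $N=\sum_{g\in\ZZ_p}g$ on $\cellC^*([0,1]^{np};\f_2)$ is a chain map landing in the invariant subcomplex and satisfies $N\circ i=\Id$ on $\cellC^{\ZZ_p}$. This exhibits $\cellC^{\ZZ_p}$ as a chain-complex direct summand of $\cellC$, so its cohomology injects into $H^*(\cellC)$, which vanishes in positive degrees since the cube is contractible. That immediately gives the lemma. (The identification $H^*(\cellC^{\ZZ_p})\cong H^*(\cellC)^{\ZZ_p}$ in your second paragraph is true but not actually needed; the injectivity alone suffices.)

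By contrast, the paper's proof is a direct diagram chase: given an invariant $i$-cocycle $c$, one takes a (non-equivariant) primitive, applies $1+\psi$ or $N$ to it, and shows — using that $\ZZ_p$ acts freely on the positive-dimensional cells and that the image of $1+\psi$ is the kernel of $N$ on free modules — that one can correct the primitive to an invariant one. This is essentially an unwound version of the same semisimplicity phenomenon your transfer argument packages abstractly, but it is carried out degree by degree ($H^1$ and then the more involved $H^2$ case) rather than in one stroke. Your approach is shorter, generalizes to all cohomological degrees at once, and makes the role of $p$ being odd maximally transparent; the paper's approach is more elementary in that it avoids invoking the transfer, at the cost of two separate explicit calculations and some bookkeeping about where the group action is free.
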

\begin{proof}
Say $c\in \cellC^{\ZZ_p,1}([0,1]^{np};\f_2)$ has $\delta c=0$.  Now, $c=\delta e$ for some $e\in \cellC^0([0,1]^{np};\f_2)$.  Then $(1+\psi)\delta e=0$, from which we see that $(1+\psi)e$ is a cocycle.  However, the only cocycles in $\cellC^0([0,1]^{np};\f_2)$ are the constant cochains evaluating to $0$ or $1$ on all vertices of $[0,1]^{np}$.  The cochain evaluating to $1$ is not in the image of $1+\psi$, since the image of $1+\psi$ is characterized as those cochains so that on each $\ZZ_p$-orbit, the sum of evaluation over the orbit is $0$.  Thus $(1+\psi)e=0$, and so $c$ is the boundary of an invariant cochain, as needed.

Next, say $c\in \cellC^{\ZZ_p,2}([0,1]^{np};\f_2)$ with $\delta c=0$.  Say $c=\delta e$ for $e\in \cellC^1([0,1]^{np};\f_2)$.  As before $(1+\psi) e$ is a cocycle, so there exists $f$ with $\delta f =(1+\psi)e$.  Then $\delta (1+\psi+\dots+\psi^{p-1})f=0$. That is, $(1+\psi+\dots+\psi^{p-1})f$ is either the constant $0$-cocycle or the constant $1$-cocycle.  Since $p$ is odd, we obtain that, in the former case, $f$ must vanish on invariant vertices of $[0,1]^{np}$, and in the latter case, $f$ evaluates to $1$ on the invariant vertices.  However, adding the nontrivial cocycle to $f$ still produces a cochain $f'$ so that $\delta f'=(1+\psi)e$, and so we may assume that $f$ vanishes on all the invariant vertices of $[0,1]^{np}$, and that $(1+\psi+\dots+\psi^{p-1})f=0$.  That is, $f$ lives in a free $\ZZ_p$-submodule of $\cellC^0([0,1]^{np};\f_2)$, and using $(1+\psi+\dots+\psi^{p-1})f=0$, it follows that $f=(1+\psi)g$ for some $g\in \cellC^0([0,1]^{np};\f_2)$.  Then $\delta g=e+e'$ for some $e'$ in the image of multiplication by $(1+\psi+\dots+\psi^{p-1})$, since $\cellC^1([0,1]^{np};\f_2)$ is a free $\f_2[\ZZ_p]$-module.  Then, since $\delta^2=0$, we have $\delta e'=c$.  Finally, the image of $(1+\psi+\dots+\psi^{p-1})$ on $\cellC^1([0,1]^{np};\f_2)$ is equal to the set of invariant cochains in degree $1$, so $c$ is the boundary of an invariant cochain, as needed.
\end{proof}

\begin{prop}\label{prop:odd-equivariant-constr}
Suppose $p$ is odd.  Let $\tilde{D}$ be a $p$-periodic link diagram.  Then there is a well-defined $\ZZ_p$-external action on $\akhoburn(\tilde{D})$ and $\khoburn(\tilde{D})$.  Moreover, this $\ZZ_p$-external action is nonsingular (see Definition \ref{def:singularity}).
\end{prop}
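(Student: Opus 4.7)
The plan is to mirror the construction in Proposition \ref{prop:equivariant-constr}, while carefully tracking signs, and to apply Lemma \ref{lem:212-equivariant} after making $\ZZ_p$-invariant choices for the auxiliary data defining $\khoburn(\tilde{D})$.

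First I would make $\ZZ_p$-equivariant choices. The ordering of crossings and of circles is taken from Section \ref{subsec:periodic-links}, so that the $p$ lifts $\tilde{a}_1 < \cdots < \tilde{a}_p$ of any trivial circle $a$ downstairs satisfy $\psi \tilde{a}_i = \tilde{a}_{i+1}$. An orientation of crossings is chosen equivariantly by selecting one orientation per $\ZZ_p$-orbit and extending by rotation; this is possible since $\ZZ_p$ acts freely on the crossings of $\tilde{D}$. The key nontrivial ingredient is a $\ZZ_p$-invariant edge assignment: since the local type (A, C, X, or Y) of each two-dimensional face of $\tilde{D}$ is preserved by rotation, the obstruction cocycle $\Omega(\tilde{D})$ is $\ZZ_p$-invariant, so Lemma \ref{lem:eqvar-cube-cat} produces an invariant $\epsilon \in \eqcellC^1([0,1]^{np};\ZZ_2)$ with $\delta \epsilon = \Omega(\tilde{D})$, unique up to the constant $(-1)$-cocycle.

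Next I would verify the hypotheses of Lemma \ref{lem:212-equivariant}. With the above equivariant choices, rotation induces signed bijections $\psi_{g,v}\colon \khoburn(v) \to \khoburn(gv)$ and $\psi_{g,u,v}\colon \khoburn(\phi_{u,v}) \to \khoburn(\phi_{gu,gv})$ compatible with source and target maps, and condition \ref{itm:e1'} is immediate from the fact that $\psi$ is a group action. Condition \ref{itm:e2'} reduces to checking that the signed ladybug matching (on the faces where it is nontrivial) commutes with the $\ZZ_p$-action; this follows from planar-isotopy invariance of the unsigned ladybug matching \cite[Lemma 5.8]{lshomotopytype}, together with the equivariance of $\epsilon$, which governs the sign contributions to the matching. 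Lemma \ref{lem:212-equivariant} then yields the external $\ZZ_p$-action on $\khoburn(\tilde{D})$. The same argument restricts verbatim to fixed annular $(j,k)$-gradings, which are rotation-invariant, to give the external action on $\akhoburn(\tilde{D})$.

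Finally, for nonsingularity, since $p$ is prime the only nontrivial subgroup of $\ZZ_p$ to consider is $H = \ZZ_p$ itself. A $\ZZ_p$-invariant vertex of $\two^{np}$ lies on the diagonal copy of $\two^n$, and by Proposition \ref{prop:identify-equiv-gens} a $\ZZ_p$-invariant odd Khovanov generator upstairs corresponds to an odd Khovanov generator downstairs. Each nontrivial circle downstairs lifts to a single invariant nontrivial circle upstairs, contributing no reordering sign, while each trivial circle downstairs labeled $v_-$ contributes a block $\tilde{a}_1 \wedge \cdots \wedge \tilde{a}_p$ upstairs. Rotation acts on this block by the cyclic permutation $\tilde{a}_i \mapsto \tilde{a}_{i+1}$, whose exterior-algebra sign is $(-1)^{p-1} = +1$ because $p$ is odd. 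Hence every invariant generator is fixed with trivial decoration, proving nonsingularity. The main obstacle is the verification of \ref{itm:e2'}, where reconciling the equivariance of $\epsilon$ with the planar-isotopy invariance of the ladybug matching requires the most care; once phrased in those terms, the remainder of the argument is essentially formal.
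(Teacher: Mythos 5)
Your proposal is correct and mirrors the paper's argument closely: making the auxiliary choices equivariant (orientations of crossings chosen per orbit, an invariant edge assignment produced by Lemma \ref{lem:eqvar-cube-cat} since $\Omega(\tilde{D})$ is rotation-invariant, orderings of circles as in Section \ref{subsec:periodic-links}), reducing to Lemma \ref{lem:212-equivariant} via planar-isotopy invariance of the (signed) ladybug matching, and checking nonsingularity by the sign computation $(-1)^{p-1}=+1$ on cyclic orbit blocks. One small slip: the proposition assumes only that $p$ is odd, not prime, so $\ZZ_p$ may have nontrivial proper subgroups $\ZZ_q$ with $q\mid p$, $1<q<p$; your restriction of the nonsingularity check to $H=\ZZ_p$ on the grounds that ``$p$ is prime'' is therefore unjustified as stated. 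The same computation rescues you, since any such $q$ is odd and each $\ZZ_q$-orbit contributes sign $(-1)^{q-1}=+1$, but this should be said. (Also, trivial circles carry the $w_\pm$ labels in the paper's notation, not $v_\pm$.)
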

\begin{proof}
We begin by choosing an equivariant orientation of crossings for $\tilde{D}$, by which we mean that for each orbit of the $np$ crossings of $\tilde{D}$ under the action of $\ZZ_p$, we choose a representative crossing, orient it, and then use the $\ZZ_p$-action to define an orientation of crossings for all crossings in the same orbit.  

Next, we need to show that there exists an equivariant edge assignment.  By this, we mean that the function $\epsilon$ as in Section \ref{subsec:okh-cpx} can be chosen so that $\epsilon_{v,u}=\epsilon_{\psi v,\psi u}$.  For $p=2$, this is not generally possible, as the may be confirmed by drawing the usual picture of the Hopf link.  However, recall that an edge assignment amounts to the choice of an element $\epsilon \in \cellC^1([0,1]^{np};\f_2)$ with coboundary $\delta \epsilon = \Omega(\tilde{D})$ (tacitly identifying $\ZZ_2=\{\pm 1\}$ with $\f_2$).  We first observe that $\Omega(\tilde{D})$ is $\ZZ_p$-equivariant, since the odd resolution configuration $C_{u,w}$ for $u\geq_2 w$ is planar isotopic to the odd resolution configuration $C_{\psi u,\psi w}$, and since $\Omega(\tilde{D})_{u,w}$ is determined by the isotopy type of $C_{u,w}$ for each $u\geq_2 w\in \two^{np}$.  The condition $\epsilon_{u,v}=\epsilon_{\psi u,\psi v}$ means that we require $\epsilon \in \cellC^{\ZZ_p,1}([0,1]^{np};\f_2)$.  By Lemma \ref{lem:eqvar-cube-cat}, such $\epsilon$ exists.  

Finally, we must also choose orderings of the circles at each resolution.  In fact, any ordering of circles will do. 
We must now describe the action of $\psi$ on $\KhGen$.  Forgetting the sign, we have $\psi$ takes $\KhGen(u)\to \KhGen(\psi u)$ as in the proof of Proposition \ref{prop:equivariant-constr}.  Say $Z(\tilde{D}_u)=\{a_1,\dots,a_{\ell_1}\}$ so that $a_1<\dots<a_{\ell_1}$ and $Z(\tilde{D}_{\psi u})=\{b_1,\dots,b_{\ell_1}\}$ so that $b_1<\dots<b_{\ell_1}$.  For $x=a_1\otimes \dots \otimes a_k\in \KhGen(u)$ taken to $b_{\sigma(1)}\otimes \dots\otimes  b_{\sigma(k)}\in \KhGen(\psi u)$, the sign is just $\mathrm{sgn}(\sigma)$.  

We have now constructed $\psi$ on objects of $\akhoburn(\tilde{D})$ and $\khoburn(\tilde{D})$.  Since the edge assignment is equivariant, for $u\geq_1 v$, we have actions $\psi\from \akhoburn(\phi_{u,v})\to \akhoburn(\phi_{\psi u,\psi v})$ and $\psi \from \khoburn(\phi_{u,v})\to \khoburn(\phi_{\psi u, \psi v})$. 
The proof of the proposition now proceeds as in the proof of Proposition \ref{prop:equivariant-constr}.  To see nonsingularity of the resulting external action, consider any Khovanov generator $x\in \KhGen(u)$ fixed by $\psi$ (viewed as a bijection, not a signed bijection).  In particular, we have $\psi u=u$. 
Every invariant generator $x$ is a product of terms coming from nontrivial circles of $\tilde{D}_u$ and products $x_{i_1}\dots x_{i_p}$ of trivial circles related by rotation. 
So, to prove nonsingularity, it suffices to check that the sign is 1 for invariant generators $x$ which come from a nontrivial circle of $\tilde{D}_u$, or for a single product $x =x_{i_1}\dots x_{i_p}$ of trivial circles related by rotation. (Note that if $x=1$, $\psi$ acts as the identity.)
In the former case, certainly $\psi$ takes $x\to x$ with sign $1$.  In the latter, $\psi$ acts by some permutation of $x_{i_1}\dots x_{i_p}$. 
To verify that the sign of $\psi$ is $1$, it suffices to check a particular ordering of the circles of $u$.  To see this, note that reordering the circles changes the action of $\psi$ (viewed as a permutation of $\{1,\dots,\ell\}$ using the ordering of the circles) by conjugation.  Ordering the trivial circles in a $\ZZ_p$-orbit by order of appearance, going counterclockwise starting from an arc $\tilde{\gamma}$, we see that, indeed, $\psi$ acts with sign $+1$ on all invariant generators; here we have used that $p$ is odd.
\end{proof}

\subsection{Fixed-point functors}\label{subsec:fixed-point}

In this section, we find the fixed-point Burnside functors of the equivariant Khovanov-Burnside functors constructed above.  The main result is the following.

Write $\iota \from \two^n \into \two^{np}$ for the canonical embedding. 

\begin{thm}\label{thm:burnside-fixed-pts}
Let $\tilde{D}$ be a $p$-periodic link diagram (with $p>1$), with quotient diagram $D$.  The Khovanov fixed-point functors are
	\begin{enumerate}
		\item $\akhburn(D) = \khburn(\tilde{D})^{\ZZ_p}$
		\item  $\akhburn^{j,k}(D) = \akhburn^{pj-(p-1)k,k}(\tilde{D})^{\ZZ_p}$,
	\end{enumerate}
for any pair of quantum and ($k$)-gradings $(j, k)$.  If $p$ is odd, we further have, for suitable choices of crossing orientations, edge assignments, and circle orderings at each resolution:
	\begin{enumerate}\setcounter{enumi}{2}
		\item $\akhoburn(D)=\khoburn(\tilde{D})^{\ZZ_p},$
		\item  $\akhoburn^{j,k}(D)=\akhoburn^{pj-(p-1)k,k}(\tilde{D})^{\ZZ_p}.$
	\end{enumerate}
\end{thm}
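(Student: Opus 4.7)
The plan is to verify the four identifications by comparing the Burnside functors vertex-by-vertex, edge-by-edge, and face-by-face. All four statements have the same general structure, so I would set up a unified case analysis and point out where the odd-case signs introduce extra bookkeeping. Throughout, the canonical embedding $\iota\colon \two^n \hookrightarrow \two^{np}$ identifies $(\two^{np})^{\ZZ_p}$ with the index category of the downstairs Burnside functor.

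I would start with the objects. Fix $v \in \two^n$ and let $\iota v \in (\two^{np})^{\ZZ_p}$ be its image. Proposition \ref{prop:identify-equiv-gens} already provides a bijective correspondence between $\KhGen(D_v)$ and the $\ZZ_p$-invariant elements of $\KhGen(\tilde D_{\iota v})$, and its grading computations produce precisely the shift $(j,k) \leftrightarrow (pj-(p-1)k,\, k)$ of the theorem. In the odd case, the nonsingularity verified in Proposition \ref{prop:odd-equivariant-constr} (namely, that $\psi$ acts with sign $+1$ on every $\ZZ_p$-fixed Khovanov generator under the circle-ordering convention of Section \ref{subsec:periodic-links}) ensures $\khoburn(\tilde D)(\iota v)^{\ZZ_p}$ is exactly this set, with no sign cancellations.

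Next I would identify the edges. An edge $u \geq_1 v$ downstairs lifts to $\iota u \geq_p \iota v$ upstairs. Since $\khburn(\tilde D)$ is defined (up to canonical $2$-morphism) by composing length-one morphisms along any path, I can compute $\khburn(\tilde D)(\phi_{\iota u, \iota v})$ by resolving the $p$-crossings of the $\ZZ_p$-orbit one sector at a time. The $\ZZ_p$-fixed points of the resulting composite correspondence are then compared to $\akhburn(D)(\phi_{u,v})$. This reduces to a case analysis over the six isotopy types of index-$1$ annular resolution configurations in Figure \ref{fig:ann-cobords}. For each, I would describe the lifted configuration (a nontrivial downstairs circle lifts to a single nontrivial upstairs circle, while a trivial downstairs circle lifts to $p$ identical circles, by Proposition \ref{prop:identify-equiv-gens}), compute the invariant correspondence elements, and match them with the local annular merge/split map in Figure \ref{fig:ann-cobords}. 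For example, the downstairs merge of two nontrivial circles into a trivial one lifts to a sequence of $p$ elementary merges combining one nontrivial upstairs circle with successive copies of the resulting trivial circle, and the invariant part reproduces $v_\pm \otimes v_\mp \mapsto w_-$ and $v_+ \otimes v_+ \mapsto w_+$ (with $v_-\otimes v_-\mapsto 0$), exactly as in the annular map.

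For the odd case, the same analysis applies, but signs must be tracked. Using the equivariant edge assignment $\epsilon$ constructed in Proposition \ref{prop:odd-equivariant-constr} (whose existence is guaranteed by Lemma \ref{lem:eqvar-cube-cat} via $H^1(\cellC^{\ZZ_p,*}([0,1]^{np};\f_2))=0$), the product of $\epsilon$-signs along the chosen $p$-step path from $\iota u$ to $\iota v$ is $\ZZ_p$-invariant and reduces to a single sign. Combined with the circle-ordering convention, the resulting decoration on the invariant correspondence matches the decoration of $\akhoburn(D)(\phi_{u,v})$. Together with the grading formula $\gr_q(\tilde x) = p\gr_q(x) - (p-1)\gr_k(x)$ and the fact that a length-$1$ differential preserves $\gr_k$, this confirms the bigraded decomposition $\akhoburn^{j,k}(D)=\akhoburn^{pj-(p-1)k,k}(\tilde D)^{\ZZ_p}$.

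The final step is checking $2$-morphisms across faces $u \geq_1 v, v' \geq_1 w$ of $\two^n$. The corresponding face upstairs is $2p$-dimensional; its $2$-morphism is built from the $2$-morphisms of its $2$-dimensional sub-faces, which split into two types according to whether the two crossings involved upstairs lie in the same sector or in different sectors. In different-sector sub-faces the two resolutions are disjoint, so the $2$-morphism is the identity on the correspondence and carries no ladybug. In same-sector sub-faces the ladybug upstairs is a planar-isotopic copy of the downstairs ladybug; invariance of the ladybug matching under planar isotopies (Lemma 5.8 of \cite{lshomotopytype}) then ensures that the restricted fixed-point matching is exactly the downstairs annular matching.

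The main obstacle will be the edge-level case analysis, particularly in the odd annular setting: each of the six annular-isotopy classes of index-$1$ configurations splits into sub-cases according to which circles are trivial/nontrivial downstairs and how they lift, and the odd-case signs must be disentangled using the specific conventions on equivariant edge assignment and on ordering of lifted circles fixed in Section \ref{subsec:periodic-links}. Once the edge identifications are in place, the face step is essentially formal.
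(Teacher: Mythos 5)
Your overall plan (objects via Proposition~\ref{prop:identify-equiv-gens}, edges via a case analysis on annular configurations, $2$-morphisms last) matches the paper's decomposition into Proposition~\ref{prop:long-diffs} and Lemma~\ref{lem:ladybug-compatible}. However, there are two genuine gaps, both in the parts you label ``essentially formal.''

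First, the $2$-morphism step is not formal, and your argument for it fails in the hardest case. You claim that the sub-faces of the $2p$-dimensional face either have crossings in different sectors (hence no ladybug and identity $2$-morphism) or both in the same sector, in which case ``the ladybug upstairs is a planar-isotopic copy of the downstairs ladybug.'' This is true only for the annular ladybug of type~\textrm{I} (the one with $\mathbb{X}$ disjoint from the configuration), whose lift is indeed $p$ disjoint ladybugs, one per sector. For the type~\textrm{IV} ladybug (the one where $\mathbb{X}$ sits inside an ``ear''), the lift is a connected chain of $p$ circles joined by $p$ arcs crossing sector boundaries; no $2$-dimensional sub-face of the grid carries a ladybug at all, and the ladybug configuration only appears at a single specific partial resolution (the ``lower-left corner'' $(10)^p$). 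The identification of the induced matching with the downstairs right ladybug matching then requires an actual argument; the paper does this by an induction on $p$ in Lemma~\ref{lem:ladybug-compatible}, reducing $p=2p'$ to $p'$. Your appeal to planar-isotopy invariance of the ladybug matching would not even get off the ground here.

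Second, in the odd case you define the downstairs edge assignment $\epsilon_{u,v}$ as the product of upstairs $\tilde\epsilon$-values along a $p$-step path and assert that the resulting decoration ``matches the decoration of $\akhoburn(D)(\phi_{u,v})$.'' But $\akhoburn(D)$ is defined using an edge assignment for $D$, i.e.\ a $1$-cochain whose coboundary is the obstruction cocycle $\Omega(D)$. You must therefore verify that $\delta\epsilon=\Omega(D)$ downstairs. This is the content of Lemma~\ref{lem:edge-magic} and is the crux of the odd case: it requires propagating the grid of $(\two^2)^p$ faces and checking sign parity, and in particular one must handle not just the annular ladybugs (types~\textrm{I}--\textrm{IV}) but also the index-$2$ configurations where $\oddafunc'\circ\oddafunc'=0$ for reasons unrelated to ladybugs, namely three concentric nontrivial circles joined by two arcs (Lemma~\ref{lem:bad-ann-func}). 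Your proposal does not acknowledge this case, and without it the claimed sign-matching is not established.
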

\begin{proof}

Let us first address the case of $F=\khburn(\tilde{D})$; that is, let us see that $F^{\ZZ_p}=\akhburn(D)$.  By Lemma \ref{lem:212-equivariant}, and the fact that the fixed-point category of $\two^{np}$ is the image of the canonical embedding $\iota:\two^n\to \two^{np}$, it suffices to identify $F^{\ZZ_p}(\iota u)$ for each $u\in \two^n$, as well as the correspondences $F^{\ZZ_p}(\phi_{\iota u,\iota v})$ for $u\geqslant_1 v$, and finally to identify the $2$-morphisms associated to $2$-dimensional faces of $\two^n$.  Proposition \ref{prop:identify-equiv-gens} shows that $F^{\ZZ_p}(\iota u)$ is canonically identified with $\akhburn(u)$, and that the quantum gradings are as in the statement.  We package the proof that the $1$-morphisms are correct as Proposition \ref{prop:long-diffs} below, and the claim about $2$-morphisms as Lemma \ref{lem:ladybug-compatible}.  Assuming those lemmas, the present theorem follows directly.
\end{proof}

\begin{prop}\label{prop:long-diffs}
Let $\tilde{D}$ be a $p$-periodic link diagram, with $p>1$.  Fix $u\geqslant_1 v\in \Ob(\two^n)$ and consider a sequence of objects of $\two^{np}$ given by $\iota u\geqslant_1 u_1\dots\geqslant_1 u_p=\iota v$.  Then 
\begin{align}\label{eq:khburn-fix}
\khburn(\tilde{D})^{\ZZ_p}(\phi_{u_{p-1},\iota v}\circ\dots\circ \phi_{\iota u,u_1})&\cong\akhburn(D)(\phi_{u,v}),\\
\akhburn(\tilde{D})^{\ZZ_p}(\phi_{u_{p-1},\iota v}\circ\dots\circ \phi_{\iota u,u_1})&\cong\akhburn(D)(\phi_{u,v}),\nonumber
\end{align}
where $\cong$ denotes natural isomorphism.  Further, if $p$ is odd, then:
\begin{align*}
\khoburn(\tilde{D})^{\ZZ_p}(\phi_{u_{p-1},\iota v}\circ\dots\circ \phi_{\iota u,u_1})&\cong\akhoburn(D)(\phi_{u,v}),\\
\akhoburn(\tilde{D})^{\ZZ_p}(\phi_{u_{p-1},\iota v}\circ\dots\circ \phi_{\iota u,u_1})&\cong\akhoburn(D)(\phi_{u,v}),
\end{align*}
for appropriate choices for $\tilde{D},D$ of crossing orientations and edge assignments, and of circle orderings at each resolution.
\end{prop}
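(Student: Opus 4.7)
The plan is to reduce the statement to a local case analysis based on the six elementary annular cobordism types from Figure~\ref{fig:ann-cobords}. Since $\tilde{D}_{\iota u}$, $\tilde{D}_{\iota v}$, and every intermediate resolution $\tilde{D}_{u_i}$ agree outside a $\ZZ_p$-equivariant disk neighborhood of the orbit of crossings being resolved, the upstairs correspondences along the path factor as a product of an \emph{active} part (supported near this orbit) with a \emph{passive} part that acts by the identity on generators of circles surviving as cylinders. Correspondingly, the downstairs annular correspondence factors as an active part times the same passive part. It therefore suffices to identify the $\ZZ_p$-fixed set of the composed active upstairs correspondence with the active downstairs annular correspondence in each of the six cases.

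For each case I would describe $Z(\tilde D_{\iota u})$ and $Z(\tilde D_{\iota v})$ using the lifting dictionary from Proposition~\ref{prop:identify-equiv-gens} (a nontrivial circle downstairs lifts to a single nontrivial circle, a trivial circle lifts to $p$ cyclically-permuted nontrivial circles), identify the cobordism interpolating them as a concrete sequence of $p$ elementary merges and splits, enumerate the arrows in the composed correspondence, and read off the $\ZZ_p$-invariant ones. In four of the six cases (merge or split involving a pair of trivial circles, or a trivial with a nontrivial circle), the upstairs cobordism is a disjoint union of $p$ copies of the downstairs cobordism; the composed correspondence is a $p$-fold product, and its $\ZZ_p$-fixed set is the diagonal, which matches the downstairs active correspondence on the nose.

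The principal obstacle is the remaining pair: the merge of two nontrivial circles into a trivial one (and its dual split). Upstairs, this is a connected cobordism consisting of one merge and $p-1$ splits, so the composed correspondence has a large number of arrows (many invariant, many not). The key point is that invariant labelings of the $p$ nontrivial upstairs circles lifting a trivial downstairs circle must be uniformly $+$ or uniformly $-$, corresponding to $w_+$ and $w_-$ in the annular Frobenius algebra $\mathbb{W}$. A direct combinatorial check then shows that the $\ZZ_p$-invariant arrows of the composed correspondence reproduce precisely the annular merge map $\mathbb{V}\otimes\mathbb{V}\to\mathbb{W}$ of Figure~\ref{fig:ann-cobords}: there are no invariant arrows from $v_\pm v_\pm$ (matching the vanishing of the annular map), and exactly one invariant arrow from each of $v_+v_-$ and $v_-v_+$ to $w_-$. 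The dual split is handled by reversing source and target.

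The annular refinement is then bookkeeping: the grading formula in Proposition~\ref{prop:identify-equiv-gens} identifies the $(pj-(p-1)k, k)$-graded piece upstairs with the $(j,k)$-graded piece downstairs, and the case analysis above respects the $(k)$-grading because lifting nontrivial/trivial circles is dictated by the annular structure. For the odd versions, the underlying bijections of sets already agree by the even case; it remains to check that decorations match. With an equivariant orientation of crossings and an equivariant edge assignment (which exist by Lemma~\ref{lem:eqvar-cube-cat} and the proof of Proposition~\ref{prop:odd-equivariant-constr}), and with the circle ordering upstairs specified after Proposition~\ref{prop:identify-equiv-gens}, I would verify that the product of the $p$ local signs along an upstairs path equals the single local sign of the annular cobordism downstairs. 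This sign-matching verification, again most delicate in the two hard cases, is the main technical obstacle; compatibility with the ladybug matching follows because the external $\ZZ_p$-action permutes ladybug configurations by planar isotopy and the ladybug matching is a planar-isotopy invariant.
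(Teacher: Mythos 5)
Your overall framework (case analysis on elementary annular cobordism types, lifting dictionary, $\ZZ_p$-invariant correspondences) tracks the paper's strategy, and you correctly identify $\mathbb{V}\otimes\mathbb{V}\to\mathbb{W}$ and its dual as the genuinely hard pair and correctly observe that invariant labelings of the $p$-orbit lifting a trivial circle must be uniformly $+$ or $-$. However, there are two real gaps.

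First, the ``$p$-fold product'' claim for four of the six cases is wrong for two of them. For $\mathbb{V}\otimes\mathbb{W}\to\mathbb{V}$ (and its dual split), the nontrivial downstairs circle lifts to a \emph{single} nontrivial circle upstairs, which is touched by all $p$ arcs; the upstairs cobordism is therefore a single connected surface with $p+2$ boundary components, not a disjoint union of $p$ pairs of pants, and the composed correspondence is an iterated fiber product (it has $p+2$ arrows, not $3^p$). There is no ``diagonal'' to take, so your argument as stated breaks. Only $\mathbb{W}\otimes\mathbb{W}\to\mathbb{W}$ and its dual, where both active circles are trivial, genuinely factor as $p$ disjoint copies. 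The $\mathbb{V}\otimes\mathbb{W}\to\mathbb{V}$ case is nonetheless easy — all $p$ elementary maps are merges, so one computes the composed correspondence directly and picks off the two invariant arrows — but you need a different reason than the one you wrote. Incidentally, a trivial circle downstairs lifts to $p$ \emph{trivial} circles (not nontrivial ones); the paper's phrasing there is a misprint and the grading formulas make the intended statement clear, but your restatement propagates it.

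Second, your treatment of the odd case seriously understates what has to be done, and it is not localized to ``the two hard cases.'' The step ``verify that the product of the $p$ local signs along an upstairs path equals the single local sign downstairs'' is circular as written: the downstairs edge assignment is being \emph{defined} by that product, so the content is to show that this prescription actually is an edge assignment, i.e.\ that its coboundary is the downstairs obstruction cocycle $\Omega(D)$. That is the paper's Lemma~\ref{lem:edge-magic}, and it requires: (a) a separate lemma (Lemma~\ref{lem:fake-functor}) identifying the $\ZZ_p$-fixed composite of the upstairs projective odd functor with the downstairs projective odd annular functor, since $\Omega$ is not determined by the projective functor on faces where the composite vanishes; (b) a classification (Lemma~\ref{lem:bad-ann-func}) of index-$2$ annular configurations on which $\oddafunc'$ composes to zero — these include four distinct annular ladybug types $\mathrm{I}$–$\mathrm{IV}$ \emph{and} a three-concentric-circles configuration not of ladybug type, which your sketch omits entirely; and (c) for each such configuration, an analysis of the $p\times p$ grid of upstairs $2$-faces to show the parity of type-X (or A) faces matches $\Omega(D)$. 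This grid bookkeeping, not the single $\mathbb{V}\otimes\mathbb{V}\to\mathbb{W}$ cobordism, is where most of the work lives. The ladybug-matching compatibility for the \emph{even} functor also needs its own argument (Lemma~\ref{lem:ladybug-compatible}), which the paper handles by induction on $p$ in the annular type-IV case; your appeal to planar-isotopy invariance of the ladybug matching covers why the external action respects the matching (Proposition~\ref{prop:equivariant-constr}), but not why the restriction to fixed points recovers the downstairs annular matching.
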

\begin{proof}

First consider the case for $F=\khburn(\tilde D)$. By commutativity of the $2$-dimensional faces of the cube, it suffices to show the identification of 1-morphisms for any particular path $\{u_i\}_i$.   

Note that in the sequence of resolutions $D_{u},D_{u_1},\ldots,D_{u_{p}}$, there may be circles, present in each of the $D_{u_i}$, which are inactive for each of the elementary cobordisms from $D_{u_{i}}$ to $D_{u_{i-1}}$; it is easy to see that the isomorphisms in the statement of Proposition \ref{prop:long-diffs} hold if and only if they hold for the sequence of resolutions with the inactive circles deleted.  The proof of the Proposition then amounts to a case-by-case check of the three different types of merges; see Figure \ref{fig:equiv-ann-merges}.  

\begin{figure}

\begin{tikzpicture}[scale=1,baseline={(current bounding box.center)}]
\node (x) at (0,0) {$\mathbb{X}$};
\draw circle (.5cm) circle (1 cm);
\node (vv) at (0,-1.5) {$\mathbb{V}\otimes \mathbb{V}$}; 
\end{tikzpicture}
$\map{\Delta^{p-1}m }$
\begin{tikzpicture}[scale=1,baseline={(current bounding box.center)}]
\node (x) at (0,0) {$\mathbb{X}$};
\draw
(10:.5cm) -- (10:1cm)
arc (10:62:1cm) 
-- (62:.5cm)
arc (62:10:.5cm);
\draw
(82:.5cm) -- (82:1cm)
arc (82:134:1cm) 
-- (134:.5cm)
arc (134:82:.5cm);
\draw
(154:.5cm) -- (154:1cm)
arc (154:206:1cm) 
-- (206:.5cm)
arc (206:154:.5cm);
\draw
(226:.5cm) -- (226:1cm)
arc (226:278:1cm) 
-- (278:.5cm)
arc (278:226:.5cm);
\draw
(298:.5cm) -- (298:1cm)
arc (298:350:1cm) 
-- (350:.5cm)
arc (350:298:.5cm);
\node (wp) at (0,-1.5) {$\mathbb{W}^{\otimes p}$}; 
\end{tikzpicture}
%
%
%
\qquad
\begin{tikzpicture}[scale=1,baseline={(current bounding box.center)}]
\node (x) at (0,0) {$\mathbb{X}$};
\draw circle (.5cm);
\draw
(10:.75cm) -- (10:1cm)
arc (10:62:1cm) 
-- (62:.75cm)
arc (62:10:.75cm);
\draw
(82:.75cm) -- (82:1cm)
arc (82:134:1cm) 
-- (134:.75cm)
arc (134:82:.75cm);
\draw
(154:.75cm) -- (154:1cm)
arc (154:206:1cm) 
-- (206:.75cm)
arc (206:154:.75cm);
\draw
(226:.75cm) -- (226:1cm)
arc (226:278:1cm) 
-- (278:.75cm)
arc (278:226:.75cm);
\draw
(298:.75cm) -- (298:1cm)
arc (298:350:1cm) 
-- (350:.75cm)
arc (350:298:.75cm);
\node (vwp) at (0,-1.5) {$\mathbb{V}\otimes \mathbb{W}^{\otimes p}$}; 
\end{tikzpicture}
$\map{m^p}$
\begin{tikzpicture}[scale=1,baseline={(current bounding box.center)}]
\node (x) at (0,0) {$\mathbb{X}$};
\draw
(10:.5cm) -- (10:1cm)
arc (10:62:1cm) 
-- (62:.5cm)
arc (62:82:.5cm);
\draw
(82:.5cm) -- (82:1cm)
arc (82:134:1cm) 
-- (134:.5cm)
arc (134:154:.5cm);
\draw
(154:.5cm) -- (154:1cm)
arc (154:206:1cm) 
-- (206:.5cm)
arc (206:226:.5cm);
\draw
(226:.5cm) -- (226:1cm)
arc (226:278:1cm) 
-- (278:.5cm)
arc (278:298:.5cm);
\draw
(298:.5cm) -- (298:1cm)
arc (298:350:1cm) 
-- (350:.5cm)
arc (350:370:.5cm);
\node (v) at (0,-1.5) {$\mathbb{V}$}; 
\end{tikzpicture} 
%
%

\qquad
\begin{tikzpicture}[scale=1,baseline={(current bounding box.center)}]
\node (x) at (0,0) {$\mathbb{X}$};
\draw
(10:.25cm) -- (10:.5cm)
arc (10:62:.5cm) 
-- (62:.25cm)
arc (62:10:.25cm);
\draw
(82:.25cm) -- (82:.5cm)
arc (82:134:.5cm) 
-- (134:.25cm)
arc (134:82:.25cm);
\draw
(154:.25cm) -- (154:.5cm)
arc (154:206:.5cm) 
-- (206:.25cm)
arc (206:154:.25cm);
\draw
(226:.25cm) -- (226:.5cm)
arc (226:278:.5cm) 
-- (278:.25cm)
arc (278:226:.25cm);
\draw
(298:.25cm) -- (298:.5cm)
arc (298:350:.5cm) 
-- (350:.25cm)
arc (350:298:.25cm);
\draw
(10:.75cm) -- (10:1cm)
arc (10:62:1cm) 
-- (62:.75cm)
arc (62:10:.75cm);
\draw
(82:.75cm) -- (82:1cm)
arc (82:134:1cm) 
-- (134:.75cm)
arc (134:82:.75cm);
\draw
(154:.75cm) -- (154:1cm)
arc (154:206:1cm) 
-- (206:.75cm)
arc (206:154:.75cm);
\draw
(226:.75cm) -- (226:1cm)
arc (226:278:1cm) 
-- (278:.75cm)
arc (278:226:.75cm);
\draw
(298:.75cm) -- (298:1cm)
arc (298:350:1cm) 
-- (350:.75cm)
arc (350:298:.75cm);

\node (wp) at (0,-1.5) {$\mathbb{W}^{\otimes 2p}$}; 
\end{tikzpicture}
$\map{m^p}$
\begin{tikzpicture}[scale=1,baseline={(current bounding box.center)}]
\node (x) at (0,0) {$\mathbb{X}$};
\draw
(10:.25cm) -- (10:1cm)
arc (10:62:1cm) 
-- (62:.25cm)
arc (62:10:.25cm);
\draw
(82:.25cm) -- (82:1cm)
arc (82:134:1cm) 
-- (134:.25cm)
arc (134:82:.25cm);
\draw
(154:.25cm) -- (154:1cm)
arc (154:206:1cm) 
-- (206:.25cm)
arc (206:154:.25cm);
\draw
(226:.25cm) -- (226:1cm)
arc (226:278:1cm) 
-- (278:.25cm)
arc (278:226:.25cm);
\draw
(298:.25cm) -- (298:1cm)
arc (298:350:1cm) 
-- (350:.25cm)
arc (350:298:.25cm);
\node (wp) at (0,-1.5) {$\mathbb{W}^{\otimes p}$}; 
\end{tikzpicture}
\caption{The three equivariant annular merges, with $p = 5$ illustrated. Here, $\Delta$ stands for `split' and $m$ stands for `merge.'  }
\label{fig:equiv-ann-merges}
\end{figure}

First, say $\phi^\op_{v,u}$ represents a $\mathbb{V}\otimes\mathbb{V} \to \mathbb{W}$ merge.  The $p$-cover is illustrated by the top left picture in Figure \ref{fig:equiv-ann-merges}.  In that figure, $\iota v$ is the diagram consisting of two concentric circles, while $\iota u$ is the diagram consisting of a single $\ZZ_p$-orbit of circles.  Then $F(\iota v)$ has four invariant generators  $\{1,x_1,x_2,x_1x_2\}$, where $x_1,x_2\in Z(\tilde{D}_{\iota v})$, and $F(\iota u)$ has two invariant generators $\{1,y_1\dots y_p\}$, where $y_1,\dots, y_p\in Z(\tilde{D}_{\iota u})$, all lying in the same $\ZZ_p$-orbit.

The first map $\phi^\op_{\iota v,u_{p-1}}$ is a merge, and then all the following maps $ \{ \phi^\op_{u_i, u_{i-1}}\}_{0 < i < p}$ are split maps.  Recall that we use the convention that $u_0=\iota u$ and $u_p=\iota v$.  It is straightforward to check that $F(\phi_{\iota u,\iota v})^{\ZZ_p}\cong\{a_1,a_2\}$ with source and target maps $s(a_i)=y_1\dots y_p$ and $t(a_i)=x_i$.  Thus, $F(\phi_{\iota u,\iota v})^{\ZZ_p}$ is naturally isomorphic to $\akhburn(D)(\phi_{u,v})$ for this case.

If $\phi^\op_{v,u}$ represents a $\mathbb{V} \otimes \mathbb{W} \to \mathbb{V}$ merge, then all $p$ maps $\{\phi^\op_{u_i, u_{i-1}}\}_{1 \leq i \leq p}$ are merge maps.  The invariant generators at $\iota v$ are $\{1,x,y_1\ldots y_p,xy_1\ldots y_p\}$ with $x\in Z(D_{\iota v})$ a nontrivial circle and where $y_i\in Z(D_{\iota v})$ are trivial circles, forming a single $\ZZ_p$-orbit.  The invariant generators at $\iota u$ are $\{1,z\}$ for $z\in Z(D_{\iota u})$.  The correspondence $F(\phi_{\iota u,\iota v})^{\ZZ_p}=\{a_1,a_2\}$ with $s(a_1)=1,s(a_2)=z$ and target $t(a_1)=1, t(a_2)=x$.  We then observe that $F(\phi_{\iota u,\iota v})^{\ZZ_p}$ is naturally isomorphic to $\akhburn(D)(\phi_{u,v})$ in this case as well.

A similar situation occurs for the case $\mathbb{W}\otimes\mathbb{W} \to \mathbb{W}$. The invariant generators at $\tilde v$ are $\{1,x_1\dots x_p,y_1\dots y_p,x_1\dots x_py_1\dots y_p\}$, where $x_i\in Z(D_{\iota v})$ are all in the same $\ZZ_p$-orbit, and similarly for $y_i\in Z(D_{\iota v})$.  The invariant generators at $\iota u$ are $\{1,z_1\dots z_p\}$ where $z_i\in Z(D_{\iota u})$ lie in the same $\ZZ_p$-orbit.  One may quickly check that $F(\phi_{\iota u,\iota v})^{\ZZ_p}=\{a_1,a_2,a_3\}$ with $s(a_1)=1,t(a_1)=1$, and $s(a_2)=x_1\dots x_p$, $t(a_2)=z_1\dots z_p$, and finally $s(a_3)=y_1\dots y_p$, $t(a_3)=z_1\dots z_p$.  It follows readily that $F(\phi_{\iota u,\iota v})^{\ZZ_p}$ is naturally isomorphic to $\akhburn(D)(\phi_{u,v})$ in this case.  

The above cases, along with duality (for the corresponding equivariant split maps) \cite[Section 10]{lls1} show that equation (\ref{eq:khburn-fix}) holds.

The case of $F=\akhburn(\tilde D)$ is very similar, so we omit the details here.

Next we treat the case $F=\khoburn(\tilde D)$.  We have already seen that, if we forget the signs, $(\forgot F)^{\ZZ_p}=\akhburn(D)$.  Now, $\akhoburn(D)$ can be viewed as a way of sprinkling signs on the correspondences of $\akhburn(D)$ (and similarly for $\khoburn(\tilde{D})$ relative to $\khburn(\tilde{D})$), and we need to say that these sprinklings respect the equality of Burnside functors in (\ref{eq:khburn-fix}).  

Recall that in order to define $F$, we needed to choose the data of an (equivariant) orientation of crossings, as well as an equivariant edge assignment.  Say we have fixed these data.  Now, in order to define $\akhoburn(D)$, we need an orientation of crossings of $D$, as well as an edge assignment of $D$.  We choose the orientation of crossings coming from taking the quotient of the orientation of crossings of $\tilde{D}$.  In order to compare $\akhoburn(D)$ with $F$, we must find a way to define an edge assignment on $D$, given the edge assignment upstairs.  We start with the following lemma.  Recall that $\KhGen(\tilde{D})^{\ZZ_p}$ upstairs is identified with $\KhGen(D)$ downstairs, using the choice of an arc $\tilde{\gamma}$, as in the discussion after Proposition \ref{prop:identify-equiv-gens}.

\begin{lem}\label{lem:fake-functor}
Let $C$ be an index-$1$ annular resolution configuration, with associated odd annular Khovanov projective functor $\oddafunc' \from \two^\op \to \ZZ\text{-}\mathrm{Mod}$.  Let $p$ be odd, and let $\tilde{C}$ denote the $p$-cover of $C$, with some choice of lift of $\gamma$ to $\tilde{\gamma}$.  Set $v_i=0^{p-i}1^{i}\in \Ob(\two^{p})$.  Let $\AbFunc_o'\from (\two^p)^\op\to \ZZ\text{-}\mathrm{Mod}$ denote the odd Khovanov projective functor associated to $\tilde{C}$.  Then
\begin{equation}\label{eq:fake-agree}
(\AbFunc_o'(\phi^\op_{v_{p-1},v_p})\circ \dots \circ \AbFunc_o'(\phi^\op_{v_0,v_{1}}))_{\ZZ_p}=\oddafunc'(\phi^\op_{0,1}).
\end{equation}
Here we have written $(\AbFunc_o'(\phi^\op_{v_{p-1},v_p})\circ \dots \circ \AbFunc_o'(\phi^\op_{v_0,v_{1}}))_{\ZZ_p}$ to denote the restriction of the composite $(\AbFunc_o'(\phi^\op_{v_{p-1},v_p})\circ \dots \circ \AbFunc_o'(\phi^\op_{v_0,v_{1}}))$ to the submodule of $\AbFunc_o'(0^p)$ spanned by $\ZZ_p$-invariant generators, and then its projection to the submodule of $\AbFunc_o'(1^p)$ spanned by $\ZZ_p$-invariant generators.  Recall that the ordering of the arcs and circles of $\tilde{C}$ are defined with respect to the lift $\tilde{\gamma}$.  
\end{lem}
\begin{proof}

The proof is a case-by-case check of index-$1$ annular resolution configurations.  First, consider the resolution configuration associated to a merge $\mathbb{V}\otimes \mathbb{V} \to \mathbb{W}$.  That is, say we have the following picture in the base:

\[
\begin{tikzpicture}[scale=1,baseline={(current bounding box.center)}]

\node (x) at (0,0) {$\mathbb{X}$};
\draw circle  (1cm)
          circle (.5cm);
\path (60:1.3cm) node (v0)  {$D_0$};
\draw[->,red] (0,-1) -- (0,-0.5);
\draw[densely dotted] (0,0) -- node[auto]{$\gamma$} (0,1.5);

\end{tikzpicture}\qquad\qquad
\begin{tikzpicture}[scale=1,baseline={(current bounding box.center)}]
\node (x) at (0,0) {$\mathbb{X}$};
\path (60:1.3cm) node (v0)  {$D_1$};
\draw[densely dotted] (0,0) -- node[auto]{$\gamma$} (0,1.5);
\draw(-75:0.5cm) -- (-75:1cm)
  arc (-75:255:1cm) -- (255:0.5cm) arc (255:-75:0.5cm) ;
\end{tikzpicture}
\]

For this case, consider Figure \ref{fig:vvw-odd}, which illustrates the proof for $p=5$; the proof for general $p$ is entirely analogous, and is omitted.
\begin{figure}

\centering

\begin{tikzpicture}[scale=1,baseline={(current bounding box.center)}]
\draw circle (1cm)
circle (.5cm);
\draw[densely dotted] (0,0) -- node[near end,anchor=south east]{$\tilde{\gamma}$} (0,1.5);
\path (60:1.5cm) node (v0)  {$\tilde{D}_{0^5}$};
\draw[->,red] (36:1cm) -- (36:0.5cm);
\draw[->,red] (36*3:1cm) -- (36*3:0.5cm);
\draw[->,red] (36*5:1cm) -- (36*5:0.5cm);
\draw[->,red] (36*7:1cm) -- (36*7:0.5cm);
\draw[->,red] (36*9:1cm) -- (36*9:0.5cm);
\end{tikzpicture}\qquad \qquad
\begin{tikzpicture}[scale=1,baseline={(current bounding box.center)}]
\draw[densely dotted] (0,0) -- node[near end,anchor=south east]{$\tilde{\gamma}$} (0,1.5);
\path (60:1.5cm) node (v0)  {$\tilde{D}_{1^5}$};
\path (90:0.7cm) node[anchor=west] (b5) {$y_5$};
\path (90-72:0.7cm) node (b4) {$y_4$};
\path (90-72*2:0.7cm) node (b3) {$y_3$};
\path (90-72*3:0.7cm) node (b2) {$y_2$};
\path (90-72*4:0.7cm) node (b1) {$y_1$};
\draw (54:0.5cm) --(54:1cm)
arc (54:54+66:1cm) --(54+66:0.5cm)
arc (54+66:54:0.5cm) --cycle;
\draw (54+72:0.5cm) --(54+72:1cm)
arc (54+72:54+66+72:1cm) --(54+66+72:0.5cm)
arc (54+66+72:54+72:0.5cm) --cycle;
\draw (54+72*2:0.5cm) --(54+72*2:1cm)
arc (54+72*2:54+66+72*2:1cm) --(54+66+72*2:0.5cm)
arc (54+66+72*2:54+72*2:0.5cm) --cycle;
\draw (54+72*3:0.5cm) --(54+72*3:1cm)
arc (54+72*3:54+66+72*3:1cm) --(54+66+72*3:0.5cm)
arc (54+66+72*3:54+72*3:0.5cm) --cycle;
\draw (54+72*4:0.5cm) --(54+72*4:1cm)
arc (54+72*4:54+66+72*4:1cm) --(54+66+72*4:0.5cm)
arc (54+66+72*4:54+72*4:0.5cm) --cycle;
\end{tikzpicture}
\caption{\textbf{The $\mathbb{V}\otimes \mathbb{V}\to \mathbb{W}$ case for $p=5$.} The map downstairs is $x_1,x_2\to x$, where $x_1,x_2\in Z(D_0)$, $x\in Z(D_1)$.  Let $Z(\tilde{D}_{0^5})=\{\tilde{x}_1,\tilde{x}_2\}$, the elements over $x_1,x_2$, and let $Z(\tilde{D}_{1^5})=\{y_1,\dots,y_5\}$, related by the action of $\ZZ_5$.  Then upstairs the map on fixed points is $\tilde{x}_i\to (y_5-y_1)(y_1-y_2)(y_2-y_3)(y_3-y_4)y_5=y_1y_2y_3y_4y_5$.  Moreover, the element $1\in \AbFunc_o'(0^5)$ is sent to a term in $\AbFunc_o'(1^5)$ which is killed by projection to the summand of invariant generators, and the element $\tilde{x}_1\tilde{x}_2$ is annihilated by $\AbFunc_o'(\phi^{\mathrm{op}}_{0^5,1^5})$.  This verifies Lemma \ref{lem:fake-functor} in this example.
}
\label{fig:vvw-odd}
\end{figure}
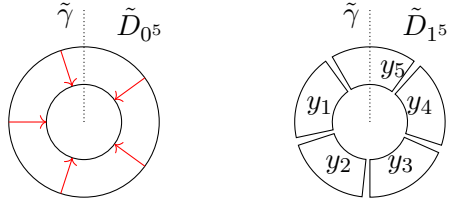

Next, consider the resolution configuration associated to a merge $\mathbb{V}\otimes \mathbb{W} \to \mathbb{V}$.  In this case, both upstairs and downstairs there are only merge maps, from which the Lemma follows readily.

Next, consider the case $\mathbb{W}\otimes \mathbb{W} \to \mathbb{W}$.  In this case, again, upstairs there are only merges, from which the result is immediate.  Note here that the ordering of the circles in the lift is chosen according to the discussion in Section \ref{subsec:periodic-links-kh}.

Next, consider the case $\mathbb{W}\to \mathbb{W}\otimes \mathbb{W}$.  For this, downstairs we have $Z(D_0)=\{x\}$, $Z(D_1)=\{y_1,y_2\}$ and upstairs $\KhGen(0^p)^{\ZZ_p}=\{1,z_1\dots z_p\}$ and $\KhGen(1^p)^{\ZZ_p}=\{1,w_1^1\dots w^1_p,w^2_1\dots w^2_p,w^1_1\dots w^1_p w_1^2\dots w^2_p\}$, where the ordering is so that $y_1<y_2$ and $w_1^1 \leq w_i^1< w^2_1\leq w^2_i$ for all $i$, and $\{z_i\},\{w_i^1\},\{w_i^2\}$ are the orbits of circles $z_1,w_1^1,w^2_1$, respectively, under $\ZZ_p$.  

Downstairs, having fixed an orientation of crossing going from $y_1$ to $y_2$, we have:
\[
\oddafunc'(\phi^\op_{0,1})(1)=y_1-y_2 \qquad \qquad \oddafunc'(\phi^\op_{0,1})(x)=y_1y_2.
\]
Upstairs, we observe, using the definition of the odd Khovanov projective functor: 
\begin{align*}
\AbFunc_o'(\phi^\op_{v_{p-1},v_p})\circ \dots \circ \AbFunc_o'(\phi^\op_{v_{0},v_1})(1)&= (w^1_1-w^2_1)\dots (w^1_p-w^2_p),\\
\AbFunc_o'(\phi^\op_{v_{p-1},v_p})\circ \dots \circ \AbFunc_o'(\phi^\op_{v_{0},v_1})(z_1\dots z_p)&= (w^1_1-w^2_1)\dots (w^1_p-w^2_p)(z_1\dots z_p)\\
&=w^1_1\dots w^1_p w^2_1\dots w^2_p.
\end{align*}
From this calculation, we have obtained the Lemma in the $\mathbb{W}\to \mathbb{W}\otimes \mathbb{W}$ case.

The cases $\mathbb{V}\to \mathbb{W} \otimes \mathbb{V}$ and $\mathbb{W} \to \mathbb{V}\otimes \mathbb{V}$ are very similar to the cases we have done so far, and we omit them; this completes the proof of Lemma \ref{lem:fake-functor}.
\end{proof}

Now, we must see how to translate from an (equivariant, type X) edge assignment $\tilde{\epsilon}$ on $\tilde{D}$ to an edge assignment on $D$.  Fix $u\geqslant_1 v \in \two^n$.  We define $v_i\in \two^{np}$ by $v_i=(v)^{p-i}(u)^{i}$, as elements of $(\two^n)^p$ for $0\leq i \leq p$.  We then define an element $\epsilon \in \cellC^1([0,1]^{n};\ZZ_2)$ by:
\[
\epsilon_{u,v}=\tilde{\epsilon}_{v_{p},v_{p-1}}\dots \tilde{\epsilon}_{v_{1},v_0}.
\]

Recall the definition of the \emph{obstruction cocycle} $\Omega(D)$ from Section \ref{subsec:okh-cpx}.  Any cochain $c\in \cellC^1([0,1]^{n};\ZZ_2)$ for which $\delta c=\Omega(D)$ gives a functor $\akhoburn(D)_{c}\from \two^n\to\burn_{\ZZ_2}$, the odd annular Khovanov-Burnside functor with edge assignment $\epsilon$, whose stable equivalence class is well-defined, i.e.\ independent of $c$.  To proceed, we need to confirm that $\delta \epsilon=\Omega(D)$.  We will work with the type X obstruction cocycle; the following lemma also holds for the type Y obstruction cocycle, if the edge assignment upstairs is chosen to be type Y (the proof below immediately generalizes to the type Y case).

\begin{lem}\label{lem:edge-magic}
For $\epsilon\in \cellC^1([0,1]^{n};\ZZ_2)$ as defined above, we have $\delta \epsilon=\Omega(D)$.
\end{lem}
\begin{proof}
For $x\in \cellC^2([0,1]^n;\ZZ_2)$ and $u\geq_2 w\in \two^n$, we write $x_{u,w}$ for the evaluation of $x$ on the copy of $[0,1]^2$ corresponding to the pair $(u,w)$.  We need to check that for each $2$-dimensional face $u\geqslant_1 v,v'\geqslant_1 w$, that $(\delta \epsilon)_{u,w}=\Omega(D)_{u,w}$.  There are two cases to consider.

The first case is that $\oddafunc'(\phi^\op_{v,u})\oddafunc'(\phi^\op_{w,v})\neq 0$.  Then $\Omega(D)_{u,w}$ is determined as follows:
\[
\oddafunc'(\phi^\op_{v,u})\oddafunc'(\phi^\op_{w,v})=\oddafunc'(\phi^\op_{v',u})\oddafunc'(\phi^\op_{w,v'})
\]
if and only if $\Omega(D)_{u,w}=1$.  However, if $\oddafunc'(\phi^\op_{v,u})\oddafunc'(\phi^\op_{w,v})=0$, more data is needed to determine $\Omega(D)_{u,w}$.  For comparison, if we worked with $\AbFunc_o'$ in place of $\oddafunc'$, more data is needed to define $\Omega(D)_{u,w}$ only for ladybug resolution configurations $C_{u,w}$ (in that case $\Omega(D)_{u,w}=-1$ for type X edge assignments, etc.).  

Let us consider the case where $\oddafunc'(\phi^\op_{v,u})\oddafunc'(\phi^\op_{w,v})\neq 0$.  Write $w_i=w^{p-i}v^{i}$, $w'_i=w^{p-i}v'^i$ and $v_i=v^{p-i}u^i$, $v_i'=v'^{p-i}u^i$, as objects in $\two^{np}$.  Then, using Lemma \ref{lem:fake-functor},
\begin{align*}
&\tilde{\epsilon}_{v_{p},v_{p-1}}\dots \tilde{\epsilon}_{v_1,v_{0}}\tilde{\epsilon}_{w_p,w_{p-1}}\dots \tilde{\epsilon}_{w_1,w_{0}}=\tilde{\epsilon}_{v'_p,v'_{p-1}}\dots \tilde{\epsilon}_{v'_1,v'_{0}}\tilde{\epsilon}_{w'_p,w'_{p-1}}\dots \tilde{\epsilon}_{w'_1,w'_{0}}\\
\text{if and only if}
\qquad &\oddafunc'(\phi^\op_{v,u})\oddafunc'(\phi^\op_{w,v})= \oddafunc'(\phi^\op_{v',u})\oddafunc'(\phi^\op_{w,v'}),
\end{align*}
since 
\begin{align*}
\tilde{\epsilon}_{v_{p},v_{p-1}}\dots \tilde{\epsilon}_{v_1,v_{0}}\tilde{\epsilon}_{w_{p},w_{p-1}}\dots \tilde{\epsilon}_{w_1,w_{0}}\AbFunc_o'(\phi^\op_{v_{p-1},v_p})\circ \dots \circ \AbFunc_o'(\phi^\op_{v_0,v_{1}})\AbFunc_o'(\phi^\op_{w_{p-1},w_p})\circ \dots \circ \AbFunc_o'(\phi^\op_{w_0,w_{1}})&\\ 
=\tilde{\epsilon}_{v'_{p},v'_{p-1}}\dots \tilde{\epsilon}_{v'_1,v'_{0}}\tilde{\epsilon}_{w'_{p},w'_{p-1}}\dots \tilde{\epsilon}_{w'_1,w'_{0}}\AbFunc_o'(\phi^\op_{v'_{p-1},v'_p})\circ \dots \circ \AbFunc_o'(\phi^\op_{v'_0,v'_{1}})\AbFunc_o'(\phi^\op_{w'_{p-1},w'_p})\circ \dots \circ \AbFunc_o'(\phi^\op_{w'_0,w'_{1}}).&
\end{align*}
The latter equality holds because $\tilde{\epsilon}$ is an edge assignment.  
We have verified $(\delta \epsilon)_{u,w}=\Omega(D)_{u,w}$ on all faces of the first case.

We next treat the second case, or faces where $\oddafunc'(\phi^\op_{v,u})\oddafunc'(\phi^\op_{w,v})=0$.  We start by cataloging such faces:
\begin{lem}\label{lem:bad-ann-func}
Say $u\geqslant_1 v\geqslant_1 w$ and let $C_{u,w}$ be an index-$2$ odd annular resolution configuration so that
\begin{equation}\label{eq:bad-ann-func}
\oddafunc'(\phi^\op_{v,u})\oddafunc'(\phi^\op_{w,v})=0.
\end{equation}

Then either the underlying resolution configuration of $C_{u,w}$ is type $X$ or $Y$, or $C_{u,w}$ consists of three concentric nontrivial circles $C_1,C_2,C_3$ with $C_1,C_2$ joined by an arc, as well as $C_2,C_3$ joined by an arc, or the dual configuration of the latter.
\end{lem}
\begin{proof}
The proof of this Lemma is a case-by-case check.  
\end{proof}

Next, we check that $(\delta \epsilon)_{u,w}=\Omega(D)_{u,w}$ for configurations $C_{u,w}$ of type X or Y, such faces are always of the second case.  We may as well assume now that $u=11,v=10,v'=01,w=00$, to simplify notation.  First consider $C_{u,w}$ of type  X.  There are four annular resolution configurations to consider, pictured in Figure \ref{fig:ann-danger}.

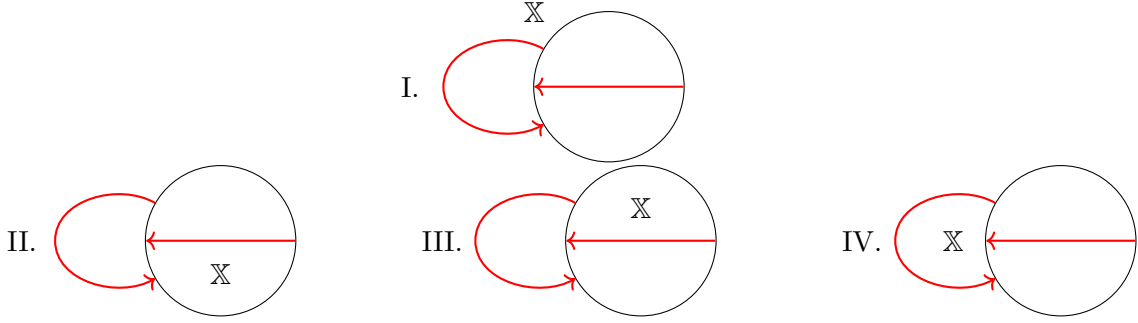
\begin{figure}

\centering

 \begin{tikzpicture}[scale=0.22,baseline={(current  bounding  box.center)}]
\node (i) at (-12,0) {\textrm{I}.};
\node (x) at (-4.5,4.5) {$\mathbb{X}$};
         \node[inner sep=0pt, outer sep=0pt,draw,shape=circle,minimum width=2cm] (a) at (0,0) {};
\draw[thick, red,->] (4.5,0) --(-4.5,0);
         \draw[thick,red,->] (a) to[out=150,in=90] (-10,0) to[out=-90,in=210] (a);
       \end{tikzpicture}\qquad \qquad

 \begin{tikzpicture}[scale=0.22,baseline={(current  bounding  box.center)}]
\node (i) at (-12,0) {\textrm{II}.};
\node (x) at (0,-2) {$\mathbb{X}$};
         \node[inner sep=0pt, outer sep=0pt,draw,shape=circle,minimum width=2cm] (a) at (0,0) {};
\draw[thick, red,->] (4.5,0) --(-4.5,0);
         \draw[thick,red,->] (a) to[out=150,in=90] (-10,0) to[out=-90,in=210] (a);
       \end{tikzpicture}\qquad \qquad
 \begin{tikzpicture}[scale=0.22,baseline={(current  bounding  box.center)}]
\node (i) at (-12,0) {\textrm{III}.};
\node (x) at (0,2) {$\mathbb{X}$};
         \node[inner sep=0pt, outer sep=0pt,draw,shape=circle,minimum width=2cm] (a) at (0,0) {};
\draw[thick, red,->] (4.5,0) --(-4.5,0);
         \draw[thick,red,->] (a) to[out=150,in=90] (-10,0) to[out=-90,in=210] (a);
       \end{tikzpicture}\qquad \qquad
 \begin{tikzpicture}[scale=0.22,baseline={(current  bounding  box.center)}]
\node (i) at (-12,0) {\textrm{IV}.};
\node (x) at (-6.5,0) {$\mathbb{X}$};
         \node[inner sep=0pt, outer sep=0pt,draw,shape=circle,minimum width=2cm] (a) at (0,0) {};
\draw[thick, red,->] (4.5,0) --(-4.5,0);
         \draw[thick,red,->] (a) to[out=150,in=90] (-10,0) to[out=-90,in=210] (a);
       \end{tikzpicture}
\caption{The annular resolution configurations of type X.
}
\label{fig:ann-danger}
\end{figure}

 Recall that we need to show 
\begin{equation}\label{eq:negative-epsilon}
\tilde{\epsilon}_{v_{p},v_{p-1}}\dots \tilde{\epsilon}_{v_1,v_{0}}\tilde{\epsilon}_{w_{p},w_{p-1}}\dots \tilde{\epsilon}_{w_1,w_{0}}=-\tilde{\epsilon}_{v'_{p},v'_{p-1}}\dots \tilde{\epsilon}_{v'_1,v'_{0}}\tilde{\epsilon}_{w'_{p},w'_{p-1}}\dots \tilde{\epsilon}_{w'_1,w'_{0}}.
\end{equation}
However, we have, by definition of an edge assignment,
\[
\prod_{i=1}^p \tilde{\epsilon}_{v_{i},v_{i-1}}\tilde{\epsilon}_{v'_{i},v'_{i-1}}\tilde{\epsilon}_{w_{i},w_{i-1}}\tilde{\epsilon}_{w'_{i},w'_{i-1}}=\prod_{(a,c)\in \indset}\Omega(\tilde{D})_{a,c},
\]
where $\indset$ is defined as follows.  Note that an element of $\two^{2p}$, say $c=c_1\ldots c_{2p}$ determines elements $c_L=c_1c_3\ldots c_{2p-1}\in \two^p$ and $c_R=c_2c_4\ldots c_{2p}\in \two^p$.  The objects $c_L,c_R\in \two^p$ will be called the first and second $\two^p$ factors of $c$.  The set $\indset$ is the set of pairs $(a,c)$ with $c=0^2x\in (\two^2)^p$ for some $x\in (\two^2)^{p-1}$, and $a$ is the result of replacing the rightmost $0$ in the first $\two^p$-factor of $c$ with a $1$, and the rightmost $0$ in the second $\two^p$-factor with a $1$.
For example, $(0^41^2,0^6)$ and $(01^5, 0^31^3)$ are both in $\indset$ for $p=3$; in the latter pair, $c_L = 0^21$ and $c_R = 01^2$.
We visualize the product $\prod \Omega(\tilde{D})$ as a product with a term for each face of a grid, whose vertices are objects of $(\two^{2})^p$.  We draw this as follows in the $p=3$ case, with only a few vertices labeled: 
\[
\begin{tikzpicture}[scale=.8]
\draw (0,0) grid (3,3);
\node[anchor=south east] (start) at (0,3) {$(0^6)$};
\node[anchor=south] (move1) at (.8,3) {$(0^51)$};
\node[anchor=south] (move2) at (2.2,3) {$(0^3101)$};
\node[anchor=south west] (right) at (3,3) {$(010101)$};
\node[anchor=north east] (left) at (0,0) {$(101010)$};
\node[anchor = north west] (end) at (3,0) {$(1^6)$};
\end{tikzpicture}
\]

Each of the faces of this grid $G$, corresponding to $a\geqslant_1 b,b'\geqslant_1 c\in (\two^2)^p$, is assigned a label in $\{\mathrm{A,C,X,Y}\}$ according to the type of the corresponding odd resolution configuration $\tilde{D}_{a,c}$.  Sometimes, we will assign the faces of the grid a $\pm 1$, using that $\Omega(\tilde{D})_{a,c}=1$ for faces of type C,Y and is $-1$ for faces of type A,X.  We will work to understand this grid in cases \textrm{I}--\textrm{IV}.  For instance, we will see below that for case \textrm{I} and $p=3$, the grid is:  
\[
\begin{tikzpicture}[scale=.8]
\draw (0,0) grid (3,3);
\node[anchor=south east] (start) at (0,3) {$(0^6)$};
\node[anchor=south] (move1) at (.8,3) {$(0^51)$};
\node[anchor=south] (move2) at (2.2,3) {$(0^3101)$};
\node[anchor=south west] (right) at (3,3) {$(010101)$};
\node[anchor=north east] (left) at (0,0) {$(101010)$};
\node[anchor = north west] (end) at (3,0) {$(1^6)$};
\node (tt) at (.5,2.4) {X};
\node (mm) at (1.5,1.4) {X};
\node (bb) at (2.5,.4) {X};
\node (mt) at (.5,1.4) {C};
\node (bt) at (.5,.4) {C};
\node (mb) at (2.5,1.4) {C};
\node (tb) at (2.5,2.4) {C};
\node (tm) at (1.5,2.4) {C};
\node (bm) at (1.5,0.4) {C};
\end{tikzpicture}
\]

Given a vertex $c\in \verti(G)$, with vertex $b\in \verti(G)$ directly below, and $b'\in \verti(G)$ directly to the right, we call $D_{b}$ the \emph{left resolution} of $D_c$, and $D_{b'}$ the \emph{right resolution} of $D_c$.   Note that each edge of the grid corresponds to resolving a crossing that is entirely contained within a single sector (recalling the notation of sectors from Section \ref{subsec:periodic-links}), and so we may label each edge of the grid by the sector in which the corresponding surgery occurs.

First we treat the configuration \textrm{I}.  Here, upstairs we have a picture as follows, illustrated for $p=3$:
\[
\begin{tikzpicture}[scale=.5]
\node (x) at (0,0) {$\mathbb{X}$};
          \begin{scope}[shift={(30:3cm)}]
            \draw (0,0) circle (1cm);
            \draw[thick,red,<-] (-1,0) --(1,0);
            \draw[thick,red,->] (150:1cm) to[out=150,in=90] (-2,0) to[out=-90,in=210] (210:1cm);
          \end{scope}
\begin{scope}[shift={(-90:3cm)}, rotate=-120 ]
            \draw (0,0) circle (1cm);
            \draw[thick,red,<-] (-1,0) --(1,0);
            \draw[thick,red,->] (150:1cm) to[out=150,in=90] (-2,0) to[out=-90,in=210] (210:1cm);
          \end{scope}
\begin{scope}[shift={(150:3cm)},rotate=-240]
           \draw (0,0) circle (1cm);
            \draw[thick,red,<-] (-1,0) --(1,0);
            \draw[thick,red,->] (150:1cm) to[out=150,in=90] (-2,0) to[out=-90,in=210] (210:1cm);
         \end{scope}
\draw[dotted] (0,0) -- (90:4cm);
\draw[dotted] (0,0) -- (-30:4cm);
\draw[dotted] (0,0) -- (-150:4cm);
\node (gamm) at (0,4) {$\tilde{\gamma}$};
\end{tikzpicture}
\]
Let $G$ denote the grid associated to such a configuration.  It is immediate from the definitions that all the faces on the main diagonal of $G$ are type X.  Now, for each off-diagonal face $D$, we see that one of the resolutions performed must be a merge.  Moreover, each off-diagonal resolution configuration is disconnected.  Inspecting the list of odd $2$-dimensional resolution configurations, any such configuration is of type C.  Thus, the total number of faces of type A or X is odd, which is equivalent to (\ref{eq:negative-epsilon}), since each face of type A or X contributes a factor of $-1$, while faces of type C and Y do not.  So, we have verified Lemma \ref{lem:edge-magic} in this case.

Next, we treat case \textrm{II}. The picture upstairs is as follows, again illustrated for $p=3$:
\[
\begin{tikzpicture}[scale=.6]
\node (x) at (0,0) {$\mathbb{X}$};
\draw[dotted] (0,0) -- (-90:4cm);
\draw[dotted] (0,0) -- (30:4cm);
\draw[dotted] (0,0) -- (150:4cm);

\draw (0,0) circle (3cm);

\draw[thick,red,->] (-90+24:3cm) .. controls (1.5,-1.5) and (1.5,-1)  .. (-90+3*24:3cm);
\draw[thick,red,->] (-90+24*2:3cm) .. controls (3.5,-1.6) and (4,0) .. (-90+4*24:3cm);

\draw[cm={cos(120),-sin(120),sin(120),cos(120),(0cm,0cm)},thick,red,->] (-90+24:3cm) .. controls (1.5,-1.5) and (1.5,-1)  .. (-90+3*24:3cm);
\draw[cm={cos(120),-sin(120),sin(120),cos(120),(0cm,0cm)},thick,red,->] (-90+24*2:3cm) .. controls (3.5,-1.6) and (4,0) .. (-90+4*24:3cm);

\draw[cm={cos(240),-sin(240),sin(240),cos(240),(0cm,0cm)},thick,red,->] (-90+24:3cm) .. controls (1.5,-1.5) and (1.5,-1)  .. (-90+3*24:3cm);
\draw[cm={cos(240),-sin(240),sin(240),cos(240),(0cm,0cm)},thick,red,->] (-90+24*2:3cm) .. controls (3.5,-1.6) and (4,0) .. (-90+4*24:3cm);

\node (gamm) at (0,-4) {$\tilde{\gamma}$};
\end{tikzpicture}
\]

It is readily checked once again that all of the diagonal faces are type X.  Fix an off-diagonal face with upper-left hand vertex at $a\in \mathrm{Vert}(G)$, whose left-resolution is in the $q\th$ sector and whose right-resolution is in the $r\neq q\th$ sector.  Write $\two^2_t$ for the $t\th$ factor of $\two^2$ in $(\two^2)^p$.  Then the resulting resolution configuration depends only on the initial condition of $c$ in $\two^2_r$ and $\two^2_q$.  To see this, consider the restriction of $D_{a,c}$ to a sector $S_t$ outside of $S_q$ and $S_r$.  It will be an arc connecting the boundary components $\partial^+ S_t$ and $\partial^- S_t$ (where the positive (\resp negative) boundary $\partial^+ S_t$ (\resp $\partial^-S_t$) of a sector $S_t$ will denote the end obtained by traversing counterclockwise (\resp clockwise)), as well as some disjoint circles, no matter the restriction of $c$ to $\two^2_t$.  In particular, the resulting two-dimensional resolution configuration $D_{a,c}$ is formed by drawing the parts of the resolution configuration in the $q$ and $r$ sectors, and attaching these on their boundaries; see for example Figure \ref{fig:type-2s}.

Next, fix $c\in \verti(G)$, the upper-left hand corner of a square $a,b,b',c$ in $G$, where $D_b$ is the left resolution and $D_{b'}$ is the right resolution.  Say the pair $a\geq_2 c$ differs only in entries $e_1,e_2$, where $e_1$ is in the $q\th$-sector and $e_2$ is in the $r\th$-sector.  Let $a_q,a_r,c_q,c_r$ denote the restrictions of $a$ and $c$ to $\two^2_q$, $\two^2_r$, respectively, and recall that the type of the resolution configuration $D_{a,c}$ depends only on $a_q,c_q,a_r,c_r$.  Note furthermore that the only $c$ in the grid for which $c_q=c_r=0^2$ is $c=0^{2p}$, which does not participate in an off-diagonal face.  So, we need only consider pairs $(a,c)$ with $(c_q,c_r)\neq (0^2,0^2)$.  We list all such resolution configurations and their types in Figure \ref{fig:type-2s}.
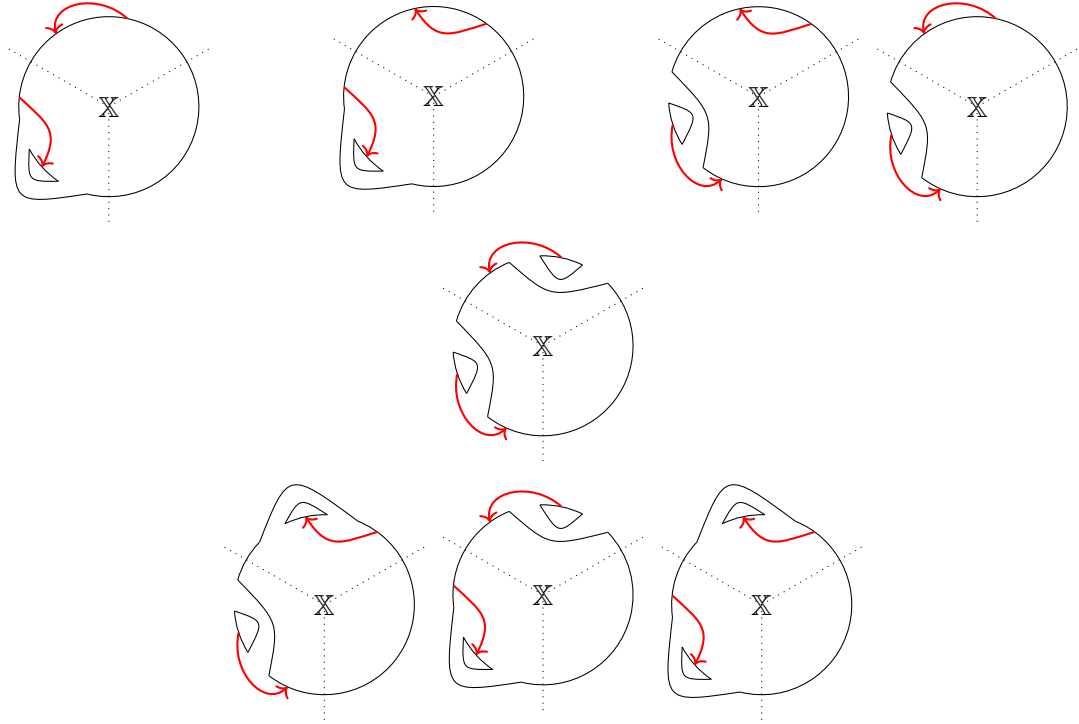
\begin{figure}

\centering

 \begin{tikzpicture}[scale=0.3991,baseline={(current  bounding  box.center)}]
\node (x) at (0,0) {$\mathbb{X}$};
\draw[dotted] (0,0) -- (-90:4cm);
\draw[dotted] (0,0) -- (30:4cm);
\draw[dotted] (0,0) -- (150:4cm);
\draw(-90-24+10:3cm) arc (-90-24+10:150+2*24-10:3cm) ;
\draw (-90-3*24+10:3cm) arc (-90-3*24+10:-90-24-10:3cm);
\draw (-90-3*24-10:3cm) .. controls (-3.3,-3.3) .. (-90-24+10:3cm);
\draw (-90-3*24+10:3cm) .. controls (-2.7,-2.5) .. (-90-24-10:3cm);
\draw[cm={cos(240),-sin(240),sin(240),cos(240),(0cm,0cm)},thick,red,->] (-90+24*2:3cm) .. controls (3.5,-1.6) and (4,0) .. (-90+4*24:3cm);
\draw[cm={cos(120),-sin(120),sin(120),cos(120),(0cm,0cm)},thick,red,->] (-90+24:3cm) .. controls (1.5,-1.5) and (1.5,-1)  .. (-90+3*24:3cm);
       \end{tikzpicture}\qquad \qquad
 \begin{tikzpicture}[scale=0.3991,baseline={(current  bounding  box.center)}]

\node (x) at (0,0) {$\mathbb{X}$};
\draw[dotted] (0,0) -- (-90:4cm);
\draw[dotted] (0,0) -- (30:4cm);
\draw[dotted] (0,0) -- (150:4cm);
\draw(-90-24+10:3cm) arc (-90-24+10:150+2*24-10:3cm) ;
\draw (-90-3*24+10:3cm) arc (-90-3*24+10:-90-24-10:3cm);
\draw (-90-3*24-10:3cm) .. controls (-3.3,-3.3) .. (-90-24+10:3cm);
\draw (-90-3*24+10:3cm) .. controls (-2.7,-2.5) .. (-90-24-10:3cm);
\draw[cm={cos(240),-sin(240),sin(240),cos(240),(0cm,0cm)},thick,red,->] (-90+24:3cm) .. controls (1.5,-1.5) and (1.5,-1)  .. (-90+3*24:3cm);

\draw[cm={cos(120),-sin(120),sin(120),cos(120),(0cm,0cm)},thick,red,->] (-90+24:3cm) .. controls (1.5,-1.5) and (1.5,-1)  .. (-90+3*24:3cm);
       \end{tikzpicture}\qquad \qquad
 \begin{tikzpicture}[scale=0.3991,baseline={(current  bounding  box.center)}]

\node (x) at (0,0) {$\mathbb{X}$};
\draw[dotted] (0,0) -- (-90:4cm);
\draw[dotted] (0,0) -- (30:4cm);
\draw[dotted] (0,0) -- (150:4cm);
\draw(-90-2*24+10:3cm) arc (-90-2*24+10:150+24-10:3cm) ;
\draw (-90-4*24+10:3cm) arc (-90-4*24+10:-90-2*24-10:3cm);
\draw (-90-4*24+10:3cm) .. controls (-2,-.55) .. (-90-2*24-10:3cm);
\draw (-90-2*24+10:3cm) .. controls (-1.5,-.6) .. (150+24-10:3cm);
\draw[cm={cos(120),-sin(120),sin(120),cos(120),(0cm,0cm)},thick,red,->] (-90+24*2:3cm) .. controls (3.5,-1.6) and (4,0) .. (-90+4*24:3cm);
\draw[cm={cos(240),-sin(240),sin(240),cos(240),(0cm,0cm)},thick,red,->] (-90+24:3cm) .. controls (1.5,-1.5) and (1.5,-1)  .. (-90+3*24:3cm);
       \end{tikzpicture}
 \begin{tikzpicture}[scale=0.3991,baseline={(current  bounding  box.center)}]

\node (x) at (0,0) {$\mathbb{X}$};
\draw[dotted] (0,0) -- (-90:4cm);
\draw[dotted] (0,0) -- (30:4cm);
\draw[dotted] (0,0) -- (150:4cm);
\draw(-90-2*24+10:3cm) arc (-90-2*24+10:150+24-10:3cm) ;
\draw (-90-4*24+10:3cm) arc (-90-4*24+10:-90-2*24-10:3cm);
\draw (-90-4*24+10:3cm) .. controls (-2,-.55) .. (-90-2*24-10:3cm);
\draw (-90-2*24+10:3cm) .. controls (-1.5,-.6) .. (150+24-10:3cm);
\draw[cm={cos(120),-sin(120),sin(120),cos(120),(0cm,0cm)},thick,red,->] (-90+24*2:3cm) .. controls (3.5,-1.6) and (4,0) .. (-90+4*24:3cm);
\draw[cm={cos(240),-sin(240),sin(240),cos(240),(0cm,0cm)},thick,red,->] (-90+24*2:3cm) .. controls (3.5,-1.6) and (4,0) .. (-90+4*24:3cm);
       \end{tikzpicture}\qquad \qquad
 \begin{tikzpicture}[scale=0.3991,baseline={(current  bounding  box.center)}]

\node (x) at (0,0) {$\mathbb{X}$};
\draw[dotted] (0,0) -- (-90:4cm);
\draw[dotted] (0,0) -- (30:4cm);
\draw[dotted] (0,0) -- (150:4cm);
\draw(-90-2*24+10:3cm) arc (-90-2*24+10:30+24-10:3cm) ;
\draw (30+24*3+10:3cm) arc (30+24*3+10:150+24-10:3cm);
\begin{scope}[rotate=240]
\draw (-90-4*24+10:3cm) arc (-90-4*24+10:-90-2*24-10:3cm);
\draw (-90-4*24+10:3cm) .. controls (-2,-.55) .. (-90-2*24-10:3cm);
\draw (-90-2*24+10:3cm) .. controls (-1.5,-.6) .. (150+24-10:3cm);
\end{scope}

\draw (-90-4*24+10:3cm) arc (-90-4*24+10:-90-2*24-10:3cm);
\draw (-90-4*24+10:3cm) .. controls (-2,-.55) .. (-90-2*24-10:3cm);
\draw (-90-2*24+10:3cm) .. controls (-1.5,-.6) .. (150+24-10:3cm);
\draw[cm={cos(240),-sin(240),sin(240),cos(240),(0cm,0cm)},thick,red,->] (-90+24*2:3cm) .. controls (3.5,-1.6) and (4,0) .. (-90+4*24:3cm);
\draw[cm={cos(120),-sin(120),sin(120),cos(120),(0cm,0cm)},thick,red,->] (-90+24*2:3cm) .. controls (3.5,-1.6) and (4,0) .. (-90+4*24:3cm);
       \end{tikzpicture}

 \begin{tikzpicture}[scale=0.3991,baseline={(current  bounding  box.center)}]

\node (x) at (0,0) {$\mathbb{X}$};
\draw[dotted] (0,0) -- (-90:4cm);
\draw[dotted] (0,0) -- (30:4cm);
\draw[dotted] (0,0) -- (150:4cm);
\draw(-90-2*24+10:3cm) arc (-90-2*24+10:30+24*2-10:3cm) ;
\draw (30+24*4+10:3cm) arc (30+24*4+10:150+24-10:3cm);
\begin{scope}[rotate=240]
\draw (-90-3*24+10:3cm) arc (-90-3*24+10:-90-24-10:3cm);
\draw (-90-3*24-10:3cm) .. controls (-3.3,-3.3) .. (-90-24+10:3cm);
\draw (-90-3*24+10:3cm) .. controls (-2.7,-2.5) .. (-90-24-10:3cm);
\end{scope}

\draw (-90-4*24+10:3cm) arc (-90-4*24+10:-90-2*24-10:3cm);
\draw (-90-4*24+10:3cm) .. controls (-2,-.55) .. (-90-2*24-10:3cm);
\draw (-90-2*24+10:3cm) .. controls (-1.5,-.6) .. (150+24-10:3cm);
\draw[cm={cos(240),-sin(240),sin(240),cos(240),(0cm,0cm)},thick,red,->] (-90+24:3cm) .. controls (1.5,-1.5) and (1.5,-1)  .. (-90+3*24:3cm);
\draw[cm={cos(120),-sin(120),sin(120),cos(120),(0cm,0cm)},thick,red,->] (-90+24*2:3cm) .. controls (3.5,-1.6) and (4,0) .. (-90+4*24:3cm);

       \end{tikzpicture}
 \begin{tikzpicture}[scale=0.3991,baseline={(current  bounding  box.center)}]
\node (x) at (0,0) {$\mathbb{X}$};
\draw[dotted] (0,0) -- (-90:4cm);
\draw[dotted] (0,0) -- (30:4cm);
\draw[dotted] (0,0) -- (150:4cm);
\draw(-90-24+10:3cm) arc (-90-24+10:30+24-10:3cm) ;
\draw (30+24*3+10:3cm) arc (30+24*3+10:150+2*24-10:3cm);
\begin{scope}[rotate=240]
\draw (-90-4*24+10:3cm) arc (-90-4*24+10:-90-2*24-10:3cm);
\draw (-90-4*24+10:3cm) .. controls (-2,-.55) .. (-90-2*24-10:3cm);
\draw (-90-2*24+10:3cm) .. controls (-1.5,-.6) .. (150+24-10:3cm);
\end{scope}

\draw (-90-3*24+10:3cm) arc (-90-3*24+10:-90-24-10:3cm);
\draw (-90-3*24-10:3cm) .. controls (-3.3,-3.3) .. (-90-24+10:3cm);
\draw (-90-3*24+10:3cm) .. controls (-2.7,-2.5) .. (-90-24-10:3cm);
\draw[cm={cos(240),-sin(240),sin(240),cos(240),(0cm,0cm)},thick,red,->] (-90+24*2:3cm) .. controls (3.5,-1.6) and (4,0) .. (-90+4*24:3cm);
\draw[cm={cos(120),-sin(120),sin(120),cos(120),(0cm,0cm)},thick,red,->] (-90+24:3cm) .. controls (1.5,-1.5) and (1.5,-1)  .. (-90+3*24:3cm);

       \end{tikzpicture}
 \begin{tikzpicture}[scale=0.3991,baseline={(current  bounding  box.center)}]
\node (x) at (0,0) {$\mathbb{X}$};
\draw[dotted] (0,0) -- (-90:4cm);
\draw[dotted] (0,0) -- (30:4cm);
\draw[dotted] (0,0) -- (150:4cm);
\draw(-90-24+10:3cm) arc (-90-24+10:30+24*2-10:3cm) ;
\draw (30+24*4+10:3cm) arc (30+24*4+10:150+2*24-10:3cm);
\begin{scope}[rotate=240]
\draw (-90-3*24+10:3cm) arc (-90-3*24+10:-90-24-10:3cm);
\draw (-90-3*24-10:3cm) .. controls (-3.3,-3.3) .. (-90-24+10:3cm);
\draw (-90-3*24+10:3cm) .. controls (-2.7,-2.5) .. (-90-24-10:3cm);
\end{scope}

\draw (-90-3*24+10:3cm) arc (-90-3*24+10:-90-24-10:3cm);
\draw (-90-3*24-10:3cm) .. controls (-3.3,-3.3) .. (-90-24+10:3cm);
\draw (-90-3*24+10:3cm) .. controls (-2.7,-2.5) .. (-90-24-10:3cm);
\draw[cm={cos(240),-sin(240),sin(240),cos(240),(0cm,0cm)},thick,red,->] (-90+24:3cm) .. controls (1.5,-1.5) and (1.5,-1)  .. (-90+3*24:3cm);
\draw[cm={cos(120),-sin(120),sin(120),cos(120),(0cm,0cm)},thick,red,->] (-90+24:3cm) .. controls (1.5,-1.5) and (1.5,-1)  .. (-90+3*24:3cm);
       \end{tikzpicture}
\caption{The off-diagonal resolution configurations in case \textrm{II}. The first four configurations are realized up to isotopy by expressions of the form $(*1,0*)\in \two^2_q\times \two^2_r$ and their permutations, while the latter four are obtained from permutations of $(*1,*1)\in \two^2_q\times \two^2_r$.}
\label{fig:type-2s}
\end{figure}
Indeed, we see that all the off-diagonal faces of $G$ are type C, which completes case \textrm{II} (since type X faces appear an odd number of times on the diagonal).

Case \textrm{III} is quite similar to case \textrm{II} and is omitted.

Finally, we address case \textrm{IV}.  The picture upstairs is as follows (illustrated for $p=5$): 
\[
\begin{tikzpicture}[scale=.5]
\node (x) at (0,0) {$\mathbb{X}$};
\draw[dotted] (0,0) -- (-90:4cm);
\draw[dotted] (0,0) -- (-90+72:4cm);
\draw[dotted] (0,0) -- (-90+2*72:4cm);
\draw[dotted] (0,0) -- (-90+3*72:4cm);
\draw[dotted] (0,0) -- (-90+4*72:4cm);
\begin{scope}[shift={(-90+10:3cm)}]
\draw (0,0) circle (1cm);
\draw[thick,->,red] (0,-1) -- (0,1);
\end{scope}
\begin{scope}[shift={(-90+72+10:3cm)}, rotate=72]
\draw (0,0) circle (1cm);
\draw[thick,->,red] (0,-1) -- (0,1);
\end{scope}
\begin{scope}[shift={(-90+72*2+10:3cm)}, rotate=72*2]
\draw (0,0) circle (1cm);
\draw[thick,->,red] (0,-1) -- (0,1);
\end{scope}
\begin{scope}[shift={(-90+72*3+10:3cm)}, rotate=72*3]
\draw (0,0) circle (1cm);
\draw[thick,->,red] (0,-1) -- (0,1);
\end{scope}
\begin{scope}[shift={(-90+72*4+10:3cm)}, rotate=72*4]
\draw (0,0) circle (1cm);
\draw[thick,->,red] (0,-1) -- (0,1);
\end{scope}

\begin{scope}[rotate=10]
\draw[red,->,thick] (-90+18.434:3.16cm) -- (-90+72-18.434:3.16cm);
\end{scope}
\begin{scope}[rotate=10+72]
\draw[red,->,thick] (-90+18.434:3.16cm) -- (-90+72-18.434:3.16cm);
\end{scope}
\begin{scope}[rotate=10+72*2]
\draw[red,->,thick] (-90+18.434:3.16cm) -- (-90+72-18.434:3.16cm);
\end{scope}
\begin{scope}[rotate=10+72*3]
\draw[red,->,thick] (-90+18.434:3.16cm) -- (-90+72-18.434:3.16cm);
\end{scope}
\begin{scope}[rotate=10+72*4]
\draw[red,->,thick] (-90+18.434:3.16cm) -- (-90+72-18.434:3.16cm);
\end{scope}
\end{tikzpicture}
\]
We order the crossings so that the edges forming a pentagon correspond to the first factor $\two^p\to (\two^2)^p$ and the other edges correspond to the second factor $\two^p \to (\two^2)^p$.  

We divide length $1$-arrows in $(\two^2)^p$ into two sets as follows.  Recall that each arrow $\phi_{v,u}^\op$ for $u\geqslant_1 v$ can be recorded as the element $v\in(\two^2)^p$, but with one of the $1,0$-entries of $v$ replaced by a $\ast$ to denote the entry that changes between $v,u$.  If $*$ is at an odd position in $\two^p$ (that is, $*$ occurs in the first $\two$-factor in some copy $\two^2\subset (\two^2)^p$), we call $\phi_{v,u}^\op$ a \emph{left edge}, otherwise a \emph{right edge}.  Similarly, an index-$2$ resolution configuration from $u\geq_2 w$ can be described by an element in $(\two^2)^p$ with two bits replaced by $*$.  

Note that resolving a right edge on some resolution $D_c$ is a split, unless $c=(10)^p$.  Further, resolving a left edge is a merge unless $c=(10)^k(00)(10)^{p-k-1}$ for some $k$.  
Further, any resolution configuration $D_{u,w}$ for which $\phi^\op_{w,v}$ is a split and $\phi^\op_{v',u}$ is a split, while $\phi^\op_{w,v'}$ and $\phi^{\op}_{v,u}$ are merges, has type C.  
We then need only consider faces in $G$ containing the vertex $(10)^p$ or some $(10)^k(00)(10)^{p-k-1}$.  However, $(10)^k(00)(10)^{p-k-1}$ is a vertex of $G$ if and only if $k=0$.  So, we see in fact that only the lower left-hand cornered can be of type other than C.  The picture in the lower left-hand corner is:
\[
\begin{tikzpicture}[scale=.5]
\node (x) at (0,0) {$\mathbb{X}$};
\draw[dotted] (0,0) -- (-90:4cm);
\draw[dotted] (0,0) -- (-90+72:4cm);
\draw[dotted] (0,0) -- (-90+2*72:4cm);
\draw[dotted] (0,0) -- (-90+3*72:4cm);
\draw[dotted] (0,0) -- (-90+4*72:4cm);

\draw (-90+72-18.434+10:2cm) -- (-90+72-18.434+10:4cm)
  arc (-90+72-18.434+10:270+18.434+10:4cm) -- (270+18.434+10:2cm) arc (270+18.434+10:-90+72-18.434+10:2cm) ;
\begin{scope}[shift={(-90+72*4+10:3cm)}, rotate=72*4]
\draw[thick,->,red] (0,-1) -- (0,1);
\end{scope}
\begin{scope}[rotate=10]
\draw[red,->,thick] (-90+18.434:3.16cm) -- (-90+72-18.434:3.16cm);
\end{scope}
\end{tikzpicture}
\]
This is a type X face, and so the proof is completed for case \textrm{IV}.

Translating the above proof to type Y faces is immediate.  The grid is the same in each case, with type X faces replaced with type Y faces.  

The only case that remains to check is that of three concentric circles (and its dual).  We fix an orientation of edges as below; the case of other orientations is similar.
\[
\begin{tikzpicture}[scale=.5]
\node (x) at (0,0) {$\mathbb{X}$};
\draw[dotted] (0,0) -- (-90:4cm);
\draw[dotted] (0,0) -- (30:4cm);
\draw[dotted] (0,0) -- (150:4cm);
\draw (0,0) circle (1cm);
\draw (0,0) circle (2cm);
\draw (0,0) circle (3cm);
\begin{scope}[rotate=-90]
\draw[->,thick,red] (40:2cm) -- (40:1cm);
\draw[->, thick, red] (80:2cm) -- (80:3cm);
\end{scope}
\begin{scope}[rotate=30]
\draw[->,thick,red] (40:2cm) -- (40:1cm);
\draw[->, thick, red] (80:2cm) -- (80:3cm);
\end{scope}
\begin{scope}[rotate=-210]
\draw[->,thick,red] (40:2cm) -- (40:1cm);
\draw[->, thick, red] (80:2cm) -- (80:3cm);
\end{scope}
\end{tikzpicture}
\]
We order the crossings so that the outer edges correspond to the first factor $\two^p\to (\two^2)^p$ and the inner edges correspond to the second factor $\two^p \to (\two^2)^p$.  
The upper left-hand corner of $G$ is readily seen to be a type C configuration, since it consists of two merges.  We note that the next configuration on the diagonal of $G$ is a type X face:
\[
\begin{tikzpicture}[scale=.5]
\node (x) at (0,0) {$\mathbb{X}$};
\draw[dotted] (0,0) -- (-90:4cm);
\draw[dotted] (0,0) -- (30:4cm);
\draw[dotted] (0,0) -- (150:4cm);

\draw (-120:2cm) --(-120:3cm) arc (-120:-120+360-20:3cm)  -- (-120+360-20:2cm);
\draw (-90-50:2cm) arc (-90-50:-90-70:2cm)-- (-90-70:1cm);
\draw (-90-30:2cm) arc (-90-30:-90-90+360:2cm) -- (-180:1cm);
\draw (-90-70:1cm) arc (-90-70:-90-90+360:1cm);
\begin{scope}[rotate=30]
\draw[->,thick,red] (40:2cm) -- (40:1cm);
\draw[->, thick, red] (80:2cm) -- (80:3cm);
\end{scope}
\end{tikzpicture}
\]
 In fact, all other faces on the diagonal are type X, since the arcs outside of the `active' sector, up to isotopy, do not depend on $c$, as is illustrated below: 
\[
\begin{tikzpicture}[scale=.5,baseline={(current  bounding  box.center)}]
\draw[dotted] (0,0) -- (-90:4cm);
\draw[dotted] (0,0) -- (-90-72:4cm);
\draw[dotted] (0,0) -- (-90-72*2:4cm);
\begin{scope}[rotate=-72-90]
\draw (0:1cm) arc (0:24-7.65:1cm) -- (24-7.65:2cm) arc (24-7.65:0:2cm);
\draw (0:3cm) arc (0:24*2-7.65:3cm) -- (24*2-7.65:2cm) arc (24*2-7.65:24+7.65:2cm) -- (24+7.65:1cm) arc (24+7.65:72:1cm);
\draw (72:3cm) arc (72:2*24+7.65:3cm) -- (2*24+7.65:2cm) arc (2*24+7.65:72:2cm);
\end{scope}
\begin{scope}[rotate=-72*2-90]
\draw (0:1cm) arc (0:24-7.65:1cm) -- (24-7.65:2cm) arc (24-7.65:0:2cm);
\draw (0:3cm) arc (0:24*2-7.65:3cm) -- (24*2-7.65:2cm) arc (24*2-7.65:24+7.65:2cm) -- (24+7.65:1cm) arc (24+7.65:72:1cm);
\draw (72:3cm) arc (72:2*24+7.65:3cm) -- (2*24+7.65:2cm) arc (2*24+7.65:72:2cm);
\end{scope}
\end{tikzpicture}
\qquad \sim\qquad 
\begin{tikzpicture}[scale=.5,baseline={(current  bounding  box.center)}]
\draw[dotted] (0,0) -- (-90:4cm);
\draw[dotted] (0,0) -- (-90-72:4cm);
\draw[dotted] (0,0) -- (-90-72*2:4cm);
\begin{scope}[rotate=-72*2-90]
\draw (0:1cm) arc (0:48-7.65:1cm) -- (48-7.65:2cm) arc (48-7.65:0:2cm);
\draw (0:3cm) arc (0:48*2-7.65:3cm) -- (48*2-7.65:2cm) arc (48*2-7.65:48+7.65:2cm) -- (48+7.65:1cm) arc (48+7.65:144:1cm);
\draw (144:3cm) arc (144:2*48+7.65:3cm) -- (2*48+7.65:2cm) arc (2*48+7.65:144:2cm);
\end{scope}
\end{tikzpicture}
\]
In particular, there are an even number of faces of type X on the diagonal.

For $u\in (\two^2)^p$, let $|u|_1$ (\resp $|u|_2$) denote the number of $1$'s occurring in the first copy of $\two^p \to (\two^2)^p$ (\resp second copy).  Now suppose $D_{a,c}$ is an index-$2$ resolution configuration such that $|c|_1> |c|_2$, for $a,c\in \verti(G)$; suppose $b$ is the left resolution and $b'$ is the right resolution.  Such resolution configurations are, up to isotopy:
\[
\begin{tikzpicture}[scale=.5, baseline={(current  bounding  box.center)}]
\draw[dotted] (0,0) -- (-90:4cm);
\draw[dotted] (0,0) -- (-90-72:4cm);
\draw[dotted] (0,0) -- (-90-72*2:4cm);
\draw[dotted] (0,0) -- (-90-72*3:4cm);
\draw[dotted] (0,0) -- (-90-72*4:4cm);
\begin{scope}[rotate=-72-90]
\draw (0:1cm) arc (0:24-7.65:1cm) -- (24-7.65:2cm) arc (24-7.65:0:2cm);
\draw (0:3cm) arc (0:24*2-7.65:3cm) -- (24*2-7.65:2cm) arc (24*2-7.65:24+7.65:2cm) -- (24+7.65:1cm) arc (24+7.65:72:1cm);
\draw (72:3cm) arc (72:2*24+7.65:3cm) -- (2*24+7.65:2cm) arc (2*24+7.65:72:2cm);
\end{scope}
\begin{scope}[rotate=-72*2-90]
\draw (0:1cm) arc (0:72:1cm);
\draw (0:3cm) arc (0:72-24-7.65:3cm) -- (72-24-7.65:2cm) arc (72-24-7.65:0:2cm);
\draw (72:3cm) arc (72:2*24+7.65:3cm) -- (2*24+7.65:2cm) arc (2*24+7.65:72:2cm);
\end{scope}
\draw (-90:3cm) arc (-90:-90+3*72:3cm);
\draw (-90:2cm) arc (-90:-90+3*72:2cm);
\draw (-90:1cm) arc (-90:-90+3*72:1cm);

\draw[->,thick,red] (72*2-90+2*24:2cm) -- (72*2-90+2*24:3cm);
\draw[->,thick,red] (3*72-90+24:2cm) -- (3*72-90+24:1cm) ;
\end{tikzpicture}
\qquad \qquad
\begin{tikzpicture}[scale=.5,baseline={(current  bounding  box.center)}]
\draw[dotted] (0,0) -- (-90:4cm);
\draw[dotted] (0,0) -- (-90-72:4cm);
\draw[dotted] (0,0) -- (-90-72*2:4cm);
\draw[dotted] (0,0) -- (-90-72*3:4cm);
\draw[dotted] (0,0) -- (-90-72*4:4cm);
\begin{scope}[rotate=-72-90]
\draw (0:1cm) arc (0:24-7.65:1cm) -- (24-7.65:2cm) arc (24-7.65:0:2cm);
\draw (0:3cm) arc (0:24*2-7.65:3cm) -- (24*2-7.65:2cm) arc (24*2-7.65:24+7.65:2cm) -- (24+7.65:1cm) arc (24+7.65:72:1cm);
\draw (72:3cm) arc (72:2*24+7.65:3cm) -- (2*24+7.65:2cm) arc (2*24+7.65:72:2cm);
\end{scope}
\begin{scope}[rotate=-72*2-90]
\draw (0:1cm) arc (0:72:1cm);
\draw (0:3cm) arc (0:72-24-7.65:3cm) -- (72-24-7.65:2cm) arc (72-24-7.65:0:2cm);
\draw (72:3cm) arc (72:2*24+7.65:3cm) -- (2*24+7.65:2cm) arc (2*24+7.65:72:2cm);
\end{scope}
\begin{scope}[rotate=-72*3-90]
\draw (0:1cm) arc (0:72:1cm);
\draw (0:3cm) arc (0:72-24-7.65:3cm) -- (72-24-7.65:2cm) arc (72-24-7.65:0:2cm);
\draw (72:3cm) arc (72:2*24+7.65:3cm) -- (2*24+7.65:2cm) arc (2*24+7.65:72:2cm);
\end{scope}
\draw (-90:3cm) arc (-90:-90+2*72:3cm);
\draw (-90:2cm) arc (-90:-90+2*72:2cm);
\draw (-90:1cm) arc (-90:-90+2*72:1cm);
\draw[->,thick,red] (3*72-90+24:2cm) -- (3*72-90+24:1cm) ;
\draw[->,thick,red] (72-90+2*24:2cm) -- (72-90+2*24:3cm);
\end{tikzpicture}
\]
From these, we observe that $\phi^\op_{c,b'}$ is a merge and $\phi^\op_{b',a}$ is a split, while $\phi^\op_{c,b}$ is a split and $\phi^{\op}_{b,a}$ is a merge.  Any such resolution configuration has type C.  For any $c$ with $|c|_2>|c|_1$, it turns out similarly that $D_{a,c}$ is type C.  Therefore, the total number of faces of type A and X is even.  Then, as in the discussion of case I in the proof of Lemma \ref{lem:edge-magic}, for the case of three concentric circles downstairs, $(\delta \epsilon)_{u,w}=1=\Omega(D)_{u,w}$.  (The case of three concentric circles, with the orientation of edges changed, results in replacing the type X faces on the diagonal by type Y faces.)  

We omit the case dual to three concentric circles; it follows by application of techniques similar to above.

Since there is at most one signed matching compatible with the Khovanov-Burnside functor, we have that the matching specified above is the ladybug matching.  This completes the proof of Lemma \ref{lem:edge-magic}.
\end{proof}
In turn, Lemma \ref{lem:fake-functor} and Lemma \ref{lem:edge-magic} complete the proof Proposition \ref{prop:long-diffs}.  
\end{proof} 

We next deal with the case of $2$-morphisms for the even Khovanov functor. 

\begin{lem}\label{lem:ladybug-compatible}
Let $u\geqslant_1 v,v'\geqslant_1 w\in \two^n$. The bijection 
\[
	\khburn(\phi_{\iota v,\iota w})^{\ZZ_p}\circ \khburn(\phi_{\iota u,\iota v})^{\ZZ_p}\to \khburn(\phi_{\iota v',\iota w})^{\ZZ_p}\circ \khburn(\phi_{\iota u,\iota v'})^{\ZZ_p}
\] is the ladybug matching.
\end{lem}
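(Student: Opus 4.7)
The bijection is uniquely determined (and automatically agrees with the matching of $\akhburn(D)$) whenever the face $u\geq_1 v, v'\geq_1 w$ does not correspond to a ladybug configuration in $D_u$, since in that case each fiber contains at most one element. So I assume throughout that the face is a ladybug. The argument splits on the parity of $p$.

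For $p$ odd I would deduce the claim from Proposition~\ref{prop:long-diffs} applied to $\khoburn$. That proposition, together with Lemma~\ref{lem:edge-magic}, identifies the signed correspondence $\khoburn(\tilde D)^{\ZZ_p}(\phi_{\iota u,\iota w})$ with $\akhoburn(D)(\phi_{u,w})$ as signed Burnside correspondences. In the signed Burnside functor $\khoburn$ the two elements in a ladybug fiber carry opposite signs, so the signed matching across such a face is uniquely determined by the signed $1$-morphism structure; hence the matchings of $\khoburn(\tilde D)^{\ZZ_p}$ and $\akhoburn(D)$ across the face coincide, and both equal the (signed) ladybug matching. Since $\khburn = \forgot\,\khoburn$ and taking $\ZZ_p$-fixed points commutes with the sign-forgetting functor, the lemma follows for $p$ odd by forgetting signs.

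For $p=2$ the odd external action is unavailable, and a direct argument is required. By Lemma~\ref{lem:212}, the $2$-morphism may be computed as a composition of $2$-morphisms across the $2$-dimensional faces of any $4$-dimensional subcube of $\two^{2n}$ interpolating between a chosen path through $\iota v$ and one through $\iota v'$. Choosing a path that first resolves the two lifts of the $u$-to-$v$ crossing (one per sector) and then the two lifts of the $u$-to-$v'$ crossing, the intervening $2$-faces split into sectoral ladybug faces (each supported in a single sector $S_i$) and diagonal faces whose resolution configuration is disconnected across sectors, and hence carries the unique matching. The upstairs $2$-morphism is thus a composition of two sectoral ladybug matchings with forced matchings. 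Using the ``right pair'' description of the ladybug matching from \cite{lshomotopytype}, I would verify case by case that this composition, restricted to $\ZZ_2$-invariant Khovanov generators, corresponds via the bijection of Proposition~\ref{prop:identify-equiv-gens} to the right pair in $D_w$, i.e., to the downstairs ladybug matching.

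The main obstacle is carrying out the $p=2$ verification, which requires a finite case analysis based on whether the circle in the downstairs ladybug $D_{u,w}$ is annularly trivial or nontrivial; this data determines both the orbit structure of the circles of $\tilde D_{\iota w}$ and the explicit form of the invariant generators. The essential simplification is that the ladybug matching is an invariant of planar isotopy (Lemma~5.8 of \cite{lshomotopytype}), so the $\ZZ_2$-rotation intertwines the right pair in $S_1$ with the right pair in $S_2$; compatibility with the downstairs matching then follows from the orbit conventions relative to the arc $\tilde\gamma$ fixed in Section~\ref{subsec:periodic-links}.
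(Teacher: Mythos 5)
Your treatment of the $p$ odd case is correct and matches the paper's own observation (stated just before the proof) that the lemma is a consequence of Lemma~\ref{lem:edge-magic}: once the induced edge assignment downstairs is shown to be valid, the $2$-morphism in $\khoburn(\tilde D)^{\ZZ_p}$ is forced by the unique sign-preserving matching, identifies with that of $\akhoburn(D)$, and forgetting signs gives the statement for $\khburn$.

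The more serious problem is that you address only $p$ odd and $p=2$. The lemma, and Theorem~\ref{thm:burnside-fixed-pts} which relies on it, must hold for all $p>1$; in particular Theorem~\ref{thm:main} is stated for $p^n$-periodic links, so composite and even $p$ matter. For $p=4,6,8,\dots$ the odd Burnside functor $\khoburn(\tilde D)$ does not carry a $\ZZ_p$-external action, so the sign-lifting route is unavailable, and your direct argument is phrased only for a $4$-dimensional subcube. The paper closes this gap by classifying the downstairs ladybug into four annular types (I--IV from Lemma~\ref{lem:edge-magic}): types II and III give an empty fixed-point correspondence, type I is direct for all $p$, and type IV is handled by an induction from $p'$ to $2p'$ that passes through the intermediate $\ZZ_{p'}$-fixed-point functor and uses the residual $\ZZ_2$-action on it. Your proposal has no analogue of that inductive step.

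There is also a flaw in the $p=2$ sketch itself. You assume the interpolating $2$-faces are either ``sectoral'' ladybugs (supported in a single sector) or have disconnected configurations with forced matchings. This is true for annular type I (the lift is $p$ disjoint copies of the ladybug), but fails for type IV: there the downstairs outer arc encircles the basepoint, so its lifts connect adjacent sectors and the upstairs circles form a necklace. The face resolving the two arcs contained in a single sector is then a type C configuration (two circles, one connecting arc, one internal chord), not a ladybug; and the unique ladybug face in the relevant grid is a cross-sector face (the lower-left corner in the paper's analysis). Your proposed case split by annular triviality of the downstairs circle is also too coarse: types I and IV both have an annularly trivial circle but lift to very different configurations.
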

	\begin{proof}
   This is quite similar to, but more straightforward than, the proof of Lemma \ref{lem:edge-magic} above.  First of all, there is only something to check if the configuration $D_{u,w}$ downstairs is a ladybug (so there is no analogue of the three-concentric-circles case in the previous proof).  Moreover, $\khburn(\phi_{\iota v,\iota w})^{\ZZ_p}\circ \khburn(\phi_{\iota u,\iota v})^{\ZZ_p}=\emptyset$ for configurations of type \textrm{II} and \textrm{III} (appearing in the proof of Lemma \ref{lem:edge-magic}).  That is, we need only consider index-$2$ annular resolution configurations downstairs of types \textrm{I} and \textrm{IV}.

The case \textrm{I} is a similar calculation to Lemma \ref{lem:edge-magic} and is omitted.  For case \textrm{IV}, we argue inductively.  For odd $p$, we are already done by the comment before the proof, and it is a straightforward calculation to verify that the Lemma holds for the case $p=2$.  

For $p$ odd, the Lemma follows as a consequence of Lemma \ref{lem:edge-magic}, since an edge assignment determines the ladybug matching. 

Say we have verified case \textrm{IV} for fixed $p'$. We show how to verify it for $p=2p'$.  The resolution configuration $\tilde{D}$ upstairs is formed from $p'$ sectors of the form below; the dotted lines indicate the boundary of one of the $p'$ sectors, and the dashed line further bisects this sector into two of the $p = 2p'$ sectors:

\[
\begin{tikzpicture}
\node (x) at (0,0) {$\mathbb{X}$};
\draw[dotted] (0,0) -- (-90:4cm);
\draw[dotted] (0,0) -- (-180:4cm);
\draw[dashed] (0,0) -- (-135:4cm);
\begin{scope}[shift={(-135+7:3cm)}, rotate=-45]
\draw (0,0) circle (.6cm);
\draw[thick,red] (0,-.6) -- (0,.6) node[black,pos=0.5,anchor=south] {$2$};
\end{scope}

\begin{scope}[shift={(-90:3cm)}]
\draw (0,.475) arc (180-52.34:180+52.34:.6cm);
\end{scope}

\begin{scope}[shift={(-180:3cm)},rotate=-90]
\draw (0,-.475) arc (-180+52.34:180-52.34:.6cm);
\draw[thick,red] (.366,-.6) -- (.366,.6) node[black,pos=0.5,anchor=south] {$4$};
\end{scope}

\begin{scope}[rotate=-180+10]
\draw[thick,red] (8:3cm) -- (30:3cm) node[black,pos=0.5,anchor=south] {$3$};
\end{scope}
\begin{scope}[rotate=-180+55]
\draw[thick,red] (8:3cm) -- (30:3cm) node[black,pos=0.5,anchor=south] {$1$};
\end{scope}

\end{tikzpicture}
\]

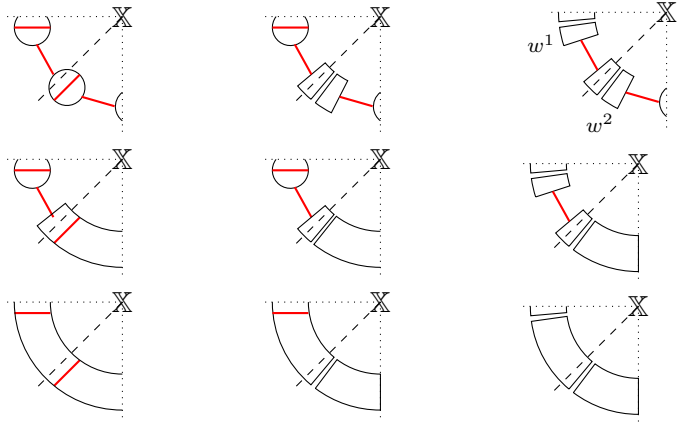
\begin{figure}
\centering
\begin{minipage}{.2\textwidth}
\begin{tikzpicture}[scale=0.3991,baseline={(current  bounding  box.center)}] 
\node (x) at (0,0) {$\mathbb{X}$};
\draw[dotted] (0,0) -- (-90:4cm);
\draw[dotted] (0,0) -- (-180:4cm);
\draw[dashed] (0,0) -- (-135:4cm);
\begin{scope}[shift={(-135+7:3cm)}, rotate=-45]
\draw (0,0) circle (.6cm);
\draw[thick,red] (0,-.6) -- (0,.6);
\end{scope}

\begin{scope}[shift={(-90:3cm)}]
\draw (0,.475) arc (180-52.34:180+52.34:.6cm);
\end{scope}

\begin{scope}[shift={(-180:3cm)},rotate=-90]
\draw (0,-.475) arc (-180+52.34:180-52.34:.6cm);
\draw[thick,red] (.366,-.6) -- (.366,.6);
\end{scope}

\begin{scope}[rotate=-180+10]
\draw[thick,red] (8:3cm) -- (30:3cm);
\end{scope}
\begin{scope}[rotate=-180+55]
\draw[thick,red] (8:3cm) -- (30:3cm);
\end{scope}
\end{tikzpicture}
\begin{tikzpicture}[scale=0.3991,baseline={(current  bounding  box.center)}] 
\node (x) at (0,0) {$\mathbb{X}$};
\draw[dotted] (0,0) -- (-90:4cm);
\draw[dotted] (0,0) -- (-180:4cm);
\draw[dashed] (0,0) -- (-135:4cm);

\begin{scope}[shift={(-180:3cm)},rotate=-90]
\draw (0,-.475) arc (-180+52.34:180-52.34:.6cm);
\draw[thick,red] (.366,-.6) -- (.366,.6);
\end{scope}
\begin{scope}[rotate=-180+10]
\draw[thick,red] (8:3cm) -- (30:3cm);
\end{scope}
\begin{scope}[shift={(-135+7:3cm)}, rotate=-45]
\draw[thick,red] (0,-.6) -- (0,.6);
\end{scope}
\draw (-90:3.6cm) arc (-90:-90-45-7:3.6cm) -- (-90-45-7.3:2.4cm) arc (-90-45-7.32:-90:2.4cm);

\end{tikzpicture}

\begin{tikzpicture}[scale=0.3991,baseline={(current  bounding  box.center)}] 
\node (X) at (0,0) {$\mathbb{X}$};
\draw[dotted] (0,0) -- (-90:4cm);
\draw[dotted] (0,0) -- (-180:4cm);
\draw[dashed] (0,0) -- (-135:4cm);
\node (y) at (-160+4:4.3cm) {\scriptsize $y$};
\node (x) at (-160+4:1.8cm) {\scriptsize $x$};

\begin{scope}[shift={(-180:3cm)},rotate=-90]
\draw[thick,red] (.366,-.6) -- (.366,.6);
\end{scope}

\begin{scope}[shift={(-135+7:3cm)}, rotate=-45]
\draw[thick,red] (0,-.6) -- (0,.6);
\end{scope}
\draw[thick] (-90:3.6cm) arc (-90:-180:3.6cm);
\draw[thick] (-90:2.4cm) arc (-90:-180:2.4cm);

\end{tikzpicture}
\end{minipage}
\begin{minipage}{.2\textwidth}
\begin{tikzpicture}[scale=0.3991,baseline={(current  bounding  box.center)}] 
\node (x) at (0,0) {$\mathbb{X}$};
\draw[dotted] (0,0) -- (-90:4cm);
\draw[dotted] (0,0) -- (-180:4cm);
\draw[dashed] (0,0) -- (-135:4cm);
\begin{scope}[rotate=-90]
\draw (-45+5:3.6cm) arc (-45+5:-45-5:3.6cm) -- (-45-5:2.4cm) arc (-45-5:-45+5:2.4cm) -- (-45+5:3.6cm);
\end{scope}

\begin{scope}[rotate=-90]
\draw (-45+8:3.6cm) arc (-45+8:-45+18:3.6cm) -- (-45+18:2.4cm) arc (-45+18:-45+8:2.4cm) -- (-45+8:3.6cm);
\end{scope}

\begin{scope}[shift={(-90:3cm)}]
\draw (0,.475) arc (180-52.34:180+52.34:.6cm);
\end{scope}

\begin{scope}[shift={(-180:3cm)},rotate=-90]
\draw (0,-.475) arc (-180+52.34:180-52.34:.6cm);
\draw[thick,red] (.366,-.6) -- (.366,.6);
\end{scope}

\begin{scope}[rotate=-180+10]
\draw[thick,red] (8:3cm) -- (30:3cm);
\end{scope}
\begin{scope}[rotate=-180+55]
\draw[thick,red] (8:3cm) -- (30:3cm);
\end{scope}
\end{tikzpicture}

\begin{tikzpicture}[scale=0.3991,baseline={(current  bounding  box.center)}] 
\node (x) at (0,0) {$\mathbb{X}$};
\draw[dotted] (0,0) -- (-90:4cm);
\draw[dotted] (0,0) -- (-180:4cm);
\draw[dashed] (0,0) -- (-135:4cm);
\node (z) at (-170+4:4.2cm) {\scriptsize $z$};

\begin{scope}[shift={(-180:3cm)},rotate=-90]
\draw[thick] (0,-.475) arc (-180+52.34:180-52.34:.6cm);
\draw[thick,red] (.366,-.6) -- (.366,.6);
\end{scope}
\begin{scope}[rotate=-180+10]
\draw[thick,red] (8:3cm) -- (30:3cm);
\end{scope}

\draw[thick] (-90:3.6cm) arc (-90:-90-45+7:3.6cm) -- (-90-45+7.3:2.4cm) arc (-90-45+7.32:-90:2.4cm);
\begin{scope}[rotate=-90]
\draw (-45+5:3.6cm) arc (-45+5:-45-5:3.6cm) -- (-45-5:2.4cm) arc (-45-5:-45+5:2.4cm) -- (-45+5:3.6cm);
\end{scope}
\end{tikzpicture}

\begin{tikzpicture}[scale=0.3991,baseline={(current  bounding  box.center)}]
\node (x) at (0,0) {$\mathbb{X}$};
\draw[dotted] (0,0) -- (-90:4cm);
\draw[dotted] (0,0) -- (-180:4cm);
\draw[dashed] (0,0) -- (-135:4cm);

\draw (-180:2.4cm) arc (-180:-45-90+5:2.4cm) -- (-45-90+5:3.6cm) arc (-45-90+5:-180:3.6cm);
\begin{scope}[shift={(-180:3cm)},rotate=-90]
\draw[thick,red] (.366,-.6) -- (.366,.6);
\end{scope}

\begin{scope}[rotate=-90]
\draw (-45+8:3.6cm) arc (-45+8:0:3.6cm) -- (0:2.4cm) arc (0:-45+8:2.4cm) -- (-45+8:3.6cm);
\end{scope}

\end{tikzpicture}
\end{minipage}
\begin{minipage}{.2\textwidth}

\begin{tikzpicture}[scale=0.3991,baseline={(current  bounding  box.center)}] 
\node (x) at (0,0) {$\mathbb{X}$};
\draw[dotted] (0,0) -- (-90:4cm);
\draw[dotted] (0,0) -- (-180:4cm);
\draw[dashed] (0,0) -- (-135:4cm);
\begin{scope}[rotate=-90]
\draw (-45+5:3.6cm) arc (-45+5:-45-5:3.6cm) -- (-45-5:2.4cm) arc (-45-5:-45+5:2.4cm) -- (-45+5:3.6cm);
\end{scope}

\node (w1) at (-170+4:4.3cm) {\scriptsize $w^1 $};
\node (w2) at (-170+45+4:4.3cm) {\scriptsize $w^2 $};
\begin{scope}[rotate=-90]
\draw[thick] (-45+8:3.6cm) arc (-45+8:-45+18:3.6cm) -- (-45+18:2.4cm) arc (-45+18:-45+8:2.4cm) -- (-45+8:3.6cm);
\end{scope}

\begin{scope}[shift={(-90:3cm)}]
\draw (0,.475) arc (180-52.34:180+52.34:.6cm);
\end{scope}

\begin{scope}[rotate=-122]
\draw[thick] (-45+5:3.6cm) arc (-45+5:-45-5:3.6cm) -- (-45-5:2.4cm) arc (-45-5:-45+5:2.4cm) -- (-45+5:3.6cm);
\end{scope}

\draw (-180:3.6cm) arc (-180:-180+5:3.6cm) -- (-180+5:2.4cm) arc (-180+5:-180:2.4cm);

\begin{scope}[rotate=-180+10]
\draw[thick,red] (8:3cm) -- (30:3cm);
\end{scope}
\begin{scope}[rotate=-180+55]
\draw[thick,red] (8:3cm) -- (30:3cm);
\end{scope}
\end{tikzpicture}

\begin{tikzpicture}[scale=0.3991,baseline={(current  bounding  box.center)}] 
\node (x) at (0,0) {$\mathbb{X}$};
\draw[dotted] (0,0) -- (-90:4cm);
\draw[dotted] (0,0) -- (-180:4cm);
\draw[dashed] (0,0) -- (-135:4cm);
\begin{scope}[rotate=-90]
\draw (-45+5:3.6cm) arc (-45+5:-45-5:3.6cm) -- (-45-5:2.4cm) arc (-45-5:-45+5:2.4cm) -- (-45+5:3.6cm);
\end{scope}

\begin{scope}[rotate=-90]
\draw (-45+8:3.6cm) arc (-45+8:0:3.6cm) -- (0:2.4cm) arc (0:-45+8:2.4cm) -- (-45+8:3.6cm);
\end{scope}

\begin{scope}[rotate=-122]
\draw (-45+5:3.6cm) arc (-45+5:-45-5:3.6cm) -- (-45-5:2.4cm) arc (-45-5:-45+5:2.4cm) -- (-45+5:3.6cm);
\end{scope}

\draw (-180:3.6cm) arc (-180:-180+5:3.6cm) -- (-180+5:2.4cm) arc (-180+5:-180:2.4cm);

\begin{scope}[rotate=-180+10]
\draw[thick,red] (8:3cm) -- (30:3cm);
\end{scope}

\end{tikzpicture}

\begin{tikzpicture}[scale=0.3991,baseline={(current  bounding  box.center)}] 
\node (x) at (0,0) {$\mathbb{X}$};
\draw[dotted] (0,0) -- (-90:4cm);
\draw[dotted] (0,0) -- (-180:4cm);
\draw[dashed] (0,0) -- (-135:4cm);
\begin{scope}[rotate=-90]
\draw (-45+5:3.6cm) arc (-45+5:-82:3.6cm) -- (-82:2.4cm) arc (-82:-45+5:2.4cm) -- (-45+5:3.6cm);
\end{scope}

\begin{scope}[rotate=-90]
\draw (-45+8:3.6cm) arc (-45+8:0:3.6cm) -- (0:2.4cm) arc (0:-45+8:2.4cm) -- (-45+8:3.6cm);
\end{scope}

\draw (-180:3.6cm) arc (-180:-180+5:3.6cm) -- (-180+5:2.4cm) arc (-180+5:-180:2.4cm);

\end{tikzpicture}
\end{minipage}

\caption{\textbf{Resolution configurations appearing in Lemma \ref{lem:ladybug-compatible}}.  Here are pictured (one sector of) the resolution configurations invariant under the action of $\ZZ_{p'}\subset \ZZ_p$.  The configurations on the top row are $\tilde{D}_{0^4}$, $\tilde{D}_{0100}, \tilde{D}_{0101}$, followed by the row $\tilde{D}_{1000},\tilde{D}_{1100},\tilde{D}_{1101}$ and finally $\tilde{D}_{1010},\tilde{D}_{1110},\tilde{D}_{1^4}$.  We have simplified the indexing by writing the indices for the quotient diagram $\tilde{D}/\ZZ_{p'}$. }\label{fig:jumbotron}
\end{figure}
We now draw the grid $G$ as in the odd case, except that we order the crossings using the ordering of $(\two^4)^{p'}$, rather than $(\two^2)^p$.  That just means that in the above picture, we resolve all edges labeled `1' (\resp 2) before any of those labeled `3' (\resp 4).
The $\ZZ_{p'}$-fixed resolutions look as in Figure \ref{fig:jumbotron}, in one of the $p'$ sectors. 
In the configuration $\tilde{D}_{1010}$, label the inner circle by $x$ and the outer circle by $y$.

Using our inductive hypothesis (and looking at the ladybug matching on $\tilde{D}/\ZZ_{p'}$), the circle $x$ is matched with $z_1\dots z_{p'}$, where $z_i$ are the circles in $\tilde{D}_{1100}$ that intersect (the dotted)  sector boundaries.  A further use of our inductive hypothesis matches $z_1\dots z_{p'}$ with the product $w^1_1\dots w^1_{p'}w^2_1\dots w^2_{p'}$, where the $w^1,w^2$ are as labeled in Figure \ref{fig:jumbotron}.  Note that the generator $w^1_{1}\dots w^2_{p'}$ is indeed $\ZZ_p$-invariant, as are $x$ and $y$.  Taking the quotients of $\tilde{D}_{1010}$ and $\tilde{D}_{0101}$ by $\ZZ_p$, we see that $\bar{x}$, the generator downstairs corresponding to $x$, indeed corresponds, under the right ladybug matching, to $\bar{w}$, the generator downstairs corresponding to the product $w^1_1\dots w^2_{p'}$.  This establishes the inductive step, and completes the Lemma.
\end{proof}

\subsection{Well-definedness of the action}\label{subsec:reidemeister}

In this section we show that, for a $p$-periodic link $\tilde{L}$, (1) the $\ZZ_p$-external stable equivalence class of the Burnside functor $\khburn$ is an invariant of $\tilde{L}$; (2) if $p$ is odd, the external equivariant stable equivalence class of $\khoburn$ is an invariant of $\tilde{L}$; and (3) the corresponding statements for the annular functors $\akhburn$ and $\akhoburn$ hold.

\emph{Proof of Theorem \ref{thm:functor-with-action}.}  Throughout the proof we will usually abbreviate `(equivariant) external stable equivalence class' to `equivalence class,' where it will cause no confusion.  We start with the case of $p$ odd and $\khoburn$.  We must first show that the equivalence class of $\khoburn(\tilde{D})$, for a fixed diagram $\tilde{D}$, is an invariant of the choices made in its construction.  Namely, we show independence of the orientation of crossings, the (equivariant) edge assignment, and the ordering of the circles $a_i$ at each resolution.  The proof of these claims almost follows verbatim from the start of the proof of Theorem 1.7 of \cite{oddkh}.

\begin{itemize}[leftmargin=*]
\item \textbf{Edge assignment:} Let $\epsilon,\epsilon'$ be two
  different equivariant edge assignments of the same type.  As noted in \cite[Lemma 2.2]{ors},
  $\epsilon\epsilon'$ is a (multiplicative) ($\ZZ_p$-invariant) cochain in
  $\cellC^1([0,1]^n;\ZZ_2)$.  By Lemma \ref{lem:eqvar-cube-cat}, $\epsilon\epsilon'$ is the coboundary of an invariant $0$-cochain
  $\alpha$ on the cube of resolutions.  That is, there is a map
  $\alpha\from \two^n\to \{ \pm 1\}$, so that for any $v\geqslant_1w$
  $\alpha(v)\alpha(w)=\epsilon(\phi^\op_{w,v})\epsilon'(\phi^\op_{w,v})$.
  If $F_0$ and $F_1$ are the corresponding functors
  $\two^n\to\oddb$, we construct a stable equivalence using the functor $F_2\from \two^{n+1} \to \burn_{\ZZ_2}$, defined by $F_2|_{i\times \two^n}=F_i$, and on the arrows between the two copies of $\two^n$ using the signed (identity) correspondence $F_1(v) \to F_2(v)$ determined by $\alpha$.  That is, we apply the sign reassignment by $\alpha$ in the language of \cite[Definition 3.5]{oddkh}.  Using the invariance of $\alpha$, we see that $F_2$ admits an external action.  It is straightforward that this natural transformation induces quasi-isomorphisms on the totalization of all fixed-point functors, finishing this check.

\item \textbf{(Equivariant) Orientations at crossings:} Recall that \cite[Lemma 2.3]{ors} asserts that for oriented diagrams $(L,o)$ and $(L,o')$
  and an edge assignment $\epsilon$ for $(L,o)$, there exists an edge
  assignment of the same type $\epsilon'$ for $(L,o')$ so that
  $\oddKhCx(L,o,\epsilon)\cong \oddKhCx(L,o',\epsilon')$.  The  isomorphism constructed in that Lemma respects the Khovanov generators, and so induces an isomorphism of Burnside functors.  The natural generalization to the equivariant setting also holds; that is, for a change of equivariant orientation of crossing, the corresponding odd Khovanov chain complexes are identified (and $\epsilon'$ is equivariant), from which independence of $\khoburn$ follows.  (Independence of the (equivariant) orientations of
  crossings can also be proved using (equivariant) Reidemeister II moves twice, as
  in \cite[Figure~4.5]{SSS-geometric-perturb}.)
  
  \item \textbf{Type of edge assignment:} \cite[Lemma 2.4]{ors} shows
  that an edge assignment $\epsilon$ of a link diagram with oriented crossings
  $(L,o)$ of type $X$ can also be viewed as a type $Y$ edge
  assignment for some orientation $o'$.  That is, the type X
  Burnside functor associated to $(L,o,\epsilon)$ is already the
  type Y Burnside functor associated to
  $(L,o',\epsilon)$.  In fact, if $L$ is a periodic link diagram, the orientation $o'$ constructed in \cite{ors} is equivariant.  Moreover, the identification of the Burnside functors is equivariant, handling this case.
  
  \item \textbf{Ordering of circles at each resolution:} We
  must check that reordering the circles of a resolution results in an equivalent Burnside functor.  For this, let
  $\KhGen(u)$ and $\KhGen'(u)$ denote the Khovanov generators for two
  differing (equivariant) orderings of the circles for a fixed equivariant link diagram.  These
  orderings are related by a bijection from $\KhGen(u)$ to
  $\KhGen'(u)$.  One checks directly that these bijections relate
  the two functors $F_1,F_2\from\two^n\to\oddb$ by a sign reassignment, which, moreover, commutes with the action of  $\ZZ_p$.
\end{itemize}

We now assume that the ordering of the circles upstairs is chosen as at the end of Section \ref{subsec:periodic-links}.
We show how to check invariance of $\khoburn$ under Reidemeister moves by upgrading the proof for chain complexes to Burnside functors, as is done in \cite{lls2}, \cite{lshomotopytype}, with the only change that we keep track of the external action in the course of the proof.  We will work out the details in the case of a Reidemeister I move; this case will make clear what modifications are necessary to the usual invariance proof of $\khoburn$ (without external action) for Reidemeister II and III moves.  Indeed, the proof of invariance is largely an iterated version of the usual invariance proof of Khovanov homology.

\begin{figure}
\centering
\begin{tikzpicture}[baseline={([yshift=-.8ex]current bounding
      box.center)},scale=.5]
\node (x) at (0,0) {$\mathbb{X}$};
\draw[dotted] (0,0) -- (-90:4cm);
\draw[dotted] (0,0) -- (30:4cm);
\draw[dotted] (0,0) -- (150:4cm);
\begin{scope}[shift={(90:3cm)}]

 \node[circle,thick,inner sep=0,outer sep=0,draw,dotted] at (0,0)
    {\begin{tikzpicture}[scale=0.04]  \begin{scope}[rotate=90]\draw[solid] (-2,-2)
        to[out=45,in=-45,looseness=2] (-2,12);\end{scope}\end{tikzpicture}};
\end{scope}
\begin{scope}[shift={(90+120:3cm)}]

 \node[circle,thick,inner sep=0,rotate=90+120,outer sep=0,draw,dotted] at (0,0)
    {\begin{tikzpicture}[scale=0.04]  \draw[solid] (-2,-2)
        to[out=45,in=-45,looseness=2] (-2,12);\end{tikzpicture}};
\end{scope}
\begin{scope}[shift={(90+240:3cm)}]

 \node[circle,thick,inner sep=0,rotate=90+240,outer sep=0,draw,dotted] at (0,0)
    {\begin{tikzpicture}[scale=0.04]  \draw[solid] (-2,-2)
        to[out=45,in=-45,looseness=2] (-2,12);\end{tikzpicture}};
\end{scope}
\node (dd) at (3,3) {$\tilde{D}$};
\end{tikzpicture}\qquad \qquad \qquad
\begin{tikzpicture}[baseline={([yshift=-.8ex]current bounding
      box.center)},scale=.5]
\node (x) at (0,0) {$\mathbb{X}$};
\draw[dotted] (0,0) -- (-90:4cm);
\draw[dotted] (0,0) -- (30:4cm);
\draw[dotted] (0,0) -- (150:4cm);
\begin{scope}[shift={(90:3cm)}]

 \node[circle,thick,rotate=90,inner sep=0,outer sep=0,draw,dotted] at (0,0) {\begin{tikzpicture}[scale=0.04]\draw[solid] (-2,12) to (8,2)
  to[out=-45,in=-90] (12,5) to[out=90,in=45] (8,8); \node[draw=none,crossing] at (5,5) {};
  \draw[solid] (-2,-2) to (8,8);\end{tikzpicture}};
\end{scope}
\begin{scope}[shift={(90+120:3cm)}]
 \node[circle,thick,rotate=90+120,inner sep=0,outer sep=0,draw,dotted] at (0,0) {\begin{tikzpicture}[scale=0.04]\draw[solid] (-2,12) to (8,2)
  to[out=-45,in=-90] (12,5) to[out=90,in=45] (8,8); \node[draw=none,crossing] at (5,5) {};
  \draw[solid] (-2,-2) to (8,8);\end{tikzpicture}};
\end{scope}
\begin{scope}[shift={(90+240:3cm)}]
 \node[circle,thick,rotate=90+240,inner sep=0,outer sep=0,draw,dotted] at (0,0) {\begin{tikzpicture}[scale=0.04]\draw[solid] (-2,12) to (8,2)
  to[out=-45,in=-90] (12,5) to[out=90,in=45] (8,8); \node[draw=none,crossing] at (5,5) {};
  \draw[solid] (-2,-2) to (8,8);\end{tikzpicture}};
\end{scope}
\node (dd) at (3,3) {$\tilde{D}'$};
\end{tikzpicture}
\caption{An equivariant Reidemeister I move.  The left-hand image denotes a periodic link diagram $\tilde{D}$ (with $p=3$ pictured), with a $\ZZ_p$-orbit of a certain unknotted arc in picked out.  The right-hand image denotes the periodic link diagram $\tilde{D}'$ obtained by performing a Reidemeister I move along each arc of the orbit.}
\label{fig:reid-1}
\end{figure}
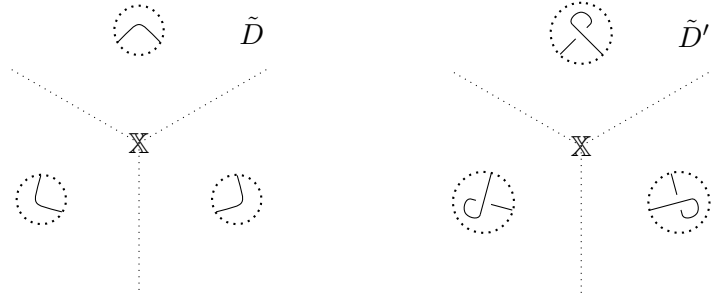

Let $\tilde D$ be a periodic link diagram, and let $\tilde D'$ be a diagram that differs from $\tilde D$ by only an equivariant Reidemeister 1 (R1) move, which consists of $p$ usual Reidemeister moves in the same orbit. See Figure \ref{fig:reid-1}, where we choose one of the R1 moves for concreteness. Let $F_1$ denote the odd Khovanov-Burnside functor of $\tilde{D}$, and $F_2$ that of $\tilde{D}'$. 

From its definition $\KhGen(\tilde{D}')=\amalg_{i\in \two^p} \KhGen(\tilde{D}'_i)$, where $\tilde{D}'_i$ denotes the resolution of $\tilde{D}'$ by resolving the orbit of the R1-crossing according to $i\in \two^p$.  Let $C$ denote the subcomplex spanned by all the generators of $\amalg_{j\neq 0^p} \KhGen(\tilde{D}_j)$ as well as the generators of $\KhGen(\tilde{D}_0)$ that do not contain the product $a_1\dots a_p$, where the $a_i$ are as in Figure \ref{fig:reid-1-part-2}.
Iterating the usual proof \cite[Section 3.5.1]{natancat} of Reidemeister I invariance shows that $C$ is acyclic.  

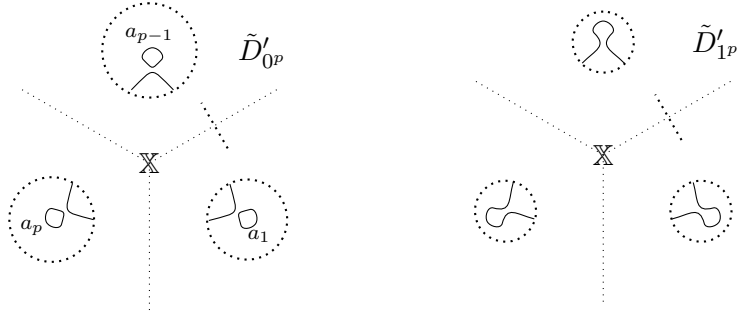
\begin{figure}
\centering
\begin{tikzpicture}[baseline={([yshift=-.8ex]current bounding
      box.center)},scale=.5]
\node (x) at (0,0) {$\mathbb{X}$};
\draw[dotted] (0,0) -- (-90:4cm);
\draw[dotted] (0,0) -- (30:4cm);
\draw[dotted] (0,0) -- (150:4cm);

\begin{scope}[shift={(30:2cm)}]
\draw[thick, dotted] (-60:.7cm) -- (120:.7cm);
\end{scope}
\begin{scope}[shift={(90:3cm)}]
 \node[thick,circle,rotate=90,inner sep=0,outer sep=0,draw,dotted] at (0,0) {\begin{tikzpicture}[scale=0.04]\draw[solid] (-2,-2)
        to[out=45,in=-45,looseness=2] (-2,12);\draw[solid] (8,8) to[out=-135,in=135,looseness=2] (8,2)
  to[out=-45,in=-90] (12,5) to[out=90,in=45] (8,8);
\node[rotate=-90] (a1) at (16,6) {\scriptsize $a_{p-1}$};
\end{tikzpicture}};
\end{scope}
\begin{scope}[shift={(90+120:3cm)}]
 \node[thick,circle,rotate=90+120,inner sep=0,outer sep=0,draw,dotted] at (0,0) {\begin{tikzpicture}[scale=0.04]\draw[solid] (-2,-2)
        to[out=45,in=-45,looseness=2] (-2,12);\draw[solid] (8,8) to[out=-135,in=135,looseness=2] (8,2)
  to[out=-45,in=-90] (12,5) to[out=90,in=45] (8,8);
\node[rotate=-90+240] (a1) at (17,4) {\scriptsize $a_{p}$};\end{tikzpicture}};
\end{scope}
\begin{scope}[shift={(90+240:3cm)}]

  \node[thick,circle,rotate=90+240,inner sep=0,outer sep=0,draw,dotted] at (0,0) {\begin{tikzpicture}[scale=0.04]\draw[solid] (-2,-2)
        to[out=45,in=-45,looseness=2] (-2,12);\draw[solid] (8,8) to[out=-135,in=135,looseness=2] (8,2)
  to[out=-45,in=-90] (12,5) to[out=90,in=45] (8,8);
\node[rotate=-90-240] (a1) at (15,2) {\scriptsize $a_1$};
\end{tikzpicture}};
\end{scope}
\node (dd) at (3,3) {$\tilde{D}'_{0^p}$};
\end{tikzpicture}\qquad \qquad\qquad
\begin{tikzpicture}[baseline={([yshift=-.8ex]current bounding
      box.center)},scale=.5]
\node (x) at (0,0) {$\mathbb{X}$};
\draw[dotted] (0,0) -- (-90:4cm);
\draw[dotted] (0,0) -- (30:4cm);
\draw[dotted] (0,0) -- (150:4cm);

\begin{scope}[shift={(30:2cm)}]
\draw[thick, dotted] (-60:.7cm) -- (120:.7cm);
\end{scope}
\begin{scope}[shift={(90:3cm)}]
 \node[thick,circle,rotate=90,inner sep=0,outer sep=0,draw,dotted] at (0,0) {\begin{tikzpicture}[scale=0.04]\draw[solid] (-2,12) to[out=-45,in=-135,looseness=1.5] (8,8)
  to[out=45,in=90] (12,5) to[out=-90,in=-45] (8,2) to[out=135,in=45,looseness=1.5] (-2,-2);\end{tikzpicture}};
\end{scope}
\begin{scope}[shift={(90+120:3cm)}]
 \node[thick,circle,rotate=90+120,inner sep=0,outer sep=0,draw,dotted] at (0,0) {\begin{tikzpicture}[scale=0.04]\draw[solid] (-2,12) to[out=-45,in=-135,looseness=1.5] (8,8)
  to[out=45,in=90] (12,5) to[out=-90,in=-45] (8,2) to[out=135,in=45,looseness=1.5] (-2,-2);\end{tikzpicture}};
\end{scope}
\begin{scope}[shift={(90+240:3cm)}]
 \node[thick,circle,rotate=90+240,inner sep=0,outer sep=0,draw,dotted] at (0,0) {\begin{tikzpicture}[scale=0.04]\draw[solid] (-2,12) to[out=-45,in=-135,looseness=1.5] (8,8)
  to[out=45,in=90] (12,5) to[out=-90,in=-45] (8,2) to[out=135,in=45,looseness=1.5] (-2,-2);\end{tikzpicture}};
\end{scope}
\node (dd) at (3,3) {$\tilde{D}'_{1^p}$};
\end{tikzpicture}
\caption{Some resolutions of the link diagram $\tilde{D}$.  The ellipses to the upper-right record that we have omitted all but three sectors of the periodic link diagram $\tilde{D}'$.  }
\label{fig:reid-1-part-2}
\end{figure}

Furthermore, $\oddKhCx(\tilde{D})$ is naturally identified with $\oddKhCx(\tilde{D}')/C$.  We have a quotient map
\begin{equation}\label{eq:reide-1-map}
\oddKhCx(\tilde{D}') \to \oddKhCx(\tilde{D}),
\end{equation}
which is a chain homotopy equivalence (because $C$ is acyclic).  This map is induced from a subfunctor inclusion $\khoburn(\tilde{D})\to \khoburn(\tilde{D}')$, in that (\ref{eq:reide-1-map}) is the dual map on totalizations:
\[
\Tot(F_2)^* \to \Tot(F_1)^*.
\]
Here we have used Theorem \ref{thm:oddkh-main} to relate the Khovanov chain complex with the totalizations.  We have a ($\ZZ_2$-equivariant) stable equivalence $F_1 \to F_2$, but we have not yet seen that it is an external equivariant stable equivalence. We must also show that the induced map
\[
\Tot(F_2^{\ZZ_q})^* \to \Tot(F_1^{\ZZ_q})^*
\]
is a homotopy equivalence for each $q>1$ dividing $p$.  For this, let $b_1,\dots, b_{p/q}$ denote the images of the Reidemeister circles $a_i$ in the quotient $\tilde{D}/\ZZ_q$.  Consider the subcomplex $E$ of $\oddAkc(\tilde{D}'/\ZZ_q)$ generated as before by all generators except those of $(\tilde{D}'/\ZZ_q)_{0^{p/q}}$ that contain the product $b_1\dots b_{p/q}$.  As usual, one checks that $E$ is acyclic, and $\oddAkc(\tilde{D}/\ZZ_q)=\oddAkc(\tilde{D}'/\ZZ_q)/E$, so the map  
\begin{equation}\label{eq:annular-1-map}
\oddAkc(\tilde{D}'/\ZZ_q)\to \oddAkc(\tilde{D}/\ZZ_q)
\end{equation}
is a quasi-isomorphism.

Moreover, the subfunctor inclusion $\khoburn(\tilde{D})\to \khoburn(\tilde{D}')$ described above passes to an inclusion on $\ZZ_q$-fixed-point functors $\khoburn(\tilde{D})^{\ZZ_q}\to \khoburn(\tilde{D}')^{\ZZ_q}$.  Using the identification in Theorem \ref{thm:burnside-fixed-pts}, the induced map on totalizations is (\ref{eq:annular-1-map}).  Since we have already seen that (\ref{eq:annular-1-map}) is a quasi-isomorphism, we have proved invariance under Reidemeister I moves.  Keeping track also of the maps induced on even Khovanov homology shows that the inclusion $F_1 \to F_2$ is an equivariant stable equivalence of Burnside functors with external action, as needed.  

Invariance under equivariant Reidemeister II and III is shown in much the same way.  That is, for each acyclic subcomplex or quotient complex `move' in the usual proof of invariance of $\khoburn$, as in \cite[Section 5.3]{oddkh}, one iterates the move $p$ times to produce an acyclic sub- (\resp quotient) complex which is equivariant, and whose quotient (\resp dual subcomplex) is homotopy-equivalent to the original complex.  The sub- (quotient) complexes resulting from fixed-point functors can be understood via Theorem \ref{thm:burnside-fixed-pts}; the induced maps on the totalization of the fixed-point functors give chain homotopy equivalences as well, since they are the usual maps used in the proof of invariance of odd annular Khovanov homology (without external action) from \cite[Section 3.2]{grigsby-wehrli-gl11}.  

The proofs of the even version (for all $p>1$) of the Theorem, as well as the two annular versions, are entirely analogous.  \qed

\emph{Proof of Theorem \ref{thm:submain}.}  Let $\X_n(\tilde{L})$ denote an equivariant realization modeled on $\tilde{\mathbb{R}}^n$, where $\ZZ_p$ acts trivially on $\tilde{\mathbb{R}}^n$, of the stable Burnside functor with external action $\khoburn(\tilde{L})$ and similarly let $\mathcal{AKH}_n(L)$ be the realization of $\akhoburn(L)$ modeled on $\tilde{\mathbb{R}}^n$.  

More generally, say $V$ is a finite-dimensional orthogonal $\ZZ_2\times \ZZ_p$-representation, with $p$ odd, as in the statement of Theorem \ref{thm:submain}.  Write $\mathcal{X}_V(\widetilde{L})$ for an equivariant realization of $\khoburn(\widetilde{L})$ modeled on $V$, and similarly for $\akhoburn$.
The statement that the actions are well-defined is the combination of Proposition \ref{prop:stable-equivalences-realization} with Theorem \ref{thm:functor-with-action}.  The fixed-point assertions follow from Theorem \ref{thm:burnside-fixed-pts} combined with Lemma \ref{lem:sub-burn-func}.  The gradings can be recovered from Proposition \ref{prop:identify-equiv-gens}.  
\qed

\subsection{Smith inequalities}\label{subsec:smith}
\label{subsec:rk-ineqs}

We now use the results on fixed-point functors from Section \ref{subsec:fixed-point} to obtain rank inequalities for Khovanov homology. Let $p$ be prime, and $G = \ZZ_p$.

Recall that the classical Smith inequality (\ref{eq:1}) for a finite $G$-CW complex $M$ is obtained by studying two spectral sequences arising from the \emph{Tate bicomplex}:
\begin{align*}
C^{\tate}(M) 
&= (C^*(M;\f_p) \otimes \f_p[\theta, \theta\inv], d^{\tate}) \\
&:=( \cdots \map{1-\psi} C^*(M; \f_p)
			   \map{N(\psi)} C^*(M;\f_p)
			   \map{1-\psi} C^*(M; \f_p)
			   \map{N(\psi)} \cdots),
\end{align*}
where $\psi$ generates the $G$-action on singular cochains $C^*(M;\f_p)$  and $N(\psi)$ is the norm $1 + \psi + \psi^2 + \ldots + \psi^{p-1}$. 
The filtration by $\theta$-degree gives a spectral sequence $E^\bullet$ with $E^1 \cong H^*(M;\f_p) \otimes \f_p[\theta, \theta\inv]$ while the filtration by cohomological degree gives a spectral sequence converging to $H^*(M^G;\f_p)\otimes \f_p[\theta, \theta\inv]$. The assumptions provide sufficient boundedness to conclude that $E^\bullet$ also converges to $H^*(M^G;\f_p)\otimes \f_p[\theta, \theta\inv]$, and the rank inequality follows. (For a more detailed exposition, see  \cite{lipshitz-treumann}, \cite{z-annular-rank}.)

\begin{thm}\label{thm:tate-applied}
For a $p$-periodic link $\tilde{L}$ for prime $p$ (\resp odd prime $p$), with quotient link $L$, and each pair of quantum and $(k)$-gradings $(j,k)$, there is a spectral sequence with
\[E^1 \cong \Akh^{pj - (p-1)k, k}(\tilde L;\f_p)\otimes \f_p[\theta,\theta^{-1}] \qquad (\text{\resp} \oddAkh^{pj - (p-1)k, k}(\tilde L;\f_p)\otimes \f_p[\theta,\theta^{-1}])\]
converging to 
\[ E^\infty \cong \Akh^{j,k}(L;\f_p)\otimes \f_p[\theta,\theta^{-1}] \qquad (\text{\resp} \oddAkh^{j,k}(L;\f_p)\otimes \f_p[\theta,\theta^{-1}]).\]  There is also a spectral sequence with 
\[E^1 \cong \Kh(\tilde L;\f_p)\otimes \f_p[\theta,\theta^{-1}] \qquad (\text{\resp} \kho(\tilde L;\f_p)\otimes \f_p[\theta,\theta^{-1}]) \]
converging to 
\[E^\infty \cong \Akh(L;\f_p)\otimes \f_p[\theta,\theta^{-1}] \qquad (\text{\resp}  \oddAkh(L;\f_p))\otimes \f_p[\theta,\theta^{-1}]).\]

\begin{proof}
First, consider the case of $p$ odd, and odd annular Khovanov homology.  Construct the Tate bicomplex for $\khon(\tilde{L})$ for odd $n$ (here, $\khon(\tilde{L})$ is viewed as a space, without passing to the suspension spectrum).  The $\theta$-degree filtration gives a spectral sequence with the desired $E^1$-page which converges to the homology of the fixed-point set $\khon(\tilde{L})^G$, which by Theorem \ref{thm:submain} is $\Akhspace_n(L)$.  Now, for the case of even Khovanov homology, repeat the above recipe with $n=0$.  

The proof for the spectral sequences starting in the annular case is entirely analogous.   Finally, for the gradings, note that the spectral sequence splits according to the wedge sum components in the CW-realizations.
\end{proof}
\end{thm}

\begin{cor}
\label{cor:cascade}
Maintain the notation from Theorem \ref{thm:tate-applied}.  For each pair of quantum and $(k)$-gradings $(j,k)$, the following rank inequalities hold (for vector spaces over $\f_p$):
\[
\dim \Akh^{pj - (p-1)k, k}(\tilde L;\f_p) \geq  \dim \Akh^{j,k}(L;\f_p)  \mbox{ and }
\]
\[
\dim \oddAkh^{pj - (p-1)k, k}(\tilde L;\f_p) \geq  \dim \oddAkh^{j,k}(L;\f_p).
\]
We also have the rank inequalities (where each object is the sum over all quantum and $(k)$-gradings):
\begin{align*}
& \dim \Akh(\tilde L;\f_p) 
	\geq \dim \Kh(\tilde L;\f_p)
	\geq \dim \Akh(L;\f_p)
	\geq \dim \Kh(L;\f_p) \mbox{ and }\\
& \dim \oddAkh(\tilde L;\f_p) 
	\geq \dim \kho(\tilde L;\f_p)
	\geq \dim \oddAkh(L;\f_p)
	\geq \dim \kho(L;\f_p).
\end{align*}
\end{cor}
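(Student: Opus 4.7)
My plan is to derive every inequality from the general principle that for a spectral sequence of finite-dimensional $\f_p$-vector spaces, $\dim E^\infty \leq \dim E^1$. The two bigraded inequalities are immediate from the first half of Theorem \ref{thm:tate-applied}: at bidegree $(j,k)$, the Tate spectral sequence starts at $\Akh^{pj-(p-1)k,k}(\tilde L;\f_p) \otimes \f_p[\theta,\theta^{-1}]$ and converges to $\Akh^{j,k}(L;\f_p) \otimes \f_p[\theta,\theta^{-1}]$, and likewise in the odd case. Both pages are Laurent-periodic in $\theta$ and the ranks are preserved under $\theta$-translation, so comparing totals in a single $\theta$-strip gives the inequality.

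For the four-term cascade, I would build it in three stages. The two outer inequalities $\dim \Akh(M;\f_p) \geq \dim \Kh(M;\f_p)$ for $M \in \{\tilde L, L\}$ come from the annular filtration spectral sequence recalled in Section \ref{subsec:ann-filt}: the $(k)$-grading defines a filtration on $\KhCx(M)$ whose associated graded is $\Akc(M)$, yielding a spectral sequence with $E^1 \cong \Akh(M;\f_p)$ converging to $\Kh(M;\f_p)$ (and identically in the odd case with $\oddAkc$, $\oddAkh$, $\kho$). The central inequality $\dim \Kh(\tilde L;\f_p) \geq \dim \Akh(L;\f_p)$ is precisely the second (ungraded) spectral sequence in Theorem \ref{thm:tate-applied}, and its odd analogue yields $\dim \kho(\tilde L;\f_p) \geq \dim \oddAkh(L;\f_p)$. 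Composing the three rank bounds produces the claimed cascade.

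I do not expect any serious obstacle, since the heavy lifting has already been done in Theorem \ref{thm:tate-applied}. The only point requiring care is confirming that each column of the Tate bicomplex is finite-dimensional so that the rank-monotonicity argument applies termwise; this holds because $\tilde L$ has a finite diagram, so both the quantum and $(k)$-gradings are bounded on each chain complex involved. With that verified, stringing the spectral sequences together immediately yields the eight stated inequalities.
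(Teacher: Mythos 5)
Your proof is correct and takes essentially the same approach as the paper: the outer inequalities come from the annular ($k$)-filtration spectral sequence $\Akh \Rightarrow \Kh$ (the paper cites \cite{roberts-dbc} for this rather than spelling it out), and the middle and bigraded inequalities come directly from the Tate spectral sequences of Theorem \ref{thm:tate-applied}. The only difference is that you make explicit the finite-dimensionality and $\theta$-periodicity observations that the paper leaves implicit.
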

\begin{proof}
The $\Akh$-to-$\Kh$ inequalities follow from the filtration of the Khovanov complex \cite{roberts-dbc}. The middle inequalities follow from Theorem \ref{thm:tate-applied}.
\end{proof}

\subsection{Questions}\label{subsec:questions}
We conclude with some questions about the construction of equivariant Khovanov spaces.  Fix throughout a $p$-periodic link $\tilde{L}$ with quotient $L$.

\begin{enumerate}[leftmargin=*,label=(q-\arabic*)]
\item We have not attempted to relate the totalization $\Tot(\khoburn)$ (that is, the equivariant odd/even Khovanov complex) with any particular CW chain complex of $\X_n(\tilde{L})$, viewed as a $\ZZ_p$-equivariant space.  This would be useful to understand in order to relate the constructions of this paper with the equivariant Khovanov homology (or an odd version of same) constructed by Politarczyk \cite{politarczyk-kh}.  In more generality, it would be desirable to better understand a $\ZZ_p$-equivariant cell decomposition of $\X_n(\tilde{L})$, so that, for example, the space $\X_0(\tilde{L})$ could be related to the space constructed in \cite{bps-2018}.

\item A better understanding of the case of even $p$ for the odd Khovanov-Burnside functor $\khoburn$ would be desirable.  In particular, the techniques of this paper are sufficient to show that for a given periodic diagram $\tilde{D}$ of $\tilde{L}$, the functor $\khoburn(\tilde{D})$ admits a $\ZZ_p$-external action.  However, it is not immediately clear that this action is a link invariant. Moreover, the resulting external action need not be nonsingular.  It is not clear to the authors whether (for $n\geq 1$) Theorem \ref{thm:submain} (including the statement about fixed points) also holds for even $p$; we do not know of a counterexample.

\item Are there applications of the construction of the present paper to showing that some links are not periodic?  Borodzik-Politarczyk-Silvero \cite{bps-2018} have obtained such applications; are there further applications that require using the odd theory?

\item Willis \cite{willis-stabilization} showed that the Khovanov homotopy type of torus links $T(n,m)$ stabilizes as $m\to \infty$.  How does this stabilization interact with the $\ZZ_m$-action?  

\end{enumerate}

\bibliographystyle{myalpha}
\bibliography{localization}

\newcommand{\etalchar}[1]{$^{#1}$}
\providecommand{\bysame}{\leavevmode\hbox to3em{\hrulefill}\thinspace}
\providecommand{\MR}{\relax\ifhmode\unskip\space\fi MR }
\providecommand{\MRhref}[2]{%
  \href{http://www.ams.org/mathscinet-getitem?mr=#1}{#2}
}
\providecommand{\href}[2]{#2}
\begin{thebibliography}{CMW01}

\bibitem[Ada84]{adams}
J.~F. Adams, \emph{Prerequisites (on equivariant stable homotopy) for
  {C}arlsson's lecture}, Algebraic topology, {A}arhus 1982 ({A}arhus, 1982),
  Lecture Notes in Math., vol. 1051, Springer, Berlin, 1984, pp.~483--532.
  \MR{764596}

\bibitem[AGW15]{AGW-skh-hh}
Denis Auroux, J.~Elisenda Grigsby, and Stephan~M. Wehrli, \emph{Sutured
  {K}hovanov homology, {H}ochschild homology, and the {O}zsv\'ath-{S}zab\'o
  spectral sequence}, Trans. Amer. Math. Soc. \textbf{367} (2015), no.~10,
  7103--7131. \MR{3378825}

\bibitem[AKW]{akw-towards}
Rostislav Akhmechet, Vyacheslav Krushkal, and Michael Willis, \emph{Towards an
  {$\mathfrak{sl}_2$} action on the annular {K}hovanov spectrum},
  arXiv:2011.11234.

\bibitem[AKW21]{akw-quantum}
\bysame, \emph{Stable homotopy refinement of quantum annular homology}, Compos.
  Math. \textbf{157} (2021), no.~4, 710--769. \MR{4247571}

\bibitem[APS06]{aps}
Marta~M. Asaeda, J\'ozef~H. Przytycki, and Adam~S. Sikora,
  \emph{Categorification of the skein module of tangles}, Primes and knots,
  Contemp. Math., vol. 416, Amer. Math. Soc., Providence, RI, 2006, pp.~1--8.
  \MR{2276132}

\bibitem[AS19]{abouzaid-smith}
Mohammed Abouzaid and Ivan Smith, \emph{{K}hovanov homology from {F}loer
  cohomology}, Journal of the American Mathematical Society \textbf{32} (2019),
  no.~1, 1--79.

\bibitem[B{\'e}n67]{Benabou-other-bicategories}
Jean B{\'e}nabou, \emph{Introduction to bicategories}, Reports of the {M}idwest
  {C}ategory {S}eminar, Springer, Berlin, 1967, pp.~1--77. \MR{0220789 (36
  \#3841)}

\bibitem[BK72]{bousfield-kan}
A.~K. Bousfield and D.~M. Kan, \emph{Homotopy limits, completions and
  localizations}, Lecture Notes in Mathematics, Vol. 304, Springer-Verlag,
  Berlin-New York, 1972. \MR{0365573}

\bibitem[BN02]{natancat}
Dror Bar-Natan, \emph{On {K}hovanov's categorification of the {J}ones
  polynomial}, Algebr. Geom. Topol. \textbf{2} (2002), 337--370. \MR{1917056}

\bibitem[BN05]{natantangle}
\bysame, \emph{Khovanov's homology for tangles and cobordisms}, Geom. Topol.
  \textbf{9} (2005), 1443--1499. \MR{2174270}

\bibitem[Boy18]{keegan-hfk}
Keegan Boyle, \emph{Rank inequalities on knot {F}loer homology of periodic
  knots}, arXiv preprint arXiv:1810.01526 (2018).

\bibitem[BPS21]{bps-2018}
Maciej Borodzik, Wojciech Politarczyk, and Marithania Silvero, \emph{Khovanov
  homotopy type, periodic links and localizations}, Math. Ann. \textbf{380}
  (2021), no.~3-4, 1233--1309. \MR{4297186}

\bibitem[BPW19]{bpw-16}
Anna Beliakova, Krzysztof~K Putyra, and Stephan~M Wehrli, \emph{Quantum link
  homology via trace functor {I}}, Inventiones mathematicae \textbf{215}
  (2019), no.~2, 383--492.

\bibitem[Bre67]{bredon}
Glen~E. Bredon, \emph{Equivariant cohomology theories}, Bull. Amer. Math. Soc.
  \textbf{73} (1967), no.~2, 266--268.

\bibitem[CGH11]{cgh1}
Vincent Colin, Paolo Ghiggini, and Ko~Honda, \emph{Equivalence of {H}eegaard
  {F}loer homology and embedded contact homology via open book decompositions},
  Proc. Natl. Acad. Sci. USA \textbf{108} (2011), no.~20, 8100--8105.
  \MR{2806645}

\bibitem[CK14]{chenkhovanov}
Yanfeng Chen and Mikhail Khovanov, \emph{An invariant of tangle cobordisms via
  subquotients of arc rings}, Fund. Math. \textbf{225} (2014), no.~1, 23--44.
  \MR{3205563}

\bibitem[CMW01]{costenoble-may-waner}
SR~Costenoble, J~Peter May, and Stefan Waner, \emph{Equivariant orientation
  theory}, Homology, Homotopy and Applications \textbf{3} (2001), no.~2,
  265--339.

\bibitem[Coo78]{cooke}
George Cooke, \emph{Replacing homotopy actions by topological actions}, Trans.
  Amer. Math. Soc. \textbf{237} (1978), 391--406. \MR{0461544}

\bibitem[CW16]{costenoble-waner}
Steven~R Costenoble and Stefan Waner, \emph{Equivariant ordinary homology and
  cohomology}, Springer, 2016.

\bibitem[DKS89]{dwyer-kan-smith}
W.~G. Dwyer, D.~M. Kan, and J.~H. Smith, \emph{Homotopy commutative diagrams
  and their realizations}, J. Pure Appl. Algebra \textbf{57} (1989), no.~1,
  5--24. \MR{984042}

\bibitem[DM16]{Dotto-Moi}
Emanuele Dotto and Kristian Moi, \emph{Homotopy theory of {$G$}-diagrams and
  equivariant excision}, Algebr. Geom. Topol. \textbf{16} (2016), no.~1,
  325--395. \MR{3470703}

\bibitem[GLW18]{GLW-schur-weyl}
J.~Elisenda Grigsby, Anthony~M. Licata, and Stephan~M. Wehrli, \emph{Annular
  {K}hovanov homology and knotted {S}chur-{W}eyl representations}, Compos.
  Math. \textbf{154} (2018), no.~3, 459--502. \MR{3731256}

\bibitem[GM95]{Greenlees-May}
J.~P.~C. Greenlees and J.~P. May, \emph{Equivariant stable homotopy theory},
  Handbook of algebraic topology, North-Holland, Amsterdam, 1995, pp.~277--323.
  \MR{1361893}

\bibitem[GW20]{grigsby-wehrli-gl11}
J.~Elisenda Grigsby and Stephan~M. Wehrli, \emph{An action of
  {${\mathfrak{gl}(1|1)}$} on odd annular {K}hovanov homology}, Math. Res.
  Lett. \textbf{27} (2020), no.~3, 711--742. \MR{4216566}

\bibitem[Hen12]{hendricks-bdc}
Kristen Hendricks, \emph{A rank inequality for the knot {F}loer homology of
  double branched covers}, Algebr. Geom. Topol. \textbf{12} (2012), no.~4,
  2127--2178. \MR{3020203}

\bibitem[Hen15]{hendricks-periodic}
\bysame, \emph{Localization of the link {F}loer homology of doubly-periodic
  knots}, J. Symplectic Geom. \textbf{13} (2015), no.~3, 545--608. \MR{3412087}

\bibitem[HKK16]{hkk}
Po~Hu, Daniel Kriz, and Igor Kriz, \emph{Field theories, stable homotopy
  theory, and {K}hovanov homology}, Topology Proc. \textbf{48} (2016),
  327--360. \MR{3465966}

\bibitem[HLS16]{hls}
Kristen Hendricks, Robert Lipshitz, and Sucharit Sarkar, \emph{A flexible
  construction of equivariant {F}loer homology and applications}, Journal of
  Topology \textbf{9} (2016), no.~4, 1153--1236.

\bibitem[Ill78]{illman}
S{\"o}ren Illman, \emph{Smooth equivariant triangulations of {G}-manifolds for
  {G} a finite group}, Mathematische Annalen \textbf{233} (1978), no.~3,
  199--220.

\bibitem[Kho00]{kho1}
Mikhail Khovanov, \emph{A categorification of the {J}ones polynomial}, Duke
  Math. J. \textbf{101} (2000), no.~3, 359--426. \MR{1740682}

\bibitem[Kho02]{khovtanglefunctor}
\bysame, \emph{A functor-valued invariant of tangles}, Algebr. Geom. Topol.
  \textbf{2} (2002), 665--741. \MR{1928174}

\bibitem[KLT20]{klt1}
\c{C}a\u{g}atay Kutluhan, Yi-Jen Lee, and Clifford~Henry Taubes, \emph{{$\rm
  HF{=}HM$}, {I}: {H}eegaard {F}loer homology and {S}eiberg-{W}itten {F}loer
  homology}, Geom. Topol. \textbf{24} (2020), no.~6, 2829--2854. \MR{4194305}

\bibitem[KR08a]{kr1}
Mikhail Khovanov and Lev Rozansky, \emph{Matrix factorizations and link
  homology}, Fund. Math. \textbf{199} (2008), no.~1, 1--91. \MR{2391017}

\bibitem[KR08b]{kr2}
\bysame, \emph{Matrix factorizations and link homology {II}}, Geom. Topol.
  \textbf{12} (2008), no.~3, 1387--1425. \MR{2421131}

\bibitem[Lar19]{large}
Tim Large, \emph{Equivariant {F}loer theory and double covers of
  three-manifolds}, arXiv preprint arXiv:1910.12119 (2019).

\bibitem[Lau12]{lauda-introduction}
Aaron~D. Lauda, \emph{An introduction to diagrammatic algebra and categorified
  quantum {$\mathfrak{sl}_2$}}, Bull. Inst. Math. Acad. Sin. (N.S.) \textbf{7}
  (2012), no.~2, 165--270. \MR{3024893}

\bibitem[LLS17]{lls2}
Tyler Lawson, Robert Lipshitz, and Sucharit Sarkar, \emph{The cube and the
  {B}urnside category}, Categorification in geometry, topology, and physics,
  Contemp. Math., vol. 684, Amer. Math. Soc., Providence, RI, 2017, pp.~63--85.
  \MR{3611723}

\bibitem[LLS19]{LLS-annular}
\bysame, \emph{Chen-{K}hovanov spectra for tangles}, arXiv preprint
  arXiv:1909.12994 (2019).

\bibitem[LLS20]{lls1}
\bysame, \emph{{K}hovanov homotopy type, {B}urnside category and products},
  Geom. Topol. \textbf{24} (2020), no.~2, 623--745. \MR{4153651}

\bibitem[LLS21]{lls-tangle}
\bysame, \emph{Khovanov spectra for tangles}, Journal of the Institute of
  Mathematics of Jussieu (2021), 1–72.

\bibitem[LM18a]{Lidman-Manolescu-ident}
Tye Lidman and Ciprian Manolescu, \emph{The equivalence of two
  {S}eiberg-{W}itten {F}loer homologies}, Ast{\'e}risque \textbf{399} (2018).

\bibitem[LM18b]{Lidman-Manolescu-covering}
\bysame, \emph{Floer homology and covering spaces}, Geom. Topol. \textbf{22}
  (2018), no.~5, 2817--2838. \MR{3811772}

\bibitem[LS14]{lshomotopytype}
Robert Lipshitz and Sucharit Sarkar, \emph{A {K}hovanov stable homotopy type},
  J. Amer. Math. Soc. \textbf{27} (2014), no.~4, 983--1042. \MR{3230817}

\bibitem[LT16]{lipshitz-treumann}
Robert Lipshitz and David Treumann, \emph{Noncommutative {H}odge-to-de {R}ham
  spectral sequence and the {H}eegaard {F}loer homology of double covers}, J.
  Eur. Math. Soc. (JEMS) \textbf{18} (2016), no.~2, 281--325. \MR{3459952}

\bibitem[M{\etalchar{+}}96]{alaska}
J~Peter May et~al., \emph{Equivariant homotopy and cohomology theory, volume 91
  of {CBMS} regional conference series in mathematics}, Published for the
  Conference Board of the Mathematical Sciences, Washington, DC, 1996, p.~88.

\bibitem[Mus19]{musyt-thesis}
Jeffrey Musyt, \emph{Equivariant {K}hovanov {H}omotopy {T}ype and {P}eriodic
  {L}inks}, ProQuest LLC, Ann Arbor, MI, 2019, Thesis (Ph.D.)--University of
  Oregon. \MR{4035428}

\bibitem[ORSz13]{ors}
Peter~S. Ozsv\'ath, Jacob Rasmussen, and Zolt\'an Szab\'o, \emph{Odd {K}hovanov
  homology}, Algebr. Geom. Topol. \textbf{13} (2013), no.~3, 1465--1488.
  \MR{3071132}

\bibitem[Pol19]{politarczyk-kh}
Wojciech Politarczyk, \emph{Equivariant {K}hovanov homology of periodic links},
  Michigan Math. J. \textbf{68} (2019), no.~4, 859--889. \MR{4029632}

\bibitem[Rob13]{roberts-dbc}
Lawrence~P. Roberts, \emph{On knot {F}loer homology in double branched covers},
  Geom. Topol. \textbf{17} (2013), no.~1, 413--467. \MR{3035332}

\bibitem[Smi38]{smith-classical}
P.~A. Smith, \emph{Transformations of finite period}, Annals of Mathematics
  \textbf{39} (1938), no.~1, 127--164.

\bibitem[SS06]{seidel-smith-symp}
Paul Seidel and Ivan Smith, \emph{A link invariant from the symplectic geometry
  of nilpotent slices}, Duke Math. J. \textbf{134} (2006), no.~3, 453--514.
  \MR{2254624}

\bibitem[SS10]{seidel-smith-loc}
\bysame, \emph{Localization for involutions in {F}loer cohomology}, Geom.
  Funct. Anal. \textbf{20} (2010), no.~6, 1464--1501. \MR{2739000}

\bibitem[SSS20]{oddkh}
Sucharit Sarkar, Christopher Scaduto, and Matthew Stoffregen, \emph{An odd
  {K}hovanov homotopy type}, Adv. Math. \textbf{367} (2020), 107112, 51.
  \MR{4078823}

\bibitem[SSSz17]{SSS-geometric-perturb}
Sucharit Sarkar, Cotton Seed, and Zolt\'an Szab\'o, \emph{A perturbation of the
  geometric spectral sequence in {K}hovanov homology}, Quantum Topol.
  \textbf{8} (2017), no.~3, 571--628. \MR{3692911}

\bibitem[Str05]{stroppel-5}
Catharina Stroppel, \emph{Categorification of the {T}emperley-{L}ieb category,
  tangles, and cobordisms via projective functors}, Duke Math. J. \textbf{126}
  (2005), no.~3, 547--596. \MR{2120117}

\bibitem[Vog73]{Vogt}
Rainer~M. Vogt, \emph{Homotopy limits and colimits}, Math. Z. \textbf{134}
  (1973), 11--52. \MR{0331376}

\bibitem[Wil17]{willis-stabilization}
Michael Willis, \emph{Stabilization of the {K}hovanov homotopy type of torus
  links}, Int. Math. Res. Not. IMRN (2017), no.~11, 3350--3376. \MR{3693652}

\bibitem[Xie21]{xie-ss}
Yi~Xie, \emph{Instantons and annular {K}hovanov homology}, Adv. Math.
  \textbf{388} (2021), Paper No. 107864, 51. \MR{4281775}

\bibitem[Zha18]{z-annular-rank}
Melissa Zhang, \emph{A rank inequality for the annular {K}hovanov homology of
  2-periodic links}, Algebr. Geom. Topol. \textbf{18} (2018), no.~2,
  1147--1194. \MR{3773751}

\end{thebibliography}
\vspace{0.3in}

\end{document}